\definecolor{labelkey}{rgb}{0.6,0,0}
\numberwithin{equation}{section}
\def\eps{\varepsilon}
\newcommand{\Z}{\mathbb{Z}}
\newcommand{\N}{\mathbb{N}}
\newcommand{\R}{\mathbb{R}}
\newcommand{\T}{\mathbb{T}}
\newcommand{\Sz}{\abs{\mathcal{S}}}
\newcommand{\W}{\widetilde{\mathcal{W}}}
\newcommand{\id}{\textnormal{Id}}
\newcommand{\I}{\mathcal{I}}
\newcommand{\B}{\mathcal{B}}
\newcommand{\U}{\mathcal{U}}
\newcommand{\Q}{\mathcal{Q}}
\newcommand{\m}{\mathfrak{m}}
\newcommand{\Ups}{\Upsilon}
\newcommand{\h}{\textnormal{h}}
\renewcommand{\L}{\mathcal{L}}
\newcommand{\blue}[1]{#1}
\renewcommand{\div}{{\mathrm{div }}}
\newcommand{\curl}{{\mathrm{curl }}}
\providecommand{\ip}[1]{\langle#1\rangle}
\providecommand{\abs}[1]{\left\lvert#1\right\rvert}
\providecommand{\norm}[1]{\left\|#1\right\|}
\newtheorem{theorem}{Theorem}[section]
\newtheorem{lemma}[theorem]{Lemma}
\newtheorem{corollary}[theorem]{Corollary}
\newtheorem{proposition}[theorem]{Proposition}
\newtheorem{remark}[theorem]{Remark}
\begin{document}

\title{\vspace*{-1cm}Global Axisymmetric Euler Flows with Rotation}

\author{Yan Guo}
\address{Brown University, Providence, RI, USA}
\email{yan\_guo@brown.edu}

\author{Benoit Pausader}
\address{Brown University, Providence, RI, USA}
\email{benoit\_pausader@brown.edu}

\author{Klaus Widmayer}
\address{University of Zurich, Zurich, Switzerland \& University of Vienna, Vienna, Austria}
\email{klaus.widmayer@math.uzh.ch}

\subjclass[2010]{35Q31, 35B40, 76B03, 76U05}


\begin{abstract}
We construct a class of \emph{global, dynamical} solutions to the $3d$ Euler equations near the stationary state given by uniform ``rigid body'' rotation. These solutions are axisymmetric, of Sobolev regularity, have non-vanishing swirl and scatter linearly, thanks to the dispersive effect induced by the rotation.

To establish this, we introduce a framework that builds on the symmetries of the problem and precisely captures the anisotropic, dispersive mechanism due to rotation. This enables a fine analysis of the geometry of nonlinear interactions and allows us to propagate sharp decay bounds, which is crucial for the construction of global Euler flows.
\end{abstract}

\setcounter{tocdepth}{1}
\maketitle
\vspace*{-.75cm}
\tableofcontents
\vspace*{-.75cm}

\section{Introduction}
While global regularity of solutions to the incompressible $3d$ Euler equations for $\bm{U}:\R\times\R^3\to\R^3$
\begin{equation}\label{eq:E}
 \partial_t\bm{U}+\bm{U}\cdot\nabla \bm{U}+\nabla P=0,\qquad\div(\bm{U})=0,
\end{equation}
remains an outstanding open problem, there are several examples of stationary states (see e.g.\ \cite{Cho2020,CLV2019,FB1973,Gav2019} for some nontrivial ones). A particularly simple yet relevant one is given by \emph{uniform rotation} around a fixed axis. 
In Cartesian coordinates with $\vec{e}_3$ along the axis of rotation, these ``rigid motions'' are given by $\bm{U}_{rot}=(-x_2,x_1,0)$ (with pressure $P_{rot}=(x_1^2+x_2^2)/2$).
Working with solutions that are \emph{axisymmetric} (i.e.\ invariant with respect to rotation about $\vec{e}_3$) and writing $\bm{U}=\bm{U}_{rot}+\bm{u}$, one sees that $\bm{U}$ solves \eqref{eq:E} iff the velocity field $\bm{u}:\R\times\R^3\to\R^3$ satisfies the \emph{Euler-Coriolis} equations
\begin{equation}\label{eq:EC}
\begin{cases}
 &\partial_t\bm{u}+\bm{u}\cdot\nabla \bm{u}+\vec{e}_3\times \bm{u}+\nabla p=0,\\
 &\div(\bm{u})=0.
\end{cases}
\end{equation}
As an alternative viewpoint, \eqref{eq:EC} are the incompressible, $3d$ Euler equations written in a uniformly rotating frame of reference, where the Coriolis force is given as $\vec{e}_3\times\bm{u}$. The scalar pressure $p:\R\times\R^3\to\R$ serves to maintain the incompressibility condition $\div(\bm{u})=0$ and can be recovered from $\bm{u}$ by solving the elliptic equation $\Delta p=-(\partial_1\bm{u}_2-\partial_2\bm{u}_1)-\div(\bm{u}\cdot\nabla\bm{u})$.

Our main result shows that sufficiently small and smooth initial data $\bm{u}_0$ that are axisymmetric
lead to global, unique solutions to \eqref{eq:EC}:
\begin{theorem}\label{MainThm}
There exist $N_0>0$ and a norm $Z$, finite for Schwartz data, and $\eps_0>0$ such that if $\bm{u}_0\in H^3(\mathbb{R}^3)$ is \emph{axisymmetric} and satisfies
\begin{equation}\label{eq:id-main}
 \norm{\bm{u}_0}_{Z}+\norm{\bm{u}_0}_{H^{2N_0}}\le\varepsilon<\varepsilon_0,
\end{equation}
then there exists a unique global solution $\bm{u}\in C(\R:H^{2N_0})$ of \eqref{eq:EC} with initial data $\bm{u}_0$, and thus also a global solution $\bm{U}$ for \eqref{eq:E} with initial data $\bm{U}_0=\bm{U}_{rot}+\bm{u}_0$.

Moreover, $\bm{u}(t)$ decays over time \blue{at the optimal rate}
\begin{equation}
 \norm{\bm{u}(t)}_{L^\infty}\lesssim \eps \ip{t}^{-1}
\end{equation}
and scatters linearly in $L^2$: There exists $\bm{u}_0^\infty$ such that the solution $\bm{u}_{lin}(t)$ of the linearization of \eqref{eq:EC} with initial data $\bm{u}_0^\infty$,
\begin{equation}\label{eq:lin-EC}
 \partial_t\bm{u}_{lin}+\vec{e}_3\times \bm{u}_{lin}+\nabla p=0,\quad \div(\bm{u}_{lin})=0,\quad \bm{u}_{lin}(0)=\bm{u}_0^\infty,
\end{equation}
satisfies 
\begin{equation}
 \norm{\bm{u}(t)-\bm{u}_{lin}(t)}_{L^2}\to 0,\qquad t\to\infty.
\end{equation}
\end{theorem}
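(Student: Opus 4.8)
\emph{Approach.} I would view \eqref{eq:EC} as a quasilinear dispersive system near the origin and close a bootstrap that couples a top-order energy estimate with a dispersive $\ip{t}^{-1}$-decay estimate carried by the norm $Z$ at much lower regularity. The starting point is the linear structure: after Leray projection onto divergence-free fields, \eqref{eq:lin-EC} diagonalizes --- crucially using axisymmetry --- into two half-wave equations $\partial_t\bm u_{\pm}=\pm i\Lambda\bm u_\pm$ with the inertial-wave dispersion relation $\Lambda(\xi)=\xi_3/\abs{\xi}$. Passing to the profiles $U_\pm(t):=e^{\mp it\Lambda}\bm u_\pm(t)$ and applying Duhamel, \eqref{eq:EC} becomes
\[ U_\pm(t)=U_\pm(0)+\int_0^t e^{\mp is\Lambda}\,\Q(\bm u,\bm u)(s)\,ds, \]
where $\Q$ is a quadratic form carrying one derivative; on the Fourier side the integrand is an oscillatory bilinear integral with phases $\Phi(\xi,\eta)=\Lambda(\xi)-\Lambda(\xi-\eta)-\Lambda(\eta)$ and its sign variants. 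I would define $Z$ as a strong, anisotropic, dyadically localized norm --- with separate weights in $\abs{\xi}$ and in $\abs{\xi_3}$ and a fixed finite number of symmetry-adapted vector fields (the rotation field acting trivially by axisymmetry) --- chosen so that stationary phase for the degenerate phase $\Lambda$ yields $\norm{e^{\mp it\Lambda}g}_{L^\infty}\lesssim\ip{t}^{-1}\norm{g}_Z$. This dispersive estimate is the source of the optimal rate $\ip{t}^{-1}$ in the theorem.

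\emph{Energy estimate.} Since $\Q$ loses a derivative, the high Sobolev norm is controlled by a symmetrized/commutator energy estimate in which the a priori pointwise bound $\norm{\nabla\bm u(s)}_{L^\infty}\lesssim\eps\ip{s}^{-1}$ forces $\frac{d}{ds}\norm{\bm u(s)}_{H^{2N_0}}^2\lesssim\ip{s}^{-1}\norm{\bm u(s)}_{H^{2N_0}}^2$, hence at worst slow growth $\norm{\bm u(t)}_{H^{2N_0}}\lesssim\eps\ip{t}^{\delta}$ with $\delta=O(\eps)$. The decay-generating norm $Z$ is then propagated at a fixed regularity $\ll N_0$, so that in every nonlinear estimate one factor can be placed in the slowly growing high norm and the other in a $\ip{s}^{-1}$-decaying (interpolated $Z$/energy) norm, the small room in the decay absorbing the slow growth and keeping all time integrals convergent.

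\emph{Propagating $Z$.} To close the bootstrap for $\norm{U(t)}_Z$ one must bound the oscillatory integral above in $Z$. The plan is a space-time resonance decomposition: partition frequency space by the sizes of $\Phi$ (time resonances) and of $\nabla_\eta\Phi$ (space resonances); where $\abs{\Phi}$ is bounded below, integrate by parts in $s$, absorbing the boundary terms into a modified profile (a normal-form transformation that must be shown bounded on $Z$); where $\abs{\nabla_\eta\Phi}$ is bounded below, integrate by parts in $\eta$ to exploit dispersion; and estimate the remaining space-time resonant region directly, using the $\ip{s}^{-1}$ decay of one input against the $L^2$ mass of the other. Axisymmetry is essential throughout: it renders the effective frequency space two-dimensional, so resonant sets are of low dimension, the kernels of the normal-form operators are benign, and the multilinear bounds close. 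Finally, the same bilinear estimates give $\int_0^\infty\norm{\Q(\bm u,\bm u)(s)}_{L^2}\,ds<\infty$ with room to spare, so each $U_\pm(t)$ is Cauchy in $L^2$; its limit, recombined into a divergence-free field, furnishes $\bm u_0^\infty$ and the stated $L^2$ scattering.

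\emph{Main obstacle.} The crux is the space-time resonant region in the previous step. The phase $\Lambda(\xi)=\xi_3/\abs{\xi}$ is homogeneous of degree zero and strongly anisotropic, so the set on which $\Phi$ and $\nabla_\eta\Phi$ vanish simultaneously is not a point but a positive-dimensional manifold, and near it neither integration by parts in time nor in frequency is available. Extracting a full power of $\ip{t}^{-1}$ from a neighborhood of this manifold without losing derivatives --- the ``fine analysis of the geometry of nonlinear interactions'' alluded to in the abstract --- is where the real work lies, and where the precise interplay between the weights defining $Z$, the axisymmetric geometry, and the structure of $\Q$ must be exploited.
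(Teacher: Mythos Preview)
Your high-level architecture---bootstrap coupling a slowly growing $H^{2N_0}$ energy with a lower-regularity dispersive norm, profiles $e^{\mp it\Lambda}\bm u_\pm$, Duhamel, and the $\ip{t}^{-1}$ decay from stationary phase for $\Lambda(\xi)=\xi_3/\abs{\xi}$---matches the paper, and you have correctly located the crux. But the paper resolves your ``main obstacle'' by a route substantially different from the standard space-time resonance analysis you sketch.

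The paper does \emph{not} integrate by parts in the full gradient $\nabla_\eta\Phi$. Instead it integrates by parts only along the symmetry vector fields $S_\eta=\eta\cdot\nabla_\eta$ and $\Omega_\eta=\eta_\h^\perp\cdot\nabla_{\eta_\h}$. The key structural fact (Proposition~\ref{prop:phasevssigma}) is that $\abs{S_\eta\Phi}+\abs{\Omega_\eta\Phi}$ and $\abs{\Phi}$ cannot be simultaneously small: if $\abs{\Phi}\le 2^{q_{\max}-10}$ then $\abs{\bar\sigma}\gtrsim 2^{q_{\max}+k_{\max}+k_{\min}}$, where $\bar\sigma=\xi_3\eta_\h-\eta_3\xi_\h$ controls the vector-field derivatives via \eqref{eq:vflobound_0}. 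In this framework there is \emph{no} space-time resonant set---every frequency region admits either a normal form or iterated vector-field integration by parts (Lemma~\ref{lem:new_ibp}). This dissolves the positive-dimensional resonant manifold you worry about, at the cost of working with only two of the three directions.

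The missing direction is the polar angle $\Upsilon=\partial_\phi$, which does not commute with the equation. The paper's second innovation is an angular Littlewood--Paley decomposition $R_\ell$ (Section~\ref{ssec:angLP}, built from zonal spherical harmonics) that quantifies fractional $\Upsilon^{1+\beta}$ regularity for $0<\beta\ll 1$. The norm $Z$ is then a sum of a $B$ norm (anisotropic $L^2$ weights in horizontal/vertical frequency parameters $p,q$) and an $X$ norm (weights $2^{(1+\beta)\ell}$ in the angular parameter), together with a finite number of copies of $S$; the linear decay (Proposition~\ref{prop:decay}) genuinely needs both pieces. Propagating the $X$ norm under the nonlinear flow is the heart of the paper (Section~\ref{sec:Xnorm}), and it is here that the anisotropic localizations $p,p_j,q,q_j$ and the angular parameters $\ell,\ell_j$ are played against one another through iterated $S/\Omega$ integration by parts.

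One further correction: your account of axisymmetry (``frequency space is effectively two-dimensional, so resonant sets are of low dimension'') understates its role. The decisive use is nonlinear: axisymmetry \emph{annihilates} a resonant subsystem that would otherwise contain a full $2d$ Euler equation (Section~\ref{sec:role-axisymm}), and it endows the surviving quadratic multipliers with a null structure $\Lambda(\zeta_1)\sqrt{1-\Lambda^2(\zeta_2)}$ (Lemma~\ref{lem:ECmult}) that is used repeatedly in the bilinear estimates.
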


We comment on a few points of immediate relevance:
\begin{enumerate}[wide,itemsep=3pt]
 \item A more precise version of Theorem \ref{MainThm} is given below in Theorem \ref{thm:EC} of Section \ref{ssec:mainresult}. In particular, the $Z$ norm in the above statement is given explicitly as a sum of $B$ and $X$ norms -- defined in \eqref{eq:defBnorm} resp.\ \eqref{eq:defXnorm} after the introduction of appropriate technical tools  -- plus regularity in terms of a scaling vector field. With this, the scattering statement can be refined and holds in a stronger topology than $L^2$ -- see Corollary \ref{cor:scatter}.

 \item We may view Theorem \ref{MainThm} as a \emph{global stability} result (in the class of axisymmetric perturbations satisfying \eqref{eq:id-main}) for uniformly rotating solutions $\bm{U}_{rot}=r\vec{e}_\theta$ in cylindrical coordinates $(r,\theta,z)$ to the incompressible $3d$ Euler equations \eqref{eq:E}. From this perspective, our result connects with the study of stability of infinite energy solutions to the $2d$ Euler equations, such as shear flows \cite{BM15,IJ2019,IJ20,MaZhaShears2020} or stratified configurations \cite{BBZD2021}, even though the stability mechanism (``phase mixing'') in these settings is different. However, to the best of our knowledge there are no such results for the Euler equations in $3d$.
 
 We point out that the particular rotating solution $\bm{U}_{rot}$ is but one example of a family of stationary states of the $3d$ Euler equations, given by $\bm{U}_f=f(r)\vec{e}_\theta$, with $f:\R^+\to\R$. The $3d$ Euler dynamics near $\bm{U}_f$ can be described as $\bm{U}=\bm{U}_f+\bm{u}$ where $\bm{u}$ satisfies
 \begin{equation*}
  \partial_t\bm{u}+\bm{u}\cdot\nabla\bm{u}+\frac{f(r)}{r}\vec{e}_z\times\bm{u}+\frac{f(r)}{r}\partial_\theta\bm{u}+r\partial_r\left(\frac{f(r)}{r}\right)(\bm{u}\cdot\vec{e}_r)\vec{e}_\theta+\nabla p=0,\quad \div(\bm{u})=0.
 \end{equation*}
 (For $f(r)=r$ and under axisymmetry this reduces to \eqref{eq:EC}.) Our result thus initiates the study of the stability of these equilibriums.

 \item Apart from smallness, localization and axisymmetry assumptions, no restrictions are put on the initial data in Theorem \ref{MainThm}. Classical theory thus only predicts the existence of \emph{local} solutions for a time span of order $\eps^{-1}$. In contrast, the \emph{global} solutions we construct can (and in general do -- see Remark \ref{rem:a-vs-swirl}) have \emph{non-vanishing swirl}. (We recall that without swirl, solutions exist globally under relatively mild assumptions, see e.g.\ \cite[Section 4.3]{MB2002}.) In this context, the crucial role of \emph{axisymmetry} is to suppress a $2d$ Euler-type dynamic in \eqref{eq:EC}. \blue{Without axial symmetry, it is unclear whether a similar stability result can hold: the $3d$ Euler equations are notoriously unstable and there are reasons to believe that even for small initial data the aforementioned $2d$ Euler dynamic (with its potential for extremely fast norm growth) would play an important role -- for more on this we refer the reader to} the discussion in Section \ref{sec:role-axisymm}.

 \item It is remarkable that a uniform rotation keeps solutions from Theorem \ref{MainThm} {\it globally regular} in the absence of dissipation. Without rotation, even axisymmetric initial data may lead to finite time blow-up, as conjectured in \cite{Hou2021,Hou2014,Luo2014} and recently established in \cite{elgindiBU,EGM2019} for $C^{1,\alpha}$ solutions. For related equations, one can produce finite time blow-up even in the presence of rotation, e.g.\ in the inviscid primitive equations \cite{ILT2020}.
 
 At the heart of this result is a dispersive effect due to rotation. This is a linear mechanism that on $\R^3$ leads to amplitude decay of solutions of the linearization \eqref{eq:lin-EC} of the Euler-Coriolis system. The anisotropy of the problem is reflected in the dispersion relation, which is degenerate and yields a critical decay rate of at most $t^{-1}$ (see Corollary \ref{cor:decay}). In particular, our nonlinear solutions decay at the same rate as linear solutions.

 \item The influence and importance of rotational effects in fluids has been documented in various contexts, in particular in the geophysical fluids literature (see e.g.\ \cite{GS2007,mcwilliamsGFD,GFD} or for the $\beta$-plane model \cite{b-plane,Gal2008,globalbeta}). In the setting of fast rotation, the (inverse) speed of rotation introduces a parameter of smallness that can be used to prolong the time of existence of solutions. For Euler-Coriolis \eqref{eq:EC}, this has been done in \cite{AF2018,CDGG2002a,Dut2005,JW2020,KohE,MR3488136,WC2018} via Strichartz estimates associated to the linear semigroup, based on work in the viscous setting \cite{CDGG2006,GR2009}. Such results do not require axisymmetry and apply for sufficiently smooth initial data without size restrictions. By rescaling\footnote{Note that if $\bm{u}$ solves \eqref{eq:EC} on a time interval $[0,T]$, then for $\omega>0$ we have that $\bm{u}_\omega(t,x):= \omega\bm{u}(\omega t,x)$ solves \eqref{eq:EC} with $\vec{e}_3$ replaced by $\omega\vec{e}_3$ on the time interval $[0,\omega^{-1}T]$, so that speed of rotation and size of initial data can be related.}, these results amount to a logarithmic improvement of the time scale of existence in Sobolev spaces, with a slightly stronger improvement available in Besov spaces \cite{AF2018,WC2018}.

 \item This article expands on the line of work initiated in \cite{rotE}: we \emph{globally} control the evolution of small, axisymmetric initial data and find their asymptotic behavior. We develop a framework that tracks various important anisotropic parameters and -- crucially -- introduce an angular Littlewood-Paley decomposition to propagate fractional type regularity in certain angular derivatives on the Fourier side. This is coupled with a novel, refined analysis of the linear effect due to rotation, which allows us to obtain sharp decay rates with a weak control of the unknowns, and a precise understanding of the geometry of nonlinear interactions. We refer the reader to Section \ref{sec:method} for a more detailed description of our ``method of partial symmetries''.

 \item While the techniques and ideas of this article are developed with a precise adaptation to the geometry of the Euler-Coriolis system, we believe they can be of much wider use, for instance for stratified systems (such as the Boussinesq equations of \cite{3dBouss} or \cite{Cha2020}), plasmas with magnetic background fields (e.g.\ in the Euler-Poisson or Euler-Maxwell equations \cite{GIP2016,GP2011}), or in a broader context dynamo theory in the MHD equations (see e.g.\ \cite[Section 7.9]{fitzpatrick2014plasma}). Moreover, they may open directions towards new results or improved thresholds also in the viscous setting \cite{CDGG2006,KohNS}.

\end{enumerate}

\medskip 
We give next an overview of the methodology this article proposes and how these ideas are used to overcome the challenges posed by the anisotropy, quasilinear nature and critical decay rate of \eqref{eq:EC}. 

\subsection{The method of partial symmetries}\label{sec:method}
Underlying our approach are classical techniques for small data/global regularity problems in nonlinear dispersive equations, such as vector fields \cite{Klainer85} and normal forms \cite{ShatahKG85} as unified in a spacetime resonance approach \cite{GermSTR,GMSGWW3d,GNT1} and further developed in \cite{BG2014,Den2018,DIPau,DIPP,CapWW,GIP2016,IP2013,IoPau1,IP2019,IoPu2,G2dWW,KP2011} (see also \cite{Ifrim2017,Ifrim2019a}). To initiate such an analysis, we observe that the linearization of \eqref{eq:EC} is a dispersive equation, with dispersion relation given by
\begin{equation}
 \Lambda(\xi)=\xi_3/\abs{\xi},\qquad \xi\in\R^3.
\end{equation}
This is anisotropic and degenerate, and leads to $L^\infty$ decay at the critical rate $t^{-1}$, \blue{which is also sharp} -- see also Proposition \ref{prop:decay} resp.\ Corollary \ref{cor:decay} \blue{and the discussion thereafter}. 

This anisotropy is also manifest in the full, nonlinear problem \eqref{eq:EC}, which exhibits fewer symmetries and conservation laws than the $3d$ Euler equations without rotation \eqref{eq:E}. In our setting, we only have two unbounded commuting vector fields: the rotation $\Omega$ about the axis $\vec{e}_3$, and the scaling $S$ (see Section \ref{sec:structure}). To obtain regularity in all directions, we complement them with a third vector field $\Ups$, corresponding to a derivative along the polar angle in spherical coordinates on the Fourier side. This choice ensures that $\Ups$ commutes with both $\Omega$ and $S$, but it {\it does not commute with the equation}.

Our overall strategy leans on a general approach to quasilinear dispersive problems and establishes a bootstrapping scheme as follows:
\begin{enumerate}[leftmargin=*]
 \item \emph{Choice of unknowns and formulation as dispersive problem (Section \ref{sec:structure}).} 
We parameterize the fluid velocity by two real scalar unknowns $U_{\pm}$ which diagonalize the linear system and commute with the geometric structure (Hodge decomposition and vector fields). Normalizing them properly then reveals a ``null type'' structure  in the case of axisymmetric solutions (Lemma \ref{lem:ECmult}).
  
 \item \emph{Linear decay analysis (Section \ref{sec:lin_decay}).} The key point here is to identify a \emph{weak criterion} for \emph{sharp decay} which will allow to retain \emph{optimal} pointwise decay even though the highest order energies increase slowly over time. This criterion largely determines the norm we will propagate in the bootstrap; it incorporates localized control of vector fields and angular derivatives in direction $\Ups$ via a $B$ and $X$ norm, respectively.

 \item \emph{Nonlinear Analysis 1: energy and refined estimates for vector fields (Section \ref{sec:Bnorm}).} Thanks to the commutation of $S$ with the equation, energy estimates for (arbitrary) powers of $S$ on the unknowns $U_\pm$ follow directly from the decay at rate $t^{-1}$. We then upgrade these $L^2$ bounds of many vector fields to refined, uniform bounds for fewer vector fields on the profiles $\U_\pm$ of $U_\pm$ in a norm $B$. This norm is designed as a relaxation of the requirement that the Fourier transform of the profiles $\U_\pm$ be in $L^\infty$.

 \item \emph{Nonlinear Analysis 2: propagation of regularity in $\Ups$ (Section \ref{sec:Xnorm}).} This is the most delicate part of the arguments, and the design of the $X$ norm to capture the angular regularity in $\Ups$ plays a key role: roughly speaking, while stronger norms give easier access to decay, they are also harder to bound along the nonlinear evolution. In the balance struck here the $X$ norm corresponds to a fractional, angular regularity on the Fourier transforms of the profiles $\U_\pm$.
\end{enumerate}

We highlight some key aspects of our novel approach:
\begin{itemize}
 \item \emph{Anisotropic localizations:} To precisely capture the degeneracy of dispersion and to be able to quantify the size of nonlinear interactions, it is important to track both horizontal and vertical components of interacting frequencies. New analytical challenges include the control of singularities due to anisotropic degeneracy (see e.g.\ Proposition \ref{prop:decay} or Lemma \ref{lem:vfsizes-mini}). We thus work with Littlewood-Paley decompositions (with associated parameters $p,q\in\Z^-$) relative to the horizontal $\abs{\xi_\h}/\abs{\xi}$ and vertical components $\abs{\xi_3}/\abs{\xi}$  of a vector $\xi=(\xi_1,\xi_2,\xi_3)\in\R^3$, where $\xi_\h=(\xi_1,\xi_2)$.
 
 \item \emph{Angular Littlewood-Paley decomposition:}  A crucial new ingredient is the introduction of an ``angular'' Littlewood-Paley decomposition quantifying angular regularity (see Section \ref{ssec:angLP}). Since our solutions are axisymmetric, this amounts to define and control fractional powers $\Upsilon^{1+\beta}$, for $0<\beta\ll 1$. This is fundamental for our analysis in that it enables us to pinpoint a \emph{weak criterion} for \emph{sharp decay} that moreover can be controlled \emph{globally}.\footnote{While sharp decay would also follow from control of a higher power of $\Ups$  such as $\Ups^2$, the resulting terms seem to resist uniform in time bounds and are thus very hard to manage.}
 
 \item \emph{Emphasis on natural derivatives:} We view the vector fields $S,\Omega$ generated by the symmetries as the \emph{natural derivatives} of this problem, and our approach is tailored to rely on them to the largest extent possible. In particular, we develop a framework of integration by parts along these vector fields (Section \ref{sec:vfibp}). The precise quantification of this technique is achieved by combining information from the anisotropic localizations and the new  angular Littlewood-Paley decomposition. Furthermore, a remarkable interplay with the ``phases'' of the nonlinear interactions reveals a natural dichotomy on which we can base our nonlinear analysis. Compared to traditional spacetime resonance analysis, one may view this as a {\it qualified} version of the absence of spacetime resonances, relying only on the natural derivatives coming from the symmetries.
\end{itemize}

\medskip
In what follows, we describe some of our arguments in more detail.
\subsubsection*{Linear Decay}
We collect the control necessary for decay in a norm $D$ in \eqref{eq:defDnorm}, that combines the aforementioned $B$ and $X$ norms (associated with localized control of vector fields and angular derivatives in direction $\Ups$, respectively). In particular, it guarantees $L^\infty$-control of the Fourier transform. This enables a stationary phase argument \emph{adapted to the vector fields}, and yields (in Proposition \ref{prop:decay}) a novel, anisotropic dispersive decay result: we split the action of the linear semigroup of \eqref{eq:EC} on a function into two well-localized pieces (related to the angular regularity we have), which decay in $L^\infty$ resp.\ $L^2$. In addition, away from the sets of degeneracy of $\Lambda$, these pieces display decay at a faster rate. To quantify this accurately, our anisotropic setup makes use of the horizontal and vertical projections $P_p$, $P_{p,q}$ and associated parameters $p,q\in\Z^-$. In combination with the localization information and a null structure of nonlinear interactions, this provides a key advantage over some traditional dispersive estimates.

\subsubsection*{Choice of Norms}\label{it:normchoice}
Our norms are modeled on $L^2$ to exploit the Hilbertian structure, and play a complementary role. The $B$-norm \eqref{eq:defBnorm} weights the projections $P_{p,q}$ \emph{negatively} in $p,q$. \blue{For functions localized at unit frequencies,} this provides normal $L^2$ control of $\hat{f}$ for frequencies where dispersion yields full $t^{-3/2}$ decay (i.e.\ when $p,q\ge -1$), but strengthens to scale as $L^\infty$ control on $\hat{f}$ where the decay degenerates to the nonintegrable rate $t^{-1}$. It is primarily used to control the contribution of the region where $q\blue{<}-10$. The $X$-norm \eqref{eq:defXnorm} gives a strong control of $1+\beta$ angular derivatives in $\Ups$, quantified via the angular Littlewood-Paley decomposition $R_\ell$ of Section \ref{ssec:angLP}, $\ell\in\Z^+$. Weighting \emph{positively} in $p$ we obtain a control that degenerates to scale as the $L^\infty$ norm of $\hat{f}$ for vertical frequencies. This is used chiefly to control the region where $p\blue{<}-10$. \blue{In addition to the weighting in terms of anisotropic localization, our norms also include factors that help overcome the derivative loss due to the quasilinear nature of the equations.}

\subsubsection*{Nonlinear Analysis}
With a suitable choice of two scalar unknowns $U_+$ and $U_-$ (Section \ref{sec:structure}), the quasilinear structure of \eqref{eq:EC} reveals a ``null type'' structure (Lemma \ref{lem:ECmult}) that will be important for the estimates to come. Conjugating by the linear evolution we can reformulate \eqref{eq:EC} in terms of bilinear Duhamel formulas for two scalar profiles $\U_+,\U_-$ -- see Section \ref{ssec:profiles}. The nonlinear analysis can then be reduced to suitable bilinear estimates for the profiles in the $B$ and $X$ norms relevant for the decay. For the resulting oscillatory integrals of the form \eqref{eq:def_Qm}, we have versions of the classical tools of normal forms or integration by parts at our disposal. 

Here our anisotropic framework invokes the horizontal and vertical parameters $p,p_j$ and $q,q_j$, $j=1,2$  -- corresponding to the interacting and output frequencies -- that are adapted to capture (inter alia) the size of the nonlinear ``phase'' functions $\Phi$ and its vector field derivatives $V\Phi$ (Lemma \ref{lem:vfsizes-mini}). 
It is valuable to observe that a gap in the values of either the horizontal or vertical parameters immediately yields a robust lower bound for $S\Phi$ or $\Omega\Phi$, expressed again in terms of those parameters $p,p_j,q,q_j$, with additional singularity in $p_j$ due to the anisotropy, see \eqref{eq:def_sigma} and \eqref{eq:vflobound_0}. Moreover, we have the striking fact that if $\Phi$ is (relatively) small, then $V\Phi$ will be (relatively) large for some vector field $V\in\{S,\Omega\}$ (see Proposition \ref{prop:phasevssigma}). To take full advantage of this dichotomy, it is important to establish \emph{sharp criteria} for when integration by parts along vector fields is beneficial (Section \ref{sec:vfibp}). Here the Littlewood-Paley decomposition $R_\ell$ 
in the angular direction $\Ups$ plays a vital role, and quantifies the effect on ``cross terms'' via associated parameters $\ell_j$, $j=1,2$ (see also Lemma \ref{lem:VFcross}).

In bilinear estimates, the resulting framework for \emph{iterated} integration by parts \emph{along vector fields} then allows us to force parameters $\ell,\ell_j$ at the cost of $p,p_j$ and $q,q_j$, $j=1,2$, roughly speaking. As it is not viable to localize in all parameters at once (see also Remark \ref{rem:comm-loc}), we first decompose our profiles with respect to $R_{\ell_j}P_{p_j}$, and only later include the full $P_{p_j,q_j}$, $j=1,2$. In practice, we will then be able to first enforce that $p,p_j$ are all comparable (no ``gap in $p$'', as we call it), then that there are no size discrepancies in $q,q_j$ (no ``gap in $q$''), and either work with normal forms or use our new decay estimates for the linear semigroup (Proposition \ref{prop:decay}).

The simplest version of these arguments appears in Section \ref{sec:dtfbds}, and gives an improved decay at almost the optimal rate $t^{-\frac{3}{2}}$ for the $L^2$ norm of time derivatives of the profiles $\U_\pm$. This is a demonstration of the flexibility and power of our approach, which in this instance overcomes the criticality of the sharp $t^{-1}$ decay with relative ease. Here, when there are no gaps in $p$ nor $q$ (and integration by parts is thus not feasible), normal forms are not available due to the time derivative. However, with our novel decay analysis and its well-localized contributions (Proposition \ref{prop:decay}) we can gain additional decay in a straightforward $L^2\times L^\infty$ estimate. 

Including normal form arguments and a refined study of the delicate contributions of terms with localization in $q,q_j$, we can then show the $B$ norm bounds \eqref{eq:btstrap-concl1.1-B} -- see Section \ref{sec:Bnorm}. Finally, the control of the $X$ norm in Section \ref{sec:Xnorm} is the most challenging aspect of this article and requires a more subtle splitting of cases and an adapted version of iterated integration by parts along vector fields (as presented in Section \ref{ssec:D3ibp}).

\subsection{Plan of the Article}
After the necessary background in Section \ref{sec:structure}, in Section \ref{sec:mainresult} we introduce the functional framework (including the angular Littlewood-Paley decomposition) and present our main result in detail with an overview of its proof. This is followed by the linear dispersive analysis that gives the decay estimate (Section \ref{sec:lin_decay}).

The formalism for repeated integration by parts in the vector fields is subsequently developed in Section \ref{sec:vfibp}, and first used in Section \ref{sec:dtfbds} to establish some useful bounds for the time derivative of our unknowns in $L^2$. In Section \ref{sec:Bnorm} we recall the straightforward $L^2$ based energy estimates and prove the claimed $B$ norm bounds, while those for the $X$ norm are given in Sections \ref{sec:Xnorm}.

Appendix \ref{apdx} includes the proof of basic properties of the angular Littlewood-Paley decomposition (Appendix \ref{apdx:angLP}) and collects some useful lemmata that are used throughout the text (Appendices \ref{apdx:set_gain}--\ref{sec:symbols}).

\section{Structure of the equations}\label{sec:structure}
In this section we present our choice of dispersive unknowns and investigate the nonlinear structure of the equations \eqref{eq:EC} in these variables. Parts of this have already been developed in our previous work \cite[Section 2]{rotE}, but we include all necessary details for the convenience of the reader.

\subsection{Symmetries and vector fields}
The equations \eqref{eq:EC} exhibit the two symmetries of \emph{scaling} and \emph{rotation}
\begin{equation}
 \bm{u}_\lambda(t,x)=\lambda \bm{u}(t,\lambda^{-1}x),\quad \lambda>0,\qquad \bm{u}_\Theta(t,x)=\Theta^\intercal \bm{u}(t,\Theta x),\quad \Theta\in O(3).
\end{equation}
These are generated by the vector fields $S$ resp.\ $\Omega$, which act on 
vector fields $\bm{v}$ and functions $f$ as
\begin{equation}\label{ScalingVF}
 S\bm{v}=\sum_{j=1}^3x^j\partial_j\bm{v}-\bm{v},\qquad Sf=x\cdot\nabla f
\end{equation} 
resp.\footnote{In terms of the rotations $\Omega_{ab}$ of Section \ref{ssec:angLP} we have that $\Omega=\Omega_{12}$.}
\begin{equation}\label{RotationVF}
\Omega \bm{v}=(x^1\partial_2-x^2\partial_1)\bm{v}-\bm{v}_\h^\perp,\qquad \Omega f=(x^1\partial_2-x^2\partial_1)f.
\end{equation}

In both cases, we observe that the vector field $V\in\{S,\Omega\}$ commutes with the Hodge decomposition and leads to the linearized equation:
\begin{equation}\label{eq:IER-VF}
 \partial_t V \bm{u}+V \bm{u}\cdot\nabla \bm{u}+\bm{u}\cdot\nabla V \bm{u}+\vec{e}_3\times V \bm{u}+\nabla p_V=0,\quad \div\, V \bm{u}=0.
\end{equation}
In particular, the nonlinear flow of \eqref{eq:EC} preserves \emph{axisymmetry}, the invariance under the action of $\Omega$, i.e.\ under rotations about the $\vec{e}_3$ axis.

We note that both $S$ and $\Omega$ are natural in the sense that they correspond to flat derivatives in spherical coordinates $(\rho,\theta,\phi)$:
\begin{equation*}
\Omega=\partial_\theta,\qquad S=\rho\partial_\rho.
\end{equation*}
In particular, they commute and they both behave well under Fourier transform: we have 
\begin{equation}
 \widehat{Sf}=-3\hat{f}-S\hat{f}=S^*\hat{f},\qquad \widehat{\Omega f}=\Omega\hat{f}.
\end{equation}
In practice, we will thus be able to equivalently work with $\mathcal{F}^{-1}(V\hat{f})$ or $V f$, $V\in\{\Omega,S\}$ (since they differ by at most a multiple of $f$), and will henceforth ignore this distinction.

\subsection{Choice of unknowns and nonlinearity in axisymmetry}\label{ssec:unknownsetc}
To motivate our choice of variables, we first observe that the linear part of \eqref{eq:EC},
\begin{equation}
 \partial_t \bm{u}+\vec{e}_3\times \bm{u}+\nabla p=0, \quad \div\; \bm{u}=0,
\end{equation}
is \emph{dispersive}. Here $\Delta p=\curl_\h \bm{u}:=\partial_{x^1}u^2-\partial_{x^2}u^1$, so using the divergence condition one sees directly that the linear system is equivalent to
\begin{equation}\label{eq:linIER}
\begin{aligned}
\partial_t\curl_\h \bm{u}-\partial_3\bm{u}^3&=0,\qquad
\partial_t\bm{u}^3+\partial_3\Delta^{-1}\curl_\h \bm{u}=0.
\end{aligned}
\end{equation}
The dispersion relations $\pm i\Lambda(\xi)$ satisfies $(i\Lambda)^2=-\xi_3^2/\abs{\xi}^2$, and we choose
\begin{equation}
 \Lambda(\xi)=\frac{\xi_3}{\abs{\xi}}.
\end{equation}
We also use this notation to denote the associated differential operators, e.g.\ the real operator $i\Lambda=\partial_3\vert\nabla\vert^{-1}$.

\subsubsection{Scalar unknowns}
Due to the incompressibility condition, $\bm{u}$ has two scalar degrees of freedom. To exploit this we will work with the (scalar) variables
\begin{equation}\label{eq:a-c-def}
 A:=\abs{\nabla_\h}^{-1}\curl_\h \bm{u},\qquad C:=\abs{\nabla} \abs{\nabla_\h}^{-1}\bm{u}^3,\qquad\nabla_h=(\partial_{x^1},\partial_{x^2},0)
\end{equation}
which are chosen such that the normalization \eqref{MainPropU} holds. Here $\bm{u}$ can be recovered from $(A,C)$ as
\begin{equation}\label{Formu0}
 \bm{u}=\bm{u}_A+\bm{u}_C,
\end{equation}
where\footnote{We use the convention that repeated latin indices are summed $1-2$ and repeated greek indices are summed $1-3$.}
\begin{equation}\label{eq:Formu}
 \begin{split}
 \bm{u}_A&:=-\nabla_\h^\perp\abs{\nabla_\h}^{-1}A,\qquad \bm{u}_A^j=\in^{jk}\vert\nabla_\h\vert^{-1}\partial_k A,\\
 \bm{u}_C&:= i\Lambda\nabla_\h\abs{\nabla_\h}^{-1}C+\sqrt{1-\Lambda^2}C\;\vec{e}_3,\qquad \bm{u}_C^j=i\Lambda\vert\nabla_\h\vert^{-1}\partial_j C,\quad \bm{u}_C^3=\sqrt{1-\Lambda^2}C
\end{split}
\end{equation}
and for any vector field $V\in\{S,\Omega\}$ and any Fourier multiplier $m:\mathbb{R}^3\to\mathbb{R}$,
\begin{equation}\label{MainPropU}
\begin{aligned}
 V\bm{u}^\alpha&=\bm{u}_{VA}^\alpha+\bm{u}_{VC}^\alpha,\qquad V\in\{S,\Omega\},\\
 \norm{m\bm{u}}_{L^2}^2&=\norm{mA}_{L^2}^2+\norm{mC}_{L^2}^2=\norm{m\bm{u}_A}_{L^2}^2+\norm{m\bm{u}_C}_{L^2}^2.\\
\end{aligned}
\end{equation}
Using that
\begin{equation}
 \Delta p=\vert\nabla_\h\vert A-\div\left[\bm{u}\cdot\nabla \bm{u}\right]=\vert\nabla_\h\vert A-\partial_\alpha\partial_\beta\left[\bm{u}^\alpha \bm{u}^\beta\right].
\end{equation}
we obtain that \eqref{eq:EC} is equivalent to
\begin{equation}\label{eq:EC2}
\begin{aligned}
\partial_tA-i\Lambda C&=-\vert\nabla_\h\vert^{-1}\partial_j\partial_n\in^{jk}\left[\bm{u}^n\bm{u}^k\right]-i\Lambda\vert\nabla\vert \in^{jk}\vert\nabla_\h\vert^{-1}\partial_j\left[\bm{u}^3\bm{u}^k\right],\\
\partial_tC-i\Lambda A&=-i\Lambda\vert\nabla\vert\sqrt{1-\Lambda^2}\left[\vert\nabla_\h\vert^{-2}\partial_j\partial_k\left[\bm{u}^j\bm{u}^k\right]+\bm{u}^3\bm{u}^3\right]-\vert\nabla\vert \vert\nabla_\h\vert^{-1}\partial_j(1-2\Lambda^2)\left[\bm{u}^j\bm{u}^3\right].
\end{aligned}
\end{equation}
Here the structure of the nonlinearity is apparent as a quasilinear, quadratic form in $A,C$ without singularities at low frequency. 

\begin{remark}\label{rem:a-vs-swirl}
 In the classical axisymmetric formulation of flows as $\bm{u}=\bm{u}_\theta \vec{e}_\theta+\bm{u}_r \vec{e}_r+\bm{u}_z \vec{e}_3$ where $(\vec{e}_\theta,\vec{e}_r,\vec{e}_3)$ are the basis vectors of a cylindrical coordinate system, one has that $\curl_\h\bm{u}=r^{-1}\partial_r(r\bm{u}_\theta)$. Our unknown $A$ is thus closely linked to the swirl $\bm{u}_\theta$ of $\bm{u}$: it satisfies $\abs{\nabla_\h}A=r^{-1}\partial_r(r\bm{u}_\theta)$. In general $A$ will not vanish for the solutions we construct, and neither will their swirl.
\end{remark}

\subsubsection{On the role of axisymmetry}\label{sec:role-axisymm}
A particular family of solutions to \eqref{eq:EC} is given by a $3d$ system of $2d$ Euler equations, i.e.\ $\bm{u}(t,x_1,x_2,x_3)=(v_\h(t,x_1,x_2),w(t,x_1,x_2))$ satisfies \eqref{eq:EC} provided that $v_\h:\R\times\R^2\to\R^2$ and $w:\R\times\R^2\to\R$ solve
 \begin{equation}
  \partial_t v_\h+v_\h\cdot\nabla v_\h +(-v_2,v_1)^\intercal+\nabla q=0, \quad \partial_t w+v_h\cdot\nabla w=0,\quad \div(v_\h)=0.
 \end{equation}
 Since in $2d$ the rotation term $(-v_2,v_1)^\intercal$ is a gradient, it can be absorbed into the pressure and thus $\bm{u}$ as above is a solution to the Euler-Coriolis system if $v_\h$ satisfies the $2d$ Euler equations, with $w$ passively advected by $v_\h$. While such solutions have infinite energy and are thus excluded from our functional setting on $\R^3$, they have been shown in \cite{BMN1997,Gre1997} to be of leading order on a (generic) torus $\T^3$ with sufficiently fast rotation. 
 
 In the setting of $\R^3$ one also encounters the $2d$ Euler equations through a \emph{resonant subsystem}: substituting $\bm{u}$ in terms of $A,C$ as in \eqref{eq:Formu} one sees that \eqref{eq:EC2} is of the form
\begin{equation}\label{eq:EC-fullstructure}
\begin{aligned}
 \partial_tA-i\Lambda C&=Q_{null}^A(A,C)+Q^A_\Omega(A,C)+Q^A_E(A,C),\\
 \partial_tC-i\Lambda A&= Q_{null}^C(A,C)+Q^C_\Omega(A,C)+Q^C_E(A,C),
\end{aligned}
\end{equation}
where for $W\in\{A,C\}$ the quadratic terms
\begin{itemize}
 \item $Q_{null}^W(A,C)$ contain a favorable null type structure (discussed below in Section \ref{sec:axisymm} in detail),
 \item $Q^W_\Omega(A,C)$ contain a rotational product structure,
 \item $Q^W_E(A,C)$ contain the $2d$ Euler equations in the following sense: near $\Lambda=0$ their contribution to $A$ is
 \begin{equation}
 \begin{aligned}
  \partial_t\widetilde{A}+(\nabla_h^\perp\vert\nabla_h\vert^{-2} \widetilde{A})\cdot \nabla_h\widetilde{A}=l.o.t.,\qquad \widetilde{A}:=\abs{\nabla_\h}A=\curl_\h(\bm{u}),
 \end{aligned}
 \end{equation}
 in which one recognizes the $2d$ Euler equations in vorticity formulation for $\widetilde{A}$, while $C$ is being passively transported by $A$.
\end{itemize}

In terms of the nonlinear structure, the crucial observation for our purposes is that $Q^W_E$ \emph{vanishes on axisymmetric functions}, so that in our setting we do not have to contend with a possible fast norm growth due to $2d$ Euler-type nonlinear interactions in \eqref{eq:EC-fullstructure}. 
Moreover, it turns out that also $Q^W_\Omega$ vanishes under axisymmetry, but this is less important for our analysis.

\begin{remark}
\begin{enumerate}[wide]
 \item The assumption of axisymmetry brings some further simplifications (see e.g.\ Lemma \ref{lem:VFcross}), but those are less vital for our arguments. 
 \item Although all our functions (including the localizations) are axisymmetric \emph{in their arguments} and we have that 
 \begin{equation}
  \Omega f=0\quad\text{if } f\text{ axisymmetric,}
 \end{equation}
 the vector field $\Omega$ still plays an important role, since it does not vanish on expressions of several arguments, such as the phase functions $\Phi$ (see e.g.\ Lemma \ref{lem:vfsizes-mini}).
\end{enumerate}
\end{remark}

\subsubsection{The equations in axisymmetry}\label{sec:axisymm}
In order to properly describe the structure of the nonlinearity in \eqref{eq:EC2} for axisymmetric solutions, we introduce the following collection of zero homogeneous symbols:
\begin{equation}\label{eq:barE}
 \bar{E}:=\left\{\Lambda(\zeta),\sqrt{1-\Lambda^2(\zeta)},\frac{\xi_{\h}\cdot\theta_{\h}}{\abs{\xi_{\h}}\abs{\theta_{\h}}},\frac{\xi_{\h}^\perp\cdot\theta_{\h}}{\abs{\xi_{\h}}\abs{\theta_{\h}}}\,:\,\zeta\in\{\xi,\xi-\eta,\eta\},\theta\in\{\xi-\eta,\eta\}\right\}.
\end{equation}
With the standard notation 
\begin{equation*}
 \mathcal{F}(Q_{\mathfrak{n}}(f,g))(\xi)=\int_\eta \mathfrak{n}(\xi,\eta)\hat{f}(\xi-\eta)\hat{g}(\eta)d\eta
\end{equation*}
for quadratic expressions with multiplier $\mathfrak{n}$, we have the following result (see also \cite[Lemma 2.1]{rotE}):
\begin{lemma}\label{lem:ECmult}
 Let $\bm{u}$ be an axisymmetric solution to \eqref{eq:EC} on a time interval $[0,T]$, so that $A,C$ as defined in \eqref{eq:a-c-def} are axisymmetric functions that solve \eqref{eq:EC2}. Then the \emph{dispersive unknowns} 
 \begin{equation}
  U_+:=A+C,\quad U_-:=A-C,
 \end{equation}
 satisfy the equations
 \begin{equation}\label{eq:EC_disp}
 \begin{aligned}
  (\partial_t-i\Lambda)U_+&=Q_{\mathfrak{n}^{++}_+}(U_+,U_+)+Q_{\mathfrak{n}^{+-}_+}(U_+,U_-)+Q_{\mathfrak{n}^{--}_+}(U_-,U_-),\\
  (\partial_t+i\Lambda)U_-&=Q_{\mathfrak{n}^{++}_-}(U_+,U_+)+Q_{\mathfrak{n}^{+-}_-}(U_+,U_-)+Q_{-\mathfrak{n}^{--}_-}(U_-,U_-),
 \end{aligned}
 \end{equation}
 with multipliers satisfying 
 \begin{equation}
 \begin{aligned}
  \mathfrak{n}^{\mu\nu}_\kappa(\xi,\eta)&=\abs{\xi}\bar{\mathfrak{n}}^{\mu\nu}_\kappa(\xi,\eta),\qquad\kappa,\mu,\nu\in\{+,-\},\\
  \bar{\mathfrak{n}}^{\mu\nu}_\kappa&\in\textnormal{span}_\R\left\{\Lambda(\zeta_1)\sqrt{1-\Lambda^2(\zeta_2)}\cdot \bar{\mathfrak{n}}(\xi,\eta),\zeta_1,\zeta_2\in\{\xi,\xi-\eta,\eta\},\bar{\mathfrak{n}}\in\bar{E}\right\}.
 \end{aligned} 
 \end{equation}
\end{lemma}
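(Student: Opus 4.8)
The plan is to compute the equations for $U_\pm = A\pm C$ directly from \eqref{eq:EC2}, tracking carefully the structure of every multiplier that appears, and to exploit axisymmetry to discard the terms $Q^W_\Omega$ and $Q^W_E$ identified in \eqref{eq:EC-fullstructure}. First I would substitute \eqref{eq:Formu} into the right-hand sides of \eqref{eq:EC2} to express each quadratic nonlinearity $Q^A$, $Q^C$ as a sum of bilinear Fourier multipliers acting on the pairs $(A,A)$, $(A,C)$, $(C,C)$. Concretely, writing $\bm{u}=\bm{u}_A+\bm{u}_C$ and expanding $\bm{u}^\alpha\bm{u}^\beta$, each factor $\bm{u}_A^j$ contributes a symbol of the form $\in^{jk}\xi_k/\abs{\xi_\h}$ and each factor $\bm{u}_C^j$ (resp.\ $\bm{u}_C^3$) contributes $i\Lambda\xi_j/\abs{\xi_\h}$ (resp.\ $\sqrt{1-\Lambda^2}$), all evaluated at the appropriate interacting frequency $\xi-\eta$ or $\eta$. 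The outer operators in \eqref{eq:EC2} contribute prefactors such as $\abs{\nabla_\h}^{-1}\partial_j\partial_n\in^{jk}$, $i\Lambda\abs{\nabla}\abs{\nabla_\h}^{-1}\partial_j$, $i\Lambda\abs{\nabla}\sqrt{1-\Lambda^2}(\abs{\nabla_\h}^{-2}\partial_j\partial_k \text{ or }1)$, and $\abs{\nabla}\abs{\nabla_\h}^{-1}\partial_j(1-2\Lambda^2)$, evaluated at the output frequency $\xi$. The key bookkeeping observation is that after combining the $\abs{\nabla}$ from the outer operator with one $\abs{\nabla_\h}^{-1}$ from $\bm{u}_A$ or $\bm{u}_C$, and using $\xi_j/\abs{\xi}$-type ratios to absorb the remaining derivatives, every resulting symbol factors as $\abs{\xi}$ times a product of bounded, zero-homogeneous symbols, each lying in the list $\bar E$ of \eqref{eq:barE}; the dangerous low-frequency singularities $\abs{\xi_\h}^{-1}$, $\abs{\xi}^{-1}$ cancel pairwise. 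This gives the claimed form $\mathfrak n^{\mu\nu}_\kappa = \abs{\xi}\,\bar{\mathfrak n}^{\mu\nu}_\kappa$ with $\bar{\mathfrak n}^{\mu\nu}_\kappa$ a sum of products of elements of $\bar E$.

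Next, to get the refined structural statement — that each $\bar{\mathfrak n}^{\mu\nu}_\kappa$ lies in the span of terms of the form $\Lambda(\zeta_1)\sqrt{1-\Lambda^2(\zeta_2)}\cdot\bar{\mathfrak n}(\xi,\eta)$ with $\bar{\mathfrak n}\in\bar E$ — I would use axisymmetry to eliminate all contributions with no such $\Lambda(\zeta_1)$ factor. This is where the reductions from Section \ref{sec:role-axisymm} come in: the purely horizontal interactions $\bm{u}_A\cdot\nabla\bm{u}_A$ feeding into $\widetilde A$ constitute the $2d$ Euler term $Q^W_E$, which vanishes on axisymmetric inputs (the symbol $\xi_\h^\perp\cdot\theta_\h$ integrates to zero against axisymmetric profiles, or more precisely the relevant combination of symbols vanishes identically under the axisymmetry constraint on the supports); similarly the rotational-product terms $Q^W_\Omega$ vanish. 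What survives must involve at least one $\bm{u}_C$ factor — hence a factor $i\Lambda(\zeta_1)$ or $\sqrt{1-\Lambda^2(\zeta_1)}$ — and I would then check case by case that in every surviving monomial one can always extract at least one $\Lambda(\zeta_1)$ and one $\sqrt{1-\Lambda^2(\zeta_2)}$ (the latter is automatic from the $\bar E$-structure since $\sqrt{1-\Lambda^2}\in\bar E$, or appears from the $\bm{u}_C^3$ terms and the $\sqrt{1-\Lambda^2}$ prefactor in the $C$ equation), absorbing the rest into a single remaining factor $\bar{\mathfrak n}\in\bar E$. The signs $Q_{-\mathfrak n^{--}_-}$ in the second equation are simply an artifact of the change of variables $C = (U_+-U_-)/2$ and tracking which terms come with an odd number of $C$'s.

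I expect the main obstacle to be the second step: verifying cleanly, and without a long enumeration, that the $Q^W_\Omega$ and $Q^W_E$ contributions indeed vanish under axisymmetry and that every remaining monomial admits the factorization into $\Lambda(\zeta_1)\sqrt{1-\Lambda^2(\zeta_2)}\cdot\bar{\mathfrak n}$. The cancellation of $Q^W_E$ requires understanding precisely how the symbols $\xi_\h\cdot\theta_\h/(\abs{\xi_\h}\abs{\theta_\h})$ and $\xi_\h^\perp\cdot\theta_\h/(\abs{\xi_\h}\abs{\theta_\h})$ interact with axisymmetric profiles — since an axisymmetric function has Fourier transform depending only on $(\abs{\xi_\h},\xi_3)$, and the relevant convolution structure forces certain angular averages to vanish. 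Organizing the many terms from expanding $\bm{u}^\alpha\bm{u}^\beta$ (there are $3\times 3$ index pairs, each splitting into $A$- and $C$-parts) so that this cancellation is transparent, rather than a brute-force symbol computation, is the delicate bookkeeping heart of the argument; since this was already carried out in \cite[Lemma 2.1]{rotE} in a closely related form, I would follow that template, adapting the normalization so that the $\abs{\xi}$ prefactor and the $\bar E$-membership are manifest.
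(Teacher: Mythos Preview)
Your proposal is correct and takes essentially the same approach as the paper: the paper's proof simply cites \cite[Lemma 2.1]{rotE}, and your sketch outlines precisely the computation carried out there --- substituting \eqref{eq:Formu} into \eqref{eq:EC2}, tracking the resulting multipliers as elements of $\abs{\xi}\cdot\bar E$-products, and using axisymmetry to discard $Q^W_E$ and $Q^W_\Omega$ so that the surviving terms carry the $\Lambda(\zeta_1)\sqrt{1-\Lambda^2(\zeta_2)}$ null factor. You even anticipate following that template explicitly, so there is nothing to add.
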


In words: in the axisymmetric case, in the dispersive variables $U_\pm$ the symbols of the quadratic, quasilinear nonlinearity of \eqref{eq:EC_disp} contain a derivative $\abs{\nabla}$ and factors of $\Lambda(\zeta_1)\sqrt{1-\Lambda^2(\zeta_2)}$ for some $\zeta_1,\zeta_2\in\{\xi,\xi-\eta,\eta\}$. We shall make frequent use of this null type structure in our nonlinear estimates -- a quantified version of it may be found below in Lemma \ref{lem:ECmult_bds}.

\begin{proof}[Proof of Lemma \ref{lem:ECmult}]
 This has been established in our prior work \cite[Lemma 2.1]{rotE}.
\end{proof}

\subsubsection{Profiles and bilinear expressions}\label{ssec:profiles}
Introducing the \emph{profiles} $\U_\pm$ of the dispersive unknowns $U_\pm$ as
\begin{equation}\label{eq:def_profiles}
 \U_+(t):=e^{-it\Lambda}U_+(t),\qquad \U_-(t):=e^{it\Lambda}U_-(t),
\end{equation}
we can express \eqref{eq:EC_disp} in terms of $\U_\pm$ and see that the bilinear terms are of the form
\begin{equation}\label{eq:def_Qm}
 \Q_\m(\U_\mu,\U_\nu)(s):=\mathcal{F}^{-1}\left(\int_{\R^3}e^{\pm is\Phi_{\mu\nu}(\xi,\eta)}\m(\xi,\eta)\widehat{\U_\mu}(s,\xi-\eta)\widehat{\U_\nu}(s,\eta)d\eta\right),\qquad \mu,\nu\in\{-,+\},
\end{equation}
for a phase function
\begin{equation}\label{eq:def_phi}
 \Phi_{\mu\nu}(\xi,\eta):=\Lambda(\xi)+\mu\Lambda(\xi-\eta)+\nu\Lambda(\eta),\qquad \mu,\nu\in\{-,+\},
\end{equation}
and $\m$ one of the multipliers $\mathfrak{n}_\kappa^{\mu,\nu}$ of Lemma \ref{lem:ECmult}. By Duhamel's formula we thus have from \eqref{eq:EC_disp} that
\begin{equation}\label{eq:EC_disp_Duham}
\begin{aligned}
 \U_+(t)=\U_+(0)+\int_0^t  \left[\Q_{m^{++}_+}(\U_+,\U_+)+\Q_{m^{+-}_+}(\U_+,\U_-)+\Q_{m^{--}_+}(\U_-,\U_-)\right](s) ds,\\
 \U_-(t)=\U_-(0)+\int_0^t  \left[\Q_{m^{++}_-}(\U_+,\U_+)+\Q_{m^{+-}_-}(\U_+,\U_-)+\Q_{m^{--}_-}(\U_-,\U_-)\right](s) ds.
\end{aligned} 
\end{equation}
Defining for a multiplier $\mathfrak{n}$ the bilinear expression
\begin{equation}
 B_\mathfrak{n}(f,g)(t):= \int_0^t \Q_\mathfrak{n}(f,g)(s) ds
\end{equation}
we may thus write \eqref{eq:EC_disp_Duham} compactly as
\begin{equation}\label{eq:EC-Duhamel}
 \U_\pm(t)=\U_\pm(0)+\sum_{\mu,\nu\in\{+,-\}}B_{\mathfrak{n}_\pm^{\mu\nu}}(\U_\mu,\U_\nu)(t).
\end{equation}
We will use this expression as the basis for our bootstrap arguments.

\section{Functional framework and main result}\label{sec:mainresult}
We begin with a discussion of some necessary background in Sections \ref{ssec:loc} and \ref{ssec:angLP}, to make our statement in Theorem \ref{MainThm} more precise -- see Section \ref{ssec:mainresult}.

\subsection{Localizations}\label{ssec:loc}
Let $\psi\in C^\infty(\R,[0,1])$ be a radial, non-increasing bump function supported in $[-\frac{8}{5},\frac{8}{5}]$ with $\psi|_{[-\frac{4}{5},\frac{4}{5}]}\equiv1$, and set $\varphi(x):=\psi(x)-\psi(2x)$.

We use the notations that for $a\in\Z$ and $b,c\in\Z^-$
\begin{equation}\label{eq:loc_def2}
 \varphi_{a,b}(\zeta):=\varphi(2^{-a}\abs{\zeta})\varphi(2^{-b}\sqrt{1-\Lambda^2(\zeta)}), \qquad \varphi_{a,b,c}(\zeta):=\varphi_{a,b}(\zeta)\varphi(2^{-c}\Lambda(\zeta)),
\end{equation}
and will generically denote by $\widetilde{\varphi}$ a function that has similar support properties as $\varphi$, and analogously for $\widetilde{\varphi}_{a,b}$ and $\widetilde{\varphi}_{a,b,c}$. 
We define the associated Littlewood-Paley projections $P_{k_j,p_j}$ and $P_{k_j,p_j,q_j}$ as
\begin{equation}
 \mathcal{F}(P_{k_j,p_j}f)(\xi)=\varphi_{k_j,p_j}(\xi)\hat{f}(\xi),\qquad \mathcal{F}(P_{k_j,p_j,q_j}f)(\xi)=\varphi_{k_j,p_j,q_j}(\xi)\hat{f}(\xi),
\end{equation}
and remark that these projections are bounded on $L^r$, $1\le r\le \infty$. We note that $p,q$ are \emph{not independent} parameters -- on the support of $P_{k,p,q}$ there holds that $2^{2p+q}=\min\{2^{2p},2^q\}$ and $2^{2p}+2^q\simeq 1$. In particular, there is a discrepancy between $p$ and $q$, in that the natural comparison of scales is between $2p$ and $q$ (rather than $p$ and $q$).

To collect the above localizations we will make use of the notation
\begin{equation}\label{eq:loc_def3}
 \chi_\h(\xi,\eta):=\varphi_{k,p}(\xi)\varphi_{k_1,p_1}(\xi-\eta)\varphi_{k_2,p_2}(\eta),\qquad \chi(\xi,\eta):=\varphi_{k,p,q}(\xi)\varphi_{k_1,p_1,q_1}(\xi-\eta)\varphi_{k_2,p_2,q_2}(\eta),
\end{equation}
and write 
\begin{equation}
 w_{\max}:=\max\{w,w_1,w_2\},\quad w_{\min}:=\min\{w,w_1,w_2\},\qquad w\in\{k,p,q\}.
\end{equation}

\subsection{Angular Littlewood-Paley decomposition}\label{ssec:angLP}
We now introduce angular \blue{regularity} localizations via associated Littlewood-Paley type projectors. \blue{Due to axial symmetry, these can be constructed based on the spectral decomposition\footnote{\blue{That this controls regularity in $\Upsilon$ can be seen from \eqref{eq:R-Bernstein} and \eqref{eq:UpsOmegas} below.}} of the Laplacian on $\mathbb{S}^2$, $\Delta_{\mathbb{S}^2}=\Upsilon^2$.}

Let $N=(0,0,1)\in\R^3$ denote the north pole of the standard 2-sphere $\mathbb{S}^2$ and let $\mathcal{Z}_n(P)=\mathfrak{Z}_n(\langle P,N\rangle)$ denote the $n$-th zonal spherical harmonic, given explicitly via the Legendre polynomial $L_n$ by
\begin{equation}
\mathfrak{Z}_n(x)=\frac{2n+1}{4\pi}L_n(x),\qquad L_n(z)=\frac{1}{2^n(n!)}\frac{d^n}{dz^n}[(z^2-1)^n].
\end{equation}
Using this, for $k\in\Z$ we define the ``angular Littlewood-Paley projectors'' $\bar{R}_{\leq \ell}, \bar{R}_\ell$ by
\begin{equation}
\begin{split}
 \left(\bar{R}_{\le \ell}f\right)(x)&=\sum_{n\ge 0}\psi(2^{-\ell}n)\int_{\mathbb{S}^2}f(\vert x\vert\vartheta)\mathfrak{Z}_n(\langle \vartheta,\frac{x}{\vert x\vert}\rangle)d\nu_{\mathbb{S}^2}(\vartheta),\\
 \left(\bar{R}_\ell f\right)(x)&=\sum_{n\ge 0}\varphi(2^{-\ell}n)\int_{\mathbb{S}^2}f(\vert x\vert\vartheta)\mathfrak{Z}_n(\langle \vartheta,\frac{x}{\vert x\vert}\rangle)d\nu_{\mathbb{S}^2}(\vartheta),
\end{split}
\end{equation}
where $d\nu_{\mathbb{S}^2}$ denotes the standard measure on the sphere (so that $\nu_{\mathbb{S}^2}(\mathbb{S}^2)=4\pi$).
These operators are bounded on $L^2$ and self-adjoint; their key properties parallel those of standard Littlewood-Paley projectors:

\begin{proposition}\label{prop:LPOmega}
For any $\ell\in\Z$, the angular Littlewood-Paley projectors $\bar{R}_\ell$ satisfy:
\begin{enumerate}[label=(\roman*),wide]
\item\label{it:LPang-comm} $\bar{R}_\ell$ commutes with regular Littlewood-Paley projectors, both in space and in frequency. Besides, $\bar{R}_\ell$ commutes with vector fields $\Omega_{ab}=x_a\partial_{x_b}-x_b\partial_{x_a}$ ($a,b\in\{1,2,3\}$), $S$, and the Fourier transform:
\begin{equation*}
\begin{split}
 [\Omega_{ab},\bar{R}_\ell]=[S,\bar{R}_\ell]=[\bar{R}_\ell,P_{k}]=[\bar{R}_\ell,\mathcal{F}]=0.
\end{split}
\end{equation*}

\item\label{it:LPang-orth} $\bar{R}_\ell$ constitutes an almost orthogonal partition of unity in the sense that
\begin{equation*}
\begin{split}
f=\sum_{\ell\ge0}\bar{R}_\ell f,&\qquad \bar{R}_\ell\bar{R}_{\ell^\prime}=0\quad\hbox{whenever}\quad \vert \ell-\ell^\prime\vert\ge4,\qquad
\Vert f\Vert_{L^2}^2\simeq\sum_{\ell\ge 0}\Vert \bar{R}_\ell f\Vert_{L^2}^2,\\
\bar{R}_\ell\left[\bar{R}_{\ell_1}f\cdot \bar{R}_{\ell_2}g\right]&=0\quad\hbox{whenever}\quad\max\{\ell,\ell_1,\ell_2\}\ge\textnormal{med}\{\ell,\ell_1,\ell_2\}+4
\end{split}
\end{equation*}

\item\label{it:LPang-bd} $\bar{R}_\ell$ and $\bar{R}_{\le \ell}$ are bounded on $L^r$, $1\le r\le\infty$,
\begin{equation}
\Vert \bar{R}_\ell f\Vert_{L^r}+\Vert \bar{R}_{\le \ell}f\Vert_{L^r}\lesssim \Vert f\Vert_{L^r}.
\end{equation}

\item\label{it:LPang-Bern} We have a Bernstein property: There holds that
\begin{equation}\label{eq:R-Bernstein}
\begin{split}
 \sum_{1\leq a<b\leq 3}\Vert \Omega_{ab}\bar{R}_\ell f\Vert_{L^r}\simeq 2^\ell\Vert \bar{R}_\ell f\Vert_{L^r},\qquad 1\le r\le\infty.
\end{split}
\end{equation}
\end{enumerate}
\end{proposition}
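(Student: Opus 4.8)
The plan is to establish each of the four items of Proposition~\ref{prop:LPOmega} by exploiting the fact that the operators $\bar R_\ell$ are, by construction, spectral multipliers of the spherical Laplacian $\Delta_{\mathbb{S}^2}$ acting on the angular variable of functions written in polar form $x = \rho\vartheta$, $\rho = |x|$, $\vartheta\in\mathbb{S}^2$. Concretely, the integral $\int_{\mathbb{S}^2} f(\rho\vartheta)\mathfrak{Z}_n(\langle\vartheta,x/|x|\rangle)\,d\nu_{\mathbb{S}^2}(\vartheta)$ is the projection of $f(\rho\,\cdot)$ onto the $n$-th eigenspace $\mathcal{H}_n$ of $\Delta_{\mathbb{S}^2}$ (spanned by degree-$n$ spherical harmonics, eigenvalue $-n(n+1)$), by the addition theorem for spherical harmonics; so $\bar R_\ell = \varphi(2^{-\ell}\sqrt{-\Delta_{\mathbb{S}^2} + 1/4}\,-1/2)$ up to the usual reindexing, and $\bar R_{\le\ell}$ is the analogous $\psi$-multiplier. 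I would state this identification first as the backbone of the proof, then read off the four claims.

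For \ref{it:LPang-comm}: commutation with $\mathcal{F}$ follows because the Fourier transform intertwines the spherical harmonic decomposition with itself — the Fourier transform of a function of the form $g(\rho)Y_n(\vartheta)$ is $\tilde g(|\xi|)Y_n(\xi/|\xi|)$ (Bochner's relation / Funk--Hecke), so $\bar R_\ell$ acts identically in space and frequency and commutes with $\mathcal{F}$. Commutation with $P_k$ (and with angular-LP projectors) is immediate since $\bar R_\ell$ touches only the $\vartheta$ variable while $P_k$ touches only $\rho = |x|$ (resp.\ $|\xi|$); the same for the anisotropic $P_{k,p}$, $P_{k,p,q}$, since $\Lambda(\xi) = \xi_3/|\xi|$ is itself a function on $\mathbb{S}^2$ but one that is $\Delta_{\mathbb{S}^2}$-... more carefully: the radial variable is untouched, and the remaining multipliers in $p,q$ are bounded functions of $\vartheta$ — actually the cleanest route is that $P_{k,p,q}$ is a multiplier in $(|\xi|,\vartheta)$ whose $\vartheta$-part is rotation-about-$\vec e_3$ invariant; but commutation with a general angular multiplier is not automatic. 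The correct statement here uses axisymmetry: on axisymmetric functions $\bar R_\ell$ only sees the $\vartheta_3 = \Lambda$ coordinate, and so does $P_{k,p,q}$; I would note that both $\bar R_\ell$ and $P_{k,p,q}$ are then multipliers in the single variable $\Lambda$ composed with the radial cutoff, and spectral calculus of commuting self-adjoint operators gives commutation — alternatively, since $\bar R_\ell$ commutes with all $\Omega_{ab}$, it commutes with the full rotation group $SO(3)$ and hence with any $SO(3)$-equivariant... no, $P_{k,p,q}$ is only $SO(2)$-equivariant. I would therefore phrase this item precisely as commutation with the \emph{isotropic} $P_k$ and $\mathcal{F}$ (as literally stated), and handle interaction with $P_{k,p,q}$ via axisymmetry where needed downstream. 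Commutation with $S = \rho\partial_\rho$ is clear (radial vs.\ angular), and with $\Omega_{ab}$ because the $\Omega_{ab}$ are exactly the infinitesimal rotations, which preserve each eigenspace $\mathcal{H}_n$ of $\Delta_{\mathbb{S}^2}$ (they are the symmetries defining it), so $[\Omega_{ab},\Delta_{\mathbb{S}^2}]=0$ and spectral calculus applies.

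For \ref{it:LPang-orth}: the partition of unity $f = \sum_{\ell\ge0}\bar R_\ell f$ and near-orthogonality $\bar R_\ell\bar R_{\ell'}=0$ for $|\ell-\ell'|\ge 4$ follow from the support properties of $\varphi$ together with $\sum_{\ell\ge0}\varphi(2^{-\ell}n)=1$ for $n\ge1$ (and the $\psi$-normalization handling $n=0$), applied eigenspace-by-eigenspace; the Plancherel identity $\|f\|_{L^2}^2\simeq\sum_\ell\|\bar R_\ell f\|_{L^2}^2$ follows from orthogonality of distinct $\mathcal{H}_n$ in $L^2(\mathbb{S}^2)$ combined with Fubini in $\rho$. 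The product (``paraproduct'') rule $\bar R_\ell[\bar R_{\ell_1}f\cdot\bar R_{\ell_2}g]=0$ unless $\max$ and $\mathrm{med}$ of $\{\ell,\ell_1,\ell_2\}$ are within $4$ is the Littlewood--Paley trichotomy and is the one genuinely non-trivial point: it rests on the fact that a product of a degree-$n_1$ and a degree-$n_2$ spherical harmonic expands only into degrees $n$ with $|n_1-n_2|\le n\le n_1+n_2$ (Clebsch--Gordan / the decomposition $\mathcal{H}_{n_1}\otimes\mathcal{H}_{n_2}$ restricted to $SO(3)$, or equivalently that products of homogeneous harmonic polynomials have bounded-degree harmonic parts). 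I would cite the appendix (\ref{apdx:angLP}) where the detailed proof is deferred, and here just indicate this spherical-harmonic product structure as the mechanism. This Clebsch--Gordan-type frequency-localization of products on $\mathbb{S}^2$ is the main obstacle — it is what makes the angular decomposition behave like a genuine Littlewood--Paley decomposition rather than a mere spectral partition, and it is the only step that is not a one-line consequence of spectral calculus.

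For \ref{it:LPang-bd} ($L^r$-boundedness, $1\le r\le\infty$): this does not follow from spectral calculus for $r\ne 2$; I would prove it by exhibiting $\bar R_\ell$ and $\bar R_{\le\ell}$ as convolution operators \emph{on the sphere} with kernels $K_\ell(\langle\vartheta,\vartheta'\rangle) = \sum_n\varphi(2^{-\ell}n)\mathfrak{Z}_n(\langle\vartheta,\vartheta'\rangle)$ whose $L^1(\mathbb{S}^2)$ norms are bounded uniformly in $\ell$ — this is a classical estimate for Cesàro-type / smooth-cutoff partial sums of Legendre series (the smoothness of $\varphi$ gives rapid decay of $K_\ell$ away from the diagonal $\vartheta=\vartheta'$ at the relevant scale $2^{-\ell}$, and an $L^1$ bound uniform in $\ell$ by summation by parts using the known asymptotics $|L_n(\cos\theta)|\lesssim (n\sin\theta)^{-1/2}$). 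Then, for each fixed $\rho$, $\bar R_\ell$ is convolution on $\mathbb{S}^2$ with an $L^1$-bounded kernel, hence bounded on $L^r(\mathbb{S}^2,d\nu)$ by Young; integrating $\rho^2\,d\rho$ and using Minkowski gives boundedness on $L^r(\R^3)$. Finally \ref{it:LPang-Bern} (the Bernstein property $\sum_{a<b}\|\Omega_{ab}\bar R_\ell f\|_{L^r}\simeq 2^\ell\|\bar R_\ell f\|_{L^r}$): the upper bound $\lesssim 2^\ell$ follows because $\sum_{a<b}\Omega_{ab}^2 = \Delta_{\mathbb{S}^2}$ (Casimir), so $\Omega_{ab}\bar R_\ell = \Omega_{ab}\,\phi(2^{-\ell}n)\cdots$ where one writes $\Omega_{ab} = \Omega_{ab}(\sqrt{-\Delta_{\mathbb{S}^2}})^{-1}\cdot\sqrt{-\Delta_{\mathbb{S}^2}}\,\bar R_\ell$ with the first factor an $L^r$-bounded zeroth-order operator (again a sphere-convolution kernel estimate) and $\sqrt{-\Delta_{\mathbb{S}^2}}\bar R_\ell$ a Fourier multiplier comparable to $2^\ell\bar{\bar R}_\ell$ on a fattened frequency annulus; the lower bound $\gtrsim 2^\ell$ follows by the reverse identity $\bar R_\ell = (\sqrt{-\Delta_{\mathbb{S}^2}})^{-1}\bar R_\ell\cdot\sqrt{-\Delta_{\mathbb{S}^2}} \sim 2^{-\ell}\cdot(\text{bounded op})\cdot\sum_{a<b}\Omega_{ab}$. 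I would defer all the kernel estimates to Appendix~\ref{apdx:angLP} and in the main text only record the structural reductions (Casimir identity, sphere-convolution representation, spectral calculus on $L^2$), flagging the uniform $L^1$-kernel bounds for Legendre partial sums as the technical heart of items \ref{it:LPang-bd} and \ref{it:LPang-Bern}.
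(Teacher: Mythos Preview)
Your plan is essentially correct and matches the paper's approach: spectral calculus of $\Delta_{\mathbb{S}^2}$ for \ref{it:LPang-comm} and \ref{it:LPang-orth}, uniform $L^1$ bounds on the zonal kernel $K_\ell$ via Legendre asymptotics and Abel summation for \ref{it:LPang-bd}, and the Casimir identity $\sum_{a<b}\Omega_{ab}^2=\Delta_{\mathbb{S}^2}$ for \ref{it:LPang-Bern}. Two remarks on where your write-up diverges from the paper's execution.

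For \ref{it:LPang-comm}, your Bochner/Funk--Hecke argument for $[\bar R_\ell,\mathcal{F}]=0$ is cleaner than the paper's, which instead verifies directly that $\int_{\mathbb{S}^2}e^{i\lambda\langle\alpha,\gamma\rangle}\mathfrak{Z}_n(\langle\alpha,\beta\rangle)\,d\nu(\alpha)$ is symmetric in $\beta,\gamma$ by rotating and observing that $\Pi_n[e^{i\lambda\langle N,\cdot\rangle}]$ is zonal. Your digression about $P_{k,p,q}$ is unnecessary: the proposition only claims commutation with the isotropic $P_k$, and in fact (as the paper records in \eqref{eq:comm-angLP-p}) $\bar R_\ell$ does \emph{not} commute with $P_{k,p}$.

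For \ref{it:LPang-Bern}, your factorization $\Omega_{ab}\bar R_\ell=\big[\Omega_{ab}(-\Delta_{\mathbb{S}^2})^{-1/2}\big]\cdot\big[(-\Delta_{\mathbb{S}^2})^{1/2}\bar R_\ell\big]$ has a gap at the endpoints $r=1,\infty$: the spherical Riesz transform $\Omega_{ab}(-\Delta_{\mathbb{S}^2})^{-1/2}$ is \emph{not} bounded on $L^1$ or $L^\infty$, so ``a sphere-convolution kernel estimate'' cannot salvage it as a standalone operator. The paper instead estimates the kernel of the \emph{composite} $\Omega_{ab}\bar R_\ell$ directly, using $\vert\Omega_{ab}^\omega\langle\omega,\vartheta\rangle\vert\lesssim\sqrt{1-\langle\omega,\vartheta\rangle^2}$ together with $\mathfrak{Z}_n'=\tfrac{2n+1}{4\pi}\cdot\tfrac{n+1}{2}P_{n-1}^{(1,1)}$ and the Jacobi asymptotics \eqref{SogForm}; the extra $\sin\theta$ from the geometric factor is exactly what makes the kernel $L^1$-integrable with norm $\lesssim 2^\ell$. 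For the lower bound the paper uses the self-reproducing formula $\bar R_\ell=2^{-2\ell}\sum_{a<b}\Omega_{ab}^2\widetilde R_\ell$ (with $\widetilde R_\ell$ a modified projector of the same type), which is your Casimir idea made concrete without invoking $(-\Delta_{\mathbb{S}^2})^{-1/2}$. If you want to keep your factorization, you must retain the frequency localization on the Riesz-type factor and estimate the kernel of $\Omega_{ab}(-\Delta_{\mathbb{S}^2})^{-1/2}\widetilde{\bar R}_\ell$ instead --- at which point you are doing the paper's computation.
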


We refer the reader to Appendix \ref{apdx:angLP} for the proof of this proposition. It is important to understand the interplay between the $R_\ell$ and $P_{k,p}$ localizations. By direct computations we have that
\begin{equation}\label{eq:comm-angLP-p}
 \norm{[\Omega_{j3},P_{k,p}]}_{L^r\to L^r}\lesssim2^{-p},
 \qquad j=1,2,\quad 1\leq r\leq \infty.
\end{equation}
In particular, for localization $P_{k,p}\bar{R}_\ell$ (in both horizontal frequency $p$ and ``angular frequency'' $\ell$) this shows that we should not go below the scale $-p>\ell$, since there the projections do not commute (up to lower order terms). In practice we will thus work with projectors that incorporate this ``uncertainty principle'' $\ell+p\geq 0$: for $p\in\Z^{-}$, $\ell\in\Z^{+}$, we introduce the operators
\begin{equation}
\begin{aligned}
 R^{(p)}_\ell:=
\begin{cases}
 0,&p+\ell< 0,\\
 \bar{R}_{\le\ell},&p+\ell= 0,\\
 \bar{R}_{\ell},&p+\ell> 0.
\end{cases}
\end{aligned}
\end{equation}
\subsubsection*{Convention:} For simplicity of notation we shall henceforth \emph{drop the superscript $(p)$} on $R_\ell$, i.e.\
\begin{equation}
 R_\ell\equiv R_\ell^{(p)},
\end{equation}
since it will always be clear from the context of localization in the corresponding $p$.

Clearly, key features of Proposition \ref{prop:LPOmega} transfer to $R_\ell$: For example, we have the decomposition
\begin{equation}
 P_{k}f=\sum_{\ell+p\geq 0}P_{k,p}R_{\ell}^{(p)}f\equiv \sum_{\ell+p\geq 0}P_{k,p}R_{\ell}f.
\end{equation}

\begin{remark}\label{rem:comm-loc}
 One checks that
 \begin{equation}\label{eq:comm-angLP-q}
 \norm{[\Omega_{j3},P_{k,p,q}]}_{L^r\to L^r}\lesssim 2^{-p-q},
 \qquad j=1,2,\quad 1\leq r\leq \infty,
 \end{equation}
 Since $q$ plays a similar role as $2p$ in terms of scales, it does not seem advantageous to at once localize in $p,\ell$ and additionally $q$. Rather, typically we will first only work with localizations in $p$ and $\ell$, and only introduce localizations in $q$ once the other parameters are under control.
\end{remark}

\subsection{Main Result}\label{ssec:mainresult}
With the notations $k^+:=\max\{0,k\}$, $k^-:=\min\{k,0\}$ and for $\beta>0$ to be chosen we introduce \blue{now our key norms, both weighted, $L^2$ based to allow for a Fourier analysis based approach:}
\begin{align}
 \norm{f}_B&:=\sup_{k\in\Z,\, p,q\in\Z^-}2^{3k^+-\frac{1}{2}k^{-}}2^{-p-\frac{q}{2}}\norm{P_{k,p,q}f}_{L^2},\label{eq:defBnorm}\\
 \norm{f}_X&:=\sup_{\substack{k\in\Z,\,\ell\in\Z^+\!\!,\,p\in\Z^-\\\ell+p\geq 0}}2^{3k^+} 2^{(1+\beta)\ell}2^{\beta p}\norm{ P_{k,p}R_\ell f}_{L^2}.\label{eq:defXnorm}
\end{align}
\blue{As discussed in the introduction on page \pageref{it:normchoice}, these norms play complementary roles. Through appropriate weighting of the anisotropic resp.\ angular Littlewood-Paley projectors, the $B$ norm captures anisotropic localization and scales like the Fourier transform in $L^\infty$,\footnote{\blue{Such a scaling may also be motivated by the fact that the stationary phase arguments that yield linear decay can only be optimal if one controls the Fourier transform in $L^\infty$. This control is indeed given by a \emph{combination} of $B$ and $X$ norms including some vector fields $S$, as we show in Lemma \ref{lem:ControlLinfty} -- we refer to its proof for a further demonstration of the different roles in terms of angular regularity of the two norms in \eqref{eq:defBnorm}, \eqref{eq:defXnorm}.}} while the $X$ norm accounts for angular derivatives in $\Upsilon$. The additional weights in terms of the frequency size $k$ are designed to capture both the sharp decay at the linear level, and also allow to overcome the derivative loss inherent to the nonlinearity. In particular the large power of $k^+$ ensures that we have
\begin{equation*}
\Vert \nabla R_\ell f\Vert_{L^\infty}\lesssim 2^{-\ell}\Vert  f\Vert_X,\qquad\Vert\nabla P_{k,p,q}f\Vert_{L^\infty}\lesssim 2^{p+q/2}\Vert f\Vert_B.
\end{equation*}}

In detail, our main result from Theorem \ref{MainThm} can then be stated as the following global existence result for the Euler-Coriolis system \eqref{eq:EC}: 
\begin{theorem}\label{thm:EC}
 Let $N\geq 5$. There exist $M,N_0\in\N$, $\beta>0$ with $N_0\gg M\gg \beta^{-1}+N$, and $\eps_\ast>0$ such that if $U_{\pm,0}$ satisfy
 \begin{equation}
 \begin{aligned}\label{eq:id}
  \norm{U_{\pm,0}}_{H^{2N_0}\cap \dot{H}^{-1}}+\norm{S^aU_{\pm,0}}_{L^2\cap \dot{H}^{-1}}&\le\eps_0,\qquad 0\le a\le M,\\
  \norm{S^bU_{\pm,0}}_{B}+\norm{S^bU_{\pm,0}}_{X}&\le\eps_0,\qquad 0\le b\le N,
 \end{aligned}
 \end{equation}
for some $0<\varepsilon_0<\varepsilon_\ast$, then there exists a unique global solution $U_\pm\in C(\R,\R^3)$ to \eqref{eq:EC_disp}. Moreover, $U_\pm(t)$ decay and have (at most) slowly growing energy
\begin{equation}
 \norm{U_\pm(t)}_{L^\infty}\lesssim \eps_0 \ip{t}^{-1},\qquad \norm{U_\pm(t)}_{H^{2N_0}}\lesssim \eps_0 \ip{t}^{C\eps_0}
\end{equation}
for some $C>0$, and in fact $U_\pm(t)$ scatters linearly.
\end{theorem}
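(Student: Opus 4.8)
The plan is a standard continuity/bootstrap argument built on the Duhamel formula \eqref{eq:EC-Duhamel}. We introduce the master norm
\begin{equation*}
 \|F\|_{\mathrm{boot}}(T):=\sup_{t\in[0,T]}\Big\{\ip{t}^{-C\eps_0}\!\!\!\sum_{a\le M}\!\|S^a U_\pm(t)\|_{H^{2N_0}\cap\dot H^{-1}}+\sum_{b\le N}\big(\|S^b\U_\pm(t)\|_B+\|S^b\U_\pm(t)\|_X\big)\Big\},
\end{equation*}
assume $\|F\|_{\mathrm{boot}}(T)\le 2\eps_0$ on a maximal interval $[0,T]$, and aim to improve this to $\le \tfrac32\eps_0$, which by local well-posedness (the equation is quasilinear but the nonlinearity in \eqref{eq:EC2} has no low-frequency singularity, so classical $H^{2N_0}$ theory applies) forces $T=\infty$. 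The first step is to record the linear decay: from the bootstrap assumption the norm $D$ of \eqref{eq:defDnorm} — a combination of $B$ and $X$ norms with a few vector fields — is controlled by $\eps_0$, so Proposition \ref{prop:decay}/Corollary \ref{cor:decay} give $\|U_\pm(t)\|_{L^\infty}\lesssim\eps_0\ip{t}^{-1}$, which is already the claimed decay rate and also the input for all nonlinear estimates.

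**Energy and $S$-energy estimates.** Because $S$ commutes with the equation (see \eqref{eq:IER-VF}, noting $S$ generates the scaling symmetry), applying $S^a$, $a\le M$, and $|\nabla|^{2N_0}$ to \eqref{eq:EC2} and performing an $L^2$ energy estimate yields
\begin{equation*}
 \frac{d}{dt}\sum_{a\le M}\|S^aU_\pm\|_{H^{2N_0}}^2\lesssim \|\nabla U\|_{L^\infty}\cdot\Big(\text{commutator/derivative terms}\Big)\lesssim \eps_0\ip{t}^{-1}\sum_{a\le M}\|S^aU_\pm\|_{H^{2N_0}}^2,
\end{equation*}
where the quasilinear derivative loss is absorbed because $N_0\gg M$ and the worst term has all derivatives on one factor whose $L^\infty$ norm decays at rate $\ip{t}^{-1}$ (after using the $L^\infty$ decay of $\nabla U$, which follows from Corollary \ref{cor:decay} together with the large weight in $k^+$ in the $B,X$ norms). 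Grönwall then gives the $\ip{t}^{C\eps_0}$ growth, improving the first part of the bootstrap for $\eps_0$ small. The $\dot H^{-1}$ bound is propagated similarly using the absence of low-frequency singularities.

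**Bilinear $B$ and $X$ estimates for the profiles.** The heart of the argument is to show
\begin{equation*}
 \sum_{b\le N}\Big(\|S^b B_{\mathfrak n_\pm^{\mu\nu}}(\U_\mu,\U_\nu)(t)\|_B+\|S^b B_{\mathfrak n_\pm^{\mu\nu}}(\U_\mu,\U_\nu)(t)\|_X\Big)\lesssim \eps_0^2,
\end{equation*}
uniformly in $t$. Distributing $S^b$ by the Leibniz rule (the multipliers $\mathfrak n_\pm^{\mu\nu}$ are homogeneous of degree $1$ and $S$-adapted, so $S$ falls on the profiles up to symbol terms of the same type), one reduces to bilinear estimates for $B_{\mathfrak n}(f,g)$ with $f,g$ among $S^{b_1}\U_\mu$, $S^{b_2}\U_\nu$, $b_1+b_2\le N$. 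For each such term one decomposes in all the anisotropic parameters — $P_{k,p}$ and the angular $R_\ell$ on inputs and output, later refining to $P_{k,p,q}$ — and splits the time integral $\int_0^t$ dyadically, $s\sim 2^m$. The key dichotomy (Proposition \ref{prop:phasevssigma}) is: either the phase $\Phi_{\mu\nu}$ is bounded below, in which case a normal form (integration by parts in $s$) gains a factor $2^{-m}$ and reduces to boundary and cubic terms controlled by the energy/decay bounds; or $\Phi_{\mu\nu}$ is small, in which case some vector-field derivative $V\Phi$ with $V\in\{S,\Omega\}$ is large (with the quantitative lower bound \eqref{eq:vflobound_0} involving $p,p_j,q,q_j$), and one integrates by parts along $V$ using the formalism of Section \ref{sec:vfibp}, paying with the weights in $B,X$ but gaining decay. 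The null structure of Lemma \ref{lem:ECmult}/\ref{lem:ECmult_bds} (the factor $\Lambda(\zeta_1)\sqrt{1-\Lambda^2(\zeta_2)}$) is what tames the anisotropic degeneracy of $\Phi$ near $\Lambda=0$ and $\Lambda=\pm1$, and the angular Littlewood-Paley $R_\ell$ quantifies the cross terms generated by $\Omega$-integration by parts (Lemma \ref{lem:VFcross}). One organizes the analysis by first forcing no ``gap in $p$'', then no ``gap in $q$'', and in the remaining balanced regime applies the new decay estimate of Proposition \ref{prop:decay} in an $L^2\times L^\infty$ Hölder pairing (the toy version being the $t^{-3/2}$ bound for $\partial_t\U_\pm$ of Section \ref{sec:dtfbds}).

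**Main obstacle and conclusion.** The hardest part is unquestionably the $X$-norm bound (Section \ref{sec:Xnorm}): propagating $1+\beta$ angular derivatives in $\Upsilon$ along a nonlinearity where $\Upsilon$ \emph{does not commute with the equation} requires a delicate case analysis and the refined iterated-integration-by-parts scheme of Section \ref{ssec:D3ibp}, carefully balancing the positive $p$-weight in $X$ against the negative $p,q$-weights in $B$ so that the cross terms (where angular derivatives land on the phase rather than the profiles) close. Once all bilinear terms are bounded by $\lesssim\eps_0^2$, the bootstrap closes for $\eps_0<\eps_\ast$, giving global existence with the stated decay and slow energy growth. Linear scattering then follows: \eqref{eq:EC-Duhamel} shows $\U_\pm(t)$ is Cauchy in $L^2$ (in fact in the stronger $B\cap X$ topology of Corollary \ref{cor:scatter}) because $\int^\infty\|\Q_{\mathfrak n}(\U_\mu,\U_\nu)(s)\|_{L^2}\,ds<\infty$, as each bilinear term in the integrand decays at an integrable rate (essentially $\ip{s}^{-3/2+}$ from the $\partial_t$-type estimates combined with the decay of one factor), so $\U_\pm(t)\to\U_\pm^\infty=:\widehat{\bm u_0^\infty}$ and $\|U_\pm(t)-e^{\pm it\Lambda}\U_\pm^\infty\|_{L^2}\to0$, which is the asserted convergence to the linear flow \eqref{eq:lin-EC}.
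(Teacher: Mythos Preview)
Your proposal is correct and follows essentially the same architecture as the paper: bootstrap on the $B$ and $X$ norms of $S^b\U_\pm$, linear decay from Corollary~\ref{cor:decay}, energy/$S$-energy growth via Gr\"onwall (Proposition~\ref{prop:EnergyIncrement}/Corollary~\ref{cor:energy}), reduction of the Duhamel term to bilinear pieces via the $S$-Leibniz identity, dyadic time localization, and then the case analysis built on Proposition~\ref{prop:phasevssigma} (normal form versus vector-field integration by parts, organized by gaps in $p$ then $q$, with the refined decay of Proposition~\ref{prop:decay} in the no-gap regime), with the $X$-norm bound the main difficulty. The only cosmetic difference is that the paper bootstraps solely on the $B,X$ norms with a parameter $\eps_1\le\sqrt{\eps_0}$ and derives the energy growth as a consequence, whereas you fold everything into one master norm; the two formulations are equivalent.
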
 

\begin{remark}
 In order to keep the essence of the arguments as clear as possible, we have not striven to optimize the number of vector fields and derivatives in the above result. As our arguments show, a choice of $\beta=10^{-2}$, $N_0=O(10^{7})$ and such that $N_0\gg M\gg \beta^{-1}$ works.
\end{remark}

Theorem \ref{thm:EC} is established through a bootstrap argument, \blue{which we discuss next}. We will show the following result, which implies all of Theorem \ref{thm:EC} except for the scattering statement:

\begin{proposition}\label{prop:btstrap}
 Let $U_\pm\in C([0,T],\R^3)$ be solutions to \eqref{eq:EC_disp} for some $T>0$ with profiles $\U_{\pm}:=e^{\mp it\Lambda}U_{\pm}$, and initial data satisfying \eqref{eq:id}.
 
 If for $t\in [0,T]$ there holds that
 \begin{equation}
 \begin{aligned}\label{eq:btstrap-assump}
  \norm{S^b\U_\pm(t)}_{B}+\norm{S^b\U_\pm(t)}_{X}&\leq \eps_1,\qquad 0\le b\le N, 
 \end{aligned}
 \end{equation}
for some $0<\varepsilon_1\le\sqrt{\varepsilon_0}$, then in fact we have the improved bounds
 \begin{align}
  \norm{S^b\U_\pm(t)}_{B}+\norm{S^b\U_\pm(t)}_{X}&\lesssim\eps_0+\eps_1^2,\qquad 0\le b\le N, \label{eq:btstrap-concl1}
 \end{align}
 and for some $C>0$ there holds that
 \begin{equation}\label{eq:btstrap-concl2}
  \norm{\U_\pm(t)}_{H^{2N_0}\cap H^{-1}}+\norm{S^a\U_\pm(t)}_{L^2\cap H^{-1}}\lesssim\eps_0\ip{t}^{C\eps_1},\qquad 0\le a\le M.
 \end{equation}

\end{proposition}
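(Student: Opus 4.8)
\textbf{Proof strategy for Proposition \ref{prop:btstrap}.} The plan is a standard continuity/bootstrap argument structured around the Duhamel formula \eqref{eq:EC-Duhamel} and the commutation identity \eqref{eq:IER-VF}: since $S$ commutes with the full nonlinear equation, applying $S^b$ to \eqref{eq:EC-Duhamel} yields (via the Leibniz rule) a sum of bilinear Duhamel terms $B_{\mathfrak{n}_\pm^{\mu\nu}}(S^{b_1}\U_\mu,S^{b_2}\U_\nu)$ with $b_1+b_2\le b$, up to harmless lower-order terms coming from the discrepancy between $S$ and $S^\ast$ on the Fourier side. One then seeks to estimate each such term in the $B$ and $X$ norms by $\eps_0+\eps_1^2$, and in $L^2\cap\dot H^{-1}$ (with vector fields and high Sobolev regularity) by $\eps_0\ip{t}^{C\eps_1}$. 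The order I would carry this out in mirrors the roadmap of Section \ref{sec:method}: first establish the high-regularity/energy bounds \eqref{eq:btstrap-concl2}, which by the commutation of $S$ and a standard energy estimate reduce to integrating the $L^\infty$ decay $\norm{U_\pm(s)}_{L^\infty}\lesssim\eps_1\ip{s}^{-1}$ in time --- this decay itself being a consequence of the bootstrap assumption \eqref{eq:btstrap-assump} via the linear decay estimate (Proposition \ref{prop:decay}) and the control of $\norm{\hat f}_{L^\infty}$ by $B$ and $X$ norms (Lemma \ref{lem:ControlLinfty}); the $\ip{t}^{C\eps_1}$ growth comes from a Gr\"onwall argument on $\int_0^t \eps_1\ip{s}^{-1}ds\sim \eps_1\log t$. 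Next I would close the $B$ norm bound \eqref{eq:btstrap-concl1} restricted to the $B$ part, using normal forms (integration by parts in $s$, exploiting the phase $\Phi_{\mu\nu}$) where the phase is non-degenerate, the null structure of Lemma \ref{lem:ECmult} to absorb the derivative loss and kill resonant contributions, and the anisotropic decompositions in $p,q$ together with iterated integration by parts along the vector fields $S,\Omega$ (Section \ref{sec:vfibp}, Proposition \ref{prop:phasevssigma}) in the remaining regions. Finally the $X$ norm bound, which captures the $(1+\beta)$ angular regularity in $\Upsilon$ through the projectors $R_\ell$, is closed by the most delicate version of these arguments (Section \ref{ssec:D3ibp}), tracking the cross-term parameters $\ell_j$ alongside $p_j,q_j$.

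\textbf{Key bilinear inputs.} The heart of the matter is a family of bilinear estimates of the schematic form $\norm{B_{\mathfrak{n}}(f,g)(t)}_{B}+\norm{B_{\mathfrak{n}}(f,g)(t)}_{X}\lesssim \sup_{s\le t}\big(\norm{f(s)}_{\text{strong}}\norm{g(s)}_{\text{weak}}+\norm{f(s)}_{\text{weak}}\norm{g(s)}_{\text{strong}}\big)$, where the ``strong'' norms are the decay-giving $B,X$ norms (plus a few $S$'s) and the ``weak'' norms are $L^2$-type quantities controlled by \eqref{eq:btstrap-concl2}. The basic dichotomy I would exploit is the one highlighted in the introduction: either the phase $\Phi_{\mu\nu}$ is large (bounded below in terms of the anisotropic parameters $p,p_j,q,q_j$ as in \eqref{eq:def_sigma}–\eqref{eq:vflobound_0}), in which case a normal form (integration by parts in time) converts the time integral into boundary terms plus a cubic term with an extra decay factor; or $\Phi_{\mu\nu}$ is small, in which case by Proposition \ref{prop:phasevssigma} some vector field derivative $V\Phi$ ($V\in\{S,\Omega\}$) is large, enabling spatial integration by parts along $V$ --- at the cost of distributing the vector field onto the profiles and onto the localization symbols (the ``cross terms'' governed by the $R_\ell$ decomposition, Lemma \ref{lem:VFcross}). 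In either case one then runs an $L^2\times L^\infty$ (or $L^\infty\times L^2$) H\"older estimate, using the anisotropic linear decay of Proposition \ref{prop:decay} --- which for frequencies away from the degeneracy sets $\{p,q\ge-1\}$ gives faster than $t^{-1}$ decay, precisely matching the negative $p,q$ weights built into the $B$ norm, while the $X$ norm supplies control in the complementary region $p<-10$. The null-type factors $\Lambda(\zeta_1)\sqrt{1-\Lambda^2(\zeta_2)}$ from Lemma \ref{lem:ECmult} are essential both to cancel the $\abs{\xi}$ derivative in $\mathfrak{n}$ at the relevant output/input frequencies and to provide extra smallness in the dangerous regions where $\Lambda\to0$ or $\sqrt{1-\Lambda^2}\to0$ (which is also where $2d$-Euler-type interactions would live, were it not for axisymmetry).

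\textbf{Main obstacle.} The principal difficulty is the $X$ norm bound, i.e.\ propagating the fractional angular regularity $\Upsilon^{1+\beta}$. The tension is structural: a stronger angular norm ($\Upsilon^2$, say) would make linear decay trivial to extract but cannot be propagated uniformly in time, while the minimal $\Upsilon^{1+\beta}$ that still suffices for sharp decay forces one to be extremely careful with how angular derivatives are generated by integration by parts along $\Omega$ (which, unlike $S$, does not commute with the equation and produces terms where $\Omega$ falls on a localization symbol, effectively raising $\ell$). The key technical device is the ``uncertainty principle'' $\ell+p\ge0$ encoded in $R_\ell^{(p)}$ together with the commutator bounds \eqref{eq:comm-angLP-p}–\eqref{eq:comm-angLP-q}, which constrain how far one can iterate; one must set up the iterated integration-by-parts scheme of Section \ref{ssec:D3ibp} so that each step trades $\ell,\ell_j$ against $p,p_j,q,q_j$ in a way that terminates with a net gain, and one must decompose the profiles first in $R_{\ell_j}P_{p_j}$ and only later refine to $P_{p_j,q_j}$ (cf.\ Remark \ref{rem:comm-loc}) to avoid the incompatibility of simultaneous localization in all parameters. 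Carefully bookkeeping the resulting case distinctions --- gap in $p$ vs.\ no gap, gap in $q$ vs.\ no gap, small vs.\ large phase, and the three bilinear types $Q_{null},Q_\Omega,Q_E$ (the latter two vanishing under axisymmetry) --- while keeping every exponent on the right side of the estimate is where the bulk of the work lies, and is precisely the content deferred to Sections \ref{sec:Bnorm} and \ref{sec:Xnorm}.
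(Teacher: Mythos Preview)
Your proposal is correct and follows essentially the same route as the paper's proof: reduce \eqref{eq:btstrap-concl2} to a Gr\"onwall argument fed by the linear decay (Proposition~\ref{prop:decay}/Corollary~\ref{cor:decay}), distribute $S^b$ over the Duhamel formula via $(S_\xi+S_\eta)\Phi=0$ and $(S_\xi+S_\eta)\m=\m$, time-localize, and defer the resulting bilinear $B$ and $X$ bounds to Sections~\ref{sec:Bnorm}--\ref{sec:Xnorm} with the normal-form/vector-field-IBP dichotomy of Proposition~\ref{prop:phasevssigma}.

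One factual correction: you write that $\Omega$ ``unlike $S$, does not commute with the equation''. This is not so---both $S$ and $\Omega$ generate symmetries and commute with the full nonlinear flow (see \eqref{eq:IER-VF}); indeed $\Omega f=0$ on axisymmetric functions. The mechanism by which angular regularity is consumed in the integration-by-parts scheme is different: when one integrates by parts along $V_\eta$ for $V\in\{S,\Omega\}$, the \emph{cross term} acting on the other input resolves (Lemma~\ref{lem:VFcross}) into a piece along $S_{\xi-\eta}$ and a piece along $\Ups_{\xi-\eta}$, and it is this $\Ups$-contribution---not any failure of $\Omega$ to commute---that costs a factor $2^{\ell_j}$ via the Bernstein property \eqref{eq:R-Bernstein}. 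Relatedly, the case split $Q_{null}/Q_\Omega/Q_E$ you mention is used in the paper only to motivate axisymmetry (Section~\ref{sec:role-axisymm}); the actual bilinear estimates work directly with the multiplier class of Lemma~\ref{lem:ECmult}.
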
 

Finally, the linear scattering in $L^2$ is a direct consequence of the fast decay of $\partial_t\U_\pm(t)$, and more is true:
\begin{corollary}\label{cor:scatter}
 Let $U_\pm(t)$ be global solutions to \eqref{eq:EC_disp}, constructed via the bootstrap in Proposition \ref{prop:btstrap} that in particular satisfy the bounds \eqref{eq:btstrap-concl1} and \eqref{eq:btstrap-concl2}. There exist $\U_{\pm}^\infty\in  B\cap X$ such that
 \begin{equation}\label{eq:scatter}
  \norm{e^{\mp it\Lambda}U_{\pm}(t)-\U_{\pm}^{\infty}}_{B\cap X}\to 0,\qquad t\to +\infty.
 \end{equation}
\end{corollary}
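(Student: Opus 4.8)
The plan is to deduce scattering from the already-established fact that $\partial_t\U_\pm$ decays faster than the critical rate $t^{-1}$, so that the Duhamel integrands in \eqref{eq:EC-Duhamel} are absolutely integrable in time when measured in the $B\cap X$ norm. First I would recall from the bootstrap conclusions (in particular the refined time-derivative estimates alluded to in Section \ref{sec:dtfbds} and the $B$, $X$ bounds \eqref{eq:btstrap-concl1}) that
\begin{equation*}
 \Big\|\partial_t\U_\pm(s)\Big\|_{B\cap X}=\Big\|\sum_{\mu,\nu}\Q_{\mathfrak{n}_\pm^{\mu\nu}}(\U_\mu,\U_\nu)(s)\Big\|_{B\cap X}\lesssim \eps_1^2\,\ip{s}^{-1-\delta}
\end{equation*}
for some $\delta>0$. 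Indeed, the bilinear estimates that go into proving \eqref{eq:btstrap-concl1} already control $\int_0^t\|\Q_{\mathfrak n}(\U_\mu,\U_\nu)(s)\|_{B\cap X}\,ds$, so the strengthened decay rate needed here should be extractable by revisiting those same estimates and noting that the loss of one power of $\ip{s}^{-1}$ (relative to what is needed for a uniform-in-time bound) is what yields integrability. Granting this, I would then define
\begin{equation*}
 \U_\pm^\infty:=\U_\pm(0)+\int_0^\infty \partial_s\U_\pm(s)\,ds=\U_\pm(0)+\sum_{\mu,\nu}\int_0^\infty\Q_{\mathfrak n_\pm^{\mu\nu}}(\U_\mu,\U_\nu)(s)\,ds,
\end{equation*}
the integral converging absolutely in $B\cap X$ by the bound above; completeness of $B\cap X$ (these are weighted $\ell^\infty$-type norms over $L^2$ pieces, hence Banach) guarantees $\U_\pm^\infty\in B\cap X$.

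Next I would establish the convergence \eqref{eq:scatter} itself: since $e^{\mp it\Lambda}U_\pm(t)=\U_\pm(t)$ by definition of the profile, we simply write
\begin{equation*}
 \Big\|\U_\pm(t)-\U_\pm^\infty\Big\|_{B\cap X}=\Big\|\int_t^\infty\partial_s\U_\pm(s)\,ds\Big\|_{B\cap X}\le\int_t^\infty\big\|\partial_s\U_\pm(s)\big\|_{B\cap X}\,ds\lesssim \eps_1^2\,\ip{t}^{-\delta}\to 0
\end{equation*}
as $t\to+\infty$, which is the claimed statement (and in fact gives a rate). The passage from this $B\cap X$ scattering to the $L^2$ scattering in Theorem \ref{MainThm} follows because $B\cap X$ controls the relevant $L^2$-based quantities of the velocity field via the normalization \eqref{MainPropU} together with the embedding properties of these norms at fixed frequency; one recovers $\bm u_{lin}(t)$ as the linear evolution of the velocity field associated with $\U_\pm^\infty$ via \eqref{Formu0}--\eqref{eq:Formu}, and $\|\bm u(t)-\bm u_{lin}(t)\|_{L^2}=\|U_+(t)-e^{it\Lambda}\U_+^\infty\|_{L^2}+\|U_-(t)-e^{-it\Lambda}\U_-^\infty\|_{L^2}$ up to the (unitary) passage between $(A,C)$ and $U_\pm$, which is bounded by $\|\U_+(t)-\U_+^\infty\|_{L^2}+\|\U_-(t)-\U_-^\infty\|_{L^2}\to 0$.

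The main obstacle I anticipate is not the abstract Cauchy-criterion argument but rather verifying the \emph{strengthened} decay rate $\|\partial_s\U_\pm(s)\|_{B\cap X}\lesssim\ip{s}^{-1-\delta}$ with $\delta>0$, uniformly. The $B$ norm part should be comparatively benign: Section \ref{sec:dtfbds} already promises improved (almost $t^{-3/2}$) decay for $\|\partial_t\U_\pm\|_{L^2}$, and the bilinear estimates feeding \eqref{eq:btstrap-concl1-B} should, upon inspection, carry a spare power of $\ip{s}^{-\delta}$ once one no longer insists on a time-uniform output. The delicate point is the $X$ norm: the angular-regularity estimates in Section \ref{sec:Xnorm} are described as the most subtle in the paper, using iterated integration by parts along vector fields, and there one must check that the integrand $\Q_{\mathfrak n}(\U_\mu,\U_\nu)(s)$ — not the time-integrated bilinear form $B_{\mathfrak n}$ — obeys $\|\Q_{\mathfrak n}(\U_\mu,\U_\nu)(s)\|_X\lesssim \eps_1^2\ip{s}^{-1-\delta}$. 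In the regimes where the $X$ bound for $B_{\mathfrak n}$ is obtained by a normal form (integration by parts in $s$), one does not get a pointwise-in-$s$ integrand bound directly, so there one would instead argue that the boundary and commutator terms produced by the normal form are themselves differences/integrals that converge, i.e. run the scattering argument at the level of the normal-form-transformed profile $\U_\pm+\text{(quadratic correction)}$ and then note the quadratic correction also tends to a limit and vanishes in the difference. Handling this cleanly, consistently with the case division of Section \ref{sec:Xnorm}, is where the real work lies.
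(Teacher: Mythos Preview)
Your Cauchy-sequence strategy is the right one and matches the paper's approach, but you are making the argument harder than necessary by seeking a \emph{pointwise-in-$s$} bound $\|\partial_s\U_\pm(s)\|_{B\cap X}\lesssim\ip{s}^{-1-\delta}$, then correctly flagging the obstruction from normal forms, then proposing a workaround via normal-form-transformed profiles. None of this is needed.

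The paper's proof is one line because the bilinear Propositions \ref{prop:Bnorm}, \ref{prop:Xnorm1}, \ref{prop:Xnorm2} do not bound the integrand $\Q_\m(\cdots)(s)$ but rather the \emph{time-localized integrals} $\B_\m^{(m)}=\int\tau_m(s)\,\Q_\m(\cdots)(s)\,ds$ over dyadic windows $s\sim 2^m$, with the summable bound $\|\B_\m^{(m)}\|_{B\cap X}\lesssim 2^{-\delta^3 m}\eps_1^2$. Since $\U_\pm(t_2)-\U_\pm(t_1)$ is exactly a sum of such pieces with $m\gtrsim\log_2 t_1$, one gets $\|\U_\pm(t_2)-\U_\pm(t_1)\|_{B\cap X}\lesssim\sum_{2^m\gtrsim t_1}2^{-\delta^3 m}\eps_1^2\lesssim t_1^{-\delta^3}\eps_1^2\to 0$. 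The normal forms are already absorbed inside the dyadic estimates, so there is nothing further to unwind. Note also that your parenthetical claim that the bilinear estimates control $\int_0^t\|\Q_{\mathfrak n}(\cdots)(s)\|_{B\cap X}\,ds$ is not what is proved: the norm sits \emph{outside} the time integral, which is strictly weaker and is precisely why the pointwise route runs into trouble while the dyadic route does not.
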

\begin{proof}
 By Lemma \ref{lem:dtfinL2} we have that $\norm{\partial_t \U_\pm(t)}_{L^2}\lesssim \ip{t}^{-\frac{5}{4}}$, and hence $\U_\pm(t)$ are $L^2$ Cauchy sequences (in time) and converge to $\mathcal{U}_{\pm}^\infty$ as $t\to\infty$. Similarly, by Proposition \ref{prop:Bnorm} resp.\ Propositions \ref{prop:Xnorm1} and \ref{prop:Xnorm2} we have that $\U_\pm(t)$ are Cauchy sequences in the $B$ resp.\ $X$ norm.
\end{proof}

\blue{We outline next the strategy of proof of Proposition \ref{prop:btstrap}. In particular, we show how control of the nonlinearity can be obtained through a reduction to several bilinear estimates, which are at the heart of the rest of this article.}
\begin{proof}[Proof of Proposition \ref{prop:btstrap}]
We note that under the assumptions \eqref{eq:btstrap-assump} it follows from the linear decay estimates in Proposition \ref{prop:decay} that
\begin{equation}\label{eq:decay_cons}
 \norm{S^aU_\pm(t)}_{L^\infty}\lesssim \eps_1 \ip{t}^{-1},\quad 0\leq a\leq N-3,
\end{equation}
and thus the slow growth of the energy and vector fields \eqref{eq:btstrap-concl2} follows from a standard energy estimate for the system \eqref{eq:EC} -- see Corollary \ref{cor:energy}. We note further that by interpolation we also have bounds for up to $N$ vector fields in $H^{N_0}$: With Lemma \ref{lem:interpol} and \eqref{eq:btstrap-concl2} there holds that for $b\leq N$ we have
\begin{equation}\label{eq:interpol_energy}
 \norm{S^b U_\pm(t)}_{H^{N_0}}\lesssim \eps_0 \ip{t}^{C\eps_1}.
\end{equation}

The key point is thus to establish \eqref{eq:btstrap-concl1}. We proceed as follows:

\subsubsection*{Reduction}
From the Duhamel formula \eqref{eq:EC-Duhamel} for the profiles $\U_\pm$ we have that
\begin{equation}
 \norm{S^b\U_\pm(t)}_{B\cap X}\leq \norm{S^b\U_\pm(0)}_{B\cap X}+\!\!\!\sum_{\mu,\nu\in\{+,-\}}\norm{S^bB_{\mathfrak{n}_\pm^{\mu\nu}}(\U_\mu,\U_\nu)}_{B\cap X},
\end{equation}
hence to prove \eqref{eq:btstrap-concl1} it suffices to show that under the bootstrap assumptions \eqref{eq:btstrap-assump}, for any multiplier $\m=\mathfrak{n}_\pm^{\mu,\nu}$ as in Lemma \ref{lem:ECmult} there holds that
\begin{equation}
 \norm{S^bB_{\m}(\U_\mu,\U_\nu)}_B+\norm{S^bB_{\m}(\U_\mu,\U_\nu)}_X\lesssim \eps_1^2,\qquad 0\leq b\leq N.
\end{equation}

Since $S$ generates a symmetry of the equation, its application to a bilinear term $B_\m$ yields a favorable structure: With $S_\xi\Lambda(\xi-\eta)=-S_\eta\Lambda(\xi-\eta)$ and $S\Lambda=0$ there holds that $(S_\xi+S_\eta)\Phi_{\mu\nu}=0$, and one computes directly\footnote{Note that $S_\xi+S_\eta$ vanishes on the elements of $\bar{E}$ from \eqref{eq:barE}, and $S_\xi\abs{\xi}=\abs{\xi}$ (alternatively, see \cite[Lemma A.6]{rotE}).} that $(S_\xi+S_\eta)\m=\m$, so that from integration by parts we deduce that
\begin{equation}
\begin{aligned}
 S_\xi\mathcal{F}\left(\Q_\m(\U_\mu,\U_\nu)\right)(\xi)&=S_\xi\int_{\R^3}e^{\pm is\Phi_{\mu\nu}(\xi,\eta)}\m(\xi,\eta)\widehat{\U_\mu}(s,\xi-\eta)\widehat{\U_\nu}(s,\eta)d\eta\\
 &=\int_{\R^3}((S_\xi+S_\eta)e^{\pm is\Phi_{\mu\nu}(\xi,\eta)})\m(\xi,\eta)\widehat{\U_\mu}(s,\xi-\eta)\widehat{\U_\nu}(s,\eta)d\eta\\
 &\qquad+ \int_{\R^3}e^{\pm is\Phi_{\mu\nu}(\xi,\eta)}(S_\xi+S_\eta)\left(\m(\xi,\eta)\widehat{\U_\mu}(s,\xi-\eta)\widehat{\U_\nu}(s,\eta)\right)d\eta\\
 &\qquad +3\int_{\R^3}e^{\pm is\Phi_{\mu\nu}(\xi,\eta)}\m(\xi,\eta)\widehat{\U_\mu}(s,\xi-\eta)\widehat{\U_\nu}(s,\eta)d\eta\\
 &=\mathcal{F}\left(-2\,\Q_\m(\U_\mu,\U_\nu)+\Q_\m(S\U_\mu,\U_\nu)+\Q_\m(\U_\mu,S\U_\nu)\right)(\xi).
\end{aligned}
\end{equation}
It thus suffices to show that for $\mu,\nu\in\{+,-\}$, and $b_1,b_2\geq 0$ there holds that
\begin{equation}\label{eq:btstrap-concl1.1}
 \norm{B_{\m}(S^{b_1}\U_\mu,S^{b_2}\U_\nu)}_B+\norm{B_{\m}(S^{b_1}\U_\mu,S^{b_2}\U_\nu)}_X\lesssim \eps_1^2,\qquad b_1+b_2\leq N.
\end{equation}

\subsubsection*{Bilinear estimates}
To prove \eqref{eq:btstrap-concl1.1} it is convenient to localize the time variable. For $t\in[0,T]$ we choose a decomposition of the indicator function $\mathfrak{1}_{[0,t]}$ by functions $\tau_0,\ldots,\tau_{L+1}:\R \to [0,1]$, $\abs{L-\log_2(2+t)}\leq 2$, satisfying 
\begin{equation}
\begin{aligned}
& \mathrm{supp} \,\tau_0 \subseteq [0,2], \quad \mathrm{supp} \,\tau_{L+1}\subseteq [t-2,t],
  \quad \mathrm{supp}\,\tau_m\subseteq [2^{m-1},2^{m+1}]
  \quad \text{for} \quad  m \in \{1,\dots,L\},
\\
& \sum_{m=0}^{L+1}\tau_m(s) = \mathbf{1}_{[0,t]}(s), \quad \tau_m\in C^1(\R) \quad \text{and} \quad \int_0^t|\tau'_m(s)|\,ds\lesssim 1
  \quad \text{for} \quad m\in \{1,\ldots,L\}.
\end{aligned}
\end{equation}
We can then decompose
\begin{equation}
 B_\m(F,G)=\sum_m B_\m^{(m)}(F,G),\qquad B_\m^{(m)}(F,G):=\int_0^t\tau_m(s)\,\Q_\m(F,G)(s)ds.
\end{equation}
For simplicity of the expressions we will not carry the superscript $(m)$ and instead generically write $\B_\m$ for any of the time localized bilinear expressions $B_\m^{(m)}$ above. After establishing the relevant background and methodology in Sections \ref{sec:lin_decay}--\ref{sec:dtfbds}, we prove \eqref{eq:btstrap-concl1.1} by establishing for some $\delta>0$ the stronger bounds
\begin{equation}\label{eq:btstrap-concl1.1-B}
 \norm{\B_{\m}(S^{b_1}\U_\mu,S^{b_2}\U_\nu)}_B\lesssim 2^{-\delta^3 m}\eps_1^2,\qquad b_1+b_2\leq N
\end{equation}
in Proposition \ref{prop:Bnorm}, and 
\begin{equation}\label{eq:btstrap-concl1.1-X}
 \norm{\B_{\m}(S^{b_1}\U_\mu,S^{b_2}\U_\nu)}_X\lesssim 2^{-\delta^3 m}\eps_1^2,\qquad b_1+b_2\leq N
\end{equation}
in Propositions \ref{prop:Xnorm1} and \ref{prop:Xnorm2}. Explicitly we will choose
\begin{equation}\label{eq:def_delta}
 \delta:=2M^{-\frac{1}{2}},
\end{equation}
and another relevant parameter of smallness will be
\begin{equation}\label{eq:def_nu}
 \delta_0:=2N_0^{-1}.
\end{equation}
For technical reasons, in the proofs it will be useful to have the following hierarchy between $\delta_0$ and $\delta$, related to the sizes of $N_0, M$ in Theorem \ref{thm:EC}:
\begin{equation}\label{eq:nuvsdelta}
 10\delta_0<\delta^2, \quad\textnormal{i.e.}\quad N_0> 20 M.
\end{equation}

\end{proof}

\section{Linear decay}\label{sec:lin_decay}
Introducing the ``decay norm''
\begin{equation}\label{eq:defDnorm}
 \norm{f}_D:=\sup_{0\leq a\leq 3} \left( \norm{S^a f}_B+\norm{S^a f}_X \right),
\end{equation}
we have the following decay result:
\begin{proposition}\label{prop:decay}
Let $f$ be axisymmetric and $t>0$. We can split
\begin{equation}
 P_{k,p,q}e^{it\Lambda}f=I_{k,p,q}(f)+I\!I_{k,p,q}(f)
\end{equation}
where for any $0<\beta'<\beta$
\begin{equation}\label{eq:I-IIdecay}
\begin{aligned}
 \norm{I_{k,p,q}(f)}_{L^\infty}&\lesssim 2^{\frac{3}{2}k-3k^+}\cdot \min\{2^{2p+q},2^{-p-\frac{q}{2}}t^{-\frac{3}{2}}\}\norm{f}_D,\\
 \norm{I\!I_{k,p,q}(f)}_{L^2}&\lesssim 2^{-3k^+}t^{-1-\beta'}2^{(-1-2\beta')p}\cdot\mathfrak{1}_{2^{2p+q}\gtrsim t^{-1}}\norm{f}_D.  
\end{aligned} 
\end{equation}
\end{proposition}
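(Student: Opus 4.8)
The plan is to perform a stationary phase analysis of the oscillatory integral representation of $P_{k,p,q}e^{it\Lambda}f$, but \emph{adapted to the available control}, which is the $D$ norm of $f$ -- i.e.\ localized control of a few vector fields $S$ and of $1+\beta$ angular derivatives $\Ups$, rather than $L^\infty$ control of $\hat f$ directly. First I would reduce to $f$ localized at unit frequency (absorbing the $2^{\frac32 k-3k^+}$ and dyadic-in-$(p,q)$ factors via scaling and the weights in the $B$, $X$ norms, using $2^{2p+q}\simeq\min\{2^{2p},2^q\}$), and decompose $f$ using the angular Littlewood--Paley projectors $R_\ell$ from Section~\ref{ssec:angLP}: the piece with $\ell$ small (say $2^\ell\lesssim 2^{-p}\cdot(\text{something})$ or up to a threshold determined by $t$) will give $I_{k,p,q}$, and the high-$\ell$ tail will give $I\!I_{k,p,q}$. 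On the high-$\ell$ tail one simply uses the $X$-norm: $\norm{P_{k,p}R_\ell f}_{L^2}\lesssim 2^{-3k^+}2^{-(1+\beta)\ell}2^{-\beta p}\norm{f}_X$, summed over $\ell$ above the threshold, which produces the stated $2^{(-1-2\beta')p}t^{-1-\beta'}$ bound once the threshold is chosen as a suitable power of $t$ (roughly $2^\ell\sim (2^{2p+q}t)$, justifying also the indicator $\mathfrak 1_{2^{2p+q}\gtrsim t^{-1}}$); since $e^{it\Lambda}$ is an $L^2$ isometry this part is immediate.

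For the main piece $I_{k,p,q}$ one writes
\[
 (P_{k,p,q}e^{it\Lambda}f)(x)=c\int_{\R^3} e^{i(x\cdot\xi+t\Lambda(\xi))}\varphi_{k,p,q}(\xi)\hat f(\xi)\,d\xi
\]
and estimates it in $L^\infty$ in $x$ uniformly. The trivial bound $\int|\varphi_{k,p,q}\hat f|\lesssim 2^{2p+q}\norm{\hat f}_{L^\infty}$ (using that the support of $\varphi_{k,p,q}$ has measure $\simeq 2^{2p+q}$ at unit frequency, since $2^p$ controls $|\xi_\h|$ and $2^q$ controls... actually the vertical variable, giving volume $\simeq 2^{2p}\cdot 2^{q}$... matching $2^{2p+q}$) gives the first term in the $\min$, provided $\norm{\hat f}_{L^\infty}\lesssim\norm{f}_D$ -- this is exactly the content of Lemma~\ref{lem:ControlLinfty} (that $B\cap X$ plus vector fields controls $\hat f$ in $L^\infty$). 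For the $t^{-3/2}$ decay I would integrate by parts in $\xi$ using the vector fields $S$ and $\Omega$ (and one genuinely transverse direction) rather than flat derivatives: the phase is $x\cdot\xi+t\Lambda(\xi)$, and on the support of $\varphi_{k,p,q}$ with $p,q\gtrsim -1$ the Hessian of $\Lambda$ is nondegenerate in the relevant directions, so stationary phase yields the $t^{-3/2}$ gain; the cost of each integration by parts falls on $S f$, $\Omega f$ (or $\Ups f$), which are controlled because $a\le 3$ vector fields are allowed in $D$. The anisotropic weight $2^{-p-q/2}$ in front of $t^{-3/2}$ reflects the degeneration of the Hessian / shrinking of the stationary set as $p,q\to-\infty$, and is precisely balanced against the volume bound $2^{2p+q}$ at the crossover $2^{2p+q}\sim 2^{-p-q/2}t^{-3/2}$, i.e.\ $(2^{2p+q})^{3/2}\sim t^{-3/2}$, i.e.\ $2^{2p+q}\sim t^{-1}$ -- consistent with where $I\!I$ turns on.

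\textbf{Main obstacle.} The delicate point is the \emph{anisotropic} stationary phase estimate giving the factor $2^{-p-q/2}t^{-3/2}$ with the correct powers of $p,q$: near the degeneracy sets $\{\xi_3=0\}$ (where $\Lambda=0$, i.e.\ $q$ very negative) and $\{\xi_\h=0\}$ (where $\sqrt{1-\Lambda^2}=0$, i.e.\ $p$ very negative) the phase $\Lambda$ has vanishing or degenerate curvature, and one must carefully track how the second derivatives of $\Lambda$ scale in $p$ and $q$ on the support of $\varphi_{k,p,q}$, then run a scaled stationary phase / $TT^*$ argument. The choice to integrate by parts along $S$ and $\Omega$ (which commute with the localizations up to the harmless commutator bounds \eqref{eq:comm-angLP-p}, and cost only vector fields already budgeted in the $D$ norm) rather than along flat derivatives is what makes the argument close with only three vector fields and with the weak $X$-norm control of angular regularity; verifying that the commutators $[\,\cdot\,,\varphi_{k,p,q}]$ and the non-integer power $\Ups^{1+\beta}$ do not destroy the decay is the technically heaviest part, and is where the angular Littlewood--Paley machinery of Proposition~\ref{prop:LPOmega} is essential.
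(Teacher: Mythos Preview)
Your overall architecture is right and matches the paper: split $f=R_{\le\ell_0}f+(\mathrm{Id}-R_{\le\ell_0})f$, estimate the high-$\ell$ tail in $L^2$ directly from the $X$ norm (this is $I\!I_{k,p,q}$), and for the low-$\ell$ piece use that Lemma~\ref{lem:ControlLinfty} gives $\|\widehat f\|_{L^\infty}\lesssim\|f\|_D$, so the trivial volume bound yields $2^{2p+q}$. However, two points in your sketch need correction.

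First, the threshold. The paper takes $2^{\ell_0}\sim 2^p t\cdot(2^{2p+q}t)^{-\kappa}$ for small $\kappa>0$, not $2^{\ell_0}\sim 2^{2p+q}t$. With your choice the $X$-norm tail sum gives $2^{-(1+\beta)(2p+q)}t^{-(1+\beta)}2^{-\beta p}$, which is off by a factor $2^{-p-q}$ from the claimed $2^{-(1+2\beta')p}t^{-1-\beta'}$ and does not yield the proposition as stated. The extra factor $(2^{2p+q}t)^{-\kappa}$ in the paper's threshold is what turns $\beta$ into any $\beta'<\beta$.

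Second, and more importantly, the mechanism for the $t^{-3/2}$ bound. For axisymmetric $f$ one has $\Omega f=0$, so integrating by parts along $\Omega$ buys nothing; the relevant directions are $S=\rho\partial_\rho$ and $\Ups=\partial_\phi=-\sqrt{1-\Lambda^2}\,\partial_\Lambda$. The paper does not run a Hessian/stationary-phase or $TT^\ast$ argument. Instead it passes to spherical coordinates $(\rho,\theta,\Lambda)$, integrates out $\theta$ exactly (producing the Bessel factor $J_0(\rho\sqrt{1-\Lambda^2}\,|x_\h|)$, split as $H_\pm$ with decay $\langle\cdot\rangle^{-1/2}$), and is left with a two-dimensional oscillatory integral in $(\rho,\Lambda)$ with phase $\Psi=t\Lambda+\rho(\Lambda z\pm\sqrt{1-\Lambda^2}x)$. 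One then splits on the \emph{physical space} location $(x,z)$: if both $x$ and $|z|$ are small relative to the natural scales then $|\partial_\Lambda\Psi|\gtrsim t$ and one integrates by parts repeatedly in $\Lambda$; if one of them is large then $|\partial_\rho\Psi|$ is large and one integrates by parts in $\rho$ (costing $S$-derivatives); if both are large one decomposes dyadically in $\partial_\rho\Psi$ and uses that $|\partial_\rho\partial_\Lambda\Psi|\gtrsim t$ on the small-$|\partial_\rho\Psi|$ piece. The reason the angular Littlewood--Paley cutoff $R_{\le\ell_0}$ is essential is precisely the $\Lambda$-IBP in the first case: it converts the qualitative control of $1+\beta$ angular derivatives into the quantitative bound $\|S^b\partial_\Lambda^a\widehat f\|_{L^\infty}\lesssim_a t^a(2^{2p+q}t)^{-a\kappa}\|f\|_D$ for all $a\ge0$, so that each $\partial_\Lambda$-IBP gains $t^{-1}$ and loses only $t\cdot(2^{2p+q}t)^{-\kappa}$, and after $N\gtrsim\kappa^{-1}$ iterations one reaches the target. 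Your sketch correctly flags the angular machinery as the crux, but the actual execution is this explicit $(\rho,\Lambda)$ case analysis rather than a generic curvature argument.
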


The proof gives a slightly finer decomposition and makes crucial use of the fact that the $D$ norm of a function bounds its Fourier transform in $L^\infty$ -- see Lemma \ref{lem:ControlLinfty}. We remark that the ideas and techniques underlying Proposition \ref{prop:decay} also apply in a general (i.e.\ non-axisymmetric) setting, where upon inclusion of sufficient powers of the rotation vector field $\Omega$ in the $D$ norm an analogous result can be established.
\begin{remark}\label{rem:decay-sum}
We note that the corresponding $L^\infty$ bound for $I\!I_{k,p,q}(f)$ reads
 \begin{equation}\label{eq:IIdecayLinfty}
 \begin{aligned}
  \norm{I\!I_{k,p,q}(f)}_{L^\infty}&\lesssim 2^{\frac{3}{2}k-3k^+}t^{-1-\beta'}2^{-2\beta'p}\mathfrak{1}_{2^{2p+q}\gtrsim t^{-1}} \norm{f}_D,
 \end{aligned}
 \end{equation}
 and we have summability in $p,q$:
 \begin{equation}
 \sum_{2^{2p+q}\gtrsim t^{-1}}\norm{I\!I_{k,p,q}(f)}_{L^2}\lesssim t^{-\frac{1}{2}}2^{-3k^+}\norm{f}_D, \qquad  \sum_{2^{2p+q}\gtrsim t^{-1}}\norm{I\!I_{k,p,q}(f)}_{L^\infty}\lesssim 2^{\frac{3}{2}k-3k^+}t^{-1}\norm{f}_D.
\end{equation}
\end{remark}
Together with the above bound for $I_{k,p,q}(f)$, we thus conclude that:
\begin{corollary}\label{cor:decay}
 \begin{equation}\label{eq:cor_decay}
 \norm{P_k e^{it\Lambda}f}_{L^\infty}\lesssim \ip{t}^{-1}2^{\frac{3}{2}k-3k^+}\norm{f}_D.
 \end{equation}
 In particular, under the bootstrap assumptions \eqref{eq:btstrap-assump} there holds for a solution $\bm{u}$ to \eqref{eq:EC} that
 \begin{equation}
  \norm{\bm{u}(t)}_{L^\infty}+\norm{\nabla_x\bm{u}(t)}_{L^\infty}\lesssim \eps_1\ip{t}^{-1}.
 \end{equation}
\end{corollary}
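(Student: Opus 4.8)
The plan is to derive Corollary \ref{cor:decay} directly from Proposition \ref{prop:decay} and Remark \ref{rem:decay-sum}, combined with the structural relations between $\bm{u}$ and the dispersive unknowns $U_\pm$ established in Section \ref{sec:structure}. First I would prove \eqref{eq:cor_decay}. Write $P_k e^{it\Lambda} f = \sum_{\ell+p\geq 0} \sum_{q} P_{k,p,q} e^{it\Lambda} f$ and apply the splitting $P_{k,p,q} e^{it\Lambda} f = I_{k,p,q}(f) + I\!I_{k,p,q}(f)$. For the contribution of the $I$-pieces, I use the $L^\infty$ bound in \eqref{eq:I-IIdecay}: the factor $\min\{2^{2p+q}, 2^{-p-q/2} t^{-3/2}\}$ is summable in $(p,q)\in(\Z^-)^2$ (recall $2^{2p}+2^q\simeq 1$ on the support of $P_{k,p,q}$, so the two regimes $2^{2p+q}\lesssim t^{-1}$ and $2^{2p+q}\gtrsim t^{-1}$ each contribute a geometric series summing to $O(t^{-1})$, and one also needs to sum in $\ell$, which is free since $I_{k,p,q}$ is built from finitely many angular pieces or the sum in $\ell$ is already absorbed in the $D$ norm — this is where I would invoke the finer decomposition mentioned after the Proposition). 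For the $I\!I$-pieces I use the second line of Remark \ref{rem:decay-sum}, namely $\sum_{2^{2p+q}\gtrsim t^{-1}} \norm{I\!I_{k,p,q}(f)}_{L^\infty} \lesssim 2^{\frac32 k - 3k^+} t^{-1} \norm{f}_D$. Adding the two and incorporating the small-time regime (where the trivial bound $\norm{P_k e^{it\Lambda}f}_{L^\infty}\lesssim 2^{3k/2-3k^+}\norm{f}_D$ from Bernstein and the $L^\infty$-control of $\hat f$ applies, using Lemma \ref{lem:ControlLinfty}) gives the claimed $\ip{t}^{-1}$ decay in \eqref{eq:cor_decay}.

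Next I would pass from \eqref{eq:cor_decay} to the bound on $\bm{u}$ itself. Under the bootstrap assumptions \eqref{eq:btstrap-assump} we have $\norm{S^a \U_\pm(t)}_{B\cap X}\leq \eps_1$ for $0\leq a\leq N$, hence $\norm{\U_\pm(t)}_D\lesssim\eps_1$ (since the $D$ norm only uses $a\leq 3\leq N$). Because $U_\pm(t) = e^{\pm it\Lambda}\U_\pm(t)$, applying \eqref{eq:cor_decay} frequency-by-frequency and summing over $k\in\Z$ — the weight $2^{3k/2-3k^+}$ is summable at both ends — yields $\norm{U_\pm(t)}_{L^\infty}\lesssim\eps_1\ip{t}^{-1}$. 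To handle $\nabla_x\bm{u}$ I note that $\bm{u}$ is reconstructed from $A=(U_++U_-)/2$, $C=(U_+-U_-)/2$ via the zero-order (in the sense of the $P_{k,p}$-localized operator norms) Fourier multipliers in \eqref{eq:Formu}; more precisely $\widehat{\nabla\bm{u}}$ involves $\xi$ times bounded multipliers, so $P_k\nabla\bm{u}$ is controlled by $2^k P_k U_\pm$ up to harmless constants, and the extra $2^k$ is again absorbed by replacing the weight with $2^{5k/2-3k^+}$ (still summable) — alternatively one absorbs one derivative using the large power $3k^+$ built into the $B$ and $X$ norms, exactly as flagged in the remark after \eqref{eq:defXnorm}. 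Summing in $k$ then gives $\norm{\nabla_x\bm{u}(t)}_{L^\infty}\lesssim\eps_1\ip{t}^{-1}$.

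One subtlety I would be careful about: the multipliers relating $\bm{u}$ to $(A,C)$ carry factors like $\abs{\nabla_\h}^{-1}\nabla_\h$ and $\Lambda$, $\sqrt{1-\Lambda^2}$, which are bounded but not smooth at the degenerate sets $\xi_\h=0$ and $\xi_3=0$. This is precisely why the statement is phrased with the anisotropic projections $P_{k,p,q}$: on each such piece these symbols are smooth with bounded derivatives (at the cost of powers of $2^{-p}$), and the $B$ and $X$ norms are designed (through their $2^{-p-q/2}$ resp.\ $2^{\beta p}$ weights) so that the resulting $(p,q)$- and $\ell$-sums still converge. So the reconstruction step should be carried out at the level of $P_{k,p,q}$ (resp.\ $P_{k,p}R_\ell$) before summing, not naively at the level of $P_k$. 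I expect this bookkeeping — ensuring the anisotropic weights dominate the singularities of the reconstruction multipliers uniformly in $t$ — to be the only real point requiring care; the decay itself is immediate from Proposition \ref{prop:decay} and Remark \ref{rem:decay-sum}. Everything else is summation of geometric series.
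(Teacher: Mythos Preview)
Your approach is correct and matches the paper's (essentially unwritten) argument: sum the $I_{k,p,q}$ and $I\!I_{k,p,q}$ pieces over $(p,q)$ using Proposition \ref{prop:decay} and Remark \ref{rem:decay-sum}, then reconstruct $\bm{u}$ and $\nabla\bm{u}$ from $U_\pm$ and sum in $k$.

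One clarification: there is no $\ell$-sum to perform. The decomposition $P_{k,p,q}e^{it\Lambda}f=I_{k,p,q}+I\!I_{k,p,q}$ in Proposition \ref{prop:decay} already takes the full $f$ as input; the angular threshold $\ell_0$ in its proof is a single cutoff chosen as a function of $(p,q,t)$, not an index to be summed. So your display should read $P_k e^{it\Lambda}f=\sum_{p,q}P_{k,p,q}e^{it\Lambda}f$ and the hedging about ``summing in $\ell$'' can be dropped. Your concern about the reconstruction multipliers $\xi_\h/\abs{\xi_\h}$, $\Lambda$, $\sqrt{1-\Lambda^2}$ is legitimate and your resolution is the right one: these are smooth with $O(1)$ operator norm once restricted to a fixed $P_{k,p,q}$ block, and both $I_{k,p,q}$ and $I\!I_{k,p,q}$ are frequency-localized there, so the multipliers pass through before the $(p,q)$-sum.
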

We highlight that the decay rate $t^{-1}$ in \eqref{eq:cor_decay} is optimal: For radial $f\in L^2\cap C^0(\R^3)$ there holds that $e^{it\Lambda}f(0)=\frac{\sin t}{t}f(0)$. 
 
After a brief review of some geometric background in the following Section \ref{sec:TS}, we give the proof of these results in Section \ref{sec:decay_proof}.

\subsection{Spanning the tangent space}\label{sec:TS}
The vector fields $S,\Omega$ are related to spherical coordinates as follows: For $(\rho,\theta,\phi)\in[0,\infty)\times [0,2\pi)\times[0,\pi]$ we let
\begin{equation}
 \xi=(\rho\cos\theta\sin\phi,\rho\sin\theta\sin\phi,\rho\cos\phi)=(\rho\cos\theta\sqrt{1-\Lambda^2},\rho\sin\theta\sqrt{1-\Lambda^2},\rho\Lambda),
\end{equation}
with
\begin{equation}
 \Lambda=\cos\phi,\quad \sqrt{1-\Lambda^2}=\sin\phi,
\end{equation}
and have that
\begin{equation}
\begin{split}
\partial_\rho\xi&=\rho^{-1}\xi,\qquad \partial_\theta\xi=\xi_\h^\perp,\\
\partial_\phi\xi&=(\rho\cos\theta\cos\phi,\rho\sin\theta\cos\phi,-\rho\sin\phi)=\frac{\Lambda}{\sqrt{1-\Lambda^2}}\xi_\h-\sqrt{1-\Lambda^2}\rho\,\vec{e}_3,\\
\partial_\Lambda\xi&=-\frac{\Lambda}{\sqrt{1-\Lambda^2}}(\rho\cos\theta,\rho\sin\theta,0)+(0,0,\rho),
\end{split}
\end{equation}
so that $S$ is the radial scaling vector field and $\Omega$ the azimuthal angular derivative, i.e.\
\begin{equation}\label{eq:vfs_coords}
 S_\xi=\xi\cdot\nabla_\xi=\rho\partial_\rho,\qquad \Omega_\xi=\xi_\h^\perp\cdot\nabla_\xi=\partial_\theta.
\end{equation}

To complement these to a full set of vector fields\footnote{\blue{While other choices of complementing vector field are possible, $\Ups$ seems to play a particularly favorable role with respect to the linear and nonlinear structure. In the context of cylindrical symmetry, (a $0$-homogeneous version of) the vertical derivative $\partial_{\xi_3}$ would be another natural choice, but this leads to a degenerate coordinate system near the vertical axis (where $S=\xi_3\partial_{\xi_3}$) and complicates the nonlinear analysis.}} that spans the tangent space at a point in $\R^3$, we define the polar angular derivative by
\begin{equation}\label{eq:ups}
\begin{aligned}
 \Upsilon_\xi:=\partial_\phi=-\sqrt{1-\Lambda^2}\partial_\Lambda&=\frac{\Lambda}{\sqrt{1-\Lambda^2}}(\xi_1\partial_{\xi_1}+\xi_2\partial_{\xi_2})-\vert\xi\vert\sqrt{1-\Lambda^2}\partial_{\xi_3}=\frac{1}{\sqrt{1-\Lambda^2}}\left[\Lambda S_\xi-\abs{\xi}\partial_{\xi_3}\right].
\end{aligned} 
\end{equation}
In terms of the rotation vector fields $\Omega^x_{ab}=x_a\partial_{x_b}-x_b\partial_{x_a}$ introduced in the context of the angular Littlewood-Paley decomposition (Section \ref{ssec:angLP}), this can also be expressed as
\begin{equation}\label{eq:UpsOmegas}
 \Ups_\xi=-\frac{\xi_1}{\abs{\xi_\h}}\Omega^\xi_{13}-\frac{\xi_2}{\abs{\xi_\h}}\Omega^\xi_{23}.
\end{equation}
 
\begin{figure}[h]
 \centering
 \includegraphics[width=0.4\textwidth]{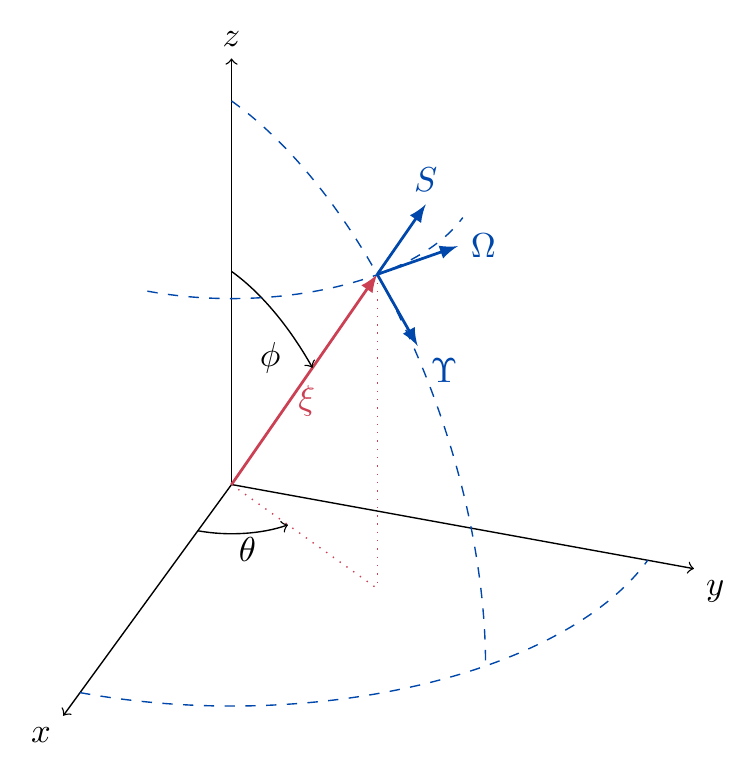}
 \caption{The vector fields $S,\Omega,\Ups$.}
 \label{fig:3dcoords}
\end{figure}
  
\subsection{Proof of Proposition \ref{prop:decay}}\label{sec:decay_proof} 

By scaling and rotation symmetry, we may assume that $k=0$ and ${\bf x}=(x,0,z)$ for some $x\ge0$. If $t2^{2p+q}\le C$, we simply use a crude integration to get
\begin{equation*}
\begin{split}
\vert I_{0,p,q}\vert\lesssim 2^{2p+q}\norm{f}_B.
\end{split}
\end{equation*}
Henceforth we will assume that $2^{-2p-q}\leq C^{-1}t$. We have that 
\begin{equation}
 P_{k,p,q}e^{it\Lambda}f({\bf x})=\int_{\mathbb{R}^3}e^{i\left[t\Lambda(\xi)+\langle {\bf x},\xi\rangle\right]}\widehat{P_{k,p,q}f}(\xi_1,\xi_2,\xi_3)d\xi.
\end{equation}
In spherical coordinates $\xi\mapsto(\rho,\Lambda,\theta)$, with
\begin{equation*}
 d\xi=\rho^2\sin\phi d\theta d\phi d\rho=\rho^2d\theta d\Lambda d\rho,
\end{equation*}
upon integration in $\theta$ we thus need to consider the integral
\begin{equation*}
\begin{split}
I(x,z,t)&:=\int_{\mathbb{R}_+\times[-1,1]}e^{i\left[t\Lambda+\rho\Lambda z\right]}\varphi(2^{-p}\rho\sqrt{1-\Lambda^2})\varphi(2^{-q}\rho\Lambda)\cdot J_0(\rho\sqrt{1-\Lambda^2}x)\cdot \widehat{f}\rho^2\varphi(\rho)d\rho d\Lambda,
\end{split}
\end{equation*}
where $J_0$ denotes the Bessel function of order $0$. 
By standard results on Bessel functions (see e.g.\ \cite[page 338]{Ste1993}), this reduces to studying
\begin{equation*}
\begin{split}
I^\pm(x,z,t)&:=\int_{\mathbb{R}_+\times[-1,1]}e^{i\Psi}\varphi(2^{-p}\rho\sqrt{1-\Lambda^2})\varphi(2^{-q}\rho\Lambda)\cdot H_\pm(\rho\sqrt{1-\Lambda^2}x)\cdot \widehat{f}\rho^2\varphi(\rho)d\rho d\Lambda,\\
\Psi&:=t\Lambda+\rho\left[\Lambda z\pm\sqrt{1-\Lambda^2}x\right],
\end{split}
\end{equation*}
where
\begin{equation}\label{eq:dxBessel}
\left\vert\left(\frac{d}{dx}\right)^aH_\pm(x)\right\vert\lesssim \langle x\rangle^{-\frac{1}{2}-a}.
\end{equation}
We focus on the case with sign $+$; the other estimate is similar. We can compute the gradient
\begin{equation}\label{SPgradients}
\begin{split}
\partial_\Lambda\Psi&=t+\rho\left[z-\frac{\Lambda}{\sqrt{1-\Lambda^2}} x\right],\qquad\partial_\rho\Psi=\Lambda z+\sqrt{1-\Lambda^2}x,\\
\partial_\Lambda^2\Psi&=-\frac{\rho x}{\left[1-\Lambda^2\right]^\frac{3}{2}},\qquad\partial_\Lambda\partial_\rho\Psi=z-\frac{\Lambda}{\sqrt{1-\Lambda^2}} x,\qquad\partial_\rho^2\Psi=0.
\end{split}
\end{equation}

\medskip
For fixed $p,q$ and $0<\kappa<(\beta-\beta^\prime)/20$, we let $\ell_0$ be the greatest integer such that $2^{\ell_0}\le 2^p t\cdot(2^{2p+q}t)^{-\kappa}$, and we decompose
\begin{equation*}
\begin{split}
 \blue{f=R_{\le \ell_0}f+(Id-R_{\leq\ell_0})f,}
\end{split}
\end{equation*}
with \blue{$I^+=I^+_{\leq\ell_0}+I^+_{>\ell_0}$} accordingly. On the one hand, we see that
\begin{equation*}
\begin{split}
 \blue{\Vert I^+_{>\ell_0}\Vert_{L^2}}&\lesssim  \sum_{\ell\ge\ell_0}2^{-(1+\beta)\ell}2^{-\beta p} \norm{P_{0,p}R_\ell f}_{X}\lesssim 2^{-p}t^{-1}\cdot (2^{2p+q}t)^{-\beta+\kappa+\beta\kappa}\cdot 2^{q\beta}\norm{f}_X,
 \end{split}
\end{equation*}
which yields the $L^2$ contribution to \eqref{eq:I-IIdecay}. From now on, together with \eqref{eq:R-Bernstein} we can thus assume that \blue{$f=R_{\le \ell_0}f$} satisfies for all $a\ge0$ and $0\le b\le 2$ that
\begin{equation}\label{FSignalProperty}
\begin{split}
\Vert S^b\partial_\Lambda^a \widehat{f}\Vert_{L^\infty}&\lesssim_a t^a\cdot(2^{2p+q}t)^{-a\kappa}\Vert S^b\widehat{f}\Vert_{L^\infty}\lesssim_a t^a\cdot(2^{2p+q}t)^{-a\kappa}\norm{f}_D.
\end{split}
\end{equation}
We will bound the remaining terms in $L^\infty$, and distinguish cases as follows:

\medskip

{\bf Case 1}:
\begin{equation*}
\begin{split}
0\le x\le C^{-1}t2^{p+q},\qquad\hbox{ and }\qquad \vert z\vert\le C^{-1} t2^{2p}.
\end{split}
\end{equation*}
In these conditions, there holds that
\begin{equation}\label{eq:dlambdaPsi}
\begin{split}
 \vert\partial_\Lambda\Psi\vert\ge t/2,\qquad \abs{\partial_\Lambda^2\Psi}\leq C^{-1}t2^{q-2p},\qquad \abs{\partial_\Lambda^3\Psi}\leq C^{-1}t2^{2q-4p}.
\end{split}
\end{equation}

Using that (with $h=\varphi(2^{-p}\rho\sqrt{1-\Lambda^2})\varphi(2^{-q}\rho\Lambda)H_+(\rho\sqrt{1-\Lambda^2}x)\rho^2\varphi(\rho)$)
\begin{equation*}
-i\iint_{\mathbb{R}_+\times[-1,1]}e^{i\Psi}h\,\partial_\Lambda^n\widehat{f}  d\rho d\Lambda=\iint_{\mathbb{R}_+\times[-1,1]}e^{i\Psi}\frac{h}{\partial_\Lambda\Psi}\partial_\Lambda^{n+1}\widehat{f}  d\rho d\Lambda+\iint_{\mathbb{R}_+\times[-1,1]}e^{i\Psi}\partial_\Lambda\left(\frac{h}{\partial_\Lambda\Psi}\right)\partial_\Lambda^n\widehat{f} d\rho d\Lambda,
\end{equation*}
we integrate by parts at most $N$ times in $\Lambda$, with $N\kappa\ge2$, stopping before if a second derivative does not hit $\widehat{f}$. Note that the boundary terms vanish since we assume $2^{2p}t\gtrsim 1$. Once this is done, we have several types of terms:
\begin{enumerate}[label=(\roman*),wide]
\item if all derivatives hit $\widehat{f}$, a crude estimate using \eqref{FSignalProperty} gives
\begin{equation*}
\begin{split}
\left\vert \iint_{\mathbb{R}_+\times[-1,1]}e^{i\Psi}\frac{h}{(\partial_\Lambda\Psi)^N}\partial_\Lambda^N\widehat{f} d\rho d\Lambda\right\vert &\lesssim 2^{2p+q}t^{-N}\cdot t^N\cdot(2^{2p+q}t)^{-N\kappa}\norm{f}_D\lesssim t^{-1}\cdot (2^{2p+q}t)^{-1}\norm{f}_D.
\end{split}
\end{equation*}

\item if all but one derivative hit $\widehat{f}$, we have a similar estimate since by \eqref{eq:dlambdaPsi} there holds that
\begin{equation*}
\vert \partial_\Lambda h/\partial_\Lambda \Psi\vert+\vert \partial_\Lambda^2\Psi/(\partial_\Lambda\Psi)^2\vert\lesssim 1.
\end{equation*}

\item if two derivatives do not hit $\widehat{f}$, using \eqref{eq:dlambdaPsi} we compute that
\begin{equation*}
\begin{split}
\frac{\vert\partial_\Lambda^3\Psi\vert}{\vert\partial_\Lambda\Psi\vert^3}+\frac{\vert\partial^2_\Lambda\Psi\vert^2}{\vert\partial_\Lambda\Psi\vert^4}+\frac{\vert\partial_\Lambda^2\Psi\vert}{\vert\partial_\Lambda\Psi\vert^2}\frac{\vert\partial_\Lambda h\vert}{\vert\partial_\Lambda\Psi\vert}+\frac{\vert\partial_\Lambda^2h\vert}{\vert\partial_\Lambda\Psi\vert^2}\lesssim (2^{2p+q}t)^{-2}
\end{split}
\end{equation*}
and therefore a crude estimate gives a similar bound.
\end{enumerate}

\medskip

{\bf Case 2}:\footnote{Cases 2 and 3 have already been treated similarly in \cite[Proof of Proposition 4.1]{rotE}.}
\begin{equation*}
\begin{split}
x\ge C^{-1}t2^{p+q}\,\hbox{ and }\,\vert z\vert\le C^{-2}t2^{2p},\qquad\hbox{ or }\qquad  \vert x\vert\le C^{-2}t2^{p+q}\,\hbox{ and }\,\vert z\vert\ge C^{-1}t2^{2p}.
\end{split}
\end{equation*}
Here we have that
\begin{equation*}
\vert\partial_\rho\Psi\vert\gtrsim t2^{2p+q},
\end{equation*}
and we can integrate by parts twice with respect to $\rho$ to obtain after crude integration that
\begin{equation}
 \abs{I^+(x,z,t)}\lesssim (t2^{2p+q})^{-2}\cdot 2^{2p+q}\norm{(1,\partial_\rho,\partial_\rho^2)\widehat{f}}_{L^\infty}\lesssim t^{-2}2^{-2p-q}\norm{(1,S,S^2)f}_D
\end{equation}
upon using Lemma \ref{lem:ControlLinfty}.

\medskip

{\bf Case 3}:
\begin{equation*}
\begin{split}
x\ge C^{-2}t2^{p+q},\,\hbox{ and }\,\vert z\vert\ge C^{-2}t2^{2p}.
\end{split}
\end{equation*}
In these conditions, there holds that
\begin{equation*}
\begin{split}
2^q\vert \partial_\rho\Psi\vert+\vert\partial_\rho\partial_\Lambda\Psi\vert\gtrsim t
\end{split}
\end{equation*}
which follows from
\begin{equation*}
\begin{split}
\Lambda\partial_\Lambda\partial_\rho\Psi-\partial_\rho\Psi&=-\frac{1}{\sqrt{1-\Lambda^2}}x,\qquad\partial_\Lambda\partial_\rho\Psi+\frac{\Lambda}{1-\Lambda^2}\partial_\rho\Psi=\frac{z}{1-\Lambda^2}
\end{split}
\end{equation*}
using the first estimate if $\blue{p\leq -10}$ and the second otherwise.  We now decompose
\begin{equation*}
\begin{split}
I=\sum_{n\ge0}I_n,\qquad
I_n(x,z,t)&:=\int_{\mathbb{R}_+\times[-1,1]}e^{i\Psi}\overline\varphi_{p,q}\cdot H_+(\rho\sqrt{1-\Lambda^2}\vert x\vert)\cdot\ \varphi(2^{-n}\partial_\rho\Psi)\widehat{f}\rho^2\varphi(\rho)d\rho d\Lambda,\\
\overline\varphi_{p,q}&:=\varphi(2^{-p}\rho\sqrt{1-\Lambda^2})\varphi(2^{-q}\rho\Lambda).
\end{split}
\end{equation*}
On the support of $I_0$ we have that $\vert\partial_\Lambda\partial_\rho\Psi\vert\gtrsim t$, and thus with $g(\rho,\Lambda)=\partial_\rho\Psi$
\begin{equation*}
\begin{split}
\vert I_0\vert&\lesssim \Vert \widehat{f}\Vert_{L^\infty}\iint \vert H_+(\rho\sqrt{1-\Lambda^2}x)\vert\cdot \varphi(g)\cdot\rho^2\varphi(\rho)d\rho d\Lambda\lesssim t^{-1}\cdot (2^{2p+q}t)^{-\frac{1}{2}}\Vert \widehat{f}\Vert_{L^\infty}.
\end{split}
\end{equation*}
For $n\ge 1$, we integrate by parts twice in $\rho$ and we find that
\begin{equation*}
\begin{split}
I_n(x,z,t)&:=\int_{\mathbb{R}_+\times[-1,1]}e^{i\Psi}\frac{1}{(\partial_\rho\Psi)^2}\varphi(2^{-n}\partial_\rho\Psi)\partial_\rho^2\left(\overline\varphi_{p,q}\cdot H_+(\rho\sqrt{1-\Lambda^2}x)\cdot\widehat{f}\rho^2\varphi(\rho)\right)d\rho d\Lambda
\end{split}
\end{equation*}
and we hence deduce
\begin{equation*}
\begin{split}
\vert I_n\vert&\lesssim 2^{-2n}\int_{\mathbb{R}_+\times[-1,1]}\vert \widetilde{H}(\rho\sqrt{1-\Lambda^2} x)\vert \cdot\ \varphi(2^{-n}\partial_\rho\Psi)\widetilde{\overline\varphi}_{p,q}F\widetilde{\varphi}(\rho)d\rho d\Lambda,\\
\widetilde{\overline\varphi}_{p,q}&:=\widetilde{\varphi}(2^{-p}\rho\sqrt{1-\Lambda^2})\widetilde{\varphi}(2^{-q}\rho\Lambda),\qquad \widetilde{H}(x):=\vert H_+(x)\vert+\langle x\rangle\vert \frac{dH_+}{dx}(x)\vert+\langle x\rangle^2\vert\frac{d^2H_+}{dx^2}(x)\vert,\\
F&:=\vert \widehat{f}\vert+\vert \partial_\rho\widehat{f}\vert+\vert \partial_\rho^2\widehat{f}\vert,
\end{split}
\end{equation*}
so that
\begin{equation*}
\begin{split}
\sum_{n\geq 1}\vert I_n\vert&\lesssim (t2^{2p+q})^{-\frac{1}{2}}\sum_{q+n\le \ln(t)}2^{-2n}\Vert F\Vert_{L^\infty}\int_{\mathbb{R}_+\times[-1,1]}  \varphi(2^{-n}g)\varphi(t^{-1}\partial_\Lambda g)\widetilde{\overline\varphi}\widetilde{\varphi}(\rho)d\rho d\Lambda\\
&\quad+(t2^{2p+q})^{-\frac{1}{2}}\sum_{q+n\ge \ln(t)}2^{-2n}\Vert F\Vert_{L^\infty}\int_{\mathbb{R}_+\times[-1,1]}  \widetilde{\overline\varphi}\widetilde{\varphi}(\rho)d\rho d\Lambda\\
&\lesssim (t2^{2p+q})^{-\frac{1}{2}}\Vert F\Vert_{L^\infty}\left(\sum_{q+n\le \ln(t)}2^{-n} t^{-1}+\sum_{q+n\ge \ln(t)}2^{-2n}2^{2p+q}\right).
\end{split}
\end{equation*}
Summing and using Lemma \ref{lem:ControlLinfty} finishes the proof.

\section{Integration by parts along vector fields}\label{sec:vfibp}
In this section we develop the formalism for repeated integration by parts along vector fields. To systematically do this, we first address (Section \ref{ssec:vfphase}) some important analytic aspects of the vector fields and how they relate to the bilinear structure of the equations \eqref{eq:EC_disp}. Then we introduce some multiplier classes related to the nonlinearity of \eqref{eq:EC_disp} and study their behavior under the vector fields (Section \ref{ssec:multipliers}). Subsequently we prove bounds for repeated integration by parts along vector fields in Section \ref{ssec:vfibp}.

\subsection{Vector fields and the phase}\label{ssec:vfphase}
We discuss here some aspects related to the interaction of the vector fields $V\in\{S,\Omega\}$ and the phase functions $\Phi_{\mu\nu}$ as in \eqref{eq:def_phi}. We use subscripts to denote the Fourier variable in which a vector field acts, so that
 \begin{equation}
  \Omega_\eta=\eta_\h^\perp\cdot\nabla_{\eta_\h},\quad S_\eta=\eta\cdot\nabla_{\eta}.
 \end{equation}
We begin by recalling that by construction there holds that 
\begin{equation}
 S_\zeta\Lambda(\zeta)=\Omega_\zeta\Lambda(\zeta)=0,
\end{equation}
and thus
\begin{equation}
 V_\eta\Phi_{\mu\nu}(\xi,\eta)=\mu\,V_\eta\Lambda(\xi-\eta),\qquad V\in\{S,\Omega\},\quad\mu,\nu\in\{-,+\}.
\end{equation}
To simplify the notation we will henceforth assume that $\mu=+$ and simply write $\Phi$ for any of the phase functions $\Phi_{\mu\nu}$, when the precise sign combination in \eqref{eq:def_phi} is inconsequential.

The quantity
\begin{equation}\label{eq:def_sigma}
  \bar\sigma\equiv\bar\sigma(\xi,\eta):=\xi_3\eta_\h-\eta_3\xi_\h=-(\xi\times\eta)_\h^\perp,
\end{equation}
will play an important role in our analysis. We note that
\begin{equation}\label{eq:def_sigma2}
 \bar\sigma(\xi,\eta)=\bar\sigma(\xi-\eta,\eta)=-\bar\sigma(\xi,\xi-\eta),
\end{equation}
and $\bar\sigma$ combines horizontal and vertical components of our frequencies, over which we will have precise control (see e.g.\ \eqref{eq:loc_def3}). Moreover, it turns out that $\bar\sigma$ controls the size of vector fields acting on the phase. A direct computation yields:
\begin{lemma}\label{lem:vfsizes-mini}
There holds that
 \begin{equation}\label{eq:vf_sigma_0}
  S_\eta\Phi=\bar\sigma(\xi,\eta)\cdot\frac{\xi_\h-\eta_\h}{\abs{\xi-\eta}^3},\quad \Omega_\eta\Phi=-\bar\sigma(\xi,\eta)\cdot\frac{(\xi_\h-\eta_\h)^\perp}{\abs{\xi-\eta}^3},
 \end{equation}
 and hence
 \begin{equation}\label{eq:vflobound_0}
  \abs{S_\eta\Phi}+\abs{\Omega_\eta\Phi}\sim \frac{\abs{\xi_\h-\eta_\h}}{\abs{\xi-\eta}}\abs{\xi-\eta}^{-2}\abs{\bar\sigma(\xi,\eta)}. 
 \end{equation}
\end{lemma}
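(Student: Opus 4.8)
The plan is to prove Lemma \ref{lem:vfsizes-mini} by a direct computation, exploiting the fact that $S$ and $\Omega$ act trivially on $\Lambda$ evaluated at the arguments $\xi$ and $\eta$, so that only the $\Lambda(\xi-\eta)$ term of the phase contributes. Concretely, since $S_\eta \Lambda(\xi) = \Omega_\eta\Lambda(\xi) = 0$ (the vector fields act only in $\eta$) and $S_\eta\Lambda(\eta) = \Omega_\eta\Lambda(\eta) = 0$ (by $S_\zeta\Lambda(\zeta) = \Omega_\zeta\Lambda(\zeta) = 0$), we have $V_\eta\Phi(\xi,\eta) = V_\eta[\Lambda(\xi-\eta)]$ for $V\in\{S,\Omega\}$. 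So the whole computation reduces to differentiating $\Lambda(\zeta) = \zeta_3/|\zeta|$ at $\zeta = \xi-\eta$.

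First I would compute the Euclidean gradient of $\Lambda$: writing $\zeta = (\zeta_\h,\zeta_3)$, one has $\nabla_\zeta \Lambda(\zeta) = |\zeta|^{-1}\vec{e}_3 - \zeta_3|\zeta|^{-3}\zeta = |\zeta|^{-3}\big(|\zeta|^2\vec{e}_3 - \zeta_3\zeta\big)$, whose horizontal part is $-\zeta_3|\zeta|^{-3}\zeta_\h$ and whose vertical part is $|\zeta|^{-3}(|\zeta|^2 - \zeta_3^2) = |\zeta_\h|^2|\zeta|^{-3}$. Next, I apply the chain rule with $\zeta = \xi - \eta$, noting that $\partial_{\eta_j}(\xi-\eta)_k = -\delta_{jk}$, so $S_\eta[\Lambda(\xi-\eta)] = \eta\cdot\nabla_\eta[\Lambda(\xi-\eta)] = -\eta\cdot(\nabla_\zeta\Lambda)(\xi-\eta)$ and similarly $\Omega_\eta[\Lambda(\xi-\eta)] = -\eta_\h^\perp\cdot(\nabla_{\zeta_\h}\Lambda)(\xi-\eta)$. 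Substituting the gradient formula with $\zeta = \xi-\eta$ and expanding, the $S$ computation gives $-\eta\cdot\big(-\frac{(\xi_3-\eta_3)}{|\xi-\eta|^3}(\xi_\h-\eta_\h), \frac{|\xi_\h-\eta_\h|^2}{|\xi-\eta|^3}\big)$, and after collecting terms the numerator becomes a combination of $\eta_\h\cdot(\xi_\h-\eta_\h)$ and $\eta_3$ that one recognizes, using $\bar\sigma = \xi_3\eta_\h - \eta_3\xi_\h$ (and the identity $\bar\sigma(\xi,\eta) = \bar\sigma(\xi-\eta,\eta)$ from \eqref{eq:def_sigma2}), as $\bar\sigma(\xi,\eta)\cdot(\xi_\h-\eta_\h)/|\xi-\eta|^3$. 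The same manipulation for $\Omega_\eta$ (replacing the dot product with $\eta_\h^\perp\cdot$, equivalently $-\eta_\h\cdot(\cdot)^\perp$) produces $-\bar\sigma(\xi,\eta)\cdot(\xi_\h-\eta_\h)^\perp/|\xi-\eta|^3$, giving \eqref{eq:vf_sigma_0}.

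For \eqref{eq:vflobound_0}, since $(a, a^\perp)$ for a vector $a = \xi_\h - \eta_\h \in \R^2$ form an orthogonal pair of equal length, one has $|S_\eta\Phi|^2 + |\Omega_\eta\Phi|^2 = |\bar\sigma(\xi,\eta)|^2 \, |\xi_\h-\eta_\h|^2 / |\xi-\eta|^6$ exactly (here $\bar\sigma$ is treated as a $2$-vector dotted against $\xi_\h-\eta_\h$ resp.\ its perpendicular — more care is needed since $\bar\sigma\cdot a$ and $\bar\sigma\cdot a^\perp$ are the two components of $\bar\sigma$ in the basis $\{a/|a|, a^\perp/|a|\}$ scaled by $|a|$, whose squares sum to $|\bar\sigma|^2|a|^2$). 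Taking square roots and rewriting $|\xi_\h-\eta_\h|/|\xi-\eta|^3 = (|\xi_\h-\eta_\h|/|\xi-\eta|)\cdot|\xi-\eta|^{-2}$ yields the stated equivalence $|S_\eta\Phi| + |\Omega_\eta\Phi| \sim \frac{|\xi_\h-\eta_\h|}{|\xi-\eta|}|\xi-\eta|^{-2}|\bar\sigma(\xi,\eta)|$, with the implied constants coming only from the equivalence of $\ell^1$ and $\ell^2$ norms on $\R^2$.

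There is no serious obstacle here — the lemma is genuinely a direct computation — but the step requiring the most care is the algebraic identification of the numerator after the chain rule with $\bar\sigma\cdot(\xi_\h-\eta_\h)$ (resp.\ its perpendicular): one must use both the relation $\bar\sigma(\xi,\eta) = \bar\sigma(\xi-\eta,\eta)$ and the cancellation of the $|\eta|$-type cross terms, so I would carry out that bookkeeping explicitly rather than by inspection. The only other point worth stating cleanly is the interpretation of ``$\bar\sigma\cdot a$'' as a scalar (the Euclidean inner product of the two horizontal vectors $\bar\sigma_\h$ and $a = \xi_\h - \eta_\h$ in $\R^2$), which makes \eqref{eq:vf_sigma_0} dimensionally consistent with \eqref{eq:vflobound_0}.
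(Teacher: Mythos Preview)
Your proposal is correct and complete. The paper itself does not give a proof but merely cites the earlier work \cite[Lemma 6.1]{rotE}; your direct computation is exactly the kind of argument that reference contains, so you are in fact supplying what the paper omits.
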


\begin{proof}
 See \cite[Lemma 6.1]{rotE}.
\end{proof}
We will make frequent use of this lemma when integrating by parts along vector fields (see Section \ref{ssec:vfibp}).

Another crucial observation is contained in the following proposition: it shows that either we have a lower bound for $\bar\sigma$ (and by \eqref{eq:vflobound_0} thus also for $V_\eta\Phi$), or the phase is relatively large. More precisely, we have shown in \cite[Proposition 6.2]{rotE} that:
\begin{proposition}\label{prop:phasevssigma}
 Assume that $\abs{\Phi}\leq 2^{q_{\max}-10}$. Then in fact $2^{p_{\max}}\sim 1$, and $\abs{\bar\sigma}\gtrsim 2^{q_{\max}}2^{k_{\max}+k_{\min}}$.
\end{proposition}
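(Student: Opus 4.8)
The plan is to prove the contrapositive-type dichotomy by a direct computation of $\Phi$ in spherical-type coordinates adapted to the anisotropic localizations, tracking horizontal and vertical frequency sizes. First I would set up notation: write $\xi = \rho e$, $\xi - \eta = \rho_1 e_1$, $\eta = \rho_2 e_2$ where each $e_\bullet \in \mathbb{S}^2$, so that $\Lambda(\zeta) = \cos\phi_\bullet$ is the cosine of the polar angle and $\sqrt{1-\Lambda^2(\zeta)} = \sin\phi_\bullet$. The localization parameters encode $2^{k_\bullet} \sim \rho_\bullet$, $2^{p_\bullet} \sim \sin\phi_\bullet$, $2^{q_\bullet} \sim |\cos\phi_\bullet|$. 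Since $\bar\sigma(\xi,\eta) = \xi_3\eta_\h - \eta_3\xi_\h$, its magnitude is $|\bar\sigma| = |\xi_3||\eta_\h - (\eta_3/\xi_3)\xi_\h|$ — more symmetrically, using $\bar\sigma(\xi,\eta) = -(\xi\times\eta)_\h^\perp$, its size is $\sim |\xi||\eta|$ times the sine of the angle between the vertical projections, which after a short computation is $\sim 2^{k+k_{\min}'}$-type quantity; the key identity to establish is $|\bar\sigma| \sim 2^{k_a + k_b}\cdot|\sin\phi_a\cos\phi_b - \sin\phi_b\cos\phi_a|$ for the appropriate pair, i.e. $|\bar\sigma|\sim 2^{k_a+k_b}|\sin(\phi_a-\phi_b)|$ when the horizontal azimuthal angles align (which one can reduce to by rotation, as $\bar\sigma$ and $\Phi$ are both invariant/equivariant under the rotation $\Omega$).

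Next I would address the two conclusions separately. For $2^{p_{\max}} \sim 1$: suppose not, so all three of $\sin\phi, \sin\phi_1, \sin\phi_2$ are small, meaning all three vectors $\xi, \xi-\eta, \eta$ are nearly vertical (either near $+\vec e_3$ or near $-\vec e_3$ direction). Then each $\Lambda(\zeta) = \pm 1 + O(2^{2p_\bullet})$, so $\Phi = \Lambda(\xi) + \Lambda(\xi-\eta) + \Lambda(\eta)$ (up to signs) is a sum of three quantities each close to $\pm 1$; the only way $|\Phi| \leq 2^{q_{\max}-10}$ can hold, given $2^{q_{\max}} \sim 1$ in this regime (since $q_{\max}$ corresponds to the largest $|\Lambda|$, which is $\sim 1$), is if the three signs conspire — but since $\xi = (\xi - \eta) + \eta$, having all three nearly parallel to $\vec e_3$ forces a sign consistency ($\xi_3 \approx (\xi-\eta)_3 + \eta_3$ with all near $\pm$ full length) that makes $\Lambda(\xi) \approx \mu\Lambda(\xi-\eta) + \nu\Lambda(\eta)$ impossible to cancel against the extra $+\Lambda(\xi)$ term — one gets $|\Phi| \gtrsim 1 \gg 2^{q_{\max}-10}$, a contradiction. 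Hence $2^{p_{\max}}\sim 1$. For the lower bound on $\bar\sigma$: now that $2^{p_{\max}}\sim 1$, at least one vector is "transverse" to the axis. I would expand $\Phi$ to second order and show that $|\Phi|$ small forces the polar angles of the large-horizontal-frequency vectors to be close, which is exactly $|\sin(\phi_a - \phi_b)|$ small; quantitatively, the relation $\Phi = \cos\phi + \mu\cos\phi_1 + \nu\cos\phi_2$ combined with the constraint from $\xi = (\xi-\eta)+\eta$ yields $|\bar\sigma| \gtrsim 2^{q_{\max}}2^{k_{\max}+k_{\min}}$ by relating $\partial\Phi$ to $\bar\sigma$ via Lemma \ref{lem:vfsizes-mini} and the structure $|\Phi|$ small $\Rightarrow$ $\Phi$ near a critical configuration.

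Concretely, the cleanest route is to cite and lean on Lemma \ref{lem:vfsizes-mini}: we have $|V_\eta\Phi| \sim \frac{|\xi_\h - \eta_\h|}{|\xi-\eta|}|\xi-\eta|^{-2}|\bar\sigma|$, and symmetric identities hold for $V_\xi\Phi$, $V_{\xi-\eta}\Phi$ by the symmetry \eqref{eq:def_sigma2} of $\bar\sigma$. So it suffices to produce a good lower bound for one of $S_\zeta\Phi$ or $\Omega_\zeta\Phi$. The mechanism is: if $|\Phi|$ is small but $\bar\sigma$ (hence all vector-field derivatives of $\Phi$) were also small, then $\Phi$ would be nearly constant along the $S,\Omega$ flows through the interaction point; but these flows, together with scaling, let one deform the configuration to a degenerate one (e.g. $\eta \to 0$ or collinear frequencies) where $\Phi$ takes a definite $\sim 1$ value — contradiction with $|\Phi|\leq 2^{q_{\max}-10}\ll 1$. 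Turning this into the sharp quantitative bound $|\bar\sigma|\gtrsim 2^{q_{\max}}2^{k_{\max}+k_{\min}}$ requires the explicit trigonometric computation in the adapted coordinates.

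I expect the main obstacle to be the anisotropic bookkeeping: keeping precise track of which of the three frequencies carries $k_{\max}$, $k_{\min}$, $q_{\max}$, and handling the several sign combinations $\mu,\nu \in \{+,-\}$ together with the cases where vectors point near $+\vec e_3$ versus $-\vec e_3$. In fact, since the statement is quoted as \cite[Proposition 6.2]{rotE}, the honest proof here is simply to invoke that reference; the sketch above indicates the structure of the argument behind it.

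\begin{proof}[Proof of Proposition \ref{prop:phasevssigma}]
 This is \cite[Proposition 6.2]{rotE}; we recall the idea. Writing $\xi,\xi-\eta,\eta$ in spherical coordinates with polar angles $\phi,\phi_1,\phi_2$ (so $\Lambda(\xi)=\cos\phi$, etc.), one has $|\bar\sigma|\sim |\xi||\eta|\,|\sin\phi_1\cos\phi_2-\sin\phi_2\cos\phi_1|$ after using the rotation invariance to align the horizontal components. If $2^{p_{\max}}\not\sim 1$, all three vectors are nearly vertical, forcing $\Lambda(\xi)+\mu\Lambda(\xi-\eta)+\nu\Lambda(\eta)$ to be bounded away from $0$ by the collinearity constraint $\xi=(\xi-\eta)+\eta$, contradicting $|\Phi|\leq 2^{q_{\max}-10}$. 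Hence $2^{p_{\max}}\sim 1$. Then, combining Lemma \ref{lem:vfsizes-mini} (and its counterparts obtained from \eqref{eq:def_sigma2}) with an expansion of $\Phi$ near its critical configurations shows that smallness of $|\Phi|$ forces the relevant polar angles to be close, which translates precisely into $|\bar\sigma|\gtrsim 2^{q_{\max}}2^{k_{\max}+k_{\min}}$.
\end{proof}
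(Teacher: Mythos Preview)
Your proposal is correct and matches the paper's own treatment: the paper does not give a proof of this proposition but simply cites \cite[Proposition 6.2]{rotE}, exactly as you do. Your additional sketch of the argument (spherical coordinates, the contradiction when all vectors are nearly vertical, and the role of Lemma \ref{lem:vfsizes-mini}) goes beyond what the paper itself provides here and is consistent with the structure of the cited argument.
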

In practice, this implies that \emph{either we can integrate by parts along a vector field $V\in\{S,\Omega\}$ or perform a normal form.} This may also be viewed as a qualified (and quantified) statement of absence of spacetime resonances. Remarkably, it only makes use of the easily accessible derivatives given by the symmetries, rather than the full gradient.

\subsection{Some multiplier mechanics}\label{ssec:multipliers}
Let us consider the following set of ``elementary'' multipliers
\begin{equation}
\begin{aligned}
 E&:=\left\{\Lambda(\zeta),\sqrt{1-\Lambda^2(\zeta)},\frac{\zeta_{\h}\cdot\zeta_{1,\h}}{\abs{\zeta_{\h}}\abs{\zeta_{1,\h}}},\frac{\zeta_{\h}^\perp\cdot\zeta_{1,\h}}{\abs{\zeta_{\h}}\abs{\zeta_{1,\h}}},\frac{\zeta_2\cdot\zeta_3}{\abs{\zeta_2}\abs{\zeta_3}}\,:\,\zeta,\zeta_1\in\{\xi,\xi-\eta,\eta\},\zeta_2,\zeta_3\in\{\xi-\eta,\eta\}\right\}.
\end{aligned} 
\end{equation}
We note that for $e\in E$ there holds that $\abs{e}\leq 1$, and $E$ is an enlarged version of $\bar{E}$ in \eqref{eq:barE}, that includes the \emph{horizontal} ``angles'' between all frequencies. As we will see, up to products and homogeneity this yields a class of multipliers that is closed under the action of the vector fields $V\in\{S,\Omega\}$, and allows us to express not only multipliers but also dot products with $\bar\sigma$ (as is needed for $V_\eta\Phi$) in terms of building blocks from $E$.

To track the orders of multipliers we encounter, we define the following collections of products of elementary multipliers
\begin{equation}
\begin{aligned}
 E_0\equiv E^{0}_{0}&:=\textnormal{span}_{\R}\left\{\prod_{i=1}^N e_i\,:\, e_i\in E,N\in\N\right\},\\
 E^{a}_{b}&:=\textnormal{span}_{\R}\left\{\abs{\xi-\eta}^{-a}\abs{\eta}^{a}\abs{\xi_\h-\eta_\h}^{-b}\abs{\eta_\h}^{b}\cdot e\,:\,e\in E_0\right\}, \quad a,b\in\Z.
\end{aligned} 
\end{equation}
Furthermore, for $n\in \N$ we let
\begin{equation}
  E(n):=\cup_{\substack{a+b\leq n\\a,b\geq 0}} E^{a}_{b},\quad E(-n):=\cup_{\substack{a+b\geq -n\\a,b\leq 0}} E^{a}_{b},
\end{equation}
which includes all multipliers up to a certain order of homogeneity. 

We remark that $\Phi\in E_0$. From Lemma \ref{lem:ECmult} it follows that the multipliers of the nonlinearity of Euler-Coriolis in dispersive formulation \eqref{eq:EC_disp} are elements of $E_0$ that satisfy certain bounds:
\begin{lemma}\label{lem:ECmult_bds}
 Let $\m$ be a multiplier of the nonlinearity of \eqref{eq:EC_disp}. Then there exists $e\in E_0$ such that
 \begin{equation}
 \m=\abs{\xi}\cdot e. 
\end{equation}
Moreover, we have the bounds
\begin{equation}\label{BoundsOnMAddedBenoit}
 \abs{\m}\cdot\chi_\h\lesssim 2^{k+p_{\max}},\qquad \abs{\m}\cdot\chi\lesssim 2^{k+p_{\max}+q_{\max}}.
\end{equation}
\end{lemma}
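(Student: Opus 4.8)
The plan is to deduce Lemma \ref{lem:ECmult_bds} directly from the explicit structure of the multipliers provided by Lemma \ref{lem:ECmult}. First I would recall that Lemma \ref{lem:ECmult} gives $\mathfrak{n}^{\mu\nu}_\kappa(\xi,\eta)=\abs{\xi}\bar{\mathfrak{n}}^{\mu\nu}_\kappa(\xi,\eta)$, where $\bar{\mathfrak{n}}^{\mu\nu}_\kappa$ lies in the real span of terms $\Lambda(\zeta_1)\sqrt{1-\Lambda^2(\zeta_2)}\cdot\bar{\mathfrak{n}}$ with $\bar{\mathfrak{n}}\in\bar E$ and $\zeta_1,\zeta_2\in\{\xi,\xi-\eta,\eta\}$. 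Since $\Lambda(\zeta_1)$, $\sqrt{1-\Lambda^2(\zeta_2)}$ and every element of $\bar E\subseteq E$ is an elementary multiplier, their product is in $E_0$ by definition; taking real spans stays in $E_0$. This immediately yields the factorization $\m=\abs{\xi}\cdot e$ with $e\in E_0$.

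For the quantitative bounds, the key is to estimate the size of $e\in E_0$ under the frequency localizations $\chi_\h$ and $\chi$. On the support of $\chi_\h$ (see \eqref{eq:loc_def3}), all three frequencies $\xi,\xi-\eta,\eta$ have horizontal components controlled by $\sqrt{1-\Lambda^2(\zeta)}\cdot\abs{\zeta}\sim 2^{p_j+k_j}$ for the relevant index. The crucial observation is that each $\bar{\mathfrak{n}}^{\mu\nu}_\kappa$ carries at least one factor of $\sqrt{1-\Lambda^2(\zeta_2)}$ for some $\zeta_2\in\{\xi-\eta,\eta\}$ (the null structure), which is $\lesssim 2^{p_{\max}}$ on the support of $\chi_\h$ (I should double-check whether $\zeta_2$ ranges over all of $\{\xi,\xi-\eta,\eta\}$ or just $\{\xi-\eta,\eta\}$ — in either case the bound $\lesssim 2^{p_{\max}}$ holds). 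All remaining elementary factors in $E$ — namely $\Lambda(\zeta)$, the further $\sqrt{1-\Lambda^2}$ factors, and the normalized horizontal angles $\frac{\zeta_\h\cdot\zeta'_\h}{\abs{\zeta_\h}\abs{\zeta'_\h}}$ — are bounded by $1$. Hence $\abs{e}\cdot\chi_\h\lesssim 2^{p_{\max}}$, and multiplying by $\abs{\xi}\sim 2^k$ gives $\abs{\m}\cdot\chi_\h\lesssim 2^{k+p_{\max}}$. For the bound with $\chi$, one uses in addition that on the support of $\chi$ we also control $\Lambda(\zeta_j)\sim 2^{q_j-k_j}$ (up to the constraint $2^{2p_j}+2^{q_j}\simeq 1$), so the factor $\Lambda(\zeta_1)$ contributes an additional $\lesssim 2^{q_{\max}}$, yielding $\abs{\m}\cdot\chi\lesssim 2^{k+p_{\max}+q_{\max}}$.

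I would write this out by picking a generic summand $\Lambda(\zeta_1)\sqrt{1-\Lambda^2(\zeta_2)}\cdot\bar{\mathfrak{n}}$, noting $\abs{\Lambda(\zeta_1)}\le 1$ and $\abs{\sqrt{1-\Lambda^2(\zeta_2)}}\le 1$ and $\abs{\bar{\mathfrak{n}}}\le 1$ in general, then refining on the localized supports: the factor $\sqrt{1-\Lambda^2(\zeta_2)}$ becomes $\lesssim 2^{p_{\max}}$ on $\mathrm{supp}\,\chi_\h$, and additionally $\Lambda(\zeta_1)$ becomes $\lesssim 2^{q_{\max}}$ on $\mathrm{supp}\,\chi$; one must also check that any horizontal-angle factors $\frac{\zeta_\h\cdot\zeta'_\h}{\abs{\zeta_\h}\abs{\zeta'_\h}}$ present in $\bar E$ are genuinely $\lesssim 1$ and need no support restriction (they are, being cosines of angles). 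Summing over the finitely many terms in the span preserves the bounds. The main (minor) obstacle is purely bookkeeping: making sure the null-structure factor $\sqrt{1-\Lambda^2(\zeta_2)}$ is attributed to a frequency among those appearing in $\chi_\h$ (so that its magnitude is indeed $\lesssim 2^{p_{\max}}$ rather than merely $\le 1$), and likewise that $\Lambda(\zeta_1)$ sits at one of the localized frequencies; this is immediate from $\zeta_1,\zeta_2\in\{\xi,\xi-\eta,\eta\}$ and the definition of $p_{\max},q_{\max}$ as maxima over all three frequencies. No genuine analytic difficulty arises; the content is entirely in unwinding the definition of $E_0$ and the localization conventions of Section \ref{ssec:loc}.
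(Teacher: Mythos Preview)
Your argument is correct and is exactly the (implicit) proof the paper has in mind: the lemma is stated without proof there, as a direct consequence of Lemma~\ref{lem:ECmult} together with $\bar E\subseteq E$ and the trivial bound $\abs{e}\le 1$ for $e\in E$. One small slip: on the support of $\chi$ the localization gives $\Lambda(\zeta_j)\sim 2^{q_j}$ (not $2^{q_j-k_j}$), since $\Lambda=\xi_3/\abs{\xi}$ is already normalized and $\varphi_{k,p,q}$ localizes $\Lambda$ itself at scale $2^q$; your conclusion $\abs{\Lambda(\zeta_1)}\lesssim 2^{q_{\max}}$ is nonetheless correct.
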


As a consequence, it will be important to understand the effect of vector fields on the above classes of multipliers, allowing us to keep track of their orders (e.g.\ when integrating by parts). This is the goal of the following lemma. 
\begin{lemma}\label{lem:vfE}
If $e\in E_0$, then $V_\eta e\in E(1)$ and $V_{\xi-\eta} e\in E(-1)$, and thus
\begin{equation}\label{eq:Ve1}
\begin{aligned}
 \abs{V_\eta e}\cdot\chi_\h(\xi,\eta)&\lesssim 1+2^{k_2-k_1}(1+2^{p_2-p_1}),\\
 \abs{V_{\xi-\eta} e}\cdot\chi_\h(\xi,\eta)&\lesssim 1+2^{k_1-k_2}(1+2^{p_1-p_2}).
\end{aligned} 
\end{equation}
More generally, if $e\in E^{a}_{b}$ 
then 
\begin{equation}\label{eq:Ve2}
\begin{aligned}
 V_{\eta}e\in E^{a}_{b}\cup E^{a+1}_{b} \cup E^{a}_{b+1}:\quad \abs{V_\eta e}\cdot\chi_\h(\xi,\eta)&\lesssim [1+2^{k_2-k_1}(1+2^{p_2-p_1})]\norm{e\chi_\h}_{L^\infty},\\
 V_{\xi-\eta}e\in E^{a}_{b}\cup E^{a-1}_{b} \cup E^{a}_{b-1}:\quad \abs{V_{\xi-\eta} e}\cdot\chi_\h(\xi,\eta)&\lesssim [1+2^{k_1-k_2}(1+2^{p_1-p_2})] \norm{e\chi_\h}_{L^\infty}.
\end{aligned} 
\end{equation}
\end{lemma}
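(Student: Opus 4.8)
The plan is to reduce everything to understanding how $S_\eta$ and $\Omega_\eta$ (and similarly $S_{\xi-\eta}$, $\Omega_{\xi-\eta}$) act on the four basic types of elementary multipliers in $E$, and then to propagate this through products and homogeneity factors by the Leibniz rule. First I would record the action of the vector fields on the fundamental building blocks: since $S_\zeta$ and $\Omega_\zeta$ annihilate $\Lambda(\zeta)$ and $\sqrt{1-\Lambda^2(\zeta)}$ whenever $\zeta$ is the variable being differentiated, the only nontrivial contributions come when the vector field acts in a variable $\eta$ (or $\xi-\eta$) appearing \emph{nonlinearly} through $\xi=(\xi-\eta)+\eta$. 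Concretely, $V_\eta$ applied to $\Lambda(\xi)$, $\sqrt{1-\Lambda^2(\xi)}$, $\Lambda(\xi-\eta)$, $\sqrt{1-\Lambda^2(\xi-\eta)}$ and to the various horizontal angles produces expressions that, after using $\bar\sigma$ from \eqref{eq:def_sigma} and the identities in Lemma \ref{lem:vfsizes-mini}, can be written as a bounded element of $E_0$ times at most one extra factor of the form $\abs{\xi-\eta}^{-1}\abs{\eta}$ or $\abs{\xi_\h-\eta_\h}^{-1}\abs{\eta_\h}$. This is exactly the statement that $V_\eta e \in E(1)$ for $e\in E$, with the refined version landing in $E^a_b\cup E^{a+1}_b\cup E^a_{b+1}$ once one tracks which type of factor is gained.

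Next I would handle products: if $e=\prod_i e_i$ with each $e_i\in E$, then by Leibniz $V_\eta e=\sum_i (V_\eta e_i)\prod_{j\neq i}e_j$, and since each $e_j$ with $j\neq i$ is bounded by $1$ while $V_\eta e_i$ gains at most one homogeneity factor of the right sign, the whole sum lies in $E(1)$; this gives the first claim $V_\eta e\in E(1)$, $V_{\xi-\eta}e\in E(-1)$ for $e\in E_0$. For the more general $e\in E^a_b$, one writes $e = \abs{\xi-\eta}^{-a}\abs{\eta}^a\abs{\xi_\h-\eta_\h}^{-b}\abs{\eta_\h}^b\cdot e_0$ with $e_0\in E_0$; applying $V_\eta$ by Leibniz, the derivative either hits $e_0$ (giving an element of $E(1)$ times the same homogeneity prefactor, hence landing in $E^a_b\cup E^{a+1}_b\cup E^a_{b+1}$), or it hits one of the homogeneity factors. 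For the latter I use that $S_\eta\abs{\eta}=\abs{\eta}$, $\Omega_\eta\abs{\eta}=0$, $S_\eta\abs{\xi-\eta}=-\abs{\xi-\eta}\cdot(\text{bounded})$ — more precisely $S_\eta\abs{\xi-\eta} = -(\xi-\eta)\cdot\eta/\abs{\xi-\eta}$, which is $\abs{\xi-\eta}^{-1}\abs{\eta}$ times a bounded angle, i.e.\ an element of $E^1_0$ times $\abs{\xi-\eta}$ — so differentiating $\abs{\xi-\eta}^{-a}$ produces $\abs{\xi-\eta}^{-a-1}\abs{\eta}\cdot(\text{bounded})$, which again shifts $a\mapsto a+1$, and similarly the horizontal factors shift $b\mapsto b+1$. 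In every case the exponent on $\abs{\xi-\eta}^{-1}$ or $\abs{\xi_\h-\eta_\h}^{-1}$ only increases (by at most one), which is the content of $V_\eta e\in E^a_b\cup E^{a+1}_b\cup E^a_{b+1}$; the symmetric computation with $V_{\xi-\eta}$, using $\bar\sigma(\xi,\eta)=-\bar\sigma(\xi,\xi-\eta)$ and $S_{\xi-\eta}\abs{\eta}=-(\xi-\eta)\cdot\eta/\abs{\eta}$, gives the decrease by one and hence $E^a_b\cup E^{a-1}_b\cup E^a_{b-1}$.

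Finally, the pointwise bounds \eqref{eq:Ve1} and \eqref{eq:Ve2} follow by evaluating the membership statements on the support of $\chi_\h$: on $\mathrm{supp}\,\chi_\h$ one has $\abs{\xi-\eta}\simeq 2^{k_1}$, $\abs{\eta}\simeq 2^{k_2}$, $\abs{\xi_\h-\eta_\h}\simeq 2^{k_1+p_1}$, $\abs{\eta_\h}\simeq 2^{k_2+p_2}$, so an element of $E^1_0$ is $\lesssim 2^{k_2-k_1}$, an element of $E^0_1$ is $\lesssim 2^{(k_2+p_2)-(k_1+p_1)}$, and $E^1_1$ gives the product; combining with the bounded piece (of size $\norm{e\chi_\h}_{L^\infty}$ in the refined version, or $\lesssim 1$ for $e\in E_0$) yields exactly $[1+2^{k_2-k_1}(1+2^{p_2-p_1})]$, and symmetrically for $V_{\xi-\eta}$. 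The main obstacle — really the only place requiring genuine care rather than bookkeeping — is the explicit computation of $V_\eta$ on the horizontal-angle multipliers $\zeta_\h\cdot\zeta_{1,\h}/(\abs{\zeta_\h}\abs{\zeta_{1,\h}})$ and on $\sqrt{1-\Lambda^2(\xi-\eta)}=\abs{\xi_\h-\eta_\h}/\abs{\xi-\eta}$: one must verify that the resulting rational expressions genuinely reorganize into (bounded angle)$\times$(at most one homogeneity factor of the allowed type) and that no factor of $\abs{\xi-\eta}^{-1}$ or $\abs{\xi_\h-\eta_\h}^{-1}$ with a worse exponent, nor any factor of $\abs{\xi}^{-1}$ or $\abs{\xi_\h}^{-1}$ outside the allowed classes, is produced. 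This is where the algebraic structure built around $\bar\sigma$ — in particular Lemma \ref{lem:vfsizes-mini} and the symmetry \eqref{eq:def_sigma2} — does the essential work, since $\bar\sigma$ is precisely the combination whose appearance guarantees one, and only one, extra homogeneity factor with the correct horizontal/vertical bookkeeping.
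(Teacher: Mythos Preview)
Your proposal is correct and follows essentially the same route as the paper: reduce by symmetry to $V_\eta$, compute its action on each elementary multiplier in $E$ (using Lemma~\ref{lem:vfsizes-mini} and the $\bar\sigma$ identity for the $\Lambda(\xi-\eta)$, $\sqrt{1-\Lambda^2(\xi-\eta)}$ pieces, and direct computation for the horizontal angles), propagate through products and the homogeneity prefactors $\abs{\xi-\eta}^{-a}\abs{\eta}^a\abs{\xi_\h-\eta_\h}^{-b}\abs{\eta_\h}^b$ by Leibniz, and read off the pointwise bounds from the localizations. The paper's own proof is terser --- it records the computation for $\sqrt{1-\Lambda^2(\xi-\eta)}$ and defers the angle computations to \cite[Appendix~A.2]{rotE} --- but the structure is identical.

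Two small slips worth cleaning up: $V_\eta\Lambda(\xi)=0$ trivially since $\xi$ is independent of $\eta$, so there is no need to list it among the ``nontrivial'' cases; and your reference to ``$E^1_1$ gives the product'' is off, since $E(1)=E^0_0\cup E^1_0\cup E^0_1$ does not contain $E^1_1$ --- the correct reading is that the three pieces contribute $1$, $2^{k_2-k_1}$, and $2^{k_2-k_1+p_2-p_1}$ respectively, whose sum is $1+2^{k_2-k_1}(1+2^{p_2-p_1})$.
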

\begin{proof}
 By symmetry it suffices to show the above claims for $V_\eta$, and \eqref{eq:Ve2} follows with analogous computations as for \eqref{eq:Ve1}. 
 
 To establish \eqref{eq:Ve1} we recall that $V_\eta\Lambda(\eta)=0$, and we note that $V_\eta\Lambda(\xi-\eta)=V_\eta\Phi$, so that by \eqref{eq:vf_sigma_0} of Lemma \ref{lem:vfsizes-mini} there holds
 \begin{equation}
  V_\eta(\sqrt{1-\Lambda^2(\xi-\eta)})=-\frac{\Lambda(\xi-\eta)}{\sqrt{1-\Lambda^2(\xi-\eta)}}V_\eta\Lambda(\xi-\eta)=-\frac{\Lambda(\xi-\eta)}{\abs{\xi-\eta}^2}\,\bar\sigma(\xi,\eta)\cdot
  \begin{cases}
   \frac{\xi_\h-\eta_\h}{\abs{\xi_\h-\eta_\h}},&V=S,\\
   -\frac{(\xi_\h-\eta_\h)^\perp}{\abs{\xi_\h-\eta_\h}},&V=\Omega,
  \end{cases}
 \end{equation}
 and we note that $\bar\sigma(\xi,\eta)=\bar\sigma(\xi-\eta,\eta)$. Together with some straightforward, but slightly tedious computations for the ``angles'' (see \cite[Appendix A.2]{rotE}) this is implies the claim.
\end{proof}

As a consequence, we can establish bounds for vector field quotients in cases where we can integrate by parts, i.e.\ when we have a suitable lower bound for $\bar\sigma$:
\begin{lemma}\label{lem:vfQ}
 Assume that $\abs{\bar\sigma}\cdot \chi\gtrsim 2^{k_{\max}+k_{\min}}2^{p_{\max}+q_{\max}}$. 
 Then for $V,V'\in\{S,\Omega\}$ and $n,m\geq 0$ there holds if $\abs{V_\eta\Phi}\gtrsim\abs{\Omega_\eta\Phi}+\abs{S_\eta\Phi}$
 \begin{equation}
  \abs{\frac{(V'_{\xi-\eta})^{n} V_\eta^{m+1}\Phi}{V_\eta\Phi}}\cdot\chi\lesssim [1+2^{k_1-k_2}(1+2^{p_1-p_2})]^n\cdot[1+2^{k_2-k_1}(1+2^{p_2-p_1})]^m.
 \end{equation}
\end{lemma}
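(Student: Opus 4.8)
\textbf{Proof plan for Lemma \ref{lem:vfQ}.}
The plan is to reduce the quotient bound to repeated application of Lemma \ref{lem:vfE} (tracking orders of multipliers under vector fields) together with the lower bound for $V_\eta\Phi$ coming from the hypothesis on $\bar\sigma$ via Lemma \ref{lem:vfsizes-mini}. First I would observe that, writing $\Phi\in E_0$ and using \eqref{eq:vf_sigma_0}, each application of $S_\eta$ or $\Omega_\eta$ to $\Phi$ produces an expression of the form $\bar\sigma(\xi,\eta)\cdot e$ with $e\in E^{-2}_{-1}$ (the $\abs{\xi-\eta}^{-3}$ and $\abs{\xi_\h-\eta_\h}$ factors in \eqref{eq:vf_sigma_0} account for this homogeneity), so that $V_\eta^{m+1}\Phi$ is a sum of terms of the shape (product of $m$ extra $E(1)$-type factors arising from further $V_\eta$-differentiation) $\times\,\bar\sigma\cdot e$; by Lemma \ref{lem:vfE} each such extra differentiation costs at most a factor $[1+2^{k_2-k_1}(1+2^{p_2-p_1})]$ when restricted to the support of $\chi$ (or $\chi_\h$). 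Since $\Phi$ itself carries only the $\bar\sigma$-linear structure once one $V_\eta$ has acted, differentiating $V_\eta^{m+1}\Phi$ does not introduce a second power of $\bar\sigma$, so the single factor $\abs{\bar\sigma}$ survives and, together with the elementary bound $\abs{e}\cdot\chi\lesssim 2^{-2k_1-p_1}\cdot(\text{harmless})$ coming from the homogeneity of $e\in E^{-2}_{-1}$, gives $\abs{V_\eta^{m+1}\Phi}\cdot\chi\lesssim 2^{-2k_1-p_1}\abs{\bar\sigma}\cdot[1+2^{k_2-k_1}(1+2^{p_2-p_1})]^m$.

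Next I would apply the $(V'_{\xi-\eta})^n$ differentiation. By the second half of Lemma \ref{lem:vfE}, each $V_{\xi-\eta}$ acting on a multiplier in $E^{a}_{b}$ lands in $E^{a}_{b}\cup E^{a-1}_{b}\cup E^{a}_{b-1}$ and costs at most a factor $[1+2^{k_1-k_2}(1+2^{p_1-p_2})]$ on the support of $\chi$; moreover $V_{\xi-\eta}$ acting on $\bar\sigma$ can be expanded in terms of elements of $E_0$ times $\abs{\xi-\eta}$, $\abs{\xi_\h-\eta_\h}$ (using $\bar\sigma(\xi,\eta)=\bar\sigma(\xi-\eta,\eta)$ and the explicit form \eqref{eq:def_sigma}), which again only shifts the order by a bounded amount and is absorbed into the same factor. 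Iterating this $n$ times yields $\abs{(V'_{\xi-\eta})^n V_\eta^{m+1}\Phi}\cdot\chi\lesssim 2^{-2k_1-p_1}\abs{\bar\sigma}\cdot[1+2^{k_1-k_2}(1+2^{p_1-p_2})]^n[1+2^{k_2-k_1}(1+2^{p_2-p_1})]^m$, where I use that the $n$ derivatives in the variable $\xi-\eta$ and the $m$ derivatives in $\eta$ contribute the two distinct factors.

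Finally I would divide by $V_\eta\Phi$. Under the hypothesis $\abs{\bar\sigma}\cdot\chi\gtrsim 2^{k_{\max}+k_{\min}}2^{p_{\max}+q_{\max}}$ and the assumption $\abs{V_\eta\Phi}\gtrsim\abs{S_\eta\Phi}+\abs{\Omega_\eta\Phi}$, the equivalence \eqref{eq:vflobound_0} gives $\abs{V_\eta\Phi}\cdot\chi\sim \abs{\xi_\h-\eta_\h}\abs{\xi-\eta}^{-3}\abs{\bar\sigma}\sim 2^{-2k_1-p_1}\abs{\bar\sigma}$ on the support (recalling $\abs{\xi-\eta}\sim 2^{k_1}$, $\abs{\xi_\h-\eta_\h}\sim 2^{k_1+p_1}$). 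Dividing the bound of the previous paragraph by this, the common factor $2^{-2k_1-p_1}\abs{\bar\sigma}$ cancels exactly, leaving precisely the claimed estimate. The main obstacle — and the step deserving the most care in the full write-up — is the bookkeeping of homogeneity orders: one must check that the $E^{a}_{b}$-membership shifts tracked in Lemma \ref{lem:vfE} combine correctly through $n+m+1$ successive differentiations so that the net homogeneity of the numerator is exactly that of $2^{-2k_1-p_1}\abs{\bar\sigma}$ times the stated dyadic ratios, with no leftover powers of $2^{q}$ or $2^{k_{\min}}$; the lower bound on $\bar\sigma$ is what guarantees that the division does not produce uncontrolled negative powers, and the restriction to the regime $\abs{V_\eta\Phi}\gtrsim\abs{S_\eta\Phi}+\abs{\Omega_\eta\Phi}$ is what lets us use \eqref{eq:vflobound_0} as a genuine two-sided estimate rather than merely an upper bound.
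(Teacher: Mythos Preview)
Your argument has a genuine gap at the key structural claim. You assert that $V_\eta^{m+1}\Phi$ is a sum of terms of the shape $\bar\sigma\cdot e$, so that a single power of $|\bar\sigma|$ survives in the numerator and cancels against the $|\bar\sigma|$ in the denominator. This is false already for $m=1$: from the explicit computation (see \eqref{eq:2ndvfPhi} in the paper)
\[
\Omega_\eta^2\Phi=3\,\Omega_\eta\Phi\cdot\frac{\in^{ab}\eta_a\xi_b}{\abs{\xi-\eta}^2}-\Lambda(\xi-\eta)\,\frac{\eta_\h\cdot\xi_\h}{\abs{\xi-\eta}^2},
\]
the second term does not vanish on $\{\bar\sigma=0\}$ (take $\xi=2\eta$ with $\eta_\h\ne0$, $\eta_3\ne0$). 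Algebraically the issue is that $\Omega_\eta\bar\sigma=\xi_3\eta_\h^\perp$ is not of the form $\bar\sigma\cdot(\text{multiplier})$, so the $\bar\sigma$-factor is lost after one further differentiation. Consequently your intermediate bound $\abs{V_\eta^{m+1}\Phi}\lesssim 2^{p_1-2k_1}\abs{\bar\sigma}\cdot[\ldots]^m$ is not available, and the clean cancellation you describe does not occur. (Incidentally, the exponent should be $2^{p_1-2k_1}$, not $2^{-2k_1-p_1}$.)

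The paper's proof resolves this by establishing inductively a finer structural formula \eqref{eq:hivfphase}: $V_\eta^{m+1}\Phi$ splits into (i) terms proportional to $S_\eta\Phi$ or $\Omega_\eta\Phi$ with coefficients in $E(m)$, for which division by $V_\eta\Phi$ is harmless under the hypothesis $\abs{V_\eta\Phi}\gtrsim\abs{S_\eta\Phi}+\abs{\Omega_\eta\Phi}$, and (ii) a residual term $\frac{\abs{\eta}}{\abs{\xi-\eta}}\sum_{\vartheta\in E'}(\frac{\xi_\h}{\abs{\xi-\eta}}\cdot\vartheta)\,e_3^\vartheta$ with $\vartheta$ carrying an explicit factor $\Lambda(\zeta')$. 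This residual term is \emph{not} proportional to $\bar\sigma$; to control its quotient one must invoke the numerical lower bound $\abs{V_\eta\Phi}\gtrsim 2^{p_1-2k_1}\cdot 2^{k_{\max}+k_{\min}+p_{\max}+q_{\max}}$, and the $2^{q_{\max}}$ in the denominator is precisely absorbed by the $\Lambda$ factor in $\vartheta$. The same mechanism, with a fourth residual term, handles the $(V'_{\xi-\eta})^n$ differentiation. Without isolating these residual terms and their $\Lambda$-structure, the hypothesis on $\abs{\bar\sigma}$ (in particular the $q_{\max}$ it carries) never enters your argument in a way that closes the estimate.
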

\begin{proof}
 A direct computation gives that
 \begin{equation}\label{eq:2ndvfPhi}
 \begin{aligned}
  &S_\eta^2\Phi=S_\eta\Phi\left[3\frac{\eta\cdot(\xi-\eta)}{\abs{\xi-\eta}^2}+2\right]-\frac{\bar\sigma\cdot\xi_\h}{\abs{\xi-\eta}^3},\\
  &\Omega_\eta^2\Phi=3\Omega_\eta\Phi\frac{\in^{ab}\eta_a\xi_b}{\abs{\xi-\eta}^2}-\Lambda(\xi-\eta)\frac{\eta_\h\cdot\xi_\h}{\abs{\xi-\eta}^2},\\
  &S_\eta\Omega_\eta\Phi=\Omega_\eta S_\eta\Phi=S_\eta\Phi\frac{\eta_\h^\perp\cdot\xi_\h}{\abs{\xi-\eta}^2}+\Omega_\eta\Phi[1+2\frac{\eta\cdot(\xi-\eta)}{\abs{\xi-\eta}^2}].
 \end{aligned}
 \end{equation}
 With Lemmas \ref{lem:vfsizes-mini} and \ref{lem:vfE}, by induction we thus see that for $m\in\N$ there exist $e_1,e_2\in E(m)$ and $(e_3^\vartheta)_{\vartheta\in E'}\subset E(m-1)$, where $E'=\left\{\Lambda(\zeta')\frac{\zeta_\h}{\abs{\zeta}},\Lambda(\zeta')\frac{\zeta_\h^\perp}{\abs{\zeta}}:\,\zeta,\zeta' \in\{\xi-\eta,\eta\}\right\}$, such that
 \begin{equation}\label{eq:hivfphase}
  V_\eta^{m+1}\Phi=\Omega_\eta\Phi\cdot e_1+S_\eta\Phi\cdot e_2+\frac{\abs{\eta}}{\abs{\xi-\eta}}\sum_{\vartheta\in E'}\left(\frac{\xi_\h}{\abs{\xi-\eta}}\cdot\vartheta\right)e_3^\vartheta.
 \end{equation}
 Together with the bounds in Lemmas \ref{lem:vfsizes-mini} and \ref{lem:vfE} this proves the claim when $n=0$, since
 \begin{equation}
  \frac{\abs{\Omega_\eta\Phi}+\abs{S_\eta\Phi}}{\abs{V_\eta\Phi}}\lesssim 1,
 \end{equation}
 and since for $\vartheta\in E'$ there holds that
 \begin{equation}
 \begin{aligned}
  \frac{\abs{\eta}}{\abs{\xi-\eta}}\frac{\abs{\xi_\h}}{\abs{\xi-\eta}}\abs{\vartheta}\cdot\abs{V_\eta\Phi}^{-1}&\lesssim 2^{k_2-2k_1}2^{k+p}(2^{q_1}+2^{q_2})(2^{p_1}+2^{p_2})\cdot 2^{2k_1-p_1}2^{-k_{\max}-k_{\min}}2^{-p_{\max}-q_{\max}}\\
  &\lesssim (1+2^{p_2-p_1})(1+2^{k_2-k_1}).
 \end{aligned} 
 \end{equation}

 When $n>0$ the claim follows analogously: we compute that 
 \begin{align}
  \Omega_{\xi-\eta}\Omega_\eta\Phi&=\frac{\xi_3}{\abs{\xi-\eta}}\frac{\abs{\xi_\h-\eta_\h}^2}{\abs{\xi-\eta}^2}-S_\eta\Phi,\qquad
  \Omega_{\xi-\eta}S_\eta\Phi=-S_{\xi-\eta}\Omega_\eta\Phi=\Omega_\eta\Phi, \qquad S_{\xi-\eta}S_\eta\Phi=-S_\eta\Phi,
 \end{align}
 so that with \eqref{eq:hivfphase} there holds that
 \begin{equation}\label{eq:mixhivfphase}
  (V_{\xi-\eta})^n V_\eta^{m+1}\Phi=\Omega_\eta\Phi\cdot \bar{e}_1+S_\eta\Phi\cdot \bar{e}_2+\frac{\abs{\eta}}{\abs{\xi-\eta}}\sum_{\vartheta\in E'}\left(\frac{\xi_\h}{\abs{\xi-\eta}}\cdot\vartheta\right)\bar{e}_3^\vartheta+\frac{\xi_3}{\abs{\xi-\eta}}\frac{\abs{\xi_\h-\eta_\h}^2}{\abs{\xi-\eta}^2}\bar{e}_4,
 \end{equation}
where $\bar{e}_1,\bar{e}_2,\bar{e}_4\in E(m)\cup E(-n)$ and $(\bar{e}_3^\vartheta)_{\vartheta\in E'}\subset E(m-1)\cup E(-n)$. To conclude it suffices to note that
\begin{equation}
 \frac{\abs{\xi_3}}{\abs{\xi-\eta}}\frac{\abs{\xi_\h-\eta_\h}^2}{\abs{\xi-\eta}^2}\cdot\abs{V_\eta\Phi}^{-1}\lesssim 2^{k+q}2^{2p_1-k_1}\cdot 2^{2k_1-p_1}2^{-k_{\max}-k_{\min}}2^{-p_{\max}-q_{\max}}\lesssim 1+2^{k_1-k_2}.
\end{equation}

\end{proof}

\subsection{Integration by parts in bilinear expressions}\label{ssec:vfibp}
Consider now a typical bilinear term $\Q_m(f_1,f_2)$ as in \eqref{eq:def_Qm}:
\begin{equation*}
 \Q_m(f_1,f_2)(s):=\mathcal{F}^{-1}\left(\int_{\R^3}e^{is\Phi(\xi,\eta)}m(\xi,\eta)\widehat{f_1}(s,\xi-\eta)\widehat{f_2}(s,\eta)d\eta\right),
\end{equation*}
 with multiplier in our standard multiplier classes, i.e.\ $ m=\abs{\xi}\cdot e$ for some $e\in E^{a}_{b}$. Our strategy for integration by parts will be to get bounds for $\abs{\bar\sigma}$ via localizations -- firstly in $p,p_j$ and $\ell,\ell_j$, or with more refinement also in $q,q_j$, $j=1,2$ -- from which control of the size of the vector fields applied to the phase follows by \eqref{eq:vflobound_0} in Lemma \ref{lem:vfsizes-mini}. Together with the corresponding quantified control on the inputs this informs us when integration by parts can be carried out advantageously.

We thus decompose
\begin{equation}
 \Q_m(f_1,f_2)=\sum_{\substack{k_j,p_j,\ell_j,\\j=1,2}}\Q_m(P_{k_1,p_1}R_{\ell_1}f_1,P_{k_2,p_2}R_{\ell_2}f_2),
\end{equation}
and using our notation \eqref{eq:loc_def3} for the localizations we have that
\begin{equation}
 \Q_m(P_{k_1,p_1}R_{\ell_1}f_1,P_{k_2,p_2}R_{\ell_2}f_2)=\Q_{m\cdot \chi_\h}(R_{\ell_1}f_1,R_{\ell_2}f_2).
\end{equation}

\subsubsection{Formalism}
We begin by recalling from \cite[Lemma 6.4]{rotE} that we can resolve the action of a vector field in a variable $\zeta\in\{\xi-\eta,\eta\}$ on a function of $\xi-\zeta$ (as we will frequently encounter them when integrating by parts along vector fields in the bilinear expressions \eqref{eq:EC_disp_Duham}) as follows:
\begin{lemma}\label{lem:VFcross}
 Let $\Gamma_{V,\eta}^S,\Gamma_{V,\eta}^\Ups\in E^{1}_{0}$ be defined by
 \begin{equation}
 \begin{aligned}
  \Gamma_{S,\eta}^S&=-\frac{\abs{\eta}}{\abs{\xi-\eta}}\left[\omega_c\sqrt{1-\Lambda^2}(\eta)\sqrt{1-\Lambda^2}(\xi-\eta)+\Lambda(\xi-\eta)\Lambda(\eta)\right],\quad 
  \Gamma_{\Omega,\eta}^S=\frac{\abs{\eta}}{\abs{\xi-\eta}}\omega_s\cdot\sqrt{1-\Lambda^2(\eta)}\sqrt{1-\Lambda^2(\xi-\eta)},\\
  \Gamma_{S,\eta}^\Ups&=-\frac{\abs{\eta}}{\abs{\xi-\eta}}\left[\omega_c\sqrt{1-\Lambda^2}(\eta)\Lambda(\xi-\eta)-\Lambda(\eta)\sqrt{1-\Lambda^2(\xi-\eta)}\right],\quad
  \Gamma_{\Omega,\eta}^\Ups=\frac{\abs{\eta}}{\abs{\xi-\eta}}\omega_s\cdot\sqrt{1-\Lambda^2(\eta)}\Lambda(\xi-\eta),
 \end{aligned}
 \end{equation}
 where 
 \begin{equation}
  \omega_c:=\frac{\eta_\h\cdot(\xi_\h-\eta_\h)}{\abs{\eta_\h}\abs{\xi_\h-\eta_\h}},\quad \omega_s:=\frac{\eta_\h\cdot(\xi_\h-\eta_\h)^\perp}{\abs{\eta_\h}\abs{\xi_\h-\eta_\h}}.
 \end{equation}
 Then on \emph{axisymmetric functions} there holds that 
 \begin{equation}
 \begin{aligned}
  S_\eta &=\Gamma_{S,\eta}^S\cdot S_{\xi-\eta}+\Gamma_{S,\eta}^\Ups\cdot \Ups_{\xi-\eta},\qquad  \Omega_\eta =\Gamma_{\Omega,\eta}^S\cdot S_{\xi-\eta}+\Gamma_{\Omega,\eta}^\Ups\cdot \Ups_{\xi-\eta},
 \end{aligned} 
 \end{equation}
 The symmetric statement holds with the roles of $\eta$ and $\xi-\eta$ exchanged and $\Gamma_{V,\xi-\eta}^W\in E^{-1}_{0}$ for $W\in\{S,\Ups\}$.
\end{lemma}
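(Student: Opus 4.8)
The plan is to verify the decomposition $S_\eta = \Gamma_{S,\eta}^S\, S_{\xi-\eta} + \Gamma_{S,\eta}^\Ups\, \Ups_{\xi-\eta}$ (and similarly for $\Omega_\eta$) by expressing everything in the orthonormal frame attached to the point $\xi-\eta$ on the Fourier side. The key observation is that on \emph{axisymmetric} functions $f$ of one variable, $\Omega_{\xi-\eta}f=0$, so the three vector fields $S_{\xi-\eta},\Omega_{\xi-\eta},\Ups_{\xi-\eta}$ which span the full tangent space at $\xi-\eta$ effectively reduce to the two ``non-azimuthal'' directions $S_{\xi-\eta}$ (radial) and $\Ups_{\xi-\eta}$ (polar). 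Hence, when acting on a function of $\xi-\eta$ only, any first-order operator $V_\eta$ ($V\in\{S,\Omega\}$) can be written as a linear combination of $S_{\xi-\eta}$ and $\Ups_{\xi-\eta}$ with coefficients that are functions of $\xi,\eta$; the content of the lemma is the explicit identification of these coefficients and the claim that they lie in $E^1_0$.

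First I would recall from Section \ref{sec:TS} the coordinate expressions $S_\zeta=\zeta\cdot\nabla_\zeta$, $\Omega_\zeta=\zeta_\h^\perp\cdot\nabla_\zeta$, and $\Ups_\zeta = \tfrac{1}{\sqrt{1-\Lambda^2(\zeta)}}[\Lambda(\zeta)S_\zeta - |\zeta|\partial_{\zeta_3}]$, together with the basic chain-rule fact that when a vector field in $\eta$ hits $g(\xi-\eta)$ it acts as $-(\text{the same differential operator in the }(\xi-\eta)\text{ slot})$ composed with the Jacobian; concretely, for a constant vector $a$, $(a\cdot\nabla_\eta)g(\xi-\eta) = -(a\cdot\nabla)g|_{\xi-\eta}$. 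So $S_\eta g(\xi-\eta) = \eta\cdot\nabla_\eta g(\xi-\eta) = -(\eta\cdot\nabla g)(\xi-\eta)$, and the task is to decompose the constant-in-the-differentiation vector $\eta$ (resp.\ $\eta_\h^\perp$ for $\Omega_\eta$) against the frame $\{(\xi-\eta)/|\xi-\eta|,\ \hat\Ups_{\xi-\eta},\ \hat\Omega_{\xi-\eta}\}$ at the point $\xi-\eta$, where $\hat\Ups_{\xi-\eta}$ and $\hat\Omega_{\xi-\eta}$ are the unit polar and azimuthal directions. Projecting $\eta$ onto the radial direction gives the coefficient $\tfrac{\eta\cdot(\xi-\eta)}{|\xi-\eta|}$; onto the polar direction gives a coefficient involving $\Lambda$, $\sqrt{1-\Lambda^2}$ of both $\eta$ and $\xi-\eta$ and the horizontal angle $\omega_c=\tfrac{\eta_\h\cdot(\xi_\h-\eta_\h)}{|\eta_\h||\xi_\h-\eta_\h|}$; and the azimuthal component, multiplied by $\Omega_{\xi-\eta}$, annihilates axisymmetric functions and is dropped. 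Matching these against the decomposition $\eta\cdot\nabla = \tfrac{\eta\cdot(\xi-\eta)}{|\xi-\eta|^2} S_{\xi-\eta}|\cdot|\text{-weighted} + (\text{polar coeff})\,\Ups_{\xi-\eta} + \ldots$ and simplifying using $S\Lambda=\Ups$-relations from \eqref{eq:ups} yields exactly the stated $\Gamma_{S,\eta}^S,\Gamma_{S,\eta}^\Ups$; the computation for $\Omega_\eta$ is identical with $\eta$ replaced by $\eta_\h^\perp$, producing the horizontal angle $\omega_s=\tfrac{\eta_\h\cdot(\xi_\h-\eta_\h)^\perp}{|\eta_\h||\xi_\h-\eta_\h|}$ and no $\Lambda(\eta)$-term in the radial coefficient (since $\eta_\h^\perp$ is horizontal).

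Once the explicit formulas are in hand, the membership $\Gamma_{V,\eta}^W\in E^1_0$ is immediate by inspection: each $\Gamma_{V,\eta}^W$ is $\tfrac{|\eta|}{|\xi-\eta|}$ times a product of elements of the elementary set $E$ — namely $\Lambda(\eta)$, $\sqrt{1-\Lambda^2(\eta)}$, $\Lambda(\xi-\eta)$, $\sqrt{1-\Lambda^2(\xi-\eta)}$, and the horizontal angles $\omega_c,\omega_s$ between $\eta_\h$ and $(\xi_\h-\eta_\h)$ — so it has homogeneity degree $(a,b)=(1,0)$ in the sense of $E^a_b$. The symmetric statement with $\eta\leftrightarrow\xi-\eta$ follows by relabeling and noting that the prefactor becomes $\tfrac{|\xi-\eta|}{|\eta|}$, i.e.\ degree $(-1,0)$, giving $\Gamma_{V,\xi-\eta}^W\in E^{-1}_0$. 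Since this lemma is quoted from \cite[Lemma 6.4]{rotE}, the proof in the present paper can simply be a pointer to that reference, possibly with the remark above on why only $S_{\xi-\eta}$ and $\Ups_{\xi-\eta}$ appear (axisymmetry kills the $\Omega_{\xi-\eta}$ term). The main obstacle, if one carries out the computation in full rather than citing it, is purely bookkeeping: correctly resolving the non-orthonormality/normalization factors $\sqrt{1-\Lambda^2}$ coming from the $\Ups=\partial_\phi$ normalization in \eqref{eq:ups}, and carefully tracking the horizontal angles so that the $\omega_c$ versus $\omega_s$ distribution between the $S_\eta$ and $\Omega_\eta$ decompositions comes out right — there is no conceptual difficulty, only the risk of sign or normalization slips.
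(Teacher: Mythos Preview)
Your proposal is correct and matches the underlying argument: the paper's own proof is simply a citation to \cite[Lemma 6.4]{rotE}, and what you have sketched—resolving $\eta$ (resp.\ $\eta_\h^\perp$) in the orthonormal spherical frame at $\xi-\eta$, dropping the $\Omega_{\xi-\eta}$ component by axisymmetry, and reading off the coefficients—is precisely the computation behind that cited lemma. Your observation that membership in $E^1_0$ (resp.\ $E^{-1}_0$) is immediate by inspection of the prefactor $|\eta|/|\xi-\eta|$ is also correct.
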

\begin{proof}
 See \cite[Lemma 6.4]{rotE}.
\end{proof}

To systematically treat several integrations by parts, we introduce the following notations. 
For $\zeta\in\{\eta,\xi-\eta\}$, we consider the following three types of operators, as they naturally arise in integration by parts (according to where the vector fields ``land''):
\begin{equation}\label{eq:adjoints}
\begin{aligned}
 \L_{V,\zeta}^{\id,S}&:=\frac{1}{V_\zeta\Phi},\\
 \L_{V,\zeta}^{W,\id}&:=\frac{1}{V_\zeta\Phi}\Gamma^W_{V,\zeta},\qquad W\in\{S,\Ups\},\\
 \L_{V,\zeta}^{\id,\id}&:=V_\zeta\left(\frac{1}{V_\zeta\Phi}\;\cdot\right)+\frac{1}{V_\zeta\Phi}c_V,\qquad c_S=3,\,\, c_\Omega=0.
\end{aligned}
\end{equation}
The first one corresponds to $V_\zeta$ hitting the input of variable $\zeta$, the second to a ``cross term'' with $W\in\{S,\Ups\}$ and the last to $V_\zeta$ acting on the multiplier itself.

Letting further
\begin{equation}
 \I:=\{(\id,S),(S,\id),(\Ups,\id),(\id,\id)\},
\end{equation}
we can write an integration by parts in e.g.\ $V_\eta$ compactly as
\begin{equation}
 \Q_{m}(F,G)=is^{-1}\sum_{(W,Z)\in\I}\Q_{\L_{V,\eta}^{W,Z}(m)}(WF,ZG),
\end{equation}
and in $V'_{\xi-\eta}$ as
\begin{equation}
 \Q_{m}(F,G)=is^{-1}\sum_{(W,Z)\in\I}\Q_{\L_{V',\xi-\eta}^{W,Z}(m)}(ZF,WG),
\end{equation}
and analogously for several consecutive integrations by parts.\footnote{For example, integrating once along $V_\eta$, then $V'_{\xi-\eta}$, then $V_{\eta}$, gives
\begin{equation}
 \Q_m(F,G)=s^{-3}\sum_{(W_i,Z_i)\in\I,\;1\leq i\leq 3} \Q_{\L_{V,\eta}^{W_3,Z_3}\L_{V',\xi-\eta}^{W_2,Z_2}\L_{V,\eta}^{W_1,Z_1}(m)}(W_3Z_2W_1F,Z_3W_2Z_1G).
\end{equation}
We note that in such an expression, only the $W_i$ may equal $\Ups$, and we have $Z_i\in\{S,\id\}$.}

\subsubsection{Bounds}\label{ssec:vfibp_bds}
The following lemma gives bounds for iterated integration by parts along vector fields (when this is possible).
\begin{lemma}\label{lem:new_ibp}
We have the following bounds for repeated integration by parts:
\begin{enumerate}[wide]
 \item\label{it:ibp_p} Assume the localization parameters are such that $\abs{\bar\sigma}\cdot\chi_\h\gtrsim L_1\gtrsim 2^{k_{\max}+k_{\min}}2^{p_{\max}}$. Then we have for any $N\in\N$ that
 \begin{equation}
 \begin{aligned}
\norm{\mathcal{F}\left(\Q_{m\chi_\h}(R_{\ell_1}f_1,R_{\ell_2}f_2)\right)}_{L^\infty}&\lesssim \norm{m\chi_\h}_{L^\infty}\cdot \left(s^{-1}\cdot 2^{-p_1+2k_1}L_1^{-1}\cdot [1+2^{k_2-k_1+\ell_1}]\right)^N \\
  &\qquad \cdot\norm{P_{k_1,p_1}R_{\ell_1}(1,S)^N f_1}_{L^2}\norm{P_{k_2,p_2}R_{\ell_2}(1,S)^Nf_2}_{L^2}.
 \end{aligned} 
 \end{equation}
 
 \item\label{it:ibp_pq} Assume the localization parameters are such that $\abs{\bar\sigma}\cdot\chi\gtrsim L_2\gtrsim 2^{k_{\max}+k_{\min}}2^{p_{\max}+q_{\max}}$. Then we have for any $N\in\N$ that
 \begin{equation}
 \begin{aligned}
  \norm{\mathcal{F}\left(\Q_{m\chi}(R_{\ell_1}f_1,R_{\ell_2}f_2)\right)}_{L^\infty}&\lesssim \norm{m\chi}_{L^\infty}\cdot \left(s^{-1}\cdot 2^{-p_1+2k_1}L_2^{-1}\cdot [1+2^{k_2-k_1}(2^{q_2-q_1}+2^{\ell_1})]\right)^N \\
  &\qquad \cdot\norm{P_{k_1,p_1,q_1}R_{\ell_1}(1,S)^N f_1}_{L^2}\norm{P_{k_2,p_2,q_2}R_{\ell_2}(1,S)^Nf_2}_{L^2}.
 \end{aligned} 
 \end{equation}
\end{enumerate}
\medskip
These claims hold symmetrically if the variables $\eta$, $\xi-\eta$ are exchanged.
\end{lemma}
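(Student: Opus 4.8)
The strategy is to iterate the single integration-by-parts identity from the Formalism subsection $N$ times and then estimate the resulting symbol and inputs. We work with the integration by parts in the $\eta$ variable (the $\xi-\eta$ case being symmetric), so that after $N$ steps we obtain
\begin{equation*}
 \Q_{m\chi_\h}(R_{\ell_1}f_1,R_{\ell_2}f_2)=(is)^{-N}\!\!\sum_{(W_i,Z_i)\in\I,\,1\le i\le N}\!\!\Q_{\L^{W_N,Z_N}_{V,\eta}\cdots\L^{W_1,Z_1}_{V,\eta}(m\chi_\h)}\big((W_N\cdots W_1)R_{\ell_1}f_1,(Z_N\cdots Z_1)R_{\ell_2}f_2\big),
\end{equation*}
where we always choose $V\in\{S,\Omega\}$ to be the vector field realizing the maximum in \eqref{eq:vflobound_0}; by hypothesis $\abs{\bar\sigma}\cdot\chi_\h\gtrsim L_1$ and Lemma \ref{lem:vfsizes-mini} then give $\abs{V_\eta\Phi}\cdot\chi_\h\gtrsim 2^{p_1-k_1}\abs{\xi-\eta}^{-1}L_1\sim 2^{p_1-2k_1}L_1$ on the relevant support. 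Each operator $\L^{W,Z}_{V,\eta}$ from \eqref{eq:adjoints} carries a factor $(V_\eta\Phi)^{-1}$, hence a gain of $s^{-1}\cdot 2^{-p_1+2k_1}L_1^{-1}$ per step; this produces the $N$-th power in the statement. The remaining task is to control (i) the symbol after $N$ applications and (ii) the new inputs obtained from the $W_i$'s and $Z_i$'s.

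For the symbol, note that $Z_i\in\{S,\id\}$ always acts on the $\eta$-input, so only $W_i$ can be $\Ups$; each $\L^{W,\id}_{V,\eta}$ inserts a cross-multiplier $\Gamma^W_{V,\eta}\in E^1_0$ (Lemma \ref{lem:VFcross}), each $\L^{\id,\id}_{V,\eta}$ differentiates the symbol by $V_\eta$ and, by Lemma \ref{lem:vfE} resp.\ Lemma \ref{lem:vfQ}, keeps us in the same multiplier class up to the harmless factor $[1+2^{k_2-k_1}(1+2^{p_2-p_1})]$ on the support of $\chi_\h$ (here one uses exactly the lower bound hypothesis on $\abs{\bar\sigma}$ to apply Lemma \ref{lem:vfQ} to the quotients $V_\eta^{m+1}\Phi/V_\eta\Phi$). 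Since $\chi_\h$ forces $2^{p_1}\sim 2^{p}$ up to harmless discrepancies only through the $p_{\max}$ terms already absorbed, these polynomial-in-$2^{k_2-k_1},2^{p_2-p_1}$ factors are bounded on the support; thus $\norm{\L^{W_N,Z_N}_{V,\eta}\cdots\L^{W_1,Z_1}_{V,\eta}(m\chi_\h)}_{L^\infty}\lesssim \norm{m\chi_\h}_{L^\infty}\cdot(s^{-1}2^{-p_1+2k_1}L_1^{-1})^N$ times bounded factors. For the inputs, the operators $W_N\cdots W_1$ applied to $R_{\ell_1}f_1$ consist of at most $N$ vector fields from $\{S,\Ups,\id\}$; using that $\Ups$ acting on an $R_{\ell_1}$-localized, axisymmetric function gains a factor $2^{\ell_1}$ by the Bernstein property \eqref{eq:R-Bernstein} and \eqref{eq:UpsOmegas} (while $S$ commutes with $R_{\ell_1}$, Proposition \ref{prop:LPOmega}\ref{it:LPang-comm}), each $W_i$ contributes a factor $\lesssim 1+2^{\ell_1}$, and converting a $\Ups$ on $f_1$ via Lemma \ref{lem:VFcross} to an $S$ on $f_1$ costs a cross-multiplier $\lesssim 2^{k_2-k_1}$ (the $\Gamma^S_{\cdot,\xi-\eta}\in E^{-1}_0$ bound). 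Collecting, the $f_1$ input is bounded by $[1+2^{k_2-k_1+\ell_1}]^N\norm{P_{k_1,p_1}R_{\ell_1}(1,S)^Nf_1}_{L^2}$ and the $f_2$ input by $\norm{P_{k_2,p_2}R_{\ell_2}(1,S)^Nf_2}_{L^2}$; a Bernstein/Young estimate $\norm{\mathcal{F}\Q_{\mathfrak{n}}(g_1,g_2)}_{L^\infty}\lesssim\norm{\mathfrak{n}}_{L^\infty}\norm{g_1}_{L^2}\norm{g_2}_{L^2}$ finishes part \ref{it:ibp_p}.

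Part \ref{it:ibp_pq} is identical in structure, now carrying the full $\chi$ in place of $\chi_\h$ and using the sharper lower bound $\abs{\bar\sigma}\cdot\chi\gtrsim L_2\gtrsim 2^{k_{\max}+k_{\min}}2^{p_{\max}+q_{\max}}$; the only change is that the cross-term and differentiated-symbol factors are now controlled on the support of $\chi$ by $[1+2^{k_2-k_1}(2^{q_2-q_1}+2^{\ell_1})]$ rather than $[1+2^{k_2-k_1+\ell_1}]$ — here one tracks the vertical discrepancy $2^{q_2-q_1}$ coming from the $\abs{\eta}/\abs{\xi-\eta}$-type factors in Lemmas \ref{lem:vfE} and \ref{lem:vfQ} when the full localization $P_{k_j,p_j,q_j}$ is in force, noting that $\Ups$ shifts the angular index by one and does not interact with $q$. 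The main obstacle I expect is the careful bookkeeping in the symbol estimate: one must verify that after $N$ iterations the operators $\L^{W,Z}_{V,\eta}$ never escape the multiplier classes $E^a_b$ in a way that would spoil the $L^\infty$ bound, which requires invoking Lemma \ref{lem:vfQ} (hence the lower bound on $\bar\sigma$) at every step and checking that the mixed terms in \eqref{eq:mixhivfphase} — in particular the $\frac{\xi_3}{\abs{\xi-\eta}}\frac{\abs{\xi_\h-\eta_\h}^2}{\abs{\xi-\eta}^2}$ piece — are absorbed by $\abs{V_\eta\Phi}^{-1}$ with only the claimed $2^{k_1-k_2}$ (resp.\ $2^{q_2-q_1}$) losses; the rest is routine once the per-step gain and the per-step loss factors are pinned down.
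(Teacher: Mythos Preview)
Your overall strategy matches the paper's, but there are two genuine gaps.

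First, you cannot simply ``choose $V\in\{S,\Omega\}$ to be the vector field realizing the maximum in \eqref{eq:vflobound_0}'': that maximum is attained pointwise by different vector fields at different $(\xi,\eta)$, and neither $S_\eta\Phi$ nor $\Omega_\eta\Phi$ alone is bounded below on the full support of $\chi_\h$ (each can vanish where $\bar\sigma$ is orthogonal to $(\xi_\h-\eta_\h)$ resp.\ $(\xi_\h-\eta_\h)^\perp$). The paper resolves this by introducing the smooth cutoff $\chi_{V_\eta}:=(1-\psi)(2^{-p_1+2k_1}L_1^{-1}\,V_\eta\Phi)$ and splitting $\Q_{m\chi_\h}=\Q_{m\chi_\h\chi_{V_\eta}}+\Q_{m\chi_\h(1-\chi_{V_\eta})}$, integrating by parts in $V_\eta$ on the first piece and in the complementary $V'_\eta$ on the second. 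One then also has to check that $V_\eta$ landing on $\chi_{V_\eta}$ itself is harmless, which is where Lemma~\ref{lem:vfQ} enters in the $(\id,\id)$ case.

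Second, your account of where the extra factor $2^{q_2-q_1}$ in part~\ref{it:ibp_pq} comes from is wrong: it is \emph{not} produced by the $\abs{\eta}/\abs{\xi-\eta}$ factors in Lemmas~\ref{lem:vfE}--\ref{lem:vfQ}. It arises when $V_\eta$ hits the vertical cutoff $\varphi(2^{-q_1}\Lambda(\xi-\eta))$ inside $\chi$; the paper records this as
\[
\abs{V_\eta(\varphi_{k_1,p_1,q_1}(\xi-\eta))}\lesssim \big(1+2^{k_2-k_1}(1+2^{p_2-p_1}+2^{q_2-q_1})\big)\,\widetilde\varphi_{k_1,p_1,q_1}(\xi-\eta).
\]
Relatedly, your claim that the symbol-side factors $[1+2^{k_2-k_1}(1+2^{p_2-p_1})]$ are ``bounded on the support'' is incorrect: they are not $O(1)$ in general but are absorbed into the stated loss via $2^{p_2-p_1}\le 2^{-p_1}\le 2^{\ell_1}$ (using $\ell_1+p_1\ge 0$ and $p_2\le 0$). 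In the paper all four cases $(W,Z)\in\I$ --- including $(\id,\id)$ where the symbol and the localizations are differentiated --- contribute to the factor $[1+2^{k_2-k_1+\ell_1}]$, not just the cross terms acting on the inputs.
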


We note that the precise estimates are slightly stronger, and in fact show that with a loss of $2^{\ell_1}$ also comes a gain of $c_{p}:=2^{p_1}+2^{p_2}$ resp.\ $c_{pq}:=2^{p_1+q_2}+2^{p_2+q_1}$.

\begin{proof}
 \eqref{it:ibp_p} Let us denote for simplicity of notation $F=R_{\ell_1}f_1$, $G=R_{\ell_2}f_2$. By \eqref{eq:vflobound_0} in Lemma \ref{lem:vfsizes-mini} we may partition
 \begin{equation}\label{eq:vfloc_not}
  1=(1-\chi_{V_\eta})+\chi_{V_\eta},\qquad \chi_{V_\eta}:=(1-\psi)(2^{-p_1+2k_1}L_1^{-1}\cdot V_\eta\Phi),
 \end{equation}
 where $V,V'\in\{S,\Omega\}$ are such that
 \begin{equation}
  \abs{V_\eta\Phi}\cdot\chi_\h\chi_{V_\eta}+\abs{V'_\eta\Phi}\cdot\chi_\h(1-\chi_{V_\eta})\gtrsim 2^{p_1-2k_1}L_1.
 \end{equation}
 We then have that
 \begin{equation}\label{eq:bilin-ibp-split}
  \Q_{m \chi_\h}(F,G)=\Q_{m \chi_\h\chi_{V_\eta}}(F,G)+\Q_{m \chi_\h(1-\chi_{V_\eta})}(F,G),
 \end{equation}
 and can integrate by parts in $V_\eta$ resp.\ $V'_\eta$ in the first resp.\ second term. 
 
 We discuss in detail the first term on the right hand side of \eqref{eq:bilin-ibp-split}, the second being almost identical. We begin with the demonstration of \eqref{it:ibp_p} for $N=1$: Upon integration by parts in $V_\eta$ we have
 \begin{equation}
 \Q_{m \chi_\h\chi_{V_\eta}}(F,G)=is^{-1}\sum_{(W,Z)\in\I}\Q_{\L_{V,\eta}^{W,Z}(m\chi_\h\chi_{V_\eta})}(WF,ZG).
\end{equation}
It suffices to estimate the three types of terms separately:
\begin{itemize}
 \item $(W,Z)=(\id,S)$: Then we have that
 \begin{equation}
  \norm{\mathcal{F}\left(\Q_{\frac{m\chi_\h\chi_{V_\eta}}{V_\eta\Phi}}(F,SG)\right)}_{L^\infty}\lesssim \norm{m\chi_\h}_{L^\infty}2^{-p_1+2k_1}L_1^{-1}\cdot\norm{P_{k_1,p_1}F}_{L^2}\norm{P_{k_2,p_2}SG}_{L^2}.
 \end{equation}
 \item $(W,Z)=(S,\id)$: Here we have that
 \begin{equation}
  \abs{\L_{V,\eta}^{S,\id}(m\chi_\h\chi_{V_\eta})}=\abs{\frac{1}{V_\eta\Phi}\Gamma^S_{V,\eta}\cdot m\chi_\h\chi_{V_\eta}}\lesssim \norm{m\chi_\h}_{L^\infty}\cdot 2^{k_2-k_1}\cdot 2^{-p_1+2k_1}L_1^{-1},
 \end{equation}
 so that
 \begin{equation}
  \norm{\mathcal{F}\left(\Q_{\L_{V,\eta}^{S,\id}(m\chi_\h\chi_{V_\eta})}(SF,G)\right)}_{L^\infty}\lesssim \norm{m\chi_\h}_{L^\infty} \cdot 2^{k_2-k_1}\cdot 2^{-p_1+2k_1}L_1^{-1}\cdot
 \norm{P_{k_1,p_1}SF}_{L^2}\norm{P_{k_2,p_2}G}_{L^2}
 \end{equation}
 \item $(W,Z)=(\Ups,\id)$: Similarly we have
 \begin{equation}
  \abs{\L_{V,\eta}^{\Upsilon,\id}(m\chi_\h\chi_{V_\eta})}=\abs{\frac{1}{V_\eta\Phi}\Gamma^\Ups_{V,\eta}\cdot m\chi_\h\chi_{V_\eta}}\lesssim \norm{m\chi_\h}_{L^\infty}\cdot (2^{p_1}+2^{p_2})\cdot 2^{k_2-k_1}\cdot 2^{-p_1+2k_1}L_1^{-1},
 \end{equation}
 so that
  \begin{equation}
  \norm{\mathcal{F}\left(\Q_{\L_{V,\eta}^{S,\id}(m\chi_\h\chi_{V_\eta})}(\Ups F,G)\right)}_{L^\infty}\lesssim \norm{m\chi_\h}_{L^\infty} \cdot (1+2^{p_2-p_1})2^{k_2+k_1}\cdot L_1^{-1}\cdot 2^{\ell_1}   \cdot \norm{P_{k_1,p_1}F}_{L^2}\norm{P_{k_2,p_2}G}_{L^2},
 \end{equation}
 having used that by \eqref{eq:UpsOmegas} and \eqref{eq:R-Bernstein} there holds
 \begin{equation}
  \norm{\Ups R_{\ell_1}f_1}_{L^2}\lesssim 2^{\ell_1}\norm{R_{\ell_1}f_1}_{L^2}.
 \end{equation}

 \item $(W,Z)=(\id,\id)$: Here we have by Lemma \ref{lem:vfQ} (and by direct computation on the localizations $\chi_\h$ and $\chi_{V_\eta}$) that
 \begin{equation}
  \abs{\L_{V,\eta}^{\id,\id}(m\chi_\h\chi_{V_\eta})}\lesssim \norm{m\chi_\h}_{L^\infty}[1+2^{k_2-k_1}(1+2^{p_2-p_1})]\cdot 2^{-p_1+2k_1}L_1^{-1},
 \end{equation}
 so that
 \begin{equation}
  \norm{\mathcal{F}\left(\Q_{\L_{V,\eta}^{\id,\id}(m\chi_\h\chi_{V_\eta})}(F,G)\right)}_{L^\infty}\lesssim \norm{m\chi_\h}_{L^\infty}\cdot 2^{-p_1+2k_1}L_1^{-1}\cdot [1+2^{k_2-k_1}(1+2^{p_2-p_1})]\cdot \norm{P_{k_1,p_1}F}_{L^2}\norm{P_{k_2,p_2}G}_{L^2}.
 \end{equation}

\end{itemize}
Since $\ell_j+p_j\geq 0$, by iteration and Lemma \ref{lem:vfQ} we obtain the claim \eqref{it:ibp_p} for general $N\in\N$.

The proof of \eqref{it:ibp_pq} is similar. The only difference arises from the case where the vector fields land on the localization functions $\chi$. Here we observe that
\begin{equation}
 \abs{V_\eta(\varphi_{k_1,p_1,q_1}(\xi-\eta))}\lesssim \left(1+2^{k_2-k_1}(1+2^{p_2-p_1}+2^{q_2-q_1})\right)\cdot \widetilde\varphi_{k_1,p_1,q_1}(\xi-\eta).
\end{equation}
Hence \eqref{it:ibp_pq} is proved.
\end{proof}

\subsubsection{A ``vertical'' variant}\label{ssec:D3ibp}
When no localizations in $\Lambda$ are involved, a zero homogeneous version of the vertical derivative can also be useful for iterated integrations by parts: We let
 \begin{equation}
  D_3^\eta:=\abs{\eta}\partial_{\eta_3}=\Lambda(\eta)S_\eta-\sqrt{1-\Lambda^2(\eta)}\Ups_\eta,
\end{equation}
and note that
\begin{equation}\label{eq:D3basics}
 D_3^\eta\left(\Lambda(\eta)\right)=1-\Lambda^2(\eta),\qquad D_3^\eta\left(\sqrt{1-\Lambda^2(\eta)}\right)=-\Lambda(\eta)\sqrt{1-\Lambda^2(\eta)},
\end{equation}
as well as
\begin{equation}
 D_3^\eta\left(\Lambda(\xi-\eta)\right)=-\frac{\abs{\eta}}{\abs{\xi-\eta}}(1-\Lambda^2(\xi-\eta)),\qquad D_3^\eta\left(\sqrt{1-\Lambda^2(\xi-\eta)}\right)=\frac{\abs{\eta}}{\abs{\xi-\eta}}\Lambda(\xi-\eta)\sqrt{1-\Lambda^2(\xi-\eta)}.
\end{equation}
Thus 
\begin{equation}
\begin{aligned}
 D_3^\eta \varphi(2^{-p_2}\sqrt{1-\Lambda^2}(\eta))&=-2^{-p_2}\Lambda(\eta)\sqrt{1-\Lambda^2(\eta)}\varphi'(2^{-p_2}\sqrt{1-\Lambda^2}(\eta))\\
 &= -\Lambda(\eta)\cdot \widetilde{\varphi}(2^{-p_2}\sqrt{1-\Lambda^2}(\eta)),\\
 D_3^\eta \varphi(2^{-p_1}\sqrt{1-\Lambda^2}(\xi-\eta))&=2^{-p_1}\frac{\abs{\eta}}{\abs{\xi-\eta}}\Lambda(\xi-\eta)\sqrt{1-\Lambda^2(\xi-\eta)}\varphi'(2^{-p_1}\sqrt{1-\Lambda^2}(\xi-\eta))\\
 &= \Lambda(\xi-\eta)\cdot\frac{\abs{\eta}}{\abs{\xi-\eta}} \widetilde{\varphi}(2^{-p_1}\sqrt{1-\Lambda^2}(\xi-\eta)).
\end{aligned} 
\end{equation}
Together with $D_3^\eta\abs{\eta}=\abs{\eta}\Lambda(\eta)$, $D_3^\eta\abs{\xi-\eta}=-\abs{\eta}\Lambda(\xi-\eta)$ and the fact that $D_3^\eta=-\frac{\abs{\eta}}{\abs{\xi-\eta}}D_3^{\xi-\eta}$ we thus have that
\begin{equation}
\begin{aligned}
 D_3^\eta \,\mathcal{F}\left(P_{k_2,p_2}R_{\ell_2} G\right)(\eta)&\sim \Lambda(\eta) \mathcal{F}\left(P_{k_2,p_2}R_{\ell_2} (1,S)G\right)(\eta)+ 2^{\ell_2+p_2}\mathcal{F}\left(P_{k_2,p_2}R_{\ell_2} G\right)(\eta),\\
 D_3^\eta\, \mathcal{F}\left(P_{k_1,p_1}R_{\ell_1} F\right)(\xi-\eta)&\sim 2^{k_2-k_1}\left[ \Lambda(\xi-\eta) \mathcal{F}\left(P_{k_1,p_1}R_{\ell_1} (1,S)F\right)(\xi-\eta)+ 2^{\ell_1+p_1}\mathcal{F}\left(P_{k_1,p_1}R_{\ell_1} F\right)(\xi-\eta)\right].
\end{aligned} 
\end{equation}

To make use of this in an iterated integration by parts we also need to control $D_3^\eta\Phi$.
From \eqref{eq:D3basics} we see iteratively that for any $M\in\N$ there holds
\begin{equation}\label{eq:D3basics2}
 \abs{(D_3^\eta)^M\Lambda(\eta)}\lesssim 1-\Lambda^2(\eta),\qquad \abs{(D_3^\eta)^M\Lambda(\xi-\eta)}\lesssim \frac{\abs{\eta}^M}{\abs{\xi-\eta}^M}(1-\Lambda^2(\xi-\eta)).
\end{equation}
\subsubsection*{Example}
In the particular case where $2^{k_2}\sim 2^{k_1}$ and $0\gg p_2\gg p_1$, we have that 
$\abs{D_3^\eta\Phi}\sim 2^{2p_2}$, and with \eqref{eq:D3basics2} we see analogously as in Section \ref{ssec:vfibp_bds} that repeated integration by parts along $D_3^\eta$ in a term $\Q_{\m}(P_{k_1,p_1}R_{\ell_1} F,P_{k_2,p_2}R_{\ell_2} G)$ is beneficial if
\begin{equation}\label{eq:D3ibpcond}
 2^{-2p_2}\left(1+2^{\ell_1+p_1}+2^{\ell_2+p_2}\right)<s^{1-\delta}.
\end{equation}

\subsubsection{A preliminary lemma to organize cases}
Since we have a multitude of parameters that govern the losses and gains when integrating by parts as in Lemma \ref{lem:new_ibp}, it is useful to get some overview of natural restrictions. To guide the organization of cases later on we will make use of the following result:
\begin{lemma}\label{lem:gapp-cases}
 Assume that $p\leq \min\{p_1,p_2\}-10$. Then on the support of $\chi_\h$ there holds that $p+k<p_1+k_1-4$, and thus $p_2+k_2-2\leq p_1+k_1\leq p_2+k_2+2$. Moreover, either one of the following options holds:
 \begin{enumerate}
  \item\label{it:gapp-op1} $\abs{k_1-k_2}\leq 4$, and thus also $\abs{p_1-p_2}\leq 6$,
  \item\label{it:gapp-op2} $k_2<k_1-4$ then $\abs{k-k_1}\leq 2$ and $p_1\leq p_2-2$, so that $p\leq p_1-10\leq p_2-12$.
  \item $k_1<k_2-4$ then $\abs{k-k_2}\leq 2$ and $p_2\leq p_1-2$, so that $p\leq p_2-10\leq p_1-12$.
 \end{enumerate}
\end{lemma}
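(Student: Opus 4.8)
The plan is to analyze the geometric constraints imposed by the support of $\chi_\h$, which involves the localizations $\varphi_{k,p}(\xi)$, $\varphi_{k_1,p_1}(\xi-\eta)$, $\varphi_{k_2,p_2}(\eta)$, using that $2^p\abs{\xi}\sim\abs{\xi_\h}$ (and similarly for the other frequencies). First I would observe that $\xi_\h = (\xi-\eta)_\h + \eta_\h$, so $\abs{\xi_\h}\lesssim \abs{(\xi-\eta)_\h}+\abs{\eta_\h}$, i.e.\ $2^{p+k}\lesssim 2^{p_1+k_1}+2^{p_2+k_2}$. Under the assumption $p\leq\min\{p_1,p_2\}-10$, if additionally $k\geq\max\{k_1,k_2\}$ were comparable to both, we would quickly get a contradiction with the triangle inequality; the point is to extract that $\abs{\xi_\h}$ is genuinely smaller than both horizontal pieces, forcing $(\xi-\eta)_\h$ and $\eta_\h$ to nearly cancel. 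This gives $2^{p_1+k_1}\sim 2^{p_2+k_2}$ (the two horizontal components must be of comparable size, since their sum is much smaller), which is the first assertion $p+k<p_1+k_1-4$ and $\abs{(p_1+k_1)-(p_2+k_2)}\leq 2$.

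Next I would turn to the trichotomy on $\abs{k_1-k_2}$. The relation $\abs{(p_1+k_1)-(p_2+k_2)}\leq 2$ means $p_1-p_2 = (k_2-k_1)+O(1)$. If $\abs{k_1-k_2}\leq 4$ then automatically $\abs{p_1-p_2}\leq 6$, which is option \eqref{it:gapp-op1}. If instead $k_2<k_1-4$, I would use that $\xi = (\xi-\eta)+\eta$ with $\abs{\eta}\sim 2^{k_2}$ much smaller than $\abs{\xi-\eta}\sim 2^{k_1}$, so $\abs{\xi}\sim 2^{k_1}$, giving $\abs{k-k_1}\leq 2$; and from $p_1-p_2=(k_2-k_1)+O(1)<-4+O(1)$ we get $p_1\leq p_2-2$ (after checking the $O(1)$ constants with the chosen cutoff widths), so $p\leq p_1-10\leq p_2-12$. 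The case $k_1<k_2-4$ is symmetric. The one subtlety is making the constants match: one needs to verify that with the specific bump function $\psi$ supported in $[-8/5,8/5]$ and $\psi\equiv1$ on $[-4/5,4/5]$, the ``$10$'' gap in the hypothesis propagates to the stated ``$-4$'', ``$\leq 6$'', ``$-12$'' etc. This is bookkeeping of overlap widths (each $\varphi_{a,b}$ lives on a dyadic annulus of fixed multiplicative width), and I would do it by tracking that $2^p\abs{\xi}$, $2^{p_1}\abs{\xi-\eta}$, $2^{p_2}\abs{\eta}$ each range over intervals of the form $[c\,2^{a+b}, C\,2^{a+b}]$ with $C/c$ an absolute constant of size roughly $4$.

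I do not expect any serious obstacle here — this is an elementary consequence of the triangle inequality applied to horizontal projections, combined with the anisotropic Littlewood-Paley support conditions. The only place requiring care is the interplay between horizontal and full frequency sizes: the key identity is $\abs{\zeta_\h} = 2^{p(\zeta)}\abs{\zeta}$ up to the fixed width of the cutoffs, valid since on the support of $\varphi(2^{-b}\sqrt{1-\Lambda^2(\zeta)})$ one has $\sqrt{1-\Lambda^2(\zeta)}\sim 2^b$ and $\abs{\zeta_\h}=\abs{\zeta}\sqrt{1-\Lambda^2(\zeta)}$. So the main step is: from $\abs{\xi_\h}\ll\min\{\abs{(\xi-\eta)_\h},\abs{\eta_\h}\}$ (which follows from $p\leq\min\{p_1,p_2\}-10$ once one also knows $k$ is not wildly larger — but that itself follows since $\abs{\xi}\leq\abs{\xi-\eta}+\abs{\eta}$ so $2^k\lesssim 2^{k_1}+2^{k_2}$, and one rules out $k$ much larger than both $k_1,k_2$ directly), deduce $\abs{(\xi-\eta)_\h}\sim\abs{\eta_\h}$ by near-cancellation, and then read off all the stated inequalities. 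I would write it as a short sequence of case distinctions on the ordering of $k,k_1,k_2$, invoking the triangle inequality for $\abs{\,\cdot\,}$ and $\abs{\,\cdot\,_\h}$ in each.
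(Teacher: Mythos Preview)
Your proposal is correct and takes essentially the same approach as the paper: both arguments boil down to applying the triangle inequality to $\xi_\h=(\xi-\eta)_\h+\eta_\h$ and $\xi=(\xi-\eta)+\eta$ and tracking the dyadic constants. The paper organizes the first step as a short contradiction argument (assume $p+k\ge p_1+k_1-4$ and derive $p\ge p_2-8$), whereas you outline it directly; note that your direct route needs one extra beat---from $k\le k_{\max}+O(1)$ and $p\le p_{\min}-10$ you only get $2^{p+k}\ll 2^{p_i+k_i}$ for the index $i$ with $k_i=k_{\max}$, and must then invoke the horizontal triangle inequality once more to get comparability of $2^{p_1+k_1}$ and $2^{p_2+k_2}$ before concluding $2^{p+k}\ll\min$---but this is exactly the bookkeeping you anticipate.
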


\begin{remark}\label{lem:gapp-casesRem}
 We comment on a few points:
 \begin{enumerate}
  \item The analogous result applies with the roles of $p,p_i$ permuted.
  \item The analogous results hold in the variables $q,q_i$ on the support of $\chi$ in case of a gap in $q_{\min}\leq q_{\max}-10$.
 \end{enumerate}
\end{remark}
\blue{\textbf{Notation.} Since the constants involved here and in many future, similar case by case analyses are independent of the other important parameters in our proofs, we will use the slightly less formal $\ll$, $\sim$, etc. Since the decisive scales are usually given in terms of parameters in dyadic decompositions, to unburden the notation we will use the same symbols to denote both multiplicative bounds (resp.\ equivalences) at the level of the dyadic scales $2^{n}$, as well as additive bounds at the level of the parameter $n\in\Z$, where the distinction is clear from the context. For example, we will refer to the assumption of Lemma \ref{lem:gapp-cases} as $p\ll \min\{p_1,p_2\}$, and will take $p\sim 0$ as equivalent to $2^p\sim 1$, namely that there exist $C\in\N$ such that $-C<p<C$.
}
\medskip

\begin{figure*}[h]
    \centering
    \begin{subfigure}[t]{0.5\textwidth}
        \centering
        \includegraphics[height=4cm]{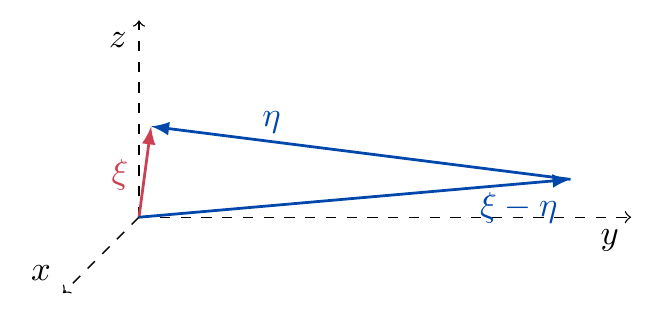}
        \caption*{Case \eqref{it:gapp-op1}}
    \end{subfigure}%
    ~ 
    \begin{subfigure}[t]{0.5\textwidth}
        \centering
        \includegraphics[height=7cm]{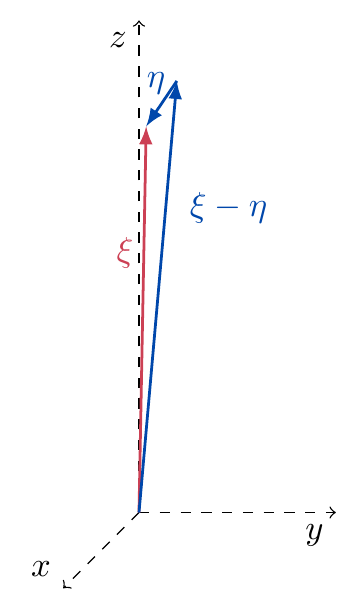}
        \caption*{Case \eqref{it:gapp-op2}}
    \end{subfigure}
    \caption{Exemplary illustration of the scenarios of Lemma \ref{lem:gapp-cases} in Cartesian coordinates.}\label{fig:gapp}
\end{figure*}

For the proof it is convenient to visualize the triangle of frequencies $\xi,\xi-\eta,\eta$ -- see also Figure \ref{fig:gapp} for illustration.
\begin{proof}[Proof of Lemma \ref{lem:gapp-cases}]
 Consider $(\xi,\eta)\in\textnormal{supp}(\chi_\h)$. Let $p\leq \min\{p_1,p_2\}-10$, and assume for the sake of contradiction that $ p+k\geq p_1+k_1-4$. Then from $\eta_\h=\xi_h-(\xi_\h-\eta_h)$ we have that $p_2+k_2\leq p+k+6$, and it also follows that $k_1\leq p-p_1+k+4\leq k-6$, and hence $k-2\leq k_2\leq k+2$ since $\eta=\xi-(\xi-\eta)$. But then we arrive at the contradiction that $p\geq p_2+k_2-k-6\geq p_2-8$. Hence we conclude that $p+k<p_1+k_1-4$, and thus $p_1+k_1\in[p_2+k_2-2,p_2+k_2+2]$.
 
 Moreover, if $\abs{k_1-k_2}\leq 4$, then it follows that $\abs{p_1-p_2}\leq 6$. Finally, if $k_2< k_1-4$, then $\abs{k-k_1}\leq 2$ and  thus $p_1-p_2\leq k_2-k_1+2\leq -2$, so that $p\leq p_1-10\leq p_2-12$. The third statement is the symmetric version upon exchanging the roles of $\xi-\eta$ and $\eta$.
\end{proof}

\subsection{Remark on normal forms}\label{ssec:nfs}
In the bilinear expressions we encounter we will also perform normal forms. For a parameter $\lambda>0$ to be chosen we decompose the multiplier into ``resonant'' and ``non-resonant'' parts
\begin{equation}\label{eq:mdecompNF}
 \m(\xi,\eta)=\psi(\lambda^{-1}\Phi)\mathfrak{m}(\xi,\eta)+(1-\psi(\lambda^{-1}\Phi))\m(\xi,\eta)=:\mathfrak{m}^{res}(\xi,\eta)+\m^{nr}(\xi,\eta),
\end{equation}
and correspondingly have that
\begin{equation}
 \B_\m(F_1,F_2)=\B_{\m^{res}}(F_1,F_2)+\B_{\m^{nr}}(F_1,F_2).
\end{equation}
A direct integration by parts in time yields that
\begin{equation}\label{mdecompNFNRNorms}
 \norm{P_{k,p}\B_{\m^{nr}}(F_1,F_2)}_{L^2}\lesssim \norm{P_{k,p}\Q_{\Phi^{-1}\m^{nr}}(F_1,F_2)}_{L^2}+\norm{P_{k,p}\B_{\Phi^{-1}\m^{nr}}(\partial_tF_1,F_2)}_{L^2}+\norm{P_{k,p}\B_{\Phi^{-1}\m^{nr}}(F_1,\partial_tF_2)}_{L^2}.
\end{equation}
\begin{lemma}\label{lem:nfs}
 Let $\lambda>0$ and $G_j=P_{k_j,p_j}G_j$.  We have the following bounds:
 \begin{enumerate}
  \item\label{it:NF-bd1}   The non-resonant part satisfies
  \begin{equation}\label{eq:NF-bd1}
   \norm{P_{k,p}\Q_{\Phi^{-1}\m^{nr}}(G_1,G_2)}_{L^2}\lesssim 2^{k+p_{\max}}\lambda^{-1}\cdot \Sz \norm{G_1}_{L^2}\norm{G_2}_{L^2}.
  \end{equation}
  
  \item\label{it:NF-bd2} If we can choose $\lambda>0$ such that $\abs{\Phi\chi_\h}\geq\lambda\gtrsim 1$, then we have that $\m^{res}=0$ and thus $\m=\m^{nr}$, and in addition to \eqref{eq:NF-bd1} there holds the alternative bound
  \begin{equation}\label{eq:NF-bd2}
   \norm{P_{k,p}\Q_{\Phi^{-1}\m^{nr}}(G_1,G_2)}_{L^2}\lesssim 2^{k+p_{\max}}\cdot \min\{\norm{e^{it\Lambda} G_1}_{L^\infty}\norm{G_2}_{L^2},\norm{G_1}_{L^2}\norm{e^{it\Lambda}G_2}_{L^\infty}\}.
  \end{equation}
  
  \item\label{it:NF-bd34} If there holds that $\abs{\partial_{\eta_3}\Phi\,\chi_\h}\gtrsim L>0$, then we also have the following set size gains:
  \begin{equation}\label{eq:NF-bd3}
   \norm{P_{k,p}\Q_{\m^{res}}(G_1,G_2)}_{L^2}\lesssim 2^{k+p_{\max}}\lambda^{\frac{1}{2}} L^{-\frac{1}{2}}\cdot \min\{2^{k_1+p_1},2^{k_2+p_2}\}\norm{G_1}_{L^2}\norm{G_2}_{L^2},
  \end{equation}
  and
  \begin{equation}\label{eq:NF-bd4}
   \norm{P_{k,p}\Q_{\Phi^{-1}\m^{nr}}(G_1,G_2)}_{L^2}\lesssim \abs{\log\lambda}2^{k+p_{\max}}\lambda^{-\frac{1}{2}} L^{-\frac{1}{2}}\cdot \min\{2^{k_1+p_1},2^{k_2+p_2}\}\norm{G_1}_{L^2}\norm{G_2}_{L^2}.
  \end{equation}

  \item The analogous bounds hold when additional localizations in $q, q_j$, $j=1,2$ are considered.
 \end{enumerate}
\end{lemma}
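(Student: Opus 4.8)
The plan is to prove the four bounds in Lemma~\ref{lem:nfs} by direct analysis of the oscillatory integrals, using the multiplier bounds from Lemma~\ref{lem:ECmult_bds} together with Schur-type estimates and the size of the measure of the relevant frequency sets.

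\medskip

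\textbf{Setup and bound \eqref{eq:NF-bd1}.} First I would record that on the support of $\chi_\h$ (which is built into the localizations of $G_1,G_2,$ and the output), Lemma~\ref{lem:ECmult_bds} gives $\abs{\m}\cdot\chi_\h\lesssim 2^{k+p_{\max}}$, and hence also $\abs{\Phi^{-1}\m^{nr}}\cdot\chi_\h\lesssim 2^{k+p_{\max}}\lambda^{-1}$ since $\m^{nr}$ is supported where $\abs{\Phi}\gtrsim\lambda$. For \eqref{eq:NF-bd1} I would then apply the standard bilinear multiplier estimate $\norm{\mathcal{F}^{-1}\int n(\xi,\eta)\widehat{G_1}(\xi-\eta)\widehat{G_2}(\eta)d\eta}_{L^2}\lesssim \norm{n}_{L^\infty}\cdot\abs{\mathcal{S}}\cdot\norm{G_1}_{L^2}\norm{G_2}_{L^2}$, where $\mathcal{S}$ is the set of frequencies entering the convolution; the factor $\Sz=\abs{\mathcal{S}}$ appears exactly as in the statement. (This is just Young/Cauchy--Schwarz in the convolution variable restricted to the support.)

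\medskip

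\textbf{Bound \eqref{eq:NF-bd2}.} Under the hypothesis $\abs{\Phi\chi_\h}\geq\lambda\gtrsim 1$, the resonant cutoff $\psi(\lambda^{-1}\Phi)$ vanishes on the support, so $\m^{res}=0$ and $\m=\m^{nr}$. Then $\abs{\Phi^{-1}\m}\chi_\h\lesssim 2^{k+p_{\max}}\lambda^{-1}\lesssim 2^{k+p_{\max}}$, and I would estimate the bilinear term by placing one factor in $L^\infty$ via its linear evolution: writing $\Q_{\Phi^{-1}\m^{nr}}(G_1,G_2)=\mathcal{F}^{-1}\int e^{is\Phi}(\Phi^{-1}\m^{nr})\widehat{G_1}\widehat{G_2}$, absorb the phase into $e^{is\Lambda}G_1$ or $e^{is\Lambda}G_2$ and use the elementary $\norm{Q_n(h_1,h_2)}_{L^2}\lesssim\norm{n}_{L^\infty}\norm{h_1}_{L^\infty}\norm{h_2}_{L^2}$ (and symmetrically), which gives \eqref{eq:NF-bd2} with the $\min$.

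\medskip

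\textbf{Bounds \eqref{eq:NF-bd3}--\eqref{eq:NF-bd4}: the main obstacle.} This is the delicate part, where the lower bound $\abs{\partial_{\eta_3}\Phi\,\chi_\h}\gtrsim L$ must be converted into a gain via the coarea formula. The idea: decompose dyadically in the size of $\Phi$, $\m^{res}=\sum_{2^j\lesssim\lambda}\psi(\lambda^{-1}\Phi)\varphi(2^{-j}\Phi)\m$ (plus the piece near $\Phi\sim 0$), and on each piece $\{\abs{\Phi}\sim 2^j\}$ the transversality $\abs{\partial_{\eta_3}\Phi}\gtrsim L$ forces the $\eta_3$-extent of the relevant slab to be $\lesssim 2^j L^{-1}$; integrating over this thin set in one variable, and using the remaining $\min\{2^{k_1+p_1},2^{k_2+p_2}\}$ for the measure in the horizontal/other directions, yields a factor $(2^j L^{-1})^{1/2}\min\{2^{k_1+p_1},2^{k_2+p_2}\}$ after Cauchy--Schwarz. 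Summing $\sum_{2^j\lesssim\lambda}(2^j)^{1/2}\sim\lambda^{1/2}$ gives \eqref{eq:NF-bd3}; for \eqref{eq:NF-bd4} the same decomposition of $\m^{nr}$ over $2^j\gtrsim\lambda$ contributes $\sum_j (2^j)^{-1/2}$ per dyadic scale with an extra $2^{-j}$ from $\Phi^{-1}$... actually one sums $2^{-j}\cdot(2^jL^{-1})^{1/2}=2^{-j/2}L^{-1/2}$ over $\lambda\lesssim 2^j\lesssim 2^{k_{\max}}$, and the number of scales $\abs{\log\lambda}$ (up to the bounded range of $\Phi$) produces the logarithmic loss. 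The careful point I expect to require attention is making the slicing rigorous: one rotates/changes variables so that $\eta_3$ is the good direction (which is legitimate since $\partial_{\eta_3}\Phi$ is the quantity assumed large), applies a one-dimensional van der Corput-free ``thin set'' estimate in that variable while keeping the Schur bound in the remaining two, and checks that the null structure factor $\abs{\m}\chi_\h\lesssim 2^{k+p_{\max}}$ is compatible with extracting the smaller of $2^{k_1+p_1}, 2^{k_2+p_2}$ as the horizontal support measure. Finally, the last item (additional $q,q_j$ localizations) follows verbatim: the same proofs apply with $\chi_\h$ replaced by $\chi$ and the multiplier bound $\abs{\m}\chi\lesssim 2^{k+p_{\max}+q_{\max}}$ from \eqref{BoundsOnMAddedBenoit} in place of $\abs{\m}\chi_\h\lesssim 2^{k+p_{\max}}$, with the set sizes adjusted to include the vertical localization scales.
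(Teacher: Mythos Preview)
Your arguments for \eqref{eq:NF-bd1}, \eqref{eq:NF-bd3}, and \eqref{eq:NF-bd4} are essentially the paper's: the first is Lemma~\ref{lem:set_gain} plus the multiplier bound $\abs{\Phi^{-1}\m^{nr}\chi_\h}\lesssim 2^{k+p_{\max}}\lambda^{-1}$, and the last two are Lemma~\ref{lem:set_gain2} (the thin-slab/coarea estimate you describe) applied directly to $\m^{res}$ for \eqref{eq:NF-bd3} and to the dyadic pieces $\m_r=\varphi(2^{-r}\lambda^{-1}\Phi)\m^{nr}$ for \eqref{eq:NF-bd4}. The $\abs{\log\lambda}$ in \eqref{eq:NF-bd4} is simply the number of dyadic scales $r$ with $\lambda\lesssim 2^r\lambda\lesssim 1$; your geometric-sum reasoning in fact shows the log is not sharp, but the paper too only records the crude count.

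There is, however, a genuine gap in your proof of \eqref{eq:NF-bd2}. The ``elementary'' inequality you invoke,
\[
\norm{\Q_n(h_1,h_2)}_{L^2}\lesssim\norm{n}_{L^\infty}\norm{h_1}_{L^\infty}\norm{h_2}_{L^2},
\]
is \emph{false} for general bounded bilinear symbols $n$: an $L^\infty$ bound on the symbol does not give $L^\infty\times L^2\to L^2$ mapping. What is needed is control of $n$ in the multiplier algebra $\W_\h$ (the $L^1$ norm of its inverse Fourier transform, see \eqref{ProdRule2}). The paper supplies exactly this via Lemma~\ref{lem:phasesymb_bd}, which shows $\norm{\Phi^{-1}\psi_>(\Phi)}_{\W_\h}\lesssim 1$ when $\abs{\Phi}\gtrsim 1$; combined with $\norm{\m}_{\W_\h}\lesssim 2^{k+p_{\max}}$ and the algebra property \eqref{eq:alg_prop}, this yields $\norm{\Phi^{-1}\m^{nr}}_{\W_\h}\lesssim 2^{k+p_{\max}}$, after which the H\"older-type product rule \eqref{ProdRule2} gives \eqref{eq:NF-bd2}. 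Without this symbol estimate your argument does not close.
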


\begin{proof}
 The first claim \eqref{it:NF-bd1} follows from Lemma \ref{lem:set_gain} and the fact that $\abs{\Phi^{-1}\m^{nr}\chi_\h}\lesssim 2^{k+p_{\max}}\lambda^{-1}$. For \eqref{it:NF-bd2} it suffices to notice that under these assumptions by Lemma \ref{lem:phasesymb_bd} there holds that $\norm{\Phi^{-1}\m^{nr}}_{\W_\h}\lesssim 2^{k+p_{\max}}$.
 
 Finally, \eqref{eq:NF-bd3} follows with the improved set size gain of Lemma \ref{lem:set_gain2}. To obtain \eqref{eq:NF-bd4}, we further decompose
 \begin{equation}
  \m^{nr}(\xi,\eta)=\sum_{r\geq 1}\m_r(\xi,\eta),\qquad \m_r(\xi,\eta)=\varphi(2^{-r}\lambda^{-1}\Phi(\xi,\eta))\m^{nr}(\xi,\eta),
 \end{equation}
 and correspondingly $\Q_{\Phi^{-1}\m^{nr}}(G_1,G_2)=\sum_{r\geq 1}\Q_{\Phi^{-1}\m_r}(G_1,G_2)$. For these we invoke again Lemma \ref{lem:set_gain2} and note that since $\abs{\Phi}\lesssim 1$ there are at most $\abs{\log\lambda}$ terms.
\end{proof}

\section{Bounds for $\partial_t S^Nf$ in $L^2$}\label{sec:dtfbds}
We have the following estimates for the time derivative of the dispersive unknowns. We note that in view of the fact that these are bilinear expressions in $3d$, without further removal of resonant parts this is the fastest decay (up to minor losses) one can hope for.
\begin{lemma}\label{lem:dtfinL2}
Let $f$ be a dispersive unknown in Euler-Coriolis, and assume the bootstrap assumptions \eqref{eq:btstrap-assump}.

Then there exists $0<\gamma\ll \beta$ such that for $m\in\N$ and $t\in [2^m,2^{m+1})\cap [0,T]$ there holds that 
 \begin{equation}
  \norm{\partial_t P_k S^bf(t)}_{L^2}\lesssim 2^{\frac{k}{2}-k^+}\cdot 2^{-\frac{3}{2}m+\gamma m}\cdot \eps_1^2, \qquad 0\leq b\leq N.
 \end{equation}

\end{lemma}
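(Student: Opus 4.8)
The plan is to estimate $\partial_t P_k S^b f$ by invoking the Duhamel/bilinear representation \eqref{eq:EC_disp}: since $f$ is a dispersive unknown, $\partial_t U_\pm = \pm i\Lambda U_\pm + \sum Q_{\mathfrak{n}}$, so $\partial_t \U_\pm = \sum \Q_{\mathfrak{n}}(\U_\mu,\U_\nu)$ and the same for vector field derivatives $S^b$, using the favorable commutation identity $(S_\xi+S_\eta)\m=\m$ established in the proof of Proposition \ref{prop:btstrap} to distribute $S^b$ onto the two factors. Thus it suffices to bound $\norm{P_k \Q_{\m}(S^{b_1}\U_\mu, S^{b_2}\U_\nu)(s)}_{L^2}$ at time $s\sim 2^m$ for $b_1+b_2\le N$, and to exhibit a gain of $2^{-\frac32 m + \gamma m}$ with the stated frequency weight $2^{k/2-k^+}$. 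I would first decompose the two inputs using the anisotropic and angular Littlewood-Paley projections $P_{k_j,p_j}R_{\ell_j}$ (and $P_{k_j,p_j,q_j}$ where needed), and reduce to frequencies $k_j$ close to $k$ up to the usual high-high-to-low exceptions, using \eqref{BoundsOnMAddedBenoit} for the multiplier size $2^{k+p_{\max}+q_{\max}}$.

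The core dichotomy, as advertised in the introduction to Section \ref{sec:dtfbds}, is: \emph{either} there is a ``gap'' in the $p$ or $q$ parameters (so $\bar\sigma$ is large by Lemma \ref{lem:gapp-cases} / Remark \ref{lem:gapp-casesRem} and one can integrate by parts along $S_\eta$ or $\Omega_\eta$ using Lemma \ref{lem:new_ibp}), \emph{or} there is no gap in $p$ nor $q$, in which case Proposition \ref{prop:phasevssigma} forces either $\abs{\Phi}\gtrsim 2^{q_{\max}}$ or $\abs{\bar\sigma}$ large. In the no-gap regime one cannot integrate by parts in time usefully for $\partial_t f$ (there is no available normal form since we already differentiated once in time), so instead I would run a straightforward $L^2\times L^\infty$ estimate: place the input with fewer vector fields in $L^\infty$ via the decay estimate $\norm{e^{is\Lambda} P_{k_j}g}_{L^\infty}\lesssim \ip{s}^{-1}2^{3k_j/2-3k_j^+}\norm{g}_D$ from Corollary \ref{cor:decay} (or, more precisely, the refined split $I+I\!I$ of Proposition \ref{prop:decay}), and the other input in $L^2$ controlled by the $B$ and $X$ norms of \eqref{eq:btstrap-assump}. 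Since $Q_\m(f,g)$ at fixed time has $\norm{Q_\m(f,g)}_{L^2}\lesssim \norm{\m\cdot\chi}_{L^\infty}\norm{e^{is\Lambda}f}_{L^\infty}\norm{g}_{L^2}$ when the input localizations are summable, and since the degeneracy region $\{2^{2p+q}\lesssim t^{-1}\}$ contributes a factor $2^{2p+q}$ (rather than $t^{-1}$) but is itself of small measure, summing the anisotropic pieces of Proposition \ref{prop:decay} — in particular $\sum_{2^{2p+q}\gtrsim t^{-1}}\norm{I\!I}_{L^2}\lesssim t^{-1/2}$ from Remark \ref{rem:decay-sum}, combined with the $t^{-1}$ bound on $I$ — produces the improved rate $t^{-3/2}$ up to the $2^{\gamma m}$ loss coming from the slow growth \eqref{eq:interpol_energy} of the $L^2$ norms of the many vector fields and the $\kappa$-losses in the angular truncation. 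The null structure from Lemma \ref{lem:ECmult_bds} (factors $\Lambda(\zeta_1)\sqrt{1-\Lambda^2(\zeta_2)}$) is used to absorb the singularities at $\Lambda=0$ and at the horizontal axis and to make the $p,q$ sums converge.

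In the gap regime, I would use Lemma \ref{lem:new_ibp}: iterating integration by parts along the vector fields costs, per step, a factor $s^{-1}2^{-p_1+2k_1}L^{-1}(1+2^{k_2-k_1}(2^{q_2-q_1}+2^{\ell_1}))$ where $L\sim \abs{\bar\sigma}\gtrsim 2^{k_{\max}+k_{\min}}2^{p_{\max}+q_{\max}}$ is boosted by the gap, while each vector field landing on an input is harmless (controlled by $\norm{S^{b_j+N}\U}$ via \eqref{eq:interpol_energy}) and each $\Ups$ costs $2^{\ell_j}$ absorbed by the $X$ norm weight $2^{(1+\beta)\ell}2^{\beta p}$. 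Running $N$ such steps beats any polynomial in $s$, in particular reaches $2^{-\frac32 m}$ with room to spare, and the $X$-norm weights ensure summability in $\ell_j$; the $B$-norm weights $2^{-p-q/2}$ handle the small-$q$ region. Collecting all cases, summing the geometric series in the Littlewood-Paley parameters, and tracking the frequency weight (the $2^{k/2}$ comes from the $L^2\to L^\infty$ Bernstein/decay factor $2^{3k/2}$ times the $2^{-k}$ from low frequencies in the other factor, and $2^{-k^+}$ from the $H^{2N_0}$-type weights on the high-frequency factor after losing finitely many derivatives) yields the claim with some $0<\gamma\ll\beta$. The main obstacle I anticipate is the no-gap regime near the joint degeneracy $2^{2p+q}\sim t^{-1}$: there the linear decay only gives $t^{-1}$, so recovering the extra $t^{-1/2}$ requires the precise anisotropic split of Proposition \ref{prop:decay} — using that the $L^2$-piece $I\!I$ is supported where $2^{2p+q}\gtrsim t^{-1}$ and sums to $t^{-1/2}2^{-3k^+}$, and that the $L^\infty$-piece $I$ already enjoys the better $\min\{2^{2p+q},2^{-p-q/2}t^{-3/2}\}$ bound — so that an $L^2\times(L^\infty\text{ or }L^2)$ pairing, rather than a normal form, still closes with the required margin.
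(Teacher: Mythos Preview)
Your proposal is correct and follows essentially the same approach as the paper: reduce to bilinear terms $P_k\Q_\m(S^{b_1}\U_\mu,S^{b_2}\U_\nu)$, dispose of extreme frequencies via energy/$B$/$X$ norm bounds, then split into the ``gap in $p$'', ``gap in $q$'', and ``no gaps'' cases, using iterated vector-field integration by parts (Lemma~\ref{lem:new_ibp}) in the gap cases and the refined $I+I\!I$ decay split of Proposition~\ref{prop:decay} in the no-gap case. The only minor deviation is that you invoke Proposition~\ref{prop:phasevssigma} in the no-gap regime, whereas the paper does not---it goes directly to the $L^2\times L^\infty$ estimate with the anisotropic decay split, which (as you yourself note) is the only route available since normal forms are unavailable for $\partial_t f$.
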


\begin{proof}
We know that $\partial_t P_k S^bf$, $0\leq b\leq N$ is a sum of terms of the form $\sum_{b_1+b_2\leq N} P_k \Q_\m(S^{b_1}F_1,S^{b_2}F_2)$ with $\m$ a multiplier as in Lemma \ref{lem:ECmult} and $F_j\in\{\U_+,\U_-\}$ dispersive unknowns, $j=1,2$, so it suffices to bound such expressions in $L^2$. Localizing the inputs in frequency we have that
\begin{equation}
 \norm{P_k \Q_\m(S^{b_1}F_1,S^{b_2}F_2)}_{L^2}\lesssim \sum_{k_1,k_2\in\Z}\norm{P_k\Q_\m(P_{k_1}S^{b_1}F_1,P_{k_2}S^{b_2}F_2)}_{L^2}.
\end{equation}
By the energy estimates \eqref{eq:interpol_energy} and direct bounds we have that
\begin{equation}
 \norm{P_k\Q_\m(P_{k_1}S^{b_1}F_1,P_{k_2}S^{b_2}F_2)}_{L^2}\lesssim 2^{k+\frac{3}{2}k_{\min}}\cdot 2^{-N_0(k_1^++k_2^+)}\cdot\norm{S^{b_1}F_1}_{H^{N_0}}\norm{S^{b_2}F_2}_{H^{N_0}},
\end{equation}
so the claim follows provided that $k_{\max}\geq 2N_0^{-1} m$, or if $k_{\min}\leq -2m$. With $\delta_0= 2N_0^{-1}$ we will thus assume that $-2m< k,k_1,k_2<\delta_0 m$.

Localizing further in $p,p_i$ and $\ell_i$, $i=1,2$, and writing $f_i=P_{k_i,p_i}R_{\ell_i}S^{b_i}F_i$ for simplicity of notation, we can further assume that $p,p_i\geq -2m$ and $\ell_i\leq 2m$, since
\begin{equation}
\begin{aligned}
 \norm{P_{k,p}\Q_\m(f_1,f_2)}_{L^2}\lesssim 2^{k+\frac{3}{2}k_{\max}+p_{\min}} &\min\{2^{p_1}\norm{f_1}_{B},2^{-\ell_1}\norm{f_1}_X\} \cdot \min\{2^{p_2}\norm{f_2}_B,2^{-\ell_2}\norm{f_2}_X\}.
\end{aligned} 
\end{equation}
Assuming without loss of generality that $k_2\leq k_1$ (so that $2^{k_{\max}+k_{\min}}\sim 2^{k+k_2}$), it thus suffices to show that
\begin{equation}\label{eq:dtfclaim}
 \norm{P_{k,p}\Q_\m(f_1,f_2)}_{L^2}\lesssim 2^{-\frac{3}{2}m+\frac{\gamma}{2} m}\cdot\eps_1^2.
\end{equation}
The basic strategy for this will be to either repeatedly integrate by parts as in Lemma \ref{lem:new_ibp}, or to use the $X$ and $B$ norm bounds. For this it is useful to distinguish cases based on the localizations and whether there are size gaps, first in $p$, $p_i$ (Case 1), then in $q$, $q_i$ (Case 2), since this gives lower bounds for $\abs{\bar\sigma}$ and thus for $V_\zeta\Phi$, $\zeta\in\{\eta,\xi-\eta\}$, as per Lemma \ref{lem:vfsizes-mini}. At the end (Case 3) this leaves us with the setting where these localizations are comparable.

\subsubsection*{Case 1: Gap in $p$}
Here we assume that $p_{\min}\ll p_{\max}$. By Lemma \ref{lem:vfsizes-mini} we have $\abs{\bar\sigma}\gtrsim 2^{p_{\max}}2^{k+k_2}$, and we may choose $V\in\{S,\Omega\}$ such that
\begin{equation}
 \abs{V_{\xi-\eta}\Phi}\sim 2^{p_2-2k_2}2^{p_{\max}+k+k_2}=2^{p_2+p_{\max}}2^{k-k_2},
\end{equation}
where we used that by convention $k_2\leq k_1$, so that $k_{\min}\in\{k,k_2\}$.

\subsubsection*{Case 1.1: $k_2=k_{\min}$}
Here we have that $2^k\sim 2^{k_1}$.

Then from Lemma \ref{lem:new_ibp}\eqref{it:ibp_p} we see that repeated integration by parts along $V_{\xi-\eta}$ gives for $K\in\N$ that
\begin{equation}
\begin{aligned}
 \norm{P_{k,p}\Q_\m(f_1,f_2)}_{L^2}&\lesssim 2^{k+\frac{3}{2}k_2}\left(2^{-m}2^{-p_2-p_{\max}}\cdot 2^{k_2-k}\cdot2^{k_1-k_2}[1+2^{\ell_2}]\right)^K\cdot \norm{(1,S)^Kf_1}_{L^2}\norm{(1,S)^Kf_2}_{L^2}\\
 &\lesssim 2^{k+\frac{3}{2}k_2}\left(2^{-m}2^{-p_2-p_{\max}}2^{\ell_2}\right)^K\cdot\eps_1^2.
\end{aligned} 
\end{equation}
Choosing $K =O(M)\gg 1$ yields the claim, provided that for $\delta=2K^{-1}=O(M^{-1})$ we have (since $\ell_2+p_2\geq 0$)
\begin{equation}
 -p_{\max}+2\ell_2\leq (1-\delta)m.
\end{equation}
If on the other hand $\ell_2>(1-\delta)\frac{m}{2}+\frac{p_{\max}}{2}$, using an $L^\infty\times L^2$ estimate and \eqref{BoundsOnMAddedBenoit} and Corollary \ref{cor:extrapol_decay} with $\kappa\ll\beta$ if $f_1$ has more than $N-3$ vector fields, we have that
\begin{equation}\label{eq:pgap_bd}
\begin{aligned}
 \norm{P_{k,p}\Q_\m(f_1,f_2)}_{L^2}&\lesssim 2^{k+p_{\max}} \norm{e^{it\Lambda}f_1}_{L^\infty}\cdot 2^{-\ell_2}\norm{f_2}_{X}\lesssim  2^{k+p_{\max}}2^{-m+\kappa m}\cdot 2^{-\frac{m}{2}+\delta\frac{m}{2}-\frac{p_{\max}}{2}}\eps_1^2\\
 &\lesssim  2^k2^{-\frac{3}{2}m+(\frac{\delta}{2}+\kappa) m}\cdot \eps_1^2.
\end{aligned} 
\end{equation}

\subsubsection*{Case 1.2: $k=k_{\min}$} 
We now aim to show \eqref{eq:dtfclaim} in case $k=k_{\min}$, where in particular $2^{k_1}\sim 2^{k_2}$. This can be done as Case 1.1 above, with the difference that now there may be a loss in $k$. This however, is recovered directly by the multiplier $\m$, so we can proceed in close parallel to Case 1.1.

By Lemma \ref{lem:new_ibp} integration by parts is feasible provided that
\begin{equation}
 2^{-m}\cdot 2^{-p_2-p_{\max}-k+k_2}\cdot (1+2^{p_1-p_2}+2^{\ell_2})<2^{-\delta m},
\end{equation}
which can be guaranteed by requiring that $-p_{\max}+2\ell_2-k+k_2\leq (1-\delta)m$. If on the other hand $-p_{\max}+2\ell_2-k+k_2> (1-\delta)m$, then as in \eqref{eq:pgap_bd} we have
\begin{equation}\label{eq:pgap_bd-2}
\begin{aligned}
 \norm{P_{k,p}\Q_\m(f_1,f_2)}_{L^2}&\lesssim 2^{k+p_{\max}} \norm{e^{it\Lambda}f_1}_{L^\infty}\cdot 2^{-\ell_2}\norm{f_2}_{X}\lesssim 2^{k+p_{\max}}2^{-m+\kappa m}2^{-\frac{1-\delta}{2}m-\frac{k}{2}+\frac{k_2}{2}}\cdot \eps_1^2\\
 &\lesssim 2^{\frac{k+p_{\max}}{2}+\frac{k_2}{2}}2^{-\frac{3}{2}m+(\kappa+\frac{\delta}{2}) m}\eps^2 \lesssim 2^{\frac{k}{2}}2^{-\frac{3}{2}m+(\frac{\delta}{2}+\kappa+\delta_0) m}\eps_1^2.
\end{aligned} 
\end{equation}

\medskip
We may henceforth assume that $2^{p_{\max}}\sim 2^{p_{\min}}$.

\subsubsection*{Case 2: Gap in $q$}
Now we localize further in $q,q_i$, and  write $g_i=P_{k,p_i,q_i}R_{\ell_i}f_i$, $i=1,2$. Then by $B$ norm estimates and the set size bound $\Sz\lesssim 2^{\frac{3}{2}k_{\max}}2^{p+\frac{q}{2}}$ we can assume that $q,q_i\geq -4m$, since if $q_{\min}<-4m$ there holds
\begin{equation}
\begin{aligned}
 \norm{P_{k,p,q}\Q_\m(g_1,g_2)}_{L^2}&\lesssim 2^{\frac{5}{2}k+p+\frac{q}{2}}\cdot 2^{p_1+\frac{q_1}{2}}\norm{g_1}_B 2^{p_2+\frac{q_2}{2}}\norm{g_2}_B\lesssim 2^{\frac{5}{2}k}2^{-2m}\eps_1^2.
\end{aligned} 
\end{equation}
Assuming now that $q_{\min}\ll q_{\max}$, we have that $2^{p_{\max}}\sim 1$, and thus by the previous case that $2^{p_{\max}}\sim 2^{p_{\min}}\sim 1$. Analogously to before we choose $V\in\{S,\Omega\}$ such that 
\begin{equation}
 \abs{V_{\xi-\eta}\Phi}\sim 2^{-2k_2}2^{q_{\max}+k+k_2}=2^{q_{\max}}2^{k-k_2}.
\end{equation}

\subsubsection*{Case 2.1: $k_2=k_{\min}$} 
By Lemma \ref{lem:new_ibp}\eqref{it:ibp_pq}, repeated integration by parts along $V_{\xi-\eta}$ gives 
\begin{equation}
\begin{aligned}
 \norm{P_{k,p,q}\Q_\m(g_1,g_2)}_{L^2}&\lesssim 2^{k+\frac{3}{2}k_2}\left(2^{-m}\cdot 2^{-q_{\max}}2^{k_2-k}\cdot 2^{k-k_2}[1+2^{q_1-q_2}+2^{\ell_2}]\right)^K\cdot \norm{(1,S)^Kg_1}_{L^2}\norm{(1,S)^Kg_2}_{L^2}\\
 &\lesssim 2^{k+\frac{3}{2}k_2}\left(2^{-m}2^{-q_{\max}}\max\{2^{q_1-q_2},2^{\ell_2}\}\right)^K\cdot\eps_1^2,
\end{aligned} 
\end{equation}
and the claim follows provided that
\begin{equation}
 2^{-m}2^{-q_{\max}}\max\{2^{q_1-q_2},2^{\ell_2}\}<2^{-\delta m}.
\end{equation}
Case 2.1a: In case $\ell_2\leq q_{1}-q_2$ this is satisfied if $q_2\geq (-1+\delta)m$. If on the other hand $q_2<(-1+\delta)m$, then by a $L^\infty\times L^2$ norm estimate and \eqref{BoundsOnMAddedBenoit} we have
\begin{equation}
\begin{aligned}
 \norm{P_{k,p,q}\Q_\m(g_1,g_2)}_{L^2}&\lesssim 2^{k+q_{\max}}\cdot \norm{e^{it\Lambda}g_1}_{L^\infty}\cdot 2^{\frac{q_2}{2}}\norm{g_2}_{B}\lesssim 2^{k}\cdot 2^{-m+\kappa m}\cdot 2^{-\frac{1-\delta}{2}m}\eps_1^2\lesssim 2^k 2^{-\frac{3}{2}m+(\kappa+\frac{\delta}{2})m}\cdot \eps_1^2.
\end{aligned} 
\end{equation}
Case 2.1b: In case $\ell_2> q_{1}-q_2$ we can repeatedly integrate by parts if $\ell_2-q_{\max}\leq (1-\delta)m$. Else we use an $L^\infty\times L^2$ estimate to get that
\begin{equation}
\begin{aligned}
 \norm{P_{k,p,q}\Q_\m(g_1,g_2)}_{L^2}&\lesssim 2^{k+q_{\max}} \norm{e^{it\Lambda}g_1}_{L^\infty}\cdot 2^{-\ell_2}\norm{g_2}_{X}\lesssim 2^k2^{-2m+(\kappa+\delta) m}\cdot \eps_1^2.
\end{aligned} 
\end{equation}

\subsubsection*{Case 2.2: $k=k_{\min}$ and $\vert k_1-k_2\vert\le 10$} 
By Lemma \ref{lem:new_ibp}\eqref{it:ibp_pq}, repeated integration by parts along $V_{\xi-\eta}$ is feasible if
\begin{equation}
 2^{-m}2^{-q_{\max}}2^{k_2-k}\max\{2^{q_1-q_2},2^{\ell_2}\}<2^{-\delta m}.
\end{equation}
If this condition is violated we distinguish cases as above in Cases 2.1a resp. 2.1b: either $-q_2+k_2-k>(1-\delta) m$ and then
\begin{equation}
\begin{aligned}
 \norm{P_{k,p,q}\Q_\m(g_1,g_2)}_{L^2}&\lesssim 2^{k+q_{\max}}\cdot \norm{e^{it\Lambda}g_1}_{L^\infty}\cdot 2^{\frac{q_2}{2}}\norm{g_2}_{B}\lesssim 2^{\frac{k+k_2}{2} m}\cdot 2^{-m+\kappa m}\cdot 2^{-\frac{1-\delta}{2}m}\eps^2\\
 &\lesssim 2^{\frac{k}{2}}2^{-\frac{3}{2}m+(\kappa+\frac{\delta+\delta_0+2}{2}) m}\cdot \eps_1^2,
\end{aligned} 
\end{equation}
or $\ell_2-q_{\max}+k_2-k>(1-\delta) m$ and then
\begin{equation}
\begin{aligned}
 \norm{P_{k,p,q}\Q_\m(g_1,g_2)}_{L^2}&\lesssim 2^{k+q_{\max}} \norm{e^{it\Lambda}g_1}_{L^\infty}\cdot 2^{-\frac{\ell_2}{2}}\norm{g_2}_{X}\lesssim 2^{\frac{k+k_2}{2}}\cdot 2^{-m+\kappa m}\cdot 2^{-\frac{(1-\delta)}{2}m}\cdot \eps^2\\
 &\lesssim 2^{\frac{k}{2}} 2^{-\frac{3}{2}m+(\kappa+\frac{\delta+\delta_0}{2}) m}\cdot \eps_1^2.
\end{aligned} 
\end{equation}

\medskip

Thus the only scenario we are left with is the following:
\subsubsection*{Case 3: No gaps}
 In this case we have that $2^p\sim 2^{p_i}$ and $2^q\sim 2^{q_i}$, $i=1,2$. As before we can also assume that $p,q\ge-4m$. Then we are done by a direct $L^2\times L^\infty$ estimate: Assuming without loss of generality that $g_1$ has fewer vector fields than $g_2$ (i.e.\ $b_1\leq b_2$ in our original notation), we have by Proposition \ref{prop:decay} that $e^{it\Lambda} P_{k_1,p_1,q_1}R_{\ell_1}g_1=I_{1,1}+I_{1,2}$ with 
 \begin{equation}
  \norm{I_{1,1}}_{L^\infty}\lesssim \eps_1\cdot 2^{-\frac{3}{2}\vert k_1\vert} 2^{-p-\frac{q}{2}}t^{-\frac{3}{2}},\quad \norm{I_{1,2}}_{L^2}\lesssim \eps_1 \cdot 2^{-\frac{3}{2}\vert k_1\vert}(t2^p)^{-1}\mathfrak{1}_{2^{2p+q}\gtrsim t^{-1}}, 
 \end{equation}
so that using \eqref{BoundsOnMAddedBenoit},
\begin{equation}
\begin{aligned}
 \norm{P_{k,p,q}\Q_\m(g_1,g_2)}_{L^2}&\lesssim 2^{p+q+k}[\norm{I_{1,1}}_{L^\infty}2^{p+\frac{q}{2}}\norm{f_2}_{B}+\norm{I_{1,2}}_{L^2}\norm{e^{it\Lambda}g_2}_{L^\infty}]\lesssim 2^{k}\cdot 2^{-\frac{3}{2}m}\cdot \eps_1^2,
\end{aligned} 
\end{equation}
where we have used the dispersive decay (at rate at least $t^{-1/2}$) of $e^{it\Lambda}g_2$, which in case $g_2$ has more than $N-3$ vector fields follows by interpolation (see Lemma \ref{lem:interpol} resp.\ Corollary \ref{cor:extrapol_decay}). 
\end{proof}

\section{Energy estimates and $B$ norm bounds}\label{sec:Bnorm}
It is classical to obtain energy estimates for \eqref{eq:EC}. As we showed in \cite[Proposition 5.1]{rotE}, both derivatives and vector fields can be controlled in $L^2$ as follows:
\begin{proposition}[Proposition 5.1 in \cite{rotE}]\label{prop:EnergyIncrement}
Assuming that $\bm{u}$ solves \eqref{eq:EC} on $0\le t\le T$, for $n\in\N$ there holds that
\begin{equation}\label{EnergyIncrementEq}
\begin{split}
 \Vert \bm{u}(t)\Vert_{H^{n}}^2-\Vert \bm{u}(0)\Vert_{H^{n}}^2&\lesssim \int_{s=0}^t\alpha(s)\cdot \Vert \bm{u}(s)\Vert_{H^n}^2\cdot \frac{ds}{1+s},\\
 \Vert S^n\bm{u}(t)\Vert_{L^2}^2-\Vert S^n\bm{u}(0)\Vert_{L^2}^2&\lesssim \int_{s=0}^t\alpha(s)\cdot \left(\Vert \bm{u}(s)\Vert_{H^n}^2+\sum_{b=0}^n\Vert S^b\bm{u}(s)\Vert_{L^2}^2\right)\cdot \frac{ds}{1+s},\\
 \Vert \vert\nabla\vert^{-1}S^n\bm{u}(t)\Vert_{L^2}^2-\Vert \vert\nabla\vert^{-1}S^n\bm{u}(0)\Vert_{L^2}^2&\lesssim \int_{s=0}^t \alpha(s)\cdot \sum_{b=0}^n\Vert S^b\bm{u}(s)\Vert_{L^2}^2\cdot \frac{ds}{1+s},
\end{split}
\end{equation}
where
\begin{equation}
\begin{split}
 \alpha(s)&=(1+s)\left[\Vert \bm{u}(s)\Vert_{L^\infty}+\Vert \nabla_x \bm{u}(s)\Vert_{L^\infty}\right].
\end{split}
\end{equation}
\end{proposition}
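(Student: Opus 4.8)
The plan is to derive each of the three inequalities in \eqref{EnergyIncrementEq} from a differential inequality obtained by the standard $L^2$ energy method applied to a suitable operator $T$ acting on \eqref{eq:EC}, and then integrate in time. I would treat the cases $T\in\{\partial^\alpha:\abs{\alpha}\le n\}$, $T=S^b$ ($b\le n$), and $T=\abs{\nabla}^{-1}S^b$ ($b\le n$) in parallel. For $T=\partial^\alpha$ this commutes with \eqref{eq:EC}; for $T$ involving $S$ one uses \eqref{eq:IER-VF} iteratively, so that $S^b\bm{u}$ solves a perturbed copy of \eqref{eq:EC} with $\div(S^b\bm{u})=0$, Coriolis term $\vec{e}_3\times S^b\bm{u}$, a pressure gradient $\nabla p_{S^b}$, and a forcing $\mathcal{N}_b:=-\sum_{b_1+b_2\le b,\ b_1\ge 1}c_{b_1b_2}\,(S^{b_1}\bm{u})\cdot\nabla(S^{b_2}\bm{u})$ (the sum carrying $\le b$ total vector fields since $S$ is a derivation up to the bounded zeroth order correction in \eqref{ScalingVF}). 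Pairing the resulting equation with $T\bm{u}$ in $L^2$, three structural facts do the bulk of the work: (i) $\langle\nabla p_T,T\bm{u}\rangle_{L^2}=-\langle p_T,\div(T\bm{u})\rangle=0$ since $\div(T\bm{u})=0$; (ii) $\langle\vec{e}_3\times T\bm{u},T\bm{u}\rangle=0$ pointwise, as $\vec{e}_3\times v\perp v$; (iii) the diagonal transport contribution vanishes, $\langle\bm{u}\cdot\nabla T\bm{u},T\bm{u}\rangle=\tfrac12\int\bm{u}\cdot\nabla\abs{T\bm{u}}^2=-\tfrac12\int(\div\bm{u})\abs{T\bm{u}}^2=0$.

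What remains are commutator and lower order terms. For $T=\partial^\alpha$, the term $\langle[\partial^\alpha,\bm{u}\cdot\nabla]\bm{u},\partial^\alpha\bm{u}\rangle$ is bounded by Kato--Ponce/Moser estimates by $\norm{\nabla_x\bm{u}}_{L^\infty}\norm{\bm{u}}_{H^n}^2$. For $T=S^b$, one is left with $\langle(S^{b_1}\bm{u})\cdot\nabla(S^{b_2}\bm{u}),S^b\bm{u}\rangle$ with $b_1\ge 1$ and $b_1+b_2\le b$: when $b_1=b$ this is $\lesssim\norm{\nabla_x\bm{u}}_{L^\infty}\norm{S^b\bm{u}}_{L^2}^2$, and for $1\le b_1\le b-1$ I would use a Gagliardo--Nirenberg inequality with \emph{mixed} norms, of the schematic form $\norm{\Gamma^a\bm{u}}_{L^r}\lesssim\norm{\bm{u}}_{L^\infty}^{1-a/n}\big(\sum_{\abs{\beta}+c\le n}\norm{\partial^\beta S^c\bm{u}}_{L^2}\big)^{a/n}$ for $\Gamma\in\{\partial_1,\partial_2,\partial_3,S\}$ and a scaling-admissible $r$, so that each such product is controlled by $(\norm{\bm{u}}_{L^\infty}+\norm{\nabla_x\bm{u}}_{L^\infty})\big(\norm{\bm{u}}_{H^n}^2+\sum_{b'\le n}\norm{S^{b'}\bm{u}}_{L^2}^2\big)$. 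For the third, negative regularity estimate, the key is to write the nonlinearity in divergence form, $\bm{u}\cdot\nabla\bm{u}=\div(\bm{u}\otimes\bm{u})$ (valid since $\div\bm{u}=0$), so that $\abs{\nabla}^{-1}S^b(\bm{u}\cdot\nabla\bm{u})$ involves only the order-zero operator $\abs{\nabla}^{-1}\div$ acting on $S$-derivatives of $\bm{u}\otimes\bm{u}$; pairing with $\abs{\nabla}^{-1}S^b\bm{u}$ and invoking the same product/interpolation estimates — now with the gained $\abs{\nabla}^{-1}$ preventing any low-frequency loss — gives a bound by $(\norm{\bm{u}}_{L^\infty}+\norm{\nabla_x\bm{u}}_{L^\infty})\sum_{b'\le n}\norm{S^{b'}\bm{u}}_{L^2}^2$, with no $H^n$ term required.

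Collecting, in each case one obtains $\tfrac{d}{dt}\norm{T\bm{u}}_{L^2}^2\lesssim(\norm{\bm{u}}_{L^\infty}+\norm{\nabla_x\bm{u}}_{L^\infty})\cdot(\text{the relevant sum of squared norms})$; summing over the admissible operators $T$, writing $\norm{\bm{u}}_{L^\infty}+\norm{\nabla_x\bm{u}}_{L^\infty}=\alpha(s)/(1+s)$, and integrating in time yields \eqref{EnergyIncrementEq}. I expect the main obstacle to be the last two ingredients: making the interpolation and commutator estimates work cleanly for the scaling vector field $S$, which is not translation invariant (so Kato--Ponce cannot simply be quoted), and in particular closing the negative regularity $\abs{\nabla}^{-1}S^b$ estimate using only the $S^{b'}\bm{u}$ norms — this is precisely where the divergence-form structure of the nonlinearity and the exact form of the commutator terms in \eqref{eq:IER-VF} are essential. (A complete argument of this type is given in \cite[Proposition 5.1]{rotE}.)
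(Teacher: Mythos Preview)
The paper does not give its own proof of this proposition; it simply cites \cite[Proposition 5.1]{rotE}. Your sketch outlines the standard energy method argument that underlies that result --- the three structural cancellations (pressure, Coriolis, diagonal transport), Kato--Ponce/Gagliardo--Nirenberg for the commutators, and the divergence-form trick for the $\abs{\nabla}^{-1}$ estimate --- and you correctly flag the two places that require care (mixed interpolation with $S$, and closing the negative-regularity bound without an $H^n$ contribution). Since both you and the paper ultimately defer to \cite{rotE}, your proposal is appropriate; there is nothing further to compare.
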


As a consequence, with the decay bounds of Corollary \ref{cor:decay} we obtain:
\begin{corollary}\label{cor:energy}
 Under the bootstrap assumptions \eqref{eq:btstrap-assump} there holds that
 \begin{equation}\label{eq:btstrap-concl2'}
  \norm{U_\pm(t)}_{H^{2N_0}\cap H^{-1}}+\norm{S^aU_\pm(t)}_{L^2\cap H^{-1}}\lesssim\eps_0\ip{t}^{C\eps_1},\qquad 0\le a\le M.
 \end{equation}
\end{corollary}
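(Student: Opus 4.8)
The plan is to combine the energy increment estimates of Proposition~\ref{prop:EnergyIncrement} with the pointwise decay of Corollary~\ref{cor:decay} and to close the estimates by a Gr\"onwall argument.

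First I would reduce the claimed bounds for $U_\pm$ to bounds for the velocity field $\bm{u}$. Since $A=(U_++U_-)/2$ and $C=(U_+-U_-)/2$, the normalization \eqref{MainPropU} gives, for any Fourier multiplier $m$ and any $a\in\N$, the identity $\norm{mS^a\bm{u}}_{L^2}^2=\tfrac12\big(\norm{mS^aU_+}_{L^2}^2+\norm{mS^aU_-}_{L^2}^2\big)$; here one uses that $S$ commutes with the Hodge decomposition, so that $S^a\bm{u}^\alpha=\bm{u}_{S^aA}^\alpha+\bm{u}_{S^aC}^\alpha$. Specializing to $m\in\{\ip{\xi}^{2N_0},1,\abs{\xi}^{-1}\}$ shows that it suffices to prove the asserted slow growth for $\norm{\bm{u}(t)}_{H^{2N_0}}$, $\norm{S^b\bm{u}(t)}_{L^2}$ and $\norm{\abs{\nabla}^{-1}S^b\bm{u}(t)}_{L^2}$, $0\le b\le M$; the corresponding bound at $t=0$ is immediate from \eqref{eq:id} and these identities.

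Next I would insert Corollary~\ref{cor:decay} into Proposition~\ref{prop:EnergyIncrement}: under the bootstrap assumptions \eqref{eq:btstrap-assump} we have $\norm{\bm{u}(s)}_{L^\infty}+\norm{\nabla_x\bm{u}(s)}_{L^\infty}\lesssim\eps_1\ip{s}^{-1}$, hence the weight $\alpha(s)=(1+s)\big[\norm{\bm{u}(s)}_{L^\infty}+\norm{\nabla_x\bm{u}(s)}_{L^\infty}\big]$ satisfies $\alpha(s)\lesssim\eps_1$ uniformly in $s$. I would then introduce the combined energy $\mathcal{E}(t):=\norm{\bm{u}(t)}_{H^{2N_0}}^2+\sum_{b=0}^M\big(\norm{S^b\bm{u}(t)}_{L^2}^2+\norm{\abs{\nabla}^{-1}S^b\bm{u}(t)}_{L^2}^2\big)$, apply the first inequality in \eqref{EnergyIncrementEq} with $n=2N_0$ and the second and third with $n=b$ for $0\le b\le M$, and add the results. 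The key structural point is that these inequalities form a closed hierarchy: every quantity $\norm{\bm{u}(s)}_{H^n}^2$ (with $n\le 2N_0$, which holds since $M<2N_0$) and $\sum_{b'\le n}\norm{S^{b'}\bm{u}(s)}_{L^2}^2$ appearing on the right-hand sides is dominated by $\mathcal{E}(s)$. Thus $\mathcal{E}(t)-\mathcal{E}(0)\lesssim\eps_1\int_0^t\mathcal{E}(s)\,\tfrac{ds}{1+s}$, and Gr\"onwall's inequality yields $\mathcal{E}(t)\lesssim\mathcal{E}(0)\,\ip{t}^{C\eps_1}\lesssim\eps_0^2\,\ip{t}^{C\eps_1}$ for some $C>0$; taking square roots (and renaming $C$) gives \eqref{eq:btstrap-concl2'}.

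There is no genuine difficulty here beyond two bookkeeping points. The first is to check that the passage between the norms of $U_\pm$ and those of $\bm{u}$, including the homogeneous $\dot{H}^{-1}$ weight $\abs{\nabla}^{-1}$, is \emph{exact} --- which is precisely the content of \eqref{MainPropU}. The second is to ensure that the coupling in \eqref{EnergyIncrementEq} (where the $S^n$ estimate sees lower-order $S^{b}$ norms as well as the full $H^n$ norm) is compatible with the choice $M\ll N_0$, so that the single energy functional $\mathcal{E}$ controls all terms on the right-hand sides and the Gr\"onwall argument closes without any loss of powers of $t$.
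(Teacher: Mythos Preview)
Your proof is correct and follows the same approach as the paper. The paper's own argument is a one-liner—observing that Corollary~\ref{cor:decay} gives $\alpha(s)\le C\eps_1$ and leaving the Gr\"onwall step and the translation between $U_\pm$ and $\bm{u}$ via \eqref{MainPropU} implicit—so your write-up simply spells out these details.
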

\begin{proof}
 It suffices to note that under the assumptions \eqref{eq:btstrap-assump}, we can use Proposition \ref{prop:decay} to get a constant $C>0$ such that
 \begin{equation}
  \alpha(s)\leq C\eps_1.
 \end{equation}
\end{proof}

The main goal of this section is then to upgrade this $L^2$ information on many vector fields to stronger $B$ norm bounds of fewer vector fields on the solution profiles. After the reduction to bilinear bounds as in the proof of Proposition \ref{prop:btstrap}, this is done by establishing the following claim (see also \eqref{eq:btstrap-concl1.1-B}):
\begin{proposition}\label{prop:Bnorm}
 Assume the bootstrap assumptions \eqref{eq:btstrap-assump} of Proposition \ref{prop:btstrap}. Then for $\delta=2M^{-1/2}>0$ and with $F_j=S^{b_j}\U_{\mu_j}$, $0\leq b_1+b_2\leq N$, $\mu_j\in\{+,-\}$, $j=1,2$, there holds that
 \begin{equation}\label{eq:B-claim0}
  \norm{\B_\m(F_1,F_2)}_B\lesssim 2^{-\delta^3 m}\eps_1^2.
 \end{equation}
\end{proposition}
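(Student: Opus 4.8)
The goal is to establish the decay estimate \eqref{eq:B-claim0} for each time-localized bilinear piece $\B_\m(F_1,F_2)$ in the $B$ norm, i.e.\ to control $2^{3k^+-\frac12 k^-}2^{-p-q/2}\norm{P_{k,p,q}\B_\m(F_1,F_2)}_{L^2}$ uniformly in $k\in\Z$, $p,q\in\Z^-$, with an extra gain $2^{-\delta^3 m}$. The strategy is the systematic case analysis foreshadowed in Section \ref{sec:dtfbds}: localize all interacting and output frequencies in the anisotropic parameters $k,k_j$, $p,p_j$, $q,q_j$ and the angular parameters $\ell_j$; discard extreme regimes using the energy bounds \eqref{eq:interpol_energy} (when $k_{\max}\gtrsim\delta_0 m$ or $k_{\min}\lesssim -2m$) and the $B$/$X$ norms together with the set-size bound $\Sz\lesssim 2^{\frac32 k_{\max}}2^{p+q/2}$ (when $p_{\min}$ or $q_{\min}\lesssim -Cm$, or $\ell_{\max}\gtrsim Cm$); and then, in the remaining compact range of parameters, play the central dichotomy of Proposition \ref{prop:phasevssigma} against the localizations.

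First I would reduce, exactly as in the proof of Proposition \ref{prop:btstrap}, to bilinear terms with $\m=\abs{\xi}e$, $e\in E_0$, satisfying the bounds \eqref{BoundsOnMAddedBenoit}, and with the null structure of Lemma \ref{lem:ECmult} available (a factor $\Lambda(\zeta_1)\sqrt{1-\Lambda^2(\zeta_2)}$), which gains smallness precisely where the phase degenerates. The case split then mirrors Section \ref{sec:dtfbds}: \textbf{(Case 1) Gap in $p$.} If $p_{\min}\ll p_{\max}$, Lemma \ref{lem:vfsizes-mini} gives $\abs{\bar\sigma}\gtrsim 2^{p_{\max}}2^{k_{\max}+k_{\min}}$, hence a robust lower bound for some $V_\zeta\Phi$; then Lemma \ref{lem:new_ibp}\eqref{it:ibp_p} lets us integrate by parts along vector fields as long as $-p_{\max}+2\ell_{\mathrm{cross}}\le(1-\delta)m$, and in the complementary regime (large $\ell$, hence by the $X$ norm a good factor $2^{-\ell}$) we close with an $L^\infty\times L^2$ estimate using Corollary \ref{cor:decay}/\ref{cor:extrapol_decay}. \textbf{(Case 2) Gap in $q$.} Here $2^{p_{\max}}\sim 1$, we further localize in $q,q_j$, use Lemma \ref{lem:new_ibp}\eqref{it:ibp_pq}, and argue symmetrically, again closing the ``cannot integrate by parts'' sub-cases with $L^\infty\times L^2$. \textbf{(Case 3) No gaps: $2^p\sim2^{p_j}$, $2^q\sim2^{q_j}$.} This is where normal forms enter. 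By Proposition \ref{prop:phasevssigma}, either $2^{p_{\max}}\sim1$ and $\abs{\bar\sigma}\gtrsim 2^{q_{\max}}2^{k_{\max}+k_{\min}}$ — so $V_\zeta\Phi$ is large and integration by parts along vector fields works via Lemma \ref{lem:new_ibp}\eqref{it:ibp_pq} — or $\abs{\Phi}\gtrsim 2^{q_{\max}}$ is bounded below, and we perform a normal form (time integration by parts), using Lemma \ref{lem:nfs}: the boundary and $\Q_{\Phi^{-1}\m^{nr}}$ terms are controlled by \eqref{eq:NF-bd1} (and the refined \eqref{eq:NF-bd3}–\eqref{eq:NF-bd4} when $\partial_{\eta_3}\Phi$ is nondegenerate), while the $\partial_t F_j$ terms invoke the fast decay $\norm{\partial_t P_k S^b f}_{L^2}\lesssim 2^{\frac k2 - k^+}2^{-\frac32 m+\gamma m}\eps_1^2$ of Lemma \ref{lem:dtfinL2}. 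The remaining ``resonant'' part $\m^{res}$ (where $\abs{\Phi}\lesssim\lambda$) is handled by the set-size gain \eqref{eq:NF-bd3} combined with the null-structure smallness near $\Lambda=0$, which is exactly the degenerate region and thus compensates the weight $2^{-p-q/2}$ in the $B$ norm. Throughout, one chooses the number of integrations by parts $K=O(M)$, so the losses $2^{-\delta m}$ per step and the polynomial-in-$m$ losses from energy growth $\ip{t}^{C\eps_1}$ and from the $O(m)$ ranges of $k,p,q,\ell$ are all absorbed into a net $2^{-\delta^3 m}$, using $\delta=2M^{-1/2}$ and the hierarchy \eqref{eq:nuvsdelta}.

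The main obstacle is the no-gap Case 3 in the regime where $2^{p_{\max}}\sim1$ but $q_{\max}$ is moderately negative (so $\Phi$ is small, of size $\sim 2^{q_{\max}}$, yet not small enough to integrate by parts along vector fields without a prohibitive loss), and simultaneously the output parameters $p,q$ are such that the $B$-norm weight $2^{-p-q/2}$ is large. Here one must genuinely combine three ingredients: the normal form (which costs $\lambda^{-1}=2^{-q_{\max}}$ but can be refined with the $\partial_{\eta_3}\Phi$ set-size gain of Lemma \ref{lem:nfs}\eqref{it:NF-bd34}), the quantitative null structure of Lemma \ref{lem:ECmult_bds} (a factor $2^{q_{\max}}$ or a factor of a horizontal/vertical angle that is small precisely on the dangerous set), and the anisotropic set-size bound $\Sz\lesssim 2^{\frac32 k_{\max}}2^{p+q/2}$, and verify that the product of gains beats $2^{-p-q/2}$. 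A delicate bookkeeping of which of $V\Phi$ large versus $\Phi$ large actually holds — and, when $\Phi$ is large, whether $\partial_{\eta_3}\Phi$ is nondegenerate so that \eqref{eq:NF-bd3}–\eqref{eq:NF-bd4} apply — is needed; this is precisely the ``refined study of the delicate contributions of terms with localization in $q,q_j$'' announced in the introduction, and I expect it to occupy the bulk of Section \ref{sec:Bnorm}.
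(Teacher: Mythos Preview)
Your overall scaffolding (reduce to finite parameter ranges, then split into gap-in-$p$, gap-in-$q$, no-gaps) is correct and matches the paper. The gap cases are handled essentially as you describe, via Lemma \ref{lem:new_ibp} for iterated integration by parts against $L^\infty\times L^2$ fallbacks when $\ell$ is forced large. One organizational point you elide: the paper separates the case $p_{\max}\ll 0$ (where $\abs{\Phi}\gtrsim 1$ trivially and a single normal form with $\lambda\sim 1$ plus Lemma \ref{lem:dtfinL2} closes it) from the gap-in-$p$ case with $p_{\max}\sim 0$; your Case 1 implicitly requires $p_{\max}\sim 0$ for the $\bar\sigma$ lower bound to be useful.

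Where your proposal genuinely diverges from the paper is the no-gap case, and here you substantially overestimate the difficulty. You plan to invoke the dichotomy of Proposition \ref{prop:phasevssigma} and run a normal form with resonant/nonresonant splitting, set-size gains from $\partial_{\eta_3}\Phi$, and the null structure---this is the machinery of the \emph{$X$ norm} no-gap case (Section \ref{ssec:X-nogaps}), not the $B$ norm. For the $B$ norm the paper does something much simpler and more elegant: when $2^{p}\sim 2^{p_j}$ and $2^{q}\sim 2^{q_j}$, apply Proposition \ref{prop:decay} directly to the input with fewer vector fields, splitting $e^{it\Lambda}g_1=I+I\!I$ with $\norm{I}_{L^\infty}\lesssim 2^{-p-q/2}t^{-3/2}\eps_1$ and $\norm{I\!I}_{L^2}\lesssim t^{-1/2}\eps_1$. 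The point is that the $L^\infty$ bound on $I$ carries \emph{exactly} the factor $2^{-p-q/2}$ that cancels the $B$-norm weight, so $\norm{I}_{L^\infty}\cdot 2^{p+q/2}\norm{g_2}_B\lesssim t^{-3/2}\eps_1^2$; the $I\!I$ piece pairs its $L^2$ decay against $\norm{e^{it\Lambda}g_2}_{L^\infty}\lesssim t^{-2/3}\eps_1$ (Corollary \ref{cor:extrapol_decay}). No normal form, no Proposition \ref{prop:phasevssigma}, no resonant analysis. The ``main obstacle'' you identify---moderate $q_{\max}$ with $\Phi\sim 2^{q_{\max}}$ small but not tiny---simply does not arise for the $B$ norm; it is the genuine difficulty only for the $X$ norm, where the weight $2^{(1+\beta)\ell}$ demands more than $t^{-3/2}$ worth of gain. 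Your route would still close (it is strictly harder than what is needed), but it obscures why the $B$ norm is the easier of the two and why Proposition \ref{prop:decay} was engineered with precisely this weight-matching in mind.
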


We recall again that here $\m$ is one of the multipliers of the Euler-Coriolis system in the dispersive formulation \eqref{eq:EC_disp} (see Lemma \ref{lem:ECmult}), for which we have the bounds of Lemma \ref{lem:ECmult_bds}. The remainder of this section now gives the proof of Proposition \ref{prop:Bnorm}.

\begin{proof}[Proof of Proposition \ref{prop:Bnorm}]
In most cases, we will be able to prove the stronger bound
\begin{equation}\label{StrongerThanBNorm}
2^k2^{4k^+} \norm{\mathcal{F}\left[P_k\B_\m(F_1,F_2)\right]}_{L^\infty}\lesssim 2^{-\delta^3 m}\eps_1^2.
\end{equation}

\subsubsection{Some simple cases}\label{ssec:Bnorm-reduction}
From the energy bounds \eqref{eq:interpol_energy} and with $\abs{\m}\lesssim 2^k$ and $\Sz\leq 2^{p+\frac{q}{2}+\frac{3}{2}k}$ we deduce that since $F_i=\sum_{k_i}P_{k_i}F_i$ there holds that
\begin{equation}
\begin{aligned}
 2^k2^{4k^+}\norm{\mathcal{F}\left[P_{k}\B_\m(F_1,F_2)\right]}_{L^\infty}\lesssim 2^m\cdot 2^{k+4k^+}&\sum_{k_1,k_2}\min\{2^{-N_0k_1}\norm{F_1}_{H^{N_0}},2^{k_1}\norm{F_1}_{\dot{H}^{-1}}\} \\
 &\qquad \cdot \min\{2^{-N_0k_2}\norm{F_2}_{H^{N_0}},2^{k_2}\norm{F_2}_{\dot{H}^{-1}}\},
\end{aligned} 
\end{equation}
and \eqref{StrongerThanBNorm} follows if $\min\{k,k_1,k_2\}\leq -2m$ or $\max\{k,k_1,k_2\}\geq \delta_0 m$, where $\delta_0=2N_0^{-1}$. 

Localizing in $p_j, \ell_j$ with $\ell_j\geq -p_j$, $j=1,2$, we have that with
\begin{equation}
 f_j=P_{k_j,p_j}R_{\ell_j}F_j, \qquad j=1,2,
\end{equation}
there holds that
\begin{equation}
 2^{k+4k^+}\norm{\mathcal{F}\left[P_{k}\B_\m(f_1,f_2)\right]}_{L^\infty}\lesssim 2^{(1+2\delta_0)m}\cdot 2^{-\ell_1}\norm{f_1}_X\cdot 2^{-\ell_2}\norm{f_2}_X\lesssim 2^{(1+2\delta_0)m-\ell_1-\ell_2}\varepsilon_1^2,
\end{equation}
and this gives \eqref{StrongerThanBNorm} if $\min\{\ell_1,\ell_2\}\ge 2m$, so that to prove \eqref{eq:B-claim0} it suffices to show that for
\begin{equation}\label{AssumptionBnormAddedBenoit}
 -2m\leq k,k_j\leq \delta_0 m,\qquad -2m\leq p_j\leq 0,\qquad -p_j\leq \ell_i\leq 2m,\quad j=1,2, 
\end{equation}
we have
\begin{equation}\label{eq:B-claim}
 \sup_{k,p,q}2^{-\frac{1}{2}k^{-}}2^{-p-\frac{q}{2}}\norm{P_{k,p,q}\B_\m(f_1,f_2)}_{L^2}\lesssim 2^{-\delta m}\eps_1^2.
\end{equation}

The rest of this section establishes \eqref{eq:B-claim}, by first treating the case of a gap in $p$ (i.e.\ $p_{\min}\ll p_{\max}$) with $2^{p_{\max}}\sim 1$ (Section \ref{ssec:B-p-gap}), secondly that of $p_{\max}\ll 0$ (Section \ref{ssec:B-pmax}), thirdly that of a gap in $q$ (Section \ref{ssec:B-q-gap}, and finally the case of no gaps (Section \ref{ssec:B-nogaps}).

\subsection{Gap in $p$, with $p_{\max}\sim 0$.}\label{ssec:B-p-gap}
We show \eqref{eq:B-claim} when \eqref{AssumptionBnormAddedBenoit} holds and in addition $p_{\min}\ll p_{\max}\sim 0$.

We further subdivide according to whether the output $p$ or one of the inputs $p_i$ is small, and use Lemma \ref{lem:gapp-cases} to organize these cases. Without loss of generality we will assume that $p_1\leq p_2$, so that we have two main cases to consider. Noting that $\abs{\bar\sigma}\sim 2^{p_{\max}}2^{k_{\min}+k_{\max}}$ and using that $\ell_i+p_i\geq 0$ (and thus $p_2-p_1\leq -\ell_1$, $p_1-p_2\leq -\ell_2$), repeated integration by parts is feasible if (see Lemma \ref{lem:new_ibp})
\begin{equation}
\begin{aligned}
 V_\eta:\qquad &2^{-p_1}2^{2k_1-k_{\min}-k_{\max}}(1+2^{k_2-k_1}2^{\ell_1})\leq 2^{(1-\delta)m},\\
 V_{\xi-\eta}:\qquad &2^{-p_2}2^{2k_2-k_{\min}-k_{\max}}(1+2^{k_1-k_2}2^{\ell_2})\leq 2^{(1-\delta)m}.
\end{aligned}
\end{equation}

\subsubsection{Case 1: $p\ll p_1,p_2$} By Lemma \ref{lem:gapp-cases} we have three scenarios to consider:
\subsubsection*{Subcase 1.1: $2^{k_1}\sim 2^{k_2}$.} Here we have $2^{p_1}\sim 2^{p_2}\sim 1$. Using Lemma \ref{lem:new_ibp}, iterated integration by parts in $V_\eta$ or $V_{\xi-\eta}$ gives the result if $\min\{\ell_1,\ell_2\}<(1-\delta)m+k-k_1$. Else we have the bound
\begin{equation}
\begin{aligned}
2^{k+4k^+}\norm{\mathcal{F}\left[P_{k,p}\Q_\m(f_1,f_2)\right]}_{L^\infty}&\lesssim 2^{2k-2k_2^+}\cdot 2^{-\ell_1-\ell_2}\norm{f_1}_X\norm{f_2}_{X}\\
 &\lesssim 2^{-2(1-\delta)m}\norm{f_1}_X\norm{f_2}_{X}\ll 2^{-(1+\frac{\beta}{2})m}\eps_1^2.\\
\end{aligned} 
\end{equation}

\subsubsection*{Subcase 1.2: $2^{k_2}\ll 2^{k_1}\sim 2^k$} Then we have that $2^{p_1-p_2}\sim 2^{k_2-k_1}\ll 1$, so that $p\ll p_1\ll p_2\sim 0$. Using Lemma \ref{lem:new_ibp}, iterated integration by parts in $V_{\xi-\eta}$ gives the claim if $\ell_2<(1-\delta)m$. Else, when $\ell_2>(1-\delta)m$ there holds that 
\begin{equation}
\begin{aligned}
2^{k+4k^+}\norm{\mathcal{F}\left[P_{k,p}\Q_\m(f_1,f_2)\right]}_{L^\infty}&\lesssim 2^{2k_1+4k_1^+}\cdot \norm{f_1}_{L^2} 2^{-(1+\beta)\ell_2}\norm{f_2}_{X}
 \ll 2^{-(1+\frac{\beta}{2})m}\eps_1^2.
\end{aligned} 
\end{equation}

\subsubsection*{Subcase 1.3: $2^{k_1}\ll 2^{k_2}\sim 2^k$} This leads to $p_2\ll p_1$ which is excluded.

\subsubsection{Case 2: $p_1\ll p,p_2$}
By Lemma \ref{lem:gapp-cases} we have three scenarios to consider:
\subsubsection*{Subcase 2.1: $2^{k}\sim 2^{k_2}$} Then $2^{p}\sim 2^{p_2}\sim 1$. Iterated integration by parts in $V_\eta$ (with $\abs{\bar\sigma}\gtrsim 2^{k_1+k}$) gives the claim if $\ell_1-p_1\leq (1-\delta)m$, whereas iterated integration by parts in $V_{\xi-\eta}$ suffices if
\begin{equation}
 2^{k_2-k_1}+2^{\ell_2}\leq 2^{(1-\delta)m}.
\end{equation}
We may thus assume that $\ell_1-p_1> (1-\delta)m$ and that $\max\{k_2-k_1,\ell_2\}>(1-\delta)m$, but this suffices in view of the crude bound
\begin{equation}
\begin{aligned}
 2^{3k^+-\frac{1}{2}k^-}2^{-\frac{q}{2}}\norm{P_{k,p,q}\Q_\m(f_1,f_2)}_{L^2}
&\lesssim 2^{-\frac{q}{2}}\Sz\cdot 2^{k-\frac{1}{2}k^-+3k_2^+}2^{-\ell_1-\ell_2}\norm{f_1}_{X} \norm{f_2}_{X}\\
 &\lesssim 2^{\frac{k+k^+}{2}+k_1+p_1}2^{-(1-\delta)m-p_1-\ell_2}\norm{f_1}_{X}\norm{f_2}_{X}\\
 &\lesssim 2^{\frac{3}{2}k+\frac{1}{2}k^+}\cdot 2^{k_1-k_2-\ell_2}\cdot 2^{-(1-\delta)m}\norm{f_1}_{X}\norm{f_2}_{X}.
\end{aligned} 
\end{equation}

\subsubsection*{Subcase 2.2: $2^{k}\ll 2^{k_2}\sim 2^{k_1}$} Then $2^{p_2-p}\sim 2^{k-k_2}\ll 1$, and thus $p_1\ll p_2\ll p\sim 0$ and we only need to recover $q$. Repeated integration by parts in $V_{\xi-\eta}$ (where now $\abs{\bar\sigma}\sim 2^{k_2+k}$) gives the claim if
\begin{equation}
 -p_2+k_2-k+\ell_2\leq (1-\delta)m.
\end{equation}
In the opposite case we use that, since $\beta\leq\frac{1}{3}$, with $2^{\beta(k_2-k)}\sim 2^{-\beta p_2}$ and $\Sz\lesssim 2^{\frac{k+q}{2}}2^{k_1+p_1}$ we can bound
\begin{equation}
\begin{aligned}
 2^{3k^+-\frac{1}{2}k^-} 2^{-\frac{q}{2}}\norm{P_{k,p,q}\Q_\m(f_1,f_2)}_{L^2}&\lesssim 2^{-\frac{q}{2}}\Sz\cdot 2^{k-\frac{1}{2}k^-}\cdot 2^{p_1+\frac{k_1}{2}}\norm{f_1}_{B}\cdot 2^{-(1+\beta)\ell_2-\beta p_2-3k_2^+}\norm{f_2}_{X}\\
 &\lesssim 2^{k+\frac{1}{2}k^+}2^{\frac{3}{2}k_1+2p_1}\norm{f_1}_{B}2^{-(1+\beta)(1-\delta)m}2^{-(1+2\beta)p_2}2^{(1+\beta)(k_2-k)}2^{-3k_2^+}\norm{f_2}_{X}\\
 &\lesssim 2^{-(1+\beta)(1-\delta)m}2^{p_1-3\beta p_2}\norm{f_1}_{B}\norm{f_2}_{X}\\
 &\lesssim 2^{-(1+\frac{\beta}{2})m}\eps_1^2.
\end{aligned} 
\end{equation}

\subsubsection*{Subcase 2.3: $2^{k_2}\ll 2^{k}\sim 2^{k_1}$} Then $2^{p-p_2}\sim 2^{k_2-k}\ll 1$, and thus $p_1\ll p\ll p_2\sim 0$. This is as in Subcase 1.2: if $\ell_2\leq (1-\delta)m$, then repeated integration by parts in $V_{\xi-\eta}$ gives the claim, \blue{whereas for $\ell_2>(1-\delta)m$ we have that
\begin{equation}
\begin{aligned}
2^{k+4k^+}\norm{\mathcal{F}\left[P_{k,p}\Q_\m(f_1,f_2)\right]}_{L^\infty}&\lesssim 2^{2k_1+4k_1^+}\cdot \norm{f_1}_{L^2} 2^{-(1+\beta)\ell_2}\norm{f_2}_{X}
 \ll 2^{-(1+\frac{\beta}{2})m}\eps_1^2.
\end{aligned} 
\end{equation}
}

\subsection{Case $p_{\min}\ll p_{\max}\ll 0$.}\label{ssec:B-pmax}
Here we have that $\abs{\Phi}\gtrsim 1$. We can use a normal form as in \eqref{eq:mdecompNF}--\eqref{mdecompNFNRNorms} with $\lambda=\frac{1}{10}$ so that $\mathfrak{m}^{res}=0$ and we see that
\begin{equation}
 \norm{P_{k,p}\B_{\m}(f_1,f_2)}_{L^2}\lesssim
 \norm{P_{k,p}\Q_{\m\cdot\Phi^{-1}}(f_1,f_2)}_{L^2}+\norm{P_{k,p}\B_{\m\cdot\Phi^{-1}}(\partial_t f_1,f_2)}_{L^2}+\norm{P_{k,p}\B_{\m\cdot\Phi^{-1}}( f_1,\partial_tf_2)}_{L^2}.
\end{equation}
Using Lemma \ref{lem:dtfinL2} the second term can be bounded as
\begin{equation}
2^{k+4k^+}\norm{\mathcal{F}\left[P_{k,p}\B_{\m\cdot\Phi^{-1}}(\partial_t f_1,f_2)\right]}_{L^\infty}\lesssim 2^m\cdot 2^{2k+4k^+} 2^{p_{\max}}\norm{\partial_t f_1}_{L^2}\norm{f_2}_{L^2}\lesssim 2^{-\frac{1}{4}m}\eps_1^2,
\end{equation}
and similarly for the third one.

It thus remains to control the boundary term. If $\min\{p_1,p_2\}=p_1\lesssim p$, this follows from Proposition \ref{prop:decay}, Lemma \ref{lem:phasesymb_bd}, Corollary \ref{cor:extrapol_decay} and the multiplier bounds \eqref{BoundsOnMAddedBenoit} as follows: 
\begin{equation*}
\begin{split}
 2^{3k^+-\frac{1}{2}k^-}2^{-p}\norm{P_{k,p}\Q_{\m\cdot\Phi^{-1}}(f_1,f_2)}_{L^2}&\lesssim 2^{-p}2^{\frac{k+k^+}{2}+3k^+}2^{p_{\max}}2^{p_1}\norm{f_1}_{B}\cdot \Vert e^{it\Lambda}f_2\Vert_{L^\infty}\ll 2^{-\delta m}\eps_1^2.
 \end{split}
\end{equation*}
If $\min\{p_1,p_2\}=p_2\lesssim p$, the situation is similar. We note that if $p_{\max}\leq -\delta m$, then we have that
\begin{equation*}
\begin{split}
 2^{k+4k^+}\norm{\mathcal{F}\left[P_{k,p}\Q_{\m\cdot\Phi^{-1}}(f_1,f_2)\right]}_{L^\infty}&\lesssim 2^{2k+4k^+}2^{p_{\max}}\norm{f_1}_{L^2}\norm{f_2}_{L^2}\lesssim 2^{3k^+}2^{p_1+p_2+p_{max}}\Vert f_1\Vert_B\Vert f_2\Vert_B\\
 &\lesssim 2^{-2\delta m}\eps_1^2,
 \end{split}
\end{equation*}
so we may assume that $p_{\max}>-\delta m$. Then with $\abs{\bar\sigma}\gtrsim 2^{-\delta m}2^{k_{\max}+k_{\min}}$ and the fact that
\begin{equation}
 s^{-1}\abs{\frac{1}{V_\eta\Phi}V_\eta(\Phi^{-1})}\lesssim s^{-1}\abs{\Phi}^{-2}\lesssim s^{-1},
\end{equation}
we are done by integration by parts as in the case of a gap in $p$, Section \ref{ssec:B-p-gap}.

After the estimates in Section \ref{ssec:B-p-gap} and Section \ref{ssec:B-pmax}, we may assume that all $p$'s are comparable:
\begin{equation*}
 p_{\max}\le p_{\min}+100.
\end{equation*}

\subsection{Gap in $q$.}\label{ssec:B-q-gap}
We additionally localize in $q_i$, writing $g_i=P_{k_i,p_i,q_i}R_{\ell_i}f_i$, $i=1,2$, and can assume by $B$ norm bounds that $q_i\geq -3m$.

For this case we now assume that $q_{\min}\ll q_{\max}$ and thus (by the previous case) $2^{p_{\min}}\sim 2^{p_{\max}}\sim 1$ (and thus also $\ell_i\geq 0$). Noting that $\abs{\bar\sigma}\sim 2^{q_{\max}}2^{k_{\min}+k_{\max}}$ and using Lemma \ref{lem:new_ibp}, repeated integration by parts is feasible if
\begin{equation}\label{CondIBPAddedGapqBNorm}
\begin{aligned}
 V_\eta:\qquad &2^{-q_{\max}}2^{2k_1-k_{\min}-k_{\max}}(1+2^{k_2-k_1}(2^{q_2-q_1}+2^{\ell_1}))\leq 2^{(1-\delta)m},\\
 V_{\xi-\eta}:\qquad &2^{-q_{\max}}2^{2k_2-k_{\min}-k_{\max}}(1+2^{k_1-k_2}(2^{q_1-q_2}+2^{\ell_2}))\leq 2^{(1-\delta)m}.
\end{aligned}
\end{equation}

We have two main cases to consider:

\subsubsection{Case 3: $q\ll q_1,q_2$}
By Lemma \ref{lem:gapp-cases} and Remark \ref{lem:gapp-casesRem}, we have three scenarios to consider:

\subsubsection*{Subcase 3.1: $2^{k_1}\sim 2^{k_2}$} Then also $2^{q_1}\sim 2^{q_2}$. Using \eqref{CondIBPAddedGapqBNorm}, we see that repeated integration by parts gives the claim if 
\begin{equation}
 -q_{\max}+k_1-k+\min\{\ell_1,\ell_2\}\leq (1-\delta)m.
\end{equation}
Otherwise, using a crude bound and \eqref{BoundsOnMAddedBenoit}, we have that
\begin{equation}
\begin{aligned}
2^{k+4k^+}\norm{\mathcal{F}\left[P_{k,p,q}\Q_\m(g_1,g_2)\right]}_{L^\infty}&\lesssim 2^{2k+k^+-3k_2^+}2^{q_{\max}}\cdot 2^{\frac{q_1}{2}}\norm{g_1}_{B} 2^{-(1+\beta)\ell_2}\norm{g_2}_{X}\\
 &\lesssim 2^{\delta m}\cdot 2^{(\frac{1}{2}-\beta)q_{\max}}2^{-(1+\beta)(1-\delta)m}\norm{g_1}_{B}\norm{g_2}_{X},
\end{aligned} 
\end{equation}
which is an acceptable contribution.

\subsubsection*{Subcase 3.2: $2^{k_2}\ll 2^{k_1}\sim 2^k$} Then we have $2^{q_1-q_2}\sim 2^{k_2-k_1}\ll 1$, and thus $q\ll q_1\ll q_2$. From \eqref{CondIBPAddedGapqBNorm}, we see that repeated integration by parts in $V_{\xi-\eta}$ gives the claim provided that
\begin{equation}
 -q_{2}+\ell_2\leq (1-\delta)m.
\end{equation}
Otherwise we can conclude just as in Subcase 3.1.

\subsubsection*{Subcase 3.3: $2^{k_1}\ll 2^{k_2}$} This is symmetric to Subcase \blue{3.2}.

\subsubsection{Case 4: $\min\{q_1,q_2\}\ll q$} Without loss of generality, we may assume that $q_1\le q_2$. By Lemma \ref{lem:gapp-cases} we have three scenarios to consider:

\subsubsection*{Subcase 4.1: $2^{k}\sim 2^{k_2}$} Then also $2^{q}\sim 2^{q_2}$.
Inspecting \eqref{CondIBPAddedGapqBNorm},  repeated integration by parts gives the claim if
\begin{equation}
\begin{aligned}
 V_\eta:\quad &\max\{-q_1,\ell_1-q_{\max}\}\leq (1-\delta)m,\qquad
 V_{\xi-\eta}:\quad \max\{k_2-k_1,\ell_2\}\leq (1-\delta)m+q_{\max}.
\end{aligned}
\end{equation}
In the opposite case, if $2^{q_1}\lesssim 2^{-(1-\delta)m}$ we can use Lemma \ref{lem:set_gain} and \eqref{BoundsOnMAddedBenoit} to bound
\begin{equation}
\begin{aligned}
 2^{3k^+-\frac{1}{2}k^-}2^{-\frac{q}{2}}\norm{P_{k,p,q}\Q_\m(g_1,g_2)}_{L^2}&\lesssim 2^{-\frac{q_{\max}}{2}}\Sz\cdot 2^{k-\frac{1}{2}k^-+q_{\max}}\cdot 2^{\frac{q_1}{2}}\norm{g_1}_{B} 2^{-(1+\beta)\ell_2}\norm{g_2}_{X}\\
 &\lesssim 2^{\frac{q_{\max}}{2}}2^{q_1}2^{\frac{k+k^+}{2}+\frac{3}{2}k_1}2^{-(1+\beta)\ell_2}\varepsilon_1^2\\
 &\lesssim 2^{\frac{q_{\max}}{2}}2^{2k_1+\frac{1}{2}k^+}2^{-(1-\delta)m}2^{\frac{k-k_1}{2}}2^{-(1+\beta)\ell_2}\varepsilon_1^2,
\end{aligned} 
\end{equation}
which suffices since $\max\{\ell_2,k-k_1\}>(1-\delta)m+q_{\max}$. If on the other hand $\ell_1-q_{\max}>(1-\delta)m$, then a crude estimate gives
\begin{equation}
\begin{aligned}
2^{k+4k^+}\norm{\mathcal{F}\left[P_{k,p,q}\Q_\m(g_1,g_2)\right]}_{L^\infty}&\lesssim 2^{2k+2k^++q_{\max}}\cdot 2^{-(1+\beta)\ell_1}\norm{g_1}_{X} 2^{\frac{q_2}{2}}\norm{g_2}_{B}\\
&\lesssim 2^{3k^+}2^{-(1+\beta)(\ell_1-q_{\max})}\varepsilon_1^2\lesssim 2^{-(1+\delta)m}\varepsilon_1^2
\end{aligned} 
\end{equation}
which is an acceptable contribution.

\subsubsection*{Subcase 4.2: $2^{k}\ll 2^{k_2}\sim 2^{k_1}$} Then also $2^{q_2-q}\sim 2^{k-k_2}\ll 1$, so that $q_1\ll q_2\ll q$. Using \eqref{CondIBPAddedGapqBNorm}, repeated integration by parts then gives the claim if
\begin{equation}
\begin{aligned}
 V_{\xi-\eta}:\qquad &-q+k_2-k+\ell_2\leq (1-\delta)m.
\end{aligned}
\end{equation}
Otherwise we get the acceptable contribution
\begin{equation}
\begin{aligned}
 2^{k+4k^+}\norm{\mathcal{F}\left[P_{k,p,q}\Q_\m(g_1,g_2)\right]}_{L^\infty}&\lesssim 2^{2(k-k_1^+)+q}\cdot 2^{\frac{k_1}{2}+\frac{q_1}{2}}\norm{g_1}_{B} 2^{-(1+\beta)\ell_2}\norm{g_2}_{X}\\
 &\lesssim 2^{2k_2^+}2^{-(1+\beta)(\ell_2+k_2-k-q)}\varepsilon_1^2.
\end{aligned} 
\end{equation}

\subsubsection*{Subcase 4.3: $2^{k_2}\ll 2^{k}\sim 2^{k_1}$} Then also $2^{q-q_2}\sim 2^{k-k_2}\ll 1$, so that $q_1\ll q\ll q_2$. From \eqref{CondIBPAddedGapqBNorm}, repeated integration by parts then gives the claim if
\begin{equation}
\begin{aligned}
 V_{\xi-\eta}:\quad \ell_2-q_2\leq (1-\delta)m.
\end{aligned}
\end{equation}
Otherwise, we get an acceptable contribution as in Subcase 4.2:
\begin{equation}
\begin{aligned}
2^{k+4k^+}\norm{\mathcal{F}\left[P_{k,p,q}\Q_\m(g_1,g_2)\right]}_{L^\infty}&\lesssim 2^{2k+k^++q_{2}}\cdot 2^{\frac{k_1}{2}+\frac{q_1}{2}}\norm{g_1}_{B} 2^{-(1+\beta)\ell_2}\norm{g_2}_{X}\\
&\lesssim 2^{4k_1^+}2^{-(1+\beta)(\ell_2-q_2)}\varepsilon_1^2\le 2^{-(1+\beta/2)m}\varepsilon_1^2.
\end{aligned} 
\end{equation}

\subsection{No gaps.}\label{ssec:B-nogaps}
Assume now that $2^{p_{\min}}\sim 2^{p_{\max}}$ and $2^{q_{\min}}\sim 2^{q_{\max}}$. Assuming further w.l.o.g.\ that $g_1$ has at most $\frac{N}{2}$ copies of $S$, by the decay estimate in Proposition \ref{prop:decay} we then have that
\begin{equation}
 e^{it\Lambda}g_1=I+I\!I
\end{equation}
with
\begin{equation}
 \norm{I}_{L^\infty}\lesssim 2^{-p-\frac{q}{2}}t^{-\frac{3}{2}}2^{\frac{3}{2}k_1-3k_1^+}\eps_1,\qquad \norm{I\!I}_{L^2}\lesssim 2^{-3k^+_1}t^{-\frac{1}{2}}\eps_1.
\end{equation}
By Corollary \ref{cor:extrapol_decay} we further have that
\begin{equation}
 \norm{e^{it\Lambda} g_2}_{L^\infty}\lesssim 2^{-\frac{3}{2}k_2^+}2^{-\frac{2}{3}m}\eps_1,
\end{equation}
and using \eqref{BoundsOnMAddedBenoit} and an $L^\infty\times L^2$ estimate, we find that
\begin{equation}
\begin{aligned}
 2^{3k^+-\frac{1}{2}k^-}2^{-p-\frac{q}{2}}\norm{P_{k,p,q}\Q_\m(g_1,g_2)}_{L^2}&\lesssim 2^{3k^+}2^{-p-\frac{q}{2}}\cdot 2^{p+q+\frac{k+k^+}{2}} \left(\norm{I}_{L^\infty}2^{p+\frac{q}{2}}\norm{g_2}_B+\norm{I\!I}_{L^2}\norm{e^{it\Lambda} g_2}_{L^\infty}\right)\\
 &\lesssim 2^{\frac{q}{2}+3k^++\frac{k+k^+}{2}}\left(2^{-\frac{3}{2}m}+2^{-\frac{m}{2}}2^{-\frac{2}{3}m}\right)\eps_1^2\\
 &\lesssim 2^{-\frac{9}{8}m}\eps_1^2.
\end{aligned} 
\end{equation}

\end{proof}

\section{$X$ norm bounds}\label{sec:Xnorm}
In this section we finally prove the $X$ norm bounds for the quadratic expressions \eqref{eq:btstrap-concl1.1-X}. This is done first for the case of ``large'' $\ell$ in Section \ref{ssec:Xnorm1}, then for ``small'' $\ell$ in Section \ref{ssec:Xnorm2}.

\subsection{$X$ norm bounds for $\ell>(1+\delta)m$}\label{ssec:Xnorm1}
The goal here is to show that if $\ell$ is sufficiently large, then we have the $X$ norm bounds claimed in the bootstrap conclusion \eqref{eq:btstrap-concl1}. More precisely, we will show:
\begin{proposition}\label{prop:Xnorm1} 
Let $0<\delta=2M^{-\frac{1}{2}}\ll \beta$, and assume the bootstrap assumptions \eqref{eq:btstrap-assump} of Proposition \ref{prop:btstrap} and let $F_j=S^{b_j}\U_{\mu_j}$, $0\leq b_1+b_2\leq N$, $\mu_j\in\{+,-\}$, $j=1,2$. Then there holds that
 \begin{equation}\label{eq:X-claim1}
  \sup_{k,\,\ell+p\geq 0,\,\ell>(1+\delta)m}2^{3k^+}2^{(1+\beta)\ell}2^{\beta p}\norm{P_{k,p}R_\ell\B_\m(F_1,F_2)}_{L^2}\lesssim 2^{-\delta^2 m}\eps_1^2.
 \end{equation}
\end{proposition}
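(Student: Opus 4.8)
The plan is to follow the reduction-and-dichotomy scheme of Section~\ref{sec:Bnorm}, now tracking the angular Littlewood--Paley parameter. First I would dispose of the extreme frequencies: combining the energy bounds \eqref{eq:interpol_energy}, the multiplier bounds \eqref{BoundsOnMAddedBenoit} and the set-size bound $\Sz\lesssim 2^{\frac32 k}2^{p+q/2}$, crude estimates give \eqref{eq:X-claim1} unless $-2m\le k,k_j\le\delta_0 m$ and $-2m\le p_j\le 0$. Localizing the inputs as $f_j:=P_{k_j,p_j}R_{\ell_j}F_j$ (with $\ell_j+p_j\ge0$), it then suffices to bound $2^{3k^+}2^{(1+\beta)\ell}2^{\beta p}\norm{P_{k,p}R_\ell\B_\m(f_1,f_2)}_{L^2}$, where $\B_\m$ carries the time-integration loss $2^m$; crucially one cannot truncate the $\ell_j$ a priori, since the supremum is over all $\ell$, and the point is precisely that an output frequency $2^\ell$ with $\ell>(1+\delta)m$ is expensive to produce.

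The core is organized by whether there is a gap in the horizontal parameters $p,p_j$, a gap in the vertical parameters $q,q_j$, or no gap. In a gap case, $\abs{\bar\sigma}$ is bounded below (Lemma~\ref{lem:vfsizes-mini}), hence so is $\abs{V_\zeta\Phi}$ for a suitable $V\in\{S,\Omega\}$ and $\zeta\in\{\eta,\xi-\eta\}$; choosing that variable according to Lemma~\ref{lem:gapp-cases} and iterating the integration-by-parts bounds of Lemma~\ref{lem:new_ibp} order $M$ times produces a factor $(s^{-1}\abs{V_\zeta\Phi}^{-1})^{O(M)}$ that beats $2^m2^{(1+\beta)\ell}$ as well as the per-step cross-term losses $2^{\ell_j}$ (admissible since $\ell_j\le 2m$ and they trade against the $p$-, resp.\ $(p+q)$-, gains noted after Lemma~\ref{lem:new_ibp}); when only horizontal scales appear one may instead use the vertical derivative $D_3$ of Section~\ref{ssec:D3ibp}. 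If there is no gap, then Proposition~\ref{prop:phasevssigma} forces either a robust lower bound on $\abs{\bar\sigma}$ (again enabling iterated IBP) or $\abs{\Phi}\gtrsim 2^{q_{\max}}\gtrsim 1$; in the latter case I perform a normal form as in Section~\ref{ssec:nfs}, whose $\partial_t f_j$ terms are handled by the almost-$t^{-3/2}$ bound of Lemma~\ref{lem:dtfinL2} together with the smallness $\abs{\m\Phi^{-1}}\lesssim 2^{k+p_{\max}+q_{\max}}$, and whose boundary terms are treated as in the next step.

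For the remaining direct bilinear estimates (the normal-form boundary terms and the residual no-gap cases), I would invoke the almost-orthogonality of the $R_\ell$ (Proposition~\ref{prop:LPOmega}): in the relevant situation $2^{p_{\max}}\sim 1$, so $\m$ and $e^{is\Phi}$ have bounded, resp.\ $O(2^m)$, angular content, and $\ell>(1+\delta)m$ then forces one of the inputs, say $f_2$, to satisfy $\ell_2\ge \ell-C$ (when instead $2^{p_{\max}}\ll1$, the set-size factor $\Sz\lesssim 2^{p+q/2+\frac32 k}$ is already small enough to conclude crudely). The $X$-norm of $f_2$ absorbs the weight $2^{(1+\beta)\ell}$, and one closes by an $L^2\times L^\infty$ estimate in which the decisive extra smallness $2^{-\delta^2 m}$ comes from Proposition~\ref{prop:decay}: writing $e^{is\Lambda}f_1=I_{k_1,p_1,q_1}(f_1)+I\!I_{k_1,p_1,q_1}(f_1)$, one pairs $I$ (which decays in $L^\infty$, up to $2^{-3m/2}$ away from the degeneracy) with $\norm{f_2}_{L^2}$, and $I\!I$ (which decays in $L^2$ as $2^{-(1+\beta')m}2^{(-1-2\beta')p_1}$ on $\{2^{2p_1+q_1}\gtrsim 2^{-m}\}$) with $\norm{e^{is\Lambda}f_2}_{L^\infty}\lesssim 2^{-m}\eps_1$ (Corollary~\ref{cor:decay}); in the no-gap case the constraints $2^{2p_1}+2^{q_1}\sim1$, $2^{p_{\max}}\sim 2^{p_{\min}}$, $2^{q_{\max}}\sim 2^{q_{\min}}$ keep the anisotropic weights from negating this gain.

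I expect the main obstacle to be the bookkeeping of angular regularity through the iterated integrations by parts in the gap cases: each cross-term $\Gamma^\Ups_{V,\zeta}\Ups_{\xi-\eta}$ (Lemma~\ref{lem:VFcross}) raises the angular frequency of an input, costing $2^{\ell_j}$, and the multiplier order can drift (Lemma~\ref{lem:vfE}), so one must check that the gain per step dominates uniformly in all localization parameters. This is exactly where the positive weight $2^{\beta p}$ in $\norm{\cdot}_X$, the constraint $\ell+p\ge0$, the refined trade-off of Lemma~\ref{lem:new_ibp}, and the structural restrictions of Lemma~\ref{lem:gapp-cases} (ruling out the a priori dangerous configurations of scales) all come into play. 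A secondary difficulty is the region adjacent to $\{2^{2p+q}\sim t^{-1}\}$, where the linear decay degenerates to $t^{-1-}$; there one must use the interplay of the $B$ and $X$ norms (as in Lemma~\ref{lem:ControlLinfty}) rather than either norm separately.
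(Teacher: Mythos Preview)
Your organizing principle — follow the $B$-norm structure of gap in $p$ / gap in $q$ / no gap, with iterated integration by parts along $V_\eta, V_{\xi-\eta}$ — is actually the architecture of Proposition~\ref{prop:Xnorm2} (the case $\ell \leq (1+\delta)m$), not of this proposition. The essential difficulty when $\ell > (1+\delta)m$ is that the $X$-norm weight $2^{(1+\beta)\ell}$ can be arbitrarily large (there is no a priori upper bound on $\ell$), while each integration by parts along $V_\zeta$ gains only a factor $s^{-1}|V_\zeta\Phi|^{-1}\sim 2^{-m}(\ldots)$, not $2^{-\ell}$. No matter how many times you iterate, you cannot beat $2^{(1+\beta)\ell}$ this way once $\ell\gg m$. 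Relatedly, your reduction to $-2m\le k,k_j\le\delta_0 m$ is incorrect in this regime: the crude energy estimate disposes only of $k_j^+\gtrsim(\ell+m)/N_0$, so the reduction parameters must be in terms of $\ell$, as the paper does in Section~\ref{sec:Xnorm1-2}.

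The paper instead splits on $\ell+p\lessgtr\delta m$. When $\ell+p\le\delta m$ the weight $2^{(1+\beta)\ell+\beta p}=2^{(1+\beta)(\ell+p)}\cdot 2^{-p}$ is tame, and there your normal-form and vector-field ideas do apply. But when $\ell+p>\delta m$, the indispensable new ingredient is a \emph{Bernstein iteration} (the ``finite speed of propagation'' argument): write $R_\ell\Q_\m=2^{-2\ell}\sum_a\widetilde{R}_\ell\Omega_{a3}^2\Q_\m$ via Proposition~\ref{prop:LPOmega}\ref{it:LPang-Bern}, then compute $\Omega_{a3}^\xi$ on the bilinear form. Each step gains $2^{-\ell}$ outright, at a cost $2^{-p}+s(2^p+2^{k-k_1+p_1})+2^{k-k_1+\ell_1}$; iterating $K$ times gives a net gain $2^{-K\delta^2\ell}$ provided $2^{m+p}+2^{k-k_j}(2^{m+p_j}+2^{\ell_j})\le 2^{(1-\delta^2)\ell}$, and if this fails one extracts the constraint $\ell_j\ge(1-\delta^2)\ell$ (or comparably useful ones on $k_j,p_j$), which is the rigorous form of your heuristic.

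Your almost-orthogonality claim ``$\ell_2\ge\ell-C$'' does not follow from Proposition~\ref{prop:LPOmega}\ref{it:LPang-orth}, which concerns pointwise products, not oscillatory bilinear forms. You correctly note that $e^{is\Phi}$ carries angular content $O(2^m)$; the honest conclusion would then be $\max\{\ell_1,\ell_2\}\gtrsim\ell-O(m)$, not $\ell-C$, and when $\ell\gg m$ this is too weak: $2^{(1+\beta)\ell}\cdot 2^{-(1+\beta)(\ell-m)}=2^{(1+\beta)m}$ leaves a deficit that the $t^{-1}$ decay cannot cover. Making the heuristic precise \emph{and} strong enough (namely $\ell_j\ge(1-\delta^2)\ell$ rather than $\ell-O(m)$) is exactly what the Bernstein iteration accomplishes.
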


We give next the proof of Proposition \ref{prop:Xnorm1}. As one sees below, here the choice of $\delta^2=O(M^{-1})$ will be convenient for repeated integrations by parts, where $M\in\N$ is the number of vector fields we propagate. 

\begin{proof}[Proof of Proposition \ref{prop:Xnorm1}]
Assuming that $\ell>(1+\delta)m$, we split our arguments into two cases: If $\ell+p\leq \delta m$ (Section \ref{sec:Xnorm1-1}) ``only'' a gain of $2^{(1+\delta)m}$ is needed, and relatively simple arguments suffice. If on the other hand $\ell+p>\delta m$ (Section \ref{sec:Xnorm1-2}) we can make use of a ``finite speed of propagation'' feature\footnote{\blue{This refers to the fact that localization on scales greater than $t$ and linear flow almost commute, similar to the terminology in e.g.\ \cite[Section 7.2]{DIPau} or \cite[Section 3B]{globalbeta}.}} of the equations via the Bernstein property \eqref{eq:R-Bernstein} of the angular Littlewood-Paley decomposition. We remark that in the arguments that follow, no localizations in $q,q_j$ are used.

\subsubsection{Case $\ell+p\leq \delta m$}\label{sec:Xnorm1-1}
Here we have that $p\leq -\ell+\delta m<-m$.

We begin with some direct observations to treat a few simple cases, akin to Section \ref{ssec:Bnorm-reduction}. Noting that
\begin{equation}
\begin{aligned}
 2^{3k^+}2^{(1+\beta)\ell}2^{\beta p}\norm{P_{k,p}R_\ell\B_\m(P_{k_1}F_1,P_{k_2}F_2)}_{L^2}&\lesssim 2^{3k^++\frac{3}{2}k}2^{(1+\beta)(\ell+p)}2^m\min\{2^{-N_0k_1}\norm{F_1}_{H^{N_0}},2^{k_1}\norm{F_1}_{\dot{H}^{-1}}\}\\
 &\qquad\cdot \min\{2^{-N_0k_2}\norm{F_2}_{H^{N_0}},2^{k_2}\norm{F_2}_{\dot{H}^{-1}}\}\\
 &\lesssim 2^{3k^++\frac{3}{2}k} 2^{m+(1+\beta)\delta m}\min\{2^{-N_0k_1},2^{k_1}\}\min\{2^{-N_0k_2},2^{k_2}\}\cdot\eps_1^2,
\end{aligned} 
\end{equation}
we see that it suffices to prove that if $\ell>(1+\delta)m$ then with $\delta_0=2N_0^{-1}\ll\delta^2$ we have
\begin{equation}
  2^{(1+\beta)\ell}2^{\beta p}\norm{P_{k,p}R_\ell\B_\m(P_{k_1}F_1,P_{k_2}F_2)}_{L^2}\lesssim 2^{-2\delta^2 m}\eps_1^2,\qquad -2m\leq k,k_j\leq \delta_0 m,\quad j=1,2.
\end{equation}
As in Section \ref{ssec:Bnorm-reduction} we can now further reduce cases by considering localizations in $p_j, \ell_j$, $j=1,2$, and see that to show the claim it suffices to establish that for $f_j=P_{k_j,p_j}R_{\ell_j}F_j$, $j=1,2$, 
when
\begin{equation}
 -2m\leq k,k_j\leq \delta_0 m,\qquad -2m\leq p_j\leq 0,\qquad -p_j\leq \ell_j\leq 2m,\quad j=1,2, 
\end{equation}
there holds that
\begin{equation}
 2^{(1+\beta)\ell}2^{\beta p}\norm{P_{k,p}R_\ell\B_\m(f_1,f_2)}_{L^2}\lesssim 2^{-\delta m}\eps_1^2.
\end{equation}
This is done in the following Case a and Case b.

\subsubsection*{Case a: $2^{p_1}+2^{p_2}\ll 1$.} 
In this case there holds that $\abs{\Phi}\gtrsim 1$ and a normal form (as in \eqref{eq:mdecompNF}--\eqref{mdecompNFNRNorms} with $\lambda=\frac{1}{10}$ so that $\mathfrak{m}^{res}=0$) gives
\begin{equation}\label{EZNF821a}
 \norm{P_{k,p}\B_{\m}(f_1,f_2)}_{L^2}\lesssim
 \norm{P_{k,p}\Q_{\m\cdot\Phi^{-1}}(f_1,f_2)}_{L^2}+\norm{P_{k,p}\B_{\m\cdot\Phi^{-1}}(\partial_t f_1,f_2)}_{L^2}+\norm{P_{k,p}\B_{\m\cdot\Phi^{-1}}( f_1,\partial_tf_2)}_{L^2}.
\end{equation}
A crude estimate using Lemma \ref{lem:dtfinL2} gives
\begin{equation*}
\begin{split}
 2^{(1+\beta)\ell}2^{\beta p}\norm{P_{k,p}R_\ell\B_{\m\cdot\Phi^{-1}}(\partial_t f_1,f_2)}_{L^2}&\lesssim  2^{\frac{3}{2}k}2^{(1+\beta)(\ell+p)}\norm{\mathcal{F}\left[P_{k,p}R_\ell\B_{\m\cdot\Phi^{-1}}(\partial_t f_1,f_2)\right]}_{L^\infty}\\
 &\lesssim 2^m\cdot 2^{(1+\beta)(\ell+p)}2^{\frac{5}{2}k}\norm{\partial_t f_1}_{L^2}\norm{f_2}_{L^2}\\
 &\lesssim 2^{m}\cdot 2^{-\frac{3}{2}m+\gamma m+(1+\beta)\delta m+3\delta_0 m}\eps_1^2
 \end{split}
\end{equation*}
and symmetrically for the term with $\partial_t f_2$. Assume now w.l.o.g.\ that $p_2\leq p_1$. For the boundary term a direct $L^2\times L^\infty$ estimate using Corollary \ref{cor:extrapol_decay} then gives the claim if $2^{p}\gtrsim 2^{p_2}$, since
\begin{equation*}
 2^{(1+\beta)\ell}2^{\beta p}\norm{P_{k,p}R_\ell\Q_{\m\cdot\Phi^{-1}}(f_1,f_2)}_{L^2}\lesssim 2^{k}2^{(1+\beta)\delta m}2^{-p}\cdot \norm{e^{it\Lambda} f_1}_{L^\infty} 2^{p_2}\norm{f_2}_{B}\lesssim 2^{-\frac{m}{2}}\eps_1^2.
\end{equation*}
We thus assume that $p\ll p_2\leq p_1$. If $p_1\leq -2\delta m$, using \eqref{BoundsOnMAddedBenoit}, we are done since
 \begin{equation*}
 \begin{split}
  2^{(1+\beta)\ell+\beta p}\norm{\Q_{\m\cdot\Phi^{-1}}(f_1,f_2)}_{L^2}&\lesssim 2^{(1+\beta)(\ell+ p)}2^{\frac{3}{2}k}\norm{\mathcal{F}\left[\Q_{\m\cdot\Phi^{-1}}(f_1,f_2)\right]}_{L^\infty}\\
  &\lesssim 2^{\frac{5}{2}k}2^{(1+\beta)\delta m}2^{3p_{\max}}\norm{f_1}_B\norm{f_2}_B.
  \end{split}
 \end{equation*}
Else we have $p_1\geq -2\delta m$, and thus $\abs{\bar\sigma}\gtrsim 2^{p_1+k_1+k}$ and using Lemma \ref{lem:new_ibp}, we can repeatedly integrate by parts in $V_\eta$ if 
\begin{equation*}
 2^{-2p_1+k_1-k}(1+2^{k_2-k_1}2^{\ell_1})<2^{(1-\delta)m}.
\end{equation*}
If this inequality is reversed, we use crude bounds: In case $k=k_{\min}$ we can assume that $-2p_1+\ell_1+k_1-k>(1-\delta)m$ and then, using \eqref{BoundsOnMAddedBenoit},
\begin{equation*}
\begin{split}
  2^{(1+\beta)\ell+\beta p}\norm{P_{k,p}R_\ell\Q_{\m\cdot\Phi^{-1}}(f_1,f_2)}_{L^2}&\lesssim   2^{\frac{3}{2}k}2^{(1+\beta)(\ell+ p)}\norm{\mathcal{F}\left[P_{k,p}R_\ell\Q_{\m\cdot\Phi^{-1}}(f_1,f_2)\right]}_{L^\infty}\\
  &\lesssim 2^{(1+\beta)\delta m}\cdot 2^{\frac{5k}{2}+p_1}\cdot 2^{-\ell_1}\norm{f_1}_X\cdot 2^{p_2}\norm{f_2}_{B}\\
  &\lesssim 2^{-m/3}\eps_1^2,
  \end{split}
\end{equation*}
whereas if $k_{\min}\in\{k_1,k_2\}$ we can assume that $-2p_1+\ell_1>(1-\delta)m$ and the conclusion follows just as above.

\subsubsection*{Case b: $2^{p_1}\sim 1$}
 Then $\abs{\bar\sigma}\sim 2^{k_1+k}$, and thus by Lemma \ref{lem:new_ibp}\eqref{it:ibp_p} repeated integration by parts in $V_\eta$ gives the claim if
 \begin{equation*}
  2^{k_1-k}(1+2^{k_2-k_1}2^{\ell_1})<2^{(1-\delta) m},
 \end{equation*}
 whereas, in the opposite case, a crude estimate using \eqref{BoundsOnMAddedBenoit} suffices
 \begin{equation*}
 \begin{split}
  2^{(1+\beta)\ell}2^{\beta p}\norm{P_{k,p}R_\ell \B_\m(f_1,f_2)}_{L^2}&\lesssim   2^{(1+\beta)(\ell+p)}2^{\frac{3}{2}k}\norm{\mathcal{F}\left[P_{k,p}R_\ell \B_\m(f_1,f_2)\right]}_{L^\infty}\\
  &\lesssim 2^{(1+2\delta)m}\cdot 2^{\frac{5}{2}k}\cdot 2^{-3k_1^+}2^{-(1+\beta)\ell_1} \norm{f_1}_{X}\norm{f_2}_{L^2}\\
  &\lesssim 2^{(1+3\delta)m}\cdot \min\{1,2^{(\frac{3}{2}-\beta)(k-k_1)}\}\cdot \min\{1,2^{-(1+\beta)(\ell_1-k+k_2)}\} \varepsilon_1^2.
 \end{split}
 \end{equation*}

\subsubsection{Case $\ell+p>\delta m$}\label{sec:Xnorm1-2}
By analogous reductions as at the beginning of Section \ref{sec:Xnorm1-1} it suffices to prove that with $f_j=P_{k_j,p_j}R_{\ell_j}F_j$, $j=1,2$, and when 
\begin{equation}\label{eq:Xnorm1-restrict1}
 -2\ell\leq k,k_j\leq \delta_0 \ell,\qquad -2\ell\leq p_j\leq 0,\qquad -p_j\leq \ell_j\leq 2\ell,\quad j=1,2, 
\end{equation}
(with $\delta_0:=2N_0^{-1}\ll \delta^2$ as above) there holds that
\begin{equation}
 2^{(1+\beta)\ell}2^{\beta p}\norm{P_{k,p}R_\ell\B_\m(f_1,f_2)}_{L^2}\lesssim 2^{-\delta^2 \ell}2^{-\delta^2m}\eps_1^2.
\end{equation}
(Note here that the reductions are given naturally in terms of the large parameter $\ell>m$).

A crude bound using \eqref{BoundsOnMAddedBenoit} gives that
\begin{equation}\label{SuffCondAddedBenoitAug}
 2^{(1+\beta)\ell}2^{\beta p}\norm{P_{k,p}R_\ell\B_\m(f_1,f_2)}_{L^2}\lesssim 2^m2^{(1+\beta)(\ell-\ell_1-\ell_2)}2^{\beta(p-p_1-p_2)}2^{k+k_{\max}+\frac{k_{\min}}{2}+p_{\min}}\Vert f_1\Vert_{X}\Vert f_2\Vert_X.
\end{equation}
When
\begin{equation*}
2^{m+p}+2^{k-k_j}2^{m+p_j}+2^{k-k_j+\ell_j}\le 2^{(1-\delta^2)\ell},\quad j=1,2,
\end{equation*}
we want to use the Bernstein property: As in \eqref{BernsteinConsequence}, we can rewrite
\begin{equation*}
\begin{split}
R^{(n)}_\ell\Q_\m(f_1,f_2)&=\sum_{a=1}^2 2^{-2\ell}R^{(n+1)}_\ell\Omega_{a3}^2\Q_\m(f_1,f_2),
\end{split}
\end{equation*}
for $R^{(0)}_\ell=R_\ell$ and where $R^{(n+1)}_\ell$ is bounded for all $n\ge1$. 
We find that
\begin{equation*}
\begin{split}
\Omega_{a3}^\xi\Q_{\m^{(j)}}(f_1,f_2)
=&\Q_{\m^{(j+1)}_1}(f_1,f_2)-\Q_{\m^{(j+1)}_2}(Sf_1,f_2)+\Q_{\m^{(j+1)}_3}(\Omega_{a3}f_1,f_2)
\end{split}
\end{equation*}
where for some coefficient matrices $A^r_{\alpha,\gamma}$, $r=2,3$,
\begin{equation*}
\begin{split}
\mathfrak{m}^{(j+1)}_1(\xi,\eta)&=\left[is(\Omega_{a3}^\xi\Phi(\xi,\eta))-3\right]\mathfrak{m}^{(j)}(\xi,\eta)+\Omega_{a3}^\xi\mathfrak{m}^{(j)}(\xi,\eta),\\
\mathfrak{m}^{(j+1)}_2(\xi,\eta)&=A_{2}^{\alpha,\gamma}\xi_\alpha(\xi-\eta)_\gamma\abs{\xi-\eta}^{-2}\mathfrak{m}^{(j)}(\xi,\eta),\\
\mathfrak{m}^{(j+1)}_3(\xi,\eta)&=A_{3}^{\alpha,\gamma}\xi_\alpha(\xi-\eta)_\gamma\abs{\xi-\eta}^{-2}\mathfrak{m}^{(j)}(\xi,\eta)\\
\end{split}
\end{equation*}
 and we see by induction that
\begin{equation*}
\begin{split}
\Vert \mathfrak{m}^{(j+1)}_1\Vert_{\widetilde{W}}&\lesssim 2^{k+p_{\max}}\cdot \left[2^{-p}+2^{m}(2^p+2^{k-k_1+p_1})+2^{k-k_1-p_1}\right]^j,\\
\Vert \mathfrak{m}^{(j+1)}_2\Vert_{\widetilde{W}}+\Vert \mathfrak{m}^{(j+1)}_3\Vert_{\widetilde{W}}&\lesssim   \Vert \mathfrak{m}^{(j)}\Vert_{\widetilde{W}}\cdot 2^{k-k_1}.
\end{split}
\end{equation*}
Iterating this at most $K$ times, stopping before once a term involves three $S$ derivatives and we use a crude estimate to conclude with \eqref{eq:R-Bernstein} for $f_1$ that
\begin{equation*}
\begin{split}
\Vert R_\ell\Q_\m(f_1,f_2)\Vert_{L^2}&\lesssim 2^{\frac{3}{2}k_{\min}}2^{k+p_{\max}}\cdot \sum_{a=0}^22^{-K\ell}\left[2^{-p}+2^{m}(2^p+2^{k-k_1+p_1})+2^{k-k_1-p_1}+2^{k-k_1+\ell_1}\right]^K\Vert S^af_1\Vert_{L^2}\Vert f_2\Vert_{L^2}\\
&\quad+2^{\frac{3}{2}k_{\min}}2^{k+p_{\max}}\cdot2^{-3\ell}\Vert S^3f_1\Vert_{L^2}\Vert f_2\Vert_{L^2}
\end{split}
\end{equation*}
which gives an acceptable contribution. Using this for $\xi-\eta$ and for $\eta$, and supposing wlog that $k_2\le k_1$, we obtain an acceptable contribution whenever
\begin{equation*}
\begin{split}
\ell_1\le (1-\delta^2)\ell\qquad\hbox{ or }\qquad k-k_2+\max\{m+p_2,\ell_2\}\le (1-\delta^2)\ell.
\end{split}
\end{equation*}
If $\ell_2\ge  m+p_2$, this and \eqref{SuffCondAddedBenoitAug} cover all the cases. In the opposite case, it remains to consider the case when
\begin{equation}\label{SuffCondLargeL}
\begin{split}
k_2\le k_1\sim k,\qquad \ell_1\ge(1-\delta^2)\ell,\qquad  (1+\delta)m\le \ell \le (1+2\delta^2)(m+p_2+k-k_2)
\end{split}
\end{equation}
which in particular implies that $p_2+k-k_2\ge\delta m/2$.

\medskip

Assume now that \eqref{SuffCondLargeL} holds and in addition,
\begin{equation*}
\begin{split}
2^{-p_2-p_{\max}}2^{k_2-k}+2^{-p_2-p_{\max}+\ell_2}\le 2^{(1-\delta^2)m}.
\end{split}
\end{equation*}
In this case, if $p_{\min}\ll p_{\max}$ we can integrate by parts along the vector field $V_{\xi-\eta}$ using Lemma \ref{lem:new_ibp}, or else an $L^2\times L^\infty$ estimate ($\ell_1\geq (1-\delta^2)\ell$) using Cor.\ \eqref{cor:decay} and $p_2+k-k_2\geq\frac{\delta m}{2}$ gives an acceptable contribution.

\medskip

If \eqref{SuffCondLargeL} holds and
\begin{equation*}
 -p_2-p_{\max}+k_2-k\geq (1-\delta^2)m-100,
\end{equation*}
a crude estimate using \eqref{BoundsOnMAddedBenoit} gives that
\begin{equation*}
\begin{split}
2^{3k^+}2^{(1+\beta)\ell}2^{\beta p}\Vert P_{k,p}R_\ell\B_\m(f_1,f_2)\Vert_{L^2}&\lesssim 2^m2^{4k^+}2^{p_{\max}}2^{(1+\beta)\ell}2^{\beta p}\cdot 2^{k+\frac{k_2}{2}}2^{p_{\min}}\cdot\Vert f_1\Vert_{L^2}\Vert f_2\Vert_{L^2}\\
&\lesssim 2^m2^{2k^+}2^{(1+\beta)(\ell-\ell_1)}2^{\beta (p-p_1)}2^{p_{\min}+p_{\max}+p_2}2^{\frac{k_2^-}{2}}\Vert f_1\Vert_X\Vert f_2\Vert_{B}\\
&\lesssim 2^m2^{2k^+}2^{2\delta^2\ell}2^{(1-\beta)p_{\min}+p_{\max}+p_2}2^{k_2^-}\varepsilon_1^2
\end{split}
\end{equation*}
which gives an acceptable contribution. Finally, if \eqref{SuffCondLargeL} holds and
\begin{equation*}
\begin{split}
 -p_2-p_{\max}+\ell_2\geq (1-\delta^2)m-100,
\end{split}
\end{equation*}
we estimate $f_2$ in the $X$ norm instead to get
\begin{equation*}
\begin{split}
2^{3k^+}2^{(1+\beta)\ell}2^{\beta p}\Vert P_{k,p}R_\ell\B_\m(f_1,f_2)\Vert_{L^2}&\lesssim 2^m2^{4k^+}2^{p_{\max}}2^{(1+\beta)\ell}2^{\beta p}\cdot 2^{k_2+\frac{k_1}{2}+\frac{p_1+p_2}{2}}\cdot\Vert f_1\Vert_{L^2}\Vert f_2\Vert_{L^2}\\
&\lesssim 2^m2^{2k^+}2^{(1+\beta)(\ell-\ell_1-\ell_2)}2^{\beta (p-p_1-p_2)}2^{\frac{p_1+p_2}{2}}2^{k_2^-}\Vert f_1\Vert_X\Vert f_2\Vert_{X}\\
&\lesssim 2^m2^{2k^+}2^{2\delta^2\ell}2^{-(1+\beta)(1-\delta^2)m}2^{(\frac{1}{2}-\beta)p_1}2^{-(\frac{1}{2}+\beta)p_2}2^{k_2^-}\varepsilon_1^2
\end{split}
\end{equation*}
and since $p_2+k-k_2\ge\delta m/2$, this also leads to an acceptable contribution. This covers all cases.

\end{proof}

\subsection{$X$ norm bounds for $\ell\leq (1+\delta)m$}\label{ssec:Xnorm2}
Next we prove the main bounds for the propagation of the $X$ norm. By Proposition \ref{prop:Xnorm1} it suffices to consider the case where $\ell<(1+\delta)m$. We will show the following:
\begin{proposition}\label{prop:Xnorm2}
Assume the bootstrap assumptions \eqref{eq:btstrap-assump} of Proposition \ref{prop:btstrap}, and let $\delta=2M^{-\frac{1}{2}}>0$.
Then for $F_j=S^{b_j}\U_{\mu_j}$, $0\leq b_1+b_2\leq N$, $\mu_j\in\{+,-\}$, $j=1,2$ there holds that 
\begin{equation}\label{eq:X-claim2}
 \sup_{k,\,\ell+p\geq 0,\,\ell\leq (1+\delta)m}2^{3k^+}2^{(1+\beta)\ell}2^{\beta p}\norm{P_{k,p}R_\ell\B_\m(F_1,F_2)}_{L^2}\lesssim 2^{-\delta^2 m}\eps_1^2.
\end{equation}
\end{proposition}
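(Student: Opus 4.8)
\textbf{Proof proposal for Proposition \ref{prop:Xnorm2}.}

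The plan is to run the same case hierarchy that underlies the proofs of Proposition \ref{prop:Bnorm} and Proposition \ref{prop:Xnorm1}, but now in the regime $\ell\leq(1+\delta)m$, where the Bernstein ``finite speed of propagation'' trick of Section \ref{sec:Xnorm1-2} is no longer available and one must genuinely gain $2^{(1+\beta)\ell}$ from the bilinear structure. First I would perform the standard reductions as at the start of Section \ref{ssec:Bnorm-reduction}: using the energy bounds \eqref{eq:interpol_energy}, the multiplier bound $|\m|\lesssim 2^{k}$ and the set-size estimate $\Sz\lesssim 2^{p+q/2+\frac32 k}$, one restricts to $-2m\leq k,k_j\leq\delta_0 m$, $-2m\leq p_j\leq 0$, $-p_j\leq\ell_j\leq 2m$, and $\ell+p\geq 0$. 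Also, since $\ell\leq(1+\delta)m$, the weight $2^{(1+\beta)\ell}$ costs at most $2^{(1+\beta)(1+\delta)m}$, so it suffices to gain a little more than $2^{(1+\beta)m}$ in $L^2$ from $\B_\m$. After localizing the inputs in $p_j,\ell_j$ we reduce to showing, for $f_j=P_{k_j,p_j}R_{\ell_j}F_j$, that $2^{(1+\beta)\ell}2^{\beta p}\|P_{k,p}R_\ell\B_\m(f_1,f_2)\|_{L^2}\lesssim 2^{-\delta m}\eps_1^2$.

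Next I would follow the gap analysis using Lemma \ref{lem:gapp-cases}, Lemma \ref{lem:vfsizes-mini} and Proposition \ref{prop:phasevssigma}: (i) \emph{gap in $p$} with $p_{\max}\sim 0$, (ii) $p_{\max}\ll 0$ (so $|\Phi|\gtrsim 1$ and a normal form as in \eqref{eq:mdecompNF}--\eqref{mdecompNFNRNorms} applies, with the time-derivative terms controlled by Lemma \ref{lem:dtfinL2}), (iii) \emph{gap in $q$} after further localizing in $q_j$, and finally (iv) \emph{no gaps}. In the gap cases, a discrepancy in either $p$'s or $q$'s yields a lower bound on $\bar\sigma$ and hence on $V_\zeta\Phi$, so Lemma \ref{lem:new_ibp} (including the ``cross term'' bookkeeping via $R_{\ell_j}$ from Lemma \ref{lem:VFcross}) lets one integrate by parts repeatedly along $S$ or $\Omega$; one iterates $K=O(M)$ times as long as the relevant ratio $2^{-m}\cdot 2^{-(\text{size of }V\Phi)}\cdot(1+2^{k_2-k_1}(2^{q_2-q_1}+2^{\ell_j}))<2^{-\delta m}$, and in the complementary range one uses an $L^2\times L^\infty$ estimate with the new decay Proposition \ref{prop:decay} (splitting the linear flow into $I+I\!I$) together with \eqref{BoundsOnMAddedBenoit}, Corollary \ref{cor:extrapol_decay} and the set-size gain Lemma \ref{lem:set_gain}. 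In the ``no gaps'' case, all $p_j\sim p$ and $q_j\sim q$, and one closes exactly as in Section \ref{ssec:B-nogaps}: write $e^{it\Lambda}g_1=I+I\!I$ with $\|I\|_{L^\infty}\lesssim 2^{-p-q/2}t^{-3/2}2^{\frac32 k_1-3k_1^+}\eps_1$ and $\|I\!I\|_{L^2}\lesssim 2^{-3k_1^+}t^{-1/2}\eps_1$, and use $\|e^{it\Lambda}g_2\|_{L^\infty}\lesssim 2^{-\frac32 k_2^+}2^{-\frac23 m}\eps_1$, so the $L^\infty\times L^2$ contraction beats $2^{(1+\beta)m}$ with room to spare. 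A delicate sub-point throughout is that the $X$ norm only controls $1+\beta$ angular derivatives, so after the first integration by parts that produces a factor $\Ups f_j$ one has spent the angular regularity of that input; the device of Section \ref{ssec:D3ibp} (the homogeneous vertical derivative $D_3^\eta$, with gain condition \eqref{eq:D3ibpcond}) is designed exactly to continue integrating by parts in such scenarios without further angular cost, and I expect to invoke it in the subcases where $p_2\gg p_1$ (or its mirror).

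The main obstacle will be the sub-case analogous to \eqref{SuffCondLargeL} but now with $\ell\leq(1+\delta)m$, namely where one input, say $f_1$, carries almost all the angular frequency ($\ell_1\approx\ell$) and simultaneously $2^{p_2}$ is not tiny while $k_2<k_1\sim k$: here neither a clean vector-field integration by parts nor the Bernstein trick is directly available, and one must carefully combine an $L^2\times L^\infty$ estimate (putting $f_1$, which carries the large $\ell_1$, in $X$ and $f_2$ in $L^\infty$ via Proposition \ref{prop:decay}) with the $D_3^\eta$ integration by parts of Section \ref{ssec:D3ibp} and the null-structure gain $2^{k+p_{\max}+q_{\max}}$ from Lemma \ref{lem:ECmult_bds}; tracking the weights $2^{(1+\beta)\ell}2^{\beta p}$ against the losses $2^{\ell_1+p_1}$, $2^{\ell_2+p_2}$ incurred in each step is where the hierarchy $10\delta_0<\delta^2$ and $M\gg\beta^{-1}+N$ from \eqref{eq:nuvsdelta} must be used, and this is the step I would expect to take the most care. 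All remaining cases are routine once the correct splitting is fixed, and the bilinear bound \eqref{eq:X-claim2} then follows by summing over the (logarithmically many) localization parameters.
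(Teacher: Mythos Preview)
Your overall case hierarchy matches the paper's, but the proof is not a cosmetic variant of the $B$-norm argument, and two of your steps would fail as written.

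\textbf{The no-gaps case.} You propose to ``close exactly as in Section \ref{ssec:B-nogaps}'': decompose $e^{it\Lambda}g_1=I+I\!I$ and use an $L^\infty\times L^2$ estimate. That argument yields at best $\|P_{k,p,q}\Q_\m(g_1,g_2)\|_{L^2}\lesssim 2^{p+q+k}2^{-7m/6}$, so after time integration $\|\B_\m\|_{L^2}\lesssim 2^{p+q+k}2^{-m/6}$. Against the $X$-weight $2^{(1+\beta)\ell}\lesssim 2^{(1+\beta)(1+\delta)m}$ this is off by almost a full power of $t$; it cannot close. What the paper does instead (Section \ref{ssec:X-nogaps}) is invoke the dichotomy of Proposition \ref{prop:phasevssigma}: split $\m=\m^{res}+\m^{nr}$ with $\lambda\sim 2^{q}$. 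On $\m^{nr}$ one has $\|\m^{nr}\Phi^{-1}\|_{\W}\lesssim 2^k$ by Lemma \ref{lem:phasesymb_bd}, so a normal form plus Lemma \ref{lem:dtfinL2} gains an extra factor of $2^{m}$. On $\m^{res}$ one has $|\bar\sigma|\gtrsim 2^{q+k_{\max}+k_{\min}}$ even though there is no gap, so Lemma \ref{lem:new_ibp} applies and pushes $\ell_j\gtrsim(1-\delta)m+q$, after which the precised decay finishes. The normal form is not optional here.

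\textbf{The gap cases.} In the $p$- and $q$-gap subcases you say: integrate by parts until the $\ell_j$ are forced large, then fall back on an $L^2\times L^\infty$ estimate. For the $B$ norm that suffices, but here the resulting bound (schematically $2^{-(1+\beta)(1-\delta)m}$ from a single $X$-norm input) still leaves a deficit of order $2^{\beta m}$ after the $X$-weight. The paper systematically layers a second device on top: once IBP has forced $\ell_1-p_1\gtrsim(1-\delta)m$ and $\ell_2\gtrsim(1-\delta)m$ (or the analogous $q$-conditions), it performs a further resonant/nonresonant split with a \emph{small} threshold (e.g.\ $\lambda=2^{-200\delta m}$ in Subcases 2.1, 2.3, or $\lambda\sim 2^{2p_2}$ or $2^{q_2}$ in Subcases 2.2, 3.2, 4.1--4.3). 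The nonresonant piece is normal-formed and closed with Lemma \ref{lem:dtfinL2}; the resonant piece is handled via the refined set-size gain of Lemma \ref{lem:set_gain2}, which requires the pointwise lower bounds $|\partial_{\eta_3}\Phi|\gtrsim L$ or $|\nabla_{\eta_\h}\Phi|\gtrsim L$ that hold in each subcase. The $D_3^\eta$ integration by parts you mention is used in exactly one subcase (2.2), to sharpen the constraint on $\ell_1,\ell_2$ without the $k$-losses that $V_{\xi-\eta}$ incurs there; it is not a substitute for the normal-form layer.

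In short: the missing idea is that for the $X$ norm one must gain roughly $2^{(2+\beta)m}$ in $\|\Q_\m\|_{L^2}$, and IBP along $S,\Omega$ alone delivers at most $2^{(2+2\beta)(1-\delta)m}$ via two $X$-norm inputs. The extra margin comes from a normal form (Lemma \ref{lem:nfs}, Lemma \ref{lem:dtfinL2}, Lemma \ref{lem:set_gain2}) applied \emph{after} IBP in essentially every subcase, including the no-gaps one.
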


The remainder of this section is devoted to the proof of Proposition \ref{prop:Xnorm2}. After a standard reduction to ``atomic'' estimates with localized versions of the inputs, we will make ample use of the integration by parts along vector fields and normal forms. To this end, we note that by choice of $\delta$ we can repeatedly integrate by parts at least $M=O(\delta^{-2})\gg O(\delta^{-1})$ times.

We proceed in a similar fashion as in the proof of the $B$ norm bounds in Section \ref{sec:Bnorm}, but the estimates are more delicate since we always require a gain of $(2+\beta+)$ powers of the time variable. We use the possibility to integrate by parts along vector fields to push $\ell_i\sim (1-\delta)m$ up to losses in adjacent parameters $k_j,p_j$, then we use a normal form to gain a copy of $m$ at the cost of adjacent parameters.

\begin{proof}[Proof of Proposition \ref{prop:Xnorm2}]
We begin with a reduction: Note that if $\ell\leq (1+\delta)m$ then $(1+\beta)\ell\leq (1+\beta+2\delta)m$, and thus by energy estimates, further localizations in $p_j, \ell_j$, $j=1,2$, and $B$ resp.\ $X$ norm bounds it suffices to prove that (again with $\delta_0=2N_0^{-1}$) for
\begin{equation}
 f_j=P_{k_j,p_j}R_{\ell_j}F_j,\quad -2m\leq k,k_j\leq \delta_0 m,\qquad -2m\leq p_j\leq 0,\qquad -p_j\leq \ell_j\leq 2m,\quad j=1,2,\quad 
\end{equation}
we have that
\begin{equation}\label{eq:X-claim2'}
 \sup_{k,\,\ell+p\geq 0,\,\ell\leq (1+\delta)m}2^{(1+\beta+2\delta)m}2^{\beta p}\norm{P_{k,p}R_\ell\B_\m(f_1,f_2)}_{L^2}\lesssim 2^{-\delta m}\eps_1^2.
\end{equation}
This is the bound we shall prove in the rest of this section. Similar to the $B$ norm bounds we do this first in the setting of a gap in $p$ with $p_{\max}\sim 0$ (Section \ref{ssec:X-p-gap}), secondly when $p_{\max}\ll 0$ (Section \ref{ssec:X-pmax}), then for the case of a gap in $q$ (Section \ref{ssec:X-q-gap}) and finally for the case of no gaps (Section \ref{ssec:X-nogaps}).

\subsubsection{Gap in $p$, with $p_{\max}\sim 0$.}\label{ssec:X-p-gap}
We consider here the case where $p_{\min}\ll p_{\max}\sim 0$. We further subdivide according to whether the output $p$ or one of the inputs $p_i$ is small, and use Lemma \ref{lem:gapp-cases} to organize these cases. Wlog we assume that $p_1\leq p_2$, so that we have two main cases to consider.

Noting that $\abs{\bar\sigma}\sim 2^{p_{\max}}2^{k_{\min}+k_{\max}}$ and using that $\ell_i+p_i\geq 0$, by Lemma \ref{lem:new_ibp}\eqref{it:ibp_p} repeated integration by parts is feasible if
\begin{equation}\label{IBPVF82}
\begin{aligned}
 V_\eta:\qquad &2^{-p_1}2^{2k_1-k_{\min}-k_{\max}}(1+2^{k_2-k_1}2^{\ell_1})\leq 2^{(1-\delta)m},\\
 V_{\xi-\eta}:\qquad &2^{-p_2}2^{2k_2-k_{\min}-k_{\max}}(1+2^{k_1-k_2}2^{\ell_2})\leq 2^{(1-\delta)m}.
\end{aligned}
\end{equation}

\subsubsection*{Case 1: $p\ll p_1,p_2$}\label{ssec:X-p-gap-p}
By Lemma \ref{lem:gapp-cases} we have three scenarios to consider:
\subsubsection*{Subcase 1.1: $2^{k_1}\sim 2^{k_2}$.} Here we have $2^{p_1}\sim 2^{p_2}\sim 1$. 

As for \eqref{IBPVF82}, after repeated integration by parts ($O(\delta^{-1})\ll M$ times) we can assume that $\ell_i>(1-\delta)m+k-k_1$, $i=1,2$. Then a direct $X$ norm bound gives the claim: we have that
\begin{equation*}
\begin{aligned}
 \norm{P_{k,p}R_\ell \Q_\m(f_1,f_2)}_{L^2}&\lesssim 2^k\Sz 2^{-(1+\beta)(\ell_1+\ell_2)}\norm{f_1}_X\norm{f_2}_X\\
 &\lesssim 2^{(\frac{5}{2}-2-2\beta)k}2^{2(1+\beta)k_1}\cdot 2^{-2(1+\beta)(1-\delta)m}\norm{f_1}_X\norm{f_2}_X,
\end{aligned} 
\end{equation*}
and hence
\begin{equation*}
\begin{split}
 2^{3k^+}2^{(1+\beta)\ell}2^{\beta p}\norm{P_{k,p}R_\ell\B_\m(f_1,f_2)}_{L^2}
&\lesssim 2^{(1+\beta+2\delta)m}\norm{P_{k,p}R_\ell\B_\m(f_1,f_2)}_{L^2}\\
&\lesssim 2^{(2+\beta+2\delta)m}\norm{P_{k,p}R_\ell \Q_\m(f_1,f_2)}_{L^2}\lesssim 2^{-(\beta-2\delta(2+\beta)-\frac{5}{2}\delta_0)m}\eps_1^2,
 \end{split}
\end{equation*}
which suffices since $\beta\gg \delta$.

\subsubsection*{Subcase 1.2: $2^{k_2}\ll 2^{k_1}\sim 2^k$} Then we have that $2^{p_1-p_2}\sim 2^{k_2-k_1}\ll 1$, so that $p\ll p_1\ll p_2\sim 0$.

As in \eqref{IBPVF82}, by iterated integration by parts we can assume that $\ell_2>(1-\delta)m$ and $\ell_1>p_1+(1-\delta)m$, which suffices for a direct $X$ norm bound provided that $\beta\leq \frac{1}{4}$,
\begin{equation}
\begin{aligned}
 \norm{P_{k,p}R_\ell \Q_\m(f_1,f_2)}_{L^2}&\lesssim 2^k\Sz 2^{-(1+\beta)(\ell_1+\ell_2)}2^{-\beta p_1}\norm{f_1}_X\norm{f_2}_X\\
 &\lesssim 2^{k}\cdot 2^{\frac{3}{2}k_2}\cdot 2^{-(1+2\beta)p_1}\cdot 2^{-2(1+\beta)(1-\delta)m}\varepsilon_1^2\\
 &\lesssim 2^{\frac{5}{2}k_1} 2^{(\frac{1}{2}-2\beta)p_1}\cdot 2^{-2(1+\beta)(1-\delta)m}\varepsilon_1^2,
\end{aligned} 
\end{equation}
using that $2^{k_2}\sim 2^{k_1+p_1}$. This leads to an acceptable contribution.

\subsubsection*{Subcase 1.3: $2^{k_1}\ll 2^{k_2}\sim 2^k$} This would imply $p_2\ll p_1$, which is excluded by assumption.

\subsubsection*{Case 2: $p_1\ll p,p_2$}\label{ssec:X-p-gap-p_1}
By Lemma \ref{lem:gapp-cases} we have three scenarios to consider:
\subsubsection*{Subcase 2.1: $2^{k}\sim 2^{k_2}$} Then $2^{p}\sim 2^{p_2}\sim 1$. Using \eqref{IBPVF82}, we can assume that $\ell_1-p_1\geq (1-\delta)m$ and $\max\{k_2-k_1,\ell_2\}\geq (1-\delta)m$.

\begin{enumerate}[label=(\alph*),wide]
\item $k_2-k_1>(1-\delta)m$. Here we have that since $\Sz\lesssim 2^{p_1}2^{\frac{3}{2}k_1}$ there holds
\begin{equation*}
\begin{split}
 \norm{P_{k,p}R_\ell \Q_\m(f_1,f_2)}_{L^2}&\lesssim 2^k\Sz 2^{-\ell_1}\norm{f_1}_X\norm{f_2}_{L^2}\lesssim 2^{k+\frac{3}{2}k_1-N_0k_2^+}2^{-(1-\delta)m}\varepsilon_1^2,
 \end{split}
\end{equation*}
which is more than enough thanks to the smallness of $k_1-k_2$.
 
\item $\ell_2>(1-\delta)m$.  Here we will further split cases towards a normal form. Assume first that
\begin{equation}\label{p1NotTooSmall82}
\begin{split}
2p_1-k_1\ge -k-100.
\end{split}
\end{equation}
In this case, a crude estimate gives
\begin{equation*}
\begin{split}
 \norm{P_{k,p}R_\ell \Q_\m(f_1,f_2)}_{L^2}&\lesssim 2^k\Sz\cdot \norm{f_1}_{L^2}\norm{f_2}_{L^2}\\
 &\lesssim 2^{k+\frac{3}{2}k_1+p_1} \cdot 2^{-(1+2\beta)p_1}2^{-2(1+\beta)(1-\delta)m}\norm{f_1}_X\norm{f_2}_X\\
 &\lesssim 2^{(1+\beta)k}2^{(\frac{3}{2}-\beta)k_1}2^{\beta (k_1-k-2p_1)}2^{-2(1+\beta)(1-\delta)m}\varepsilon_1^2,
 \end{split}
\end{equation*}
which gives an acceptable contribution.

We now assume that \eqref{p1NotTooSmall82} does not hold and do a normal form away from the resonant set (see also Section \ref{ssec:nfs}). For $\lambda=2^{-200\delta m}$, we decompose as in \eqref{eq:mdecompNF}
\begin{equation}\label{822DecompositionResNRes}
\begin{split}
\mathfrak{m}(\xi,\eta)=\psi(\lambda^{-1}\Phi)\mathfrak{m}(\xi,\eta)+(1-\psi(\lambda^{-1}\Phi))\mathfrak{m}(\xi,\eta)=\mathfrak{m}^{res}(\xi,\eta)+\mathfrak{m}^{nr}(\xi,\eta).
\end{split}
\end{equation}
On the support of $\mathfrak{m}^{res}$, using (the contraposite of) \eqref{p1NotTooSmall82}, we observe that
\begin{equation*}
\begin{split}
 \vert\partial_{\eta_3}\Phi(\xi,\eta)\vert\gtrsim 2^{-k}
\end{split}
\end{equation*}
and using Lemma \ref{lem:nfs}\eqref{it:NF-bd34} we find that
\begin{equation}\label{eq:gap-p-2.1}
\begin{aligned}
 \norm{P_{k,p}R_\ell \B_{\m^{res}}(f_1,f_2)}_{L^2}&\lesssim 2^m\cdot 2^k2^{p_1+k_1}\lambda^{1/2}2^{\frac{k_2}{2}}\cdot 2^{-\ell_1}2^{-(1+\beta)\ell_2}\norm{f_1}_{X}\norm{f_2}_{X}\\
 &\lesssim 2^{k+k_1+\frac{k_2}{2}}\cdot \lambda^{1/2}\cdot 2^{m-(2+\beta)(1-\delta)m}\eps_1^2
\end{aligned} 
\end{equation}
and again, we obtain an acceptable contribution. On the support of $\mathfrak{m}^{nr}$, the phase is large and we can perform a normal and see as in \eqref{mdecompNFNRNorms} that
\begin{equation}\label{eq:gap-p-nfsplit}
\begin{aligned}
 \norm{P_{k,p}R_\ell \B_{\m^{nr}}(f_1,f_2)}_{L^2}&\lesssim \norm{P_{k,p}R_\ell \Q_{\Phi^{-1}\m^{nr}}(f_1,f_2)}_{L^2} + \norm{P_{k,p}R_\ell \B_{\Phi^{-1}\m^{nr}}(\partial_t f_1,f_2)}_{L^2}\\
 &\qquad +\norm{P_{k,p}R_\ell \B_{\Phi^{-1}\m^{nr}}(f_1,\partial_t f_2)}_{L^2},
\end{aligned} 
\end{equation}
and using crude estimates and Lemma \ref{lem:nfs}\eqref{it:NF-bd1}, we see that
\begin{equation*}
\begin{aligned}
 \norm{P_{k,p}R_\ell \Q_{\Phi^{-1}\m^{nr}}(f_1,f_2)}_{L^2}&\lesssim 2^k2^{p_1+\frac{3}{2}k_1}\lambda^{-1}\cdot 2^{-\ell_1}2^{-(1+\beta)\ell_2}\norm{f_1}_{X}\norm{f_2}_{X}\\
 &\lesssim 2^{k+\frac{3}{2}k_1}\cdot 2^{200\delta m-(2+\beta)(1-\delta)m}\eps_1^2,
\end{aligned} 
\end{equation*}
which suffices since $\beta\gg \delta$. Similarly, using Lemma \ref{lem:dtfinL2}, we obtain that
\begin{equation}\label{eq:gap-p-2.1end}
\begin{aligned}
 \norm{P_{k,p}R_\ell \B_{\Phi^{-1}\m^{nr}}(f_1,\partial_t f_2)}_{L^2}&\lesssim 2^m\cdot 2^k2^{p_1+\frac{3}{2}k_1}\lambda^{-1} 2^{-\ell_1}\norm{f_1}_{X}\norm{\partial_t f_2}_{L^2}\\
 &\lesssim 2^{k+k_1+\frac{k_2}{2}}\cdot 2^{\gamma+300\delta m-\frac{3}{2}m}\eps_1^2,
\end{aligned}
\end{equation}
and similarly for the term with $\partial_t f_1$.

\end{enumerate}

\subsubsection*{Subcase 2.2: $2^{k}\ll 2^{k_2}\sim 2^{k_1}$} Then $2^{p_2-p}\sim 2^{k-k_2}\ll 1$, and thus $p_1\ll p_2\ll p\sim 0$.

After repeated integration by parts we may assume that
\begin{equation}\label{Suff822}
\ell_i\ge\max\{p_i-k_1+k+(1-\delta)m,-p_i\},\qquad i\in\{1,2\}.
\end{equation}
This is sufficient if
\begin{equation*}
\begin{split}
p_{1}\ge -10\delta m.
\end{split}
\end{equation*}
We first localize the analysis to the resonant set by decomposing $\m(\xi,\eta)=\m^{res}(\xi,\eta)+\m^{nr}(\xi,\eta)$ as in \eqref{eq:mdecompNF} with $\lambda=2^{-100}(2^q+2^{2p_2})$.
For the nonresonant terms, we can do a normal form as in \eqref{mdecompNFNRNorms}, and with Lemma \ref{lem:nfs}\eqref{it:NF-bd1} a crude estimate gives
\begin{equation*}
\begin{aligned}
 \norm{P_{k,p,q}R_\ell \Q_{\Phi^{-1}\m^{nr}}(f_1,f_2)}_{L^2}&\lesssim \Sz\cdot 2^k\cdot (2^{q}+2^{2p_2})^{-1}\cdot \Vert f_1\Vert_{L^2}\Vert f_2\Vert_{L^2}\\
 &\lesssim 2^{k_1+\frac{3}{2}k}2^{p_1+\frac{q}{2}}\cdot (2^q+2^{2p_2})^{-1}\cdot 2^{-\ell_1-(1+\beta)\ell_2-\beta p_2}\Vert f_1\Vert_{X}\Vert f_2\Vert_X\\
 &\lesssim 2^{-(1+\beta+2\delta)m}\varepsilon_1^2\cdot 2^{p_2+\frac{q}{2}}(2^q+2^{2p_2})^{-1}\cdot 2^{(\beta+3\delta)m}2^{2k_1}2^{\frac{1}{2}k}2^{-(1+\beta)(\ell_2+p_2)}.
\end{aligned} 
\end{equation*}
If
\begin{equation*}
\begin{split}
\max\{k_1-k,\ell_2+p_2\}\ge 5\beta m
\end{split}
\end{equation*}
this gives an acceptable contribution; else, using that $2^{p_2}\sim 2^{k-k_1}$, we obtain a contradiction with \eqref{Suff822}.
In addition, another use of Lemma \ref{lem:dtfinL2} and Lemma \ref{lem:nfs}\eqref{it:NF-bd1} gives
\begin{equation*}
\begin{aligned}
 \norm{P_{k,p,q}R_\ell \B_{\Phi^{-1}\m^{nr}}(f_1,\partial_t f_2)}_{L^2}&\lesssim 2^m\cdot 2^k(2^{q}+2^{2p_2})^{-1}\cdot \Sz\cdot \norm{f_1}_{L^2}\norm{\partial_t f_2}_{L^2}\\
 &\lesssim 2^m\cdot (2^{q}+2^{2p_2})^{-1}2^{\frac{q}{2}}2^{\frac{3}{2}k+k_1}\cdot 2^{p_1-\ell_1}2^{-\frac{3}{2}m+\gamma m}\cdot \eps_1^3.
\end{aligned} 
\end{equation*}
Now, using that $(2^{q}+2^{2p_2})^{-1}2^{\frac{q}{2}}2^{k-k_1}\le 2^{-p_2}2^{k-k_1}\lesssim 1$, we see that if $p_1\le -m/2$, then
\begin{equation*}
\begin{aligned}
 \sum_q\norm{P_{k,p,q}R_\ell \B_{\Phi^{-1}\m^{nr}}(f_1,\partial_t f_2)}_{L^2} &\lesssim 2^m\cdot 2^{\frac{1}{2}k+2k_1}\cdot 2^{2p_1}2^{-\frac{3}{2}m+\gamma m}\cdot \eps_1^3\\
\end{aligned} 
\end{equation*}
which gives an acceptable contribution, while if $-m/2\le p_1\le p_2$, we see that
\begin{equation*}
\begin{aligned}
 \sum_q\norm{P_{k,p,q}R_\ell \B_{\Phi^{-1}\m^{nr}}(f_1,\partial_t f_2)}_{L^2} &\lesssim 2^m\cdot 2^{-p_2}\cdot 2^{\frac{1}{2}k+2k_1}\cdot 2^{k-k_1+p_1-\ell_1}2^{-\frac{3}{2}m+\gamma m}\cdot \eps_1^3\\
 &\lesssim 2^{-(\frac{3}{2}-\gamma-\delta)m}\cdot 2^{-\frac{1}{2}p_2}\cdot 2^{\frac{5}{2}k_1}\cdot \eps_1^3,
\end{aligned} 
\end{equation*}
which is acceptable. The term involving $\partial_tf_1$ is treated similarly.

\medskip

We now turn to the resonant term. First, we observe that, on the support of $\mathfrak{m}^{res}$,
\begin{equation*}
\begin{split}
\vert \Lambda(\xi-\eta)\vert+\vert\Lambda(\eta)\vert\ge 3/2,\qquad
\vert \Lambda(\xi-\eta)\vert-\vert\Lambda(\eta)\vert=\frac{\sqrt{1-\Lambda^2(\eta)}^2-\sqrt{1-\Lambda^2(\xi-\eta)}^2}{\vert\Lambda(\xi-\eta)\vert+\vert\Lambda(\eta)\vert}\ge 2^{-8}2^{2p_2},
\end{split}
\end{equation*}
so that smallness of $\vert\Phi\vert$ implies that $2^q\sim 2^{2p_2}\sim 2^{2(k-k_1)}$, but we will need to restrict the support further. 

\medskip

We first observe that since $\vert k_1-k_2\vert\le 10$ and $p_1\ll p_2$, we have that, on the support of $\mathfrak{m}^{res}$,
\begin{equation*}
\begin{split}
\vert \partial_{\eta_3}\Phi(\xi,\eta)\vert\gtrsim 2^{2p_2-k_2}.
\end{split}
\end{equation*}
and we can use the analysis in Section \ref{ssec:D3ibp} to obtain an acceptable contribution unless we have
\begin{equation*}
\begin{split}
\max\{\ell_1+p_1-2p_2,\ell_2-p_2\}\ge (1-\delta)m,
\end{split}
\end{equation*}
which improves upon \eqref{Suff822} in that it does not incur $k$ losses. If the first term is largest, a crude estimate gives that
\begin{equation*}
\begin{aligned}
 \norm{P_{k,p,q}R_\ell \B_{\m^{res}}(f_1,f_2)}_{L^2}&\lesssim 2^m\cdot 2^k\Sz \cdot\norm{f_1}_{L^2}\norm{f_2}_{L^2}\\
 &\lesssim 2^m\cdot 2^{\frac{5}{2}k}2^{\frac{q}{2}}2^{-(1+\beta)\ell_1-\beta p_1}2^{-(1+\beta)\ell_2-\beta p_2}\Vert f_1\Vert_{X}\Vert f_2\Vert_X\\
 &\lesssim 2^{-(1+2\beta-3\delta)m}\cdot 2^{p_1-p_2}\cdot 2^{-(1+4\beta)p_2}2^{(\frac{3}{2}-\beta)k} 2^{(2+\beta)k_1}\varepsilon_1^2,
\end{aligned} 
\end{equation*}
and since $2^{k-k_1}\lesssim 2^{p_2}$, we obtain an acceptable contribution. Thus from now on, we may assume that
\begin{equation*}
\begin{split}
\ell_1-p_1-k+k_1\ge (1-\delta)m,\qquad \ell_2-p_2\ge(1-\delta)m,\qquad 2^{\frac{q}{2}}\sim 2^{p_2}\sim 2^{k-k_1}.
\end{split}
\end{equation*}

\medskip

In this case, a crude estimate gives that
\begin{equation*}
\begin{aligned}
 \norm{P_{k,p,q}R_\ell \B_{\m^{res}}(f_1,f_2)}_{L^2}&\lesssim 2^m\cdot 2^k\Sz \cdot\norm{f_1}_{L^2}\norm{f_2}_{L^2}\\
 &\lesssim 2^m\cdot 2^{\frac{3}{2}k+k_1}2^{p_1+\frac{q}{2}}2^{-\ell_1}2^{-\beta (\ell_1+p_1)}2^{-(1+\beta)\ell_2}2^{-\beta p_2}\Vert f_1\Vert_{X}\Vert f_2\Vert_X\\
 &\lesssim 2^{-(1+\beta-3\delta)m}\cdot 2^{-\beta(\ell_1+p_1)}\cdot 2^{(\frac{1}{2}-\beta)k}2^{-2\beta p_2}\cdot 2^{(2+\beta)k_1}\varepsilon_1^2
\end{aligned} 
\end{equation*}
and this leads to an acceptable contribution whenever
\begin{equation}\label{822ParametersNotSoSmall}
 p_2\le -40\delta m.
\end{equation}
In the opposite case, we do another normal form, choosing a smaller phase restriction $\lambda=2^{-300\delta m}$. Thus we set
\begin{equation*}
\begin{split}
\mathfrak{m}^{res}(\xi,\eta)&:=\psi(\lambda^{-1}\Phi(\xi,\eta))\mathfrak{m}^{res}(\xi,\eta)+(1-\psi(\lambda^{-1}\Phi(\xi,\eta)))\mathfrak{m}^{res}(\xi,\eta)=\mathfrak{m}^{rr}(\xi,\eta)+\mathfrak{m}^{nrr}(\xi,\eta).
\end{split}
\end{equation*}
On the support of $\mathfrak{m}^{rr}$, we have that
\begin{equation*}
\begin{split}
\vert\partial_{\eta_3}\Phi(\xi,\eta)\vert\gtrsim 2^{2p_2-k_2}\gtrsim 2^{-100\delta m}
\end{split}
\end{equation*}
and using Lemma \ref{lem:set_gain2}, as in Lemma \ref{lem:nfs}\eqref{it:NF-bd34} we see that
\begin{equation*}
\begin{split}
 \norm{P_{k,p,q}R_\ell \B_{\m^{rr}}(f_1,f_2)}_{L^2}&\lesssim 2^{m}\cdot 2^{k+k_1}(2^{100\delta m}\lambda)^{\frac{1}{2}}2^{p_1}\cdot \Vert f_1\Vert_{L^2}\Vert f_2\Vert_{L^2}\\
 &\lesssim 2^{m}\cdot 2^{k_1}2^{-100\delta m}\cdot 2^{p_1+k-\ell_1}\cdot 2^{-(1+\beta)(\ell_2-p_2)}2^{-(1+2\beta) p_2}\Vert f_1\Vert_X\Vert f_2\Vert_X\\
 &\lesssim 2^{-(1+\beta-3\delta+100\delta)m}\cdot 2^{2k_1}\cdot 2^{-(1+2\beta)p_2}\varepsilon_1^2,
\end{split}
\end{equation*} 
which gives an acceptable contribution using \eqref{822ParametersNotSoSmall}. Independently, we treat the nonresonant term via a normal form as in \eqref{mdecompNFNRNorms}. First a crude estimate using Lemma \ref{lem:nfs}\eqref{it:NF-bd1} gives that
\begin{equation*}
\begin{aligned}
 \norm{P_{k,p,q}R_\ell \Q_{\Phi^{-1}\m^{nrr}}(f_1,f_2)}_{L^2} &\lesssim 2^k\lambda^{-1}\cdot 2^{\frac{1}{2}k+k_1}2^{p_1+\frac{q}{2}}\cdot \Vert f_1\Vert_{L^2}\Vert f_2\Vert_{L^2}\\
 &\lesssim 2^{\frac{1}{2}k}2^{500\delta m}\cdot 2^{p_1+k-\ell_1}2^{p_2-\ell_2}\Vert f_1\Vert_X\Vert f_2\Vert_X\\
 &\lesssim 2^{-(2-600\delta)m}2^{\frac{1}{2}k+2k_1}\varepsilon_1^2,
\end{aligned} 
\end{equation*}
which is again acceptable. In addition, Lemma \ref{lem:dtfinL2} and Lemma \ref{lem:nfs}\eqref{it:NF-bd1} give
\begin{equation*}
\begin{aligned}
 \norm{P_{k,p,q}R_\ell \B_{\Phi^{-1}\m^{nrr}}(f_1,\partial_t f_2)}_{L^2}&\lesssim 2^m\cdot 2^k\lambda^{-1}\cdot 2^{\frac{1}{2}k+k_1}2^{p_1+\frac{q}{2}}\cdot \Vert f_1\Vert_{L^2}\Vert \partial_tf_2\Vert_{L^2}\\
 &\lesssim 2^{\frac{1}{2}k}\cdot 2^{p_1+k-\ell_1}\cdot 2^{-(\frac{1}{2}-\gamma-500\delta)m}\Vert f_1\Vert_X\varepsilon_1^2\\
 &\lesssim 2^{-(\frac{3}{2}-\gamma-500\delta)m}2^{\frac{1}{2}k+k_1}\varepsilon_1^3,
\end{aligned} 
\end{equation*}
and once again the term involving $\partial_tf_1$ is easier.

\subsubsection*{Subcase 2.3: $2^{k_2}\ll 2^{k}\sim 2^{k_1}$} Then $2^{p-p_2}\sim 2^{k_2-k}\ll 1$, and thus $p_1\ll p\ll p_2\sim 0$.

Using Lemma \ref{lem:new_ibp}, repeated integration by parts give the result unless
\begin{equation}\label{SuffCond823}
\ell_2\geq (1-\delta)m\qquad\hbox{ and }\qquad -p_1+\ell_1\geq (1-\delta)m.
\end{equation}
A crude estimate gives that
\begin{equation*}
\begin{split}
 \norm{P_{k,p}R_\ell \B_{\m}(f_1,f_2)}_{L^2}&\lesssim 2^m\cdot 2^k\cdot 2^{k_1+p_1}2^{\frac{1}{2}k_2}\cdot 2^{-\ell_1}2^{-(1+\beta)\ell_2}\norm{f_1}_{X}\norm{f_2}_{X}\\
 &\lesssim 2^{-(1+\beta-3\delta)m}\cdot 2^{2k+\frac{1}{2}k_2}\eps_1^2,
\end{split} 
\end{equation*}
and this gives an acceptable contribution unless $k_2\geq -10\delta m$. In this case, we split again into resonant and nonresonant regions with $\lambda=2^{-200\delta m}$ as in \eqref{eq:mdecompNF}. On the support of the resonant term, we see that (since $p_1\ll p_2$, $k_2\ll k_1$)
\begin{equation*}
\begin{split}
 \vert\partial_{\eta_3}\Phi(\xi,\eta)\vert\gtrsim 2^{-25\delta m},
\end{split}
\end{equation*}
and using crude estimates and Lemma \ref{lem:nfs}\eqref{it:NF-bd34}, we see that
\begin{equation*}
\begin{aligned}
 \norm{P_{k,p}R_\ell \B_{\m^{res}}(f_1,f_2)}_{L^2}&\lesssim 2^m\cdot 2^k\cdot 2^{k_1+p_1}(2^{25\delta m}\lambda)^\frac{1}{2}\cdot 2^{-\ell_1}2^{-(1+\beta)\ell_2}\norm{f_1}_{X}\norm{f_2}_{X}\\
 &\lesssim 2^{-(2+\beta+50\delta)m}2^{2k}\eps_1^2.
\end{aligned} 
\end{equation*}
For the nonresonant term, we use a normal form as in \eqref{mdecompNFNRNorms}. Using crude estimates and Lemma \ref{lem:nfs}\eqref{it:NF-bd1}, we see that
\begin{equation*}
\begin{aligned}
 \norm{P_{k,p}R_\ell \Q_{\Phi^{-1}\m^{nr}}(f_1,f_2)}_{L^2}&\lesssim 2^k2^{p_1+\frac{3}{2}k_1}\lambda^{-1}\cdot 2^{-\ell_1}2^{-(1+\beta)\ell_2}\norm{f_1}_{X}\norm{f_2}_{X}\\
 &\lesssim 2^{\frac{5}{2}k}\cdot 2^{200\delta m-(2+\beta)(1-\delta)m}\eps_1^2,
\end{aligned} 
\end{equation*}
which suffices since $\beta\gg \delta$. Similarly, using Lemma \ref{lem:dtfinL2}, we obtain that
\begin{equation}
\begin{aligned}
 \norm{P_{k,p}R_\ell \B_{\Phi^{-1}\m^{nr}}(f_1,\partial_t f_2)}_{L^2}&\lesssim 2^m\cdot 2^k2^{p_1+\frac{3}{2}k_1}\lambda^{-1} 2^{-\ell_1}\norm{f_1}_{X}\norm{\partial_t f_2}_{L^2}\\
 &\lesssim 2^{\frac{5}{2}k}\cdot 2^{\gamma+300\delta m-\frac{3}{2}m}\eps_1^2,
\end{aligned}
\end{equation}
and similarly for the symmetric case, and once again, we obtain an acceptable contribution.

\medskip
\subsubsection{Case $p_{\max}\ll 0$}\label{ssec:X-pmax}
In case $2^{p_{\max}}\ll 1$ we have that $\abs{\Phi}\gtrsim 1$, and we can do a normal form as in \eqref{eq:mdecompNF}--\eqref{mdecompNFNRNorms} with $\lambda=\frac{1}{10}$ so that $\mathfrak{m}^{res}=0$. Using Lemma \ref{lem:interpol}, we have that $\norm{e^{it\Lambda}f_i}_{L^\infty}\lesssim 2^{-\frac{2}{3}m}\eps_1$, $i=1,2$, and thus by Lemma \ref{lem:nfs}\eqref{it:NF-bd2}
\begin{equation}
 \norm{P_{k,p}R_\ell \B_{\m\cdot\Phi^{-1}}(\partial_t f_1,f_2)}_{L^2}\lesssim 2^m\cdot 2^k\norm{\partial_t f_1}_{L^2}\norm{e^{it\Lambda} f_2}_{L^\infty}\lesssim 2^{k+\frac{3}{2}k_2}2^{m-\frac{3}{2}m+\gamma m-\frac{2}{3}m}\eps_1^2
\end{equation}
and symmetrically for $B_{\m\cdot\Phi^{-1}}( f_1,\partial_tf_2)$. The boundary term requires a bit more care: Assuming w.l.o.g.\ that $p_2\leq p_1$, we distinguish two cases:
\begin{itemize}[wide]
 \item If $f_1$ has fewer vector fields than $f_2$, by Proposition \ref{prop:decay} (and again Lemma \ref{lem:nfs}\eqref{it:NF-bd2}) there holds that
 \begin{equation}
  \norm{P_{k,p}\Q_{\m\cdot\Phi^{-1}}(f_1,f_2)}_{L^2}\lesssim 2^{k} \norm{e^{it\Lambda}f_1^{(1)}}_{L^\infty}2^{p_2}\norm{f_2}_{B}+2^k\norm{e^{it\Lambda}f_1^{(2)}}_{L^2}\norm{e^{it\Lambda} f_2}_{L^\infty}\lesssim 2^{-\frac{3}{2}m}\eps_1^2,
 \end{equation}
 and analogously if $2^{p_2}\gtrsim 2^{p_1}$.
 \item If $f_1$ has more vector fields than $f_2$ and $p_2\ll p_1$, we note that since $\abs{\bar\sigma}\sim 2^{p_{\max}}2^{k_{\max}+k_{\min}}$ we have that repeated integration by parts in $V_\eta$ gives the claim if
 \begin{equation}
  2^{-p_1-p_{\max}}2^{2k_1-k_{\max}-k_{\min}}(1+2^{k_2-k_1}2^{\ell_1})<2^{(1-\delta) m}.
 \end{equation}
 Otherwise we are done by a standard $L^2\times L^\infty$ estimate, using the localization information. The most difficult term is when $k=k_{\min}$, where we can assume that $-p_1-p_{\max}+k_1-k+\ell_1>(1-\delta)m$ and obtain
 \begin{equation}
 \begin{aligned}
  \norm{P_{k,p}R_\ell \Q_{\m\cdot\Phi^{-1}}(f_1,f_2)}_{L^2}&\lesssim 2^{k+p_{\max}}\norm{f_1}_{L^2}\norm{e^{it\Lambda}f_2}_{L^\infty}\lesssim 2^{k+p_{\max}}2^{-\frac{\ell_1+p_1}{2}}\norm{f_1}_X^{\frac{1}{2}}\norm{f_1}_B^{\frac{1}{2}}\norm{e^{it\Lambda}f_2}_{L^\infty}\\
  &\lesssim 2^{\frac{k+p_{\max}}{2}+\frac{k_1}{2}}\cdot 2^{-\frac{1-\delta}{2}m-m}\cdot \eps_1^2,
 \end{aligned} 
 \end{equation}
 an acceptable contribution.
\end{itemize}

\subsubsection{Gap in $q$.}\label{ssec:X-q-gap}
We additionally localize in $q_i$, write $g_i=P_{k_i,p_i,q_i}R_{\ell_i}f_i$, $i=1,2$. A crude estimate using \eqref{BoundsOnMAddedBenoit} gives that
\begin{equation*}
\begin{split}
\norm{P_{k,p,q}R_\ell \B_{\m}(g_1,g_2)}_{L^2}&\lesssim 2^m\cdot 2^{k+q_{\max}}\cdot 2^{\frac{3}{2}k_{\max}+\frac{q_{\min}}{2}}\cdot\Vert g_1\Vert_{L^2}\Vert g_2\Vert_{L^2}\\
&\lesssim 2^m\cdot 2^{\frac{5}{2}k_{\max}}2^{\frac{q_{\min}+q_1+q_2}{2}+q_{\max}}\cdot\Vert g_1\Vert_B\Vert g_2\Vert_B,
\end{split}
\end{equation*}
and we obtain acceptable contributions unless
\begin{equation}\label{CrudeBoundq1}
q_{\min}\ge-10 m,\qquad q_{\max}\ge -6m/7,
\end{equation}
and in particular, we have at most $O(m^3)$ choices for $\{q,q_1,q_2\}$.

In this section, we assume that $q_{\min}\ll q_{\max}$ and (by the previous case) $2^{p_{\min}}\sim 2^{p_{\max}}\sim 1$. Using Lemma \ref{lem:new_ibp} and noting that $\abs{\bar\sigma}\sim 2^{q_{\max}}2^{k_{\min}+k_{\max}}$, repeated integration by parts is allow us to deal with the case when
\begin{equation}\label{ConsLem57Gapq}
\begin{aligned}
 V_\eta:\qquad &2^{-q_{\max}}2^{2k_1-k_{\min}-k_{\max}}(1+2^{k_2-k_1}(2^{q_2-q_1}+2^{\ell_1}))\leq 2^{(1-\delta)m},\\
 V_{\xi-\eta}:\qquad &2^{-q_{\max}}2^{2k_2-k_{\min}-k_{\max}}(1+2^{k_1-k_2}(2^{q_1-q_2}+2^{\ell_2}))\leq 2^{(1-\delta)m}.
\end{aligned}
\end{equation}

Wlog we assume that $q_1\leq q_2$, so that we have two main cases to consider:

\subsubsection*{Case 3: $q\ll q_1,q_2$} 
By Lemma \ref{lem:gapp-cases} we have two scenarios to consider:

\subsubsection*{Subcase 3.1: $2^{k_1}\sim 2^{k_2}$} Then also $2^{q_1}\sim 2^{q_2}$. 

Using \eqref{ConsLem57Gapq}, we see that we can assume $-q_{1}+k_1-k+\min\{\ell_1,\ell_2\}>(1-\delta)m$.
We now want to use the precised decay estimate. Assuming wlog that $g_2$ has fewer vector fields than $g_1$, we recall that by Proposition \ref{prop:decay} we have
\begin{equation}
 e^{it\Lambda}g_2=e^{it\Lambda}g_2^{(1)}+e^{it\Lambda}g_2^{(2)},
\end{equation}
with
\begin{equation}
 \norm{e^{it\Lambda}g_2^{(1)}}_{L^\infty}\lesssim 2^{\frac{3}{2}k_2-\frac{q_2}{2}}t^{-\frac{3}{2}}\eps_1,\qquad \norm{e^{it\Lambda}g_2^{(2)}}_{L^2}\lesssim t^{-1-\beta'}\mathfrak{1}_{\{q_2\gtrsim -m\}}\eps_1.
\end{equation}
Using a simple $L^\infty\times L^2$ bound with \eqref{BoundsOnMAddedBenoit}, we get
\begin{equation}
\begin{aligned}
 \norm{P_{k,p,q}R_\ell \Q_{\m}(g_1,g_2^{(1)})}_{L^2}&\lesssim 2^{k+q_2}\norm{g_1}_{L^2}\norm{e^{it\Lambda}g_2^{(1)}}_{L^\infty}\\
 &\lesssim 2^{-\frac{3}{2}m}2^{k+\frac{q_1}{2}}\min\{2^{q_1},2^{-(1+\beta)\ell_1}\}\varepsilon_1^2\lesssim 2^{-\frac{9}{4}m}\varepsilon_1^2
\end{aligned} 
\end{equation}
and using a crude estimate with Lemma \ref{lem:set_gain},
\begin{equation}
\begin{aligned}
 \norm{P_{k,p,q}R_\ell \Q_{\m}(g_1,g_2^{(2)})}_{L^2}&\lesssim 2^{k+q_2}\Sz\cdot\norm{g_1}_{L^2}\norm{e^{it\Lambda}g_2^{(2)}}_{L^2} \\
 &\lesssim 2^{2k+\frac{k_2}{2}+\frac{3}{2}q_2}2^{-(1+\beta)\ell_1}\norm{g_1}_X\cdot 2^{-(1+\beta')m} \cdot\eps_1\\
 &\lesssim 2^{(1-\beta)k+(\frac{3}{2}+\beta)k_1}2^{(\frac{1}{2}-\beta)q_2}\cdot 2^{-(2+\beta'+\beta-3\delta)m}\eps_1^2,
\end{aligned} 
\end{equation}
which are acceptable contributions.

\subsubsection*{Subcase 3.2: $2^{k_2}\ll 2^{k_1}$} Then we have $2^{q_1-q_2}\sim 2^{k_2-k_1}\ll 1$, and thus $q\ll q_1\ll q_2$. Using \eqref{ConsLem57Gapq}, we can assume that
\begin{equation}\label{IBPSubcase832}
-q_2+\ell_2\geq (1-\delta)m-100\qquad\hbox{ and }\qquad \max\{-q_1,\ell_1-q_2\}\ge (1-\delta) m-100.
\end{equation}
We also observe that
\begin{equation}\label{eq:sc3.2-philowbd}
\begin{split}
\vert \Phi(\xi,\eta)\vert\ge 2^{q_2-10}.
\end{split}
\end{equation}

\medskip

Assume first that $q_1\ge (1+2\beta)q_2$, so that from \eqref{IBPSubcase832}, we see that $\ell_i\gtrsim (1-\delta)m+q_2$ for $i=1,2$. We can use the precised dispersive decay from Proposition \ref{prop:decay}. The worst case is when $g_2$ has more than $N-3$ vector fields. In this case, we split
\begin{equation*}
\begin{split}
e^{it\Lambda}g_1=e^{it\Lambda}g_1^{I}+e^{it\Lambda}g_1^{II},\qquad\Vert e^{it\Lambda}g_1^{I}\Vert_{L^\infty}\lesssim \varepsilon_1 2^{-\frac{3}{2}m-\frac{q_1}{2}},\qquad \Vert e^{it\Lambda}g_1^{II}\Vert_{L^2}\lesssim \varepsilon_1 2^{-(1+\beta^\prime)m} 
\end{split}
\end{equation*}
and we use a crude estimate to estimate
\begin{equation*}
\begin{split}
\norm{P_{k,p,q}R_\ell \B_{\m}(g_1^{I},g_2)}_{L^2}&\lesssim 2^m\cdot 2^{k+q_2}\cdot\Vert e^{it\Lambda}g_1^I\Vert_{L^\infty}\Vert g_2\Vert_{L^2}\\
&\lesssim 2^{k-\frac{1}{2}m}2^{-\frac{q_1}{2}}2^{q_2-(1+\beta)\ell_2} \cdot\Vert g_2\Vert_{X}\varepsilon_1\\
&\lesssim 2^{-(1+\beta+2\delta)m}\varepsilon_1^2\cdot 2^{-(\frac{1}{2}-4\delta) m}2^{k}2^{-\frac{1}{2}q_1-\beta q_2}
\end{split}
\end{equation*}
and this is enough using \eqref{CrudeBoundq1} since $-q_1\le-(1+2\beta)q_2$ and $q_2\ge-6m/7$. Similarly a crude estimate gives
\begin{equation*}
\begin{split}
\norm{P_{k,p,q}R_\ell \B_{\m}(g_1^{II},g_2)}_{L^2}&\lesssim 2^m\cdot 2^{k+q_2}\cdot\Sz\cdot \Vert e^{it\Lambda}g_1^{II}\Vert_{L^2}\Vert g_2\Vert_{L^2}\\
&\lesssim 2^m\cdot 2^{k+\frac{3}{2}k_2+\frac{3}{2}q_2}\cdot 2^{-(1+\beta^\prime)m}2^{-(1+\beta)\ell_2}\cdot\varepsilon_1\Vert g_2\Vert_X\\
&\lesssim 2^{-(1+\beta^\prime+\beta-3\delta)m}\cdot 2^{(\frac{1}{2}-\beta)q_2}\cdot 2^{\frac{5}{2}k}\cdot\varepsilon_1^2
\end{split}
\end{equation*}
and this is acceptable.

\medskip

We can now assume that $q_1\le (1+2\beta)q_2$, so that $2^{k_2}\lesssim 2^{k_1}2^{2\beta q_2}$. We can do a normal form as in \eqref{eq:mdecompNF}--\eqref{mdecompNFNRNorms} with $\lambda=2^{q_2-20}$, so that $\m=\m^{nr}$ by \eqref{eq:sc3.2-philowbd}. On the one hand, a crude estimate using \eqref{BoundsOnMAddedBenoit} gives that
\begin{equation}
\begin{aligned}
 \norm{P_{k,p,q}R_\ell \Q_{\m \Phi^{-1}}(g_1,g_2)}_{L^2}&\lesssim 2^{k}\cdot 2^{\frac{3}{2}k_2+\frac{q_2}{2}}\norm{g_1}_{L^2}\norm{g_2}_{L^2}\\
 &\lesssim 2^k2^{\frac{3}{2}k_2+\frac{q_2}{2}}\cdot \min\{2^{\frac{q_1}{2}},2^{-(1+\beta)\ell_1}\}2^{-(1+\beta)\ell_2}\cdot \left[\norm{g_1}_B+\Vert g_1\Vert_X\right]\norm{g_2}_X\\
 &\lesssim 2^{-(1+\beta-2\delta)m}\cdot 2^{\frac{3}{2}k_2-(\frac{1}{2}+\beta)q_2}\cdot2^{(1-\beta)\frac{q_1}{2}}2^{-\beta(1+\beta)\ell_1}\cdot 2^{k}\varepsilon_1^2,
\end{aligned} 
\end{equation}
which gives an acceptable contribution. Similarly,
\begin{equation}
\begin{aligned}
 \norm{P_{k,p,q}R_\ell \B_{\m \Phi^{-1}}(\partial_tg_1,g_2)}_{L^2}&\lesssim 2^m\cdot 2^{k}\cdot 2^{\frac{3}{2}k_2+\frac{q_2}{2}}\cdot\norm{\partial_t g_1}_{L^2}\norm{g_2}_{L^2}\\
 &\lesssim 2^{-(\frac{1}{2}-\gamma)m}\cdot 2^{\frac{3}{2}(q_1-q_2)+\frac{q_2}{2}}2^{-(1+\beta)\ell_2}2^{\frac{5}{2}k}\cdot\varepsilon_1^2\Vert g_2\Vert_{X}\\
 &\lesssim 2^{-(\frac{3}{2}-\gamma-2\delta)m}2^{-\frac{1}{2}q_2}2^{\frac{5}{2}k}\cdot\varepsilon_1^3
\end{aligned} 
\end{equation}
which is enough since $q_2\ge-6m/7$ and $\beta\le 1/10$. The other case is simpler:
\begin{equation}
\begin{aligned}
 \norm{P_{k,p,q}R_\ell \B_{\m \Phi^{-1}}(g_1,\partial_tg_2)}_{L^2}&\lesssim 2^m\cdot 2^{k}\cdot 2^{\frac{3}{2}k_2+\frac{q_2}{2}}\cdot\norm{g_1}_{L^2}\norm{\partial_tg_2}_{L^2}\\
 &\lesssim 2^{-(\frac{1}{2}-\gamma)m}\cdot 2^{(q_1-q_2)+\frac{q_1}{2}}\min\{2^{\frac{q_1}{2}},2^{-(1+\beta)\ell_1}\}2^{\frac{5}{2}k}\cdot\varepsilon_1^2\Vert g_1\Vert_{X}\\
\end{aligned} 
\end{equation}
and if $q_1\le-(1-\delta)m$, we obtain an acceptable contribution, while if $\ell_1\ge (1-\delta)m+q_2-300$, we have the same numerology as in the term above. In all cases, we have an acceptable contribution.

\subsubsection*{Case 4: $q_1\ll q,q_2$}
By Lemma \ref{lem:gapp-cases} we have three scenarios to consider:

\subsubsection*{Subcase 4.1: $2^{k}\sim 2^{k_2}$} Then also $2^{q}\sim 2^{q_2}$.

Here repeated integration by parts gives the claim if
\begin{equation}\label{82341}
\begin{aligned}
 V_\eta:\qquad &2^{-q_1}+2^{\ell_1-q_2}\leq 2^{(1-\delta)m},\\
 V_{\xi-\eta}:\qquad &2^{-q_{2}}(2^{k_2-k_1}+2^{\ell_2})\leq 2^{(1-\delta)m}.
\end{aligned}
\end{equation}
This leads to the following cases to be distinguished:

\begin{enumerate}[label=(\alph*),wide]
\item Assume first that
\begin{equation}\label{823Case41}
k_2-k_1-q_2\ge(1-\delta)m-200.
\end{equation}
In this case, we can use \eqref{BoundsOnMAddedBenoit} with crude estimates as in Lemma \ref{lem:set_gain} to get
\begin{equation}
\begin{aligned}
 \norm{P_{k,p,q}R_\ell \B_{\m}(g_1,g_2)}_{L^2}&\lesssim 2^m\cdot 2^{k+q_2}\cdot 2^{\frac{3}{2}k_1+\frac{q_1}{2}}\cdot \Vert g_1\Vert_{L^2}\Vert g_2\Vert_{L^2}\\
 &\lesssim 2^m\cdot 2^{\frac{3}{2}(q_2+k_1-k)}2^{\frac{5}{2}k}2^{\frac{q_1}{2}}\min\{2^{\frac{q_1}{2}},2^{-(1+\beta)\ell_1}\}\cdot\left[\Vert g_1\Vert_B+\Vert g_1\Vert_X\right]\Vert g_2\Vert_B\\
 &\lesssim 2^{-(\frac{1}{2}-3\delta)m}2^{\frac{5}{2}k}2^{\frac{q_1}{2}}\min\{2^{\frac{q_1}{2}},2^{-(1+\beta)\ell_1}\}\cdot\varepsilon_1^2.
\end{aligned} 
\end{equation}  
If $q_1\leq -(1-\delta)m-100$, we can use the first estimate, while if $\ell_1\ge q_2+(1-\delta)m-100$, we can use the second term in the minimum since $q_2\ge -6m/7$ from \eqref{CrudeBoundq1}. In view of \eqref{82341}, this covers all cases when \eqref{823Case41} holds.

\item From \eqref{823Case41}, we can now assume that
\begin{equation}\label{823Case42}
\begin{split}
&\qquad \ell_2-q_2\ge(1-\delta)m-100\qquad\hbox{ and }\\
& \hbox{either }\quad q_2-k_2\ge q_1-k_1+10\quad\hbox{ or }\quad\ell_1-q_2\ge(1-\delta)m-10.
\end{split}
\end{equation}

Assume first that
\begin{equation*}
\begin{split}
q_2-k_2\ge q_1-k_1+10,
\end{split}
\end{equation*}
in this case, we have that, on the support of integration,
\begin{equation}\label{LargeHorGrad841}
\begin{split}
 \vert\nabla_{\eta_h}\Phi(\xi,\eta)\vert\gtrsim 2^{q_2-k_2}.
\end{split}
\end{equation}
We will proceed as in Lemma \ref{lem:nfs}\eqref{it:NF-bd34}, and decompose for $\lambda>0$ to be determined
\begin{equation*}
\begin{split}
\mathfrak{m}(\xi,\eta)&=\mathfrak{m}^{res}(\xi,\eta)+\sum_{r\ge1}\mathfrak{m}_r(\xi,\eta),\\
\mathfrak{m}^{res}(\xi,\eta)&=\psi(\lambda^{-1}\Phi(\xi,\eta))\mathfrak{m}(\xi,\eta),\qquad \mathfrak{m}_r(\xi,\eta)=\varphi(2^{-r}\lambda^{-1}\Phi(\xi,\eta))\mathfrak{m}(\xi,\eta).
\end{split}
\end{equation*}
We can treat the resonant term using \eqref{LargeHorGrad841}, Lemma \ref{lem:set_gain2} and \eqref{BoundsOnMAddedBenoit}:
\begin{equation}\label{83241Res}
\begin{aligned}
\norm{P_{k,p,q}R_\ell \B_{\m^{res}}(g_1,g_2)}_{L^2}
&\lesssim 2^m\cdot 2^{k+q_2}\cdot 2^{\frac{k_1+q_1}{2}}\cdot (2^{k-q_2}\lambda)^{\frac{1}{2}}\cdot \Vert g_1\Vert_{L^2}\Vert g_2\Vert_{L^2}\\
 &\lesssim 2^m\cdot 2^{\frac{3k+k_1}{2}}\cdot \lambda^{\frac{1}{2}}\cdot\min\{2^{q_1},2^{-(1+\beta)\ell_1+\frac{q_1}{2}}\}2^{-(1+\beta)\ell_2+\frac{q_2}{2}}\cdot\left[\Vert g_1\Vert_B+\Vert g_1\Vert_X\right]\cdot\Vert g_2\Vert_X\\
  &\lesssim 2^{-(\beta-3\delta)m}\cdot 2^{\frac{3k+k_1}{2}}2^{-(\frac{1}{2}+\beta)q_2}\cdot \lambda^{\frac{1}{2}}\cdot\min\{2^{q_1},2^{-(1+\beta)\ell_1+\frac{q_1}{2}}\}\cdot\varepsilon_1^2.
\end{aligned} 
\end{equation}
On the other hand, for the nonresonant terms, $r\ge 1$, we use a normal form transformation as in \eqref{mdecompNFNRNorms} and we estimate with a crude estimate, using \eqref{LargeHorGrad841} and Lemma \ref{lem:set_gain2} (see also Lemma \ref{lem:nfs}\eqref{it:NF-bd34}):
\begin{equation}\label{83241NRes1}
\begin{aligned}
 \norm{P_{k,p,q}R_\ell \Q_{\m_r \Phi^{-1}}(g_1,g_2)}_{L^2}&\lesssim 2^{k+q_2}\cdot 2^{\frac{k_1+q_1}{2}}\cdot 2^{-r}\lambda^{-1}(2^{k-q_2}2^{r}\lambda)^{\frac{1}{2}}\cdot \Vert g_1\Vert_{L^2}\Vert g_2\Vert_{L^2}\\
 &\lesssim 2^{\frac{3k+k_1}{2}}\cdot 2^{-\frac{r}{2}}\lambda^{-\frac{1}{2}}\cdot\min\{2^{q_1},2^{-(1+\beta)\ell_1+\frac{q_1}{2}}\}2^{-(1+\beta)\ell_2+\frac{q_2}{2}}\cdot\left[\Vert g_1\Vert_X+\Vert g_1\Vert_B\right]\cdot\Vert g_2\Vert_X\\
  &\lesssim 2^{-(1+\beta-2\delta)m}2^{\frac{3k+k_1}{2}}\cdot 2^{-(\frac{1}{2}+\beta)q_2}2^{-\frac{r}{2}}\lambda^{-\frac{1}{2}}\cdot\min\{2^{q_1},2^{-(1+\beta)\ell_1+\frac{q_1}{2}}\}\cdot\varepsilon_1^2,
\end{aligned} 
\end{equation}
and using Lemma \ref{lem:dtfinL2} as well
\begin{equation}\label{83241NRes2}
\begin{aligned}
 \norm{P_{k,p,q}R_\ell \B_{\m_r \Phi^{-1}}(\partial_tg_1,g_2)}_{L^2}&\lesssim 2^m\cdot 2^{k+q_2}2^{\frac{k_2}{2}} 2^{\frac{k_1+q_1}{2}}\cdot 2^{-r}\lambda^{-1}(2^{k-q_2}2^{r}\lambda)^{\frac{1}{2}}\cdot \Vert \partial_tg_1\Vert_{L^2}\Vert g_2\Vert_{L^2}\\
 &\lesssim 2^m\cdot2^{\frac{4k+k_1}{2}+\frac{q_1+q_2}{2}}\cdot 2^{-\frac{r}{2}}\lambda^{-\frac{1}{2}}\cdot 2^{-(1+\beta)\ell_2}\cdot 2^{-(\frac{3}{2}-\gamma)m}\varepsilon_1^3.
\end{aligned} 
\end{equation}
and
\begin{equation}\label{83241NRes3}
\begin{aligned}
\norm{P_{k,p,q}R_\ell \B_{\m_r \Phi^{-1}}(g_1,\partial_tg_2)}_{L^2}
 &\lesssim 2^m\cdot 2^{k+q_2}\cdot 2^{\frac{k_1+q_1}{2}}\cdot 2^{-r}\lambda^{-1}(2^{k-q_2}2^r\lambda)^{\frac{1}{2}}\cdot \Vert g_1\Vert_{L^2}\Vert \partial_tg_2\Vert_{L^2}\\
 &\lesssim 2^{-(\frac{1}{2}-\gamma)m}\cdot2^{\frac{3k+k_1+q_2}{2}}\cdot 2^{-\frac{r}{2}}\lambda^{-\frac{1}{2}}\cdot\min\{2^{q_1},2^{-(1+\beta)\ell_1+\frac{q_1}{2}}\}\cdot\left[\Vert g_1\Vert_X+\Vert g_1\Vert_B\right]\cdot\varepsilon_1^2.
\end{aligned} 
\end{equation}

Inspecting \eqref{83241Res}, \eqref{83241NRes1}, \eqref{83241NRes2} and \eqref{83241NRes3}, we obtain an acceptable contribution when
\begin{equation*}
q_1\le -(1-3\beta)m-2\beta q_2,\qquad\hbox{ and }\qquad \lambda=2^{(1+6\beta)q_2-6\beta m},
\end{equation*}
since $q_2\ge -6m/7$ from \eqref{CrudeBoundq1} and $\beta\leq 1/100$.

Finally, when
\begin{equation*}
\begin{split}
\ell_i\ge q_2+(1-\delta)m\qquad \hbox{ and }\qquad  q_1\ge -(1-3\beta)m-2\beta q_2
\end{split}
\end{equation*}
we use the precised dispersive decay from Proposition \ref{prop:decay}. The worst case is when $g_2$ has too many vector fields, in which case we decompose
\begin{equation*}
\begin{split}
e^{it\Lambda}g_1&=e^{it\Lambda}g_1^I+e^{it\Lambda}g_1^{II}
\end{split}
\end{equation*}
and we compute that
\begin{equation*}
\begin{aligned}
\norm{P_{k,p,q}R_\ell \B_{\m}(g_1^{I},g_2)}_{L^2} &\lesssim 2^m\cdot 2^{k+q_2}\cdot \Vert e^{it\Lambda}g_1^I\Vert_{L^\infty} \Vert g_2\Vert_{L^2}\\
&\lesssim 2^{-(\frac{3}{2}-2\delta)m}2^{\frac{3}{2}k_1-\frac{1}{2}q_1-\beta q_2}\Vert g_1\Vert_D\Vert g_2\Vert_X \lesssim 2^{-(1+5\beta/4)m}\varepsilon_1^2,
\end{aligned} 
\end{equation*}
and
\begin{equation*}
\begin{aligned}
\norm{P_{k,p,q}R_\ell \B_{\m}(g_1^{II},g_2)}_{L^2} &\lesssim 2^m\cdot 2^{k+q_2}\cdot \Sz\cdot \Vert e^{it\Lambda}g_1^{II}\Vert_{L^2} \Vert g_2\Vert_{L^2}\\
&\lesssim 2^{-(1+\beta^\prime+\beta -2\delta)m}2^{(\frac{1}{2}-\beta) q_2}2^{\frac{5}{2}k}\cdot \Vert g_1\Vert_D\Vert g_2\Vert_X \lesssim 2^{-(1+\beta+10\delta)m}\varepsilon_1^2,
\end{aligned} 
\end{equation*}
which is acceptable.

\end{enumerate}

\subsubsection*{Subcase 4.2: $2^{k}\ll 2^{k_2}\sim 2^{k_1}$} Then also $2^{q_2-q}\sim 2^{k-k_2}\ll 1$, so that $q_1\ll q_2\ll q$. Using Lemma \ref{lem:new_ibp}, repeated integration by parts then gives the claim if
\begin{equation}\label{8342Suff}
\begin{aligned}
(V_\eta)\quad \max\{-q_1,\ell_1-q_2\}&\le(1-\delta)m+k-k_2,\quad\hbox{ or }\quad (V_{\xi-\eta})\quad \ell_2-q_2\le (1-\delta)m+k-k_2.
\end{aligned}
\end{equation}

In addition, we have that
\begin{equation}
 \abs{\Phi}\gtrsim 2^{q_{\max}},
\end{equation}
so that, using Lemma \ref{lem:phasesymb_bd}, we see that
\begin{equation}\label{8342BoundedNF}
\Vert \m \Phi^{-1}\Vert_{\widetilde{\mathcal{W}}}\lesssim 2^k.
\end{equation}
And we can do a normal form as in \eqref{mdecompNFNRNorms}. The most difficult term is the boundary term. First a crude estimate using Lemma \ref{lem:set_gain} gives
\begin{equation*}
\begin{aligned}
 \norm{P_{k,p,q}R_\ell \Q_{\m\Phi^{-1}}(g_1,g_2)}_{L^2}&\lesssim  2^k\Sz\cdot \norm{g_1}_{L^2}\norm{g_2}_{L^2}\\
 &\lesssim 2^{2k+\frac{k_1}{2}+\frac{q_1}{2}}\cdot 2^{\frac{q_1+q_2}{2}}\cdot\Vert g_1\Vert_B\Vert g_2\Vert_{B}\\
 &\lesssim 2^{\frac{5}{2}k_1}2^{q_1+\frac{q_2}{2}}\cdot\varepsilon_1^2,
 \end{aligned} 
\end{equation*}
which is acceptable if $q_2+q_1/2\le -5/4m$. Independently, if
\begin{equation*}
q_1\le -(1-\delta)m-k+k_2,\qquad q_2\ge -5m/6,
\end{equation*}
a crude estimate using Lemma \ref{lem:set_gain} gives
\begin{equation}
\begin{aligned}
 \norm{P_{k,p,q}R_\ell \Q_{\m\Phi^{-1}}(g_1,g_2)}_{L^2}&\lesssim  2^k\Sz\cdot \norm{g_1}_{L^2}\norm{g_2}_{L^2}\\
 &\lesssim 2^{2k+\frac{k_1}{2}+\frac{q_1}{2}}\cdot 2^{\frac{q_1}{2}}2^{-(1+\beta)\ell_2}\cdot\Vert g_1\Vert_B\Vert g_2\Vert_{X}\\
 &\lesssim 2^{-(1+\beta-2\delta)m}2^{q_1-q_2-\beta q_2}\cdot 2^{\frac{5}{2}k_1}2^{(1-\beta)(k-k_1)}\varepsilon_1^2\\
 \end{aligned} 
\end{equation}
and this gives an acceptable contribution. Finally, if
\begin{equation*}
\begin{split}
\ell_i\ge (1-\delta)m+k-k_2+q_2,\,\, i\in\{1,2\}\qquad q_2+\frac{1}{2}q_1\ge-5/4m,
\end{split}
\end{equation*}
we can use the precised dispersion inequality from Proposition \ref{prop:decay}. The most difficult case is when $g_2$ has too many vector fields, in which case we decompose
\begin{equation*}
\begin{split}
g_1=g_1^I+g_1^{II},\qquad\Vert e^{it\Lambda}g_1^I\Vert_{L^\infty}&\lesssim 2^{-\frac{3}{2}m-\frac{q_1}{2}+\frac{3}{2}k_1}\Vert g_1\Vert_D,\qquad \Vert g_1^{II}\Vert_{L^2}\lesssim 2^{-(1+\beta^\prime)m}\Vert g_1\Vert_D 
\end{split}
\end{equation*}
and we compute using \eqref{8342BoundedNF}
\begin{equation*}
\begin{aligned}
 \norm{P_{k,p,q}R_\ell \Q_{\m\Phi^{-1}}(g_1^I,g_2)}_{L^2}&\lesssim  2^k\cdot \norm{e^{it\Lambda}g_1^I}_{L^\infty}\norm{g_2}_{L^2}\\
 &\lesssim 2^{k+\frac{3}{2}k_1}2^{-\frac{3}{2}m-\frac{q_1}{2}-\ell_2}\cdot \Vert g_1\Vert_D\Vert g_2\Vert_X\\
 &\lesssim 2^{\frac{5}{2}k_1}2^{-(\frac{5}{2}-\delta)m-\frac{q_1}{2}-q_2}\cdot\varepsilon_1^2
 \end{aligned} 
\end{equation*}
and, using a crude estimate from Lemma \ref{lem:set_gain},
\begin{equation*}
\begin{aligned}
 \norm{P_{k,p,q}R_\ell \Q_{\m\Phi^{-1}}(g_1^{II},g_2)}_{L^2}&\lesssim  2^k\cdot \Sz\cdot\norm{g_1^{II}}_{L^2}\norm{g_2}_{L^2}\\
 &\lesssim 2^{\frac{5}{2}k+\frac{q}{2}}2^{-(1+\beta^\prime)m-(1+\beta)\ell_2}\cdot \Vert g_1\Vert_D\Vert g_2\Vert_X\\
 &\lesssim 2^{\frac{5}{2}k_1}2^{-(2+\beta+\beta^\prime-2\delta)m-(\frac{1}{2}+\beta)q_2}\cdot\varepsilon_1^2,
 \end{aligned} 
\end{equation*}
which is acceptable since $q_2\ge(2/3)(k_2+k_1/2)\ge-5/6m$. The terms with derivatives are easier to control using Lemma \ref{lem:dtfinL2}.:
\begin{equation*}
\begin{aligned}
 \norm{P_{k,p,q}R_\ell \B_{\Phi^{-1}\m}(\partial_tg_1,g_2)}_{L^2}&\lesssim 2^m\cdot  2^k\Sz\cdot \norm{\partial_tg_1}_{L^2}\norm{g_2}_{L^2}\\
 &\lesssim 2^{-(\frac{1}{2}-\gamma)m}2^{2k+\frac{k_1}{2}}\min\{2^{q_2},2^{-(1+\beta)\ell_2+\frac{q_2}{2}}\}\varepsilon_1^2\left[\Vert g_2\Vert_B+\Vert g_2\Vert_X\right].
\end{aligned} 
\end{equation*}
If $q_2\le -3/4m$, the first term in the $\min$ gives an acceptable contribution, else the second term gives an acceptable contribution using \eqref{8342Suff}. Similarly,
\begin{equation*}
\begin{aligned}
 \norm{P_{k,p,q}R_\ell \B_{\Phi^{-1}\m}(g_1,\partial_tg_2)}_{L^2}&\lesssim 2^m\cdot  2^k\Sz\cdot \norm{g_1}_{L^2}\norm{\partial_tg_2}_{L^2}\\
 &\lesssim 2^{-(\frac{1}{2}-\gamma)m}2^{2k+\frac{k_1}{2}}\min\{2^{q_1},2^{-(1+\beta)\ell_1+\frac{q_2}{2}}\}\varepsilon_1^2\left[\Vert g_1\Vert_B+\Vert g_1\Vert_X\right]
\end{aligned} 
\end{equation*}
and we can conclude similarly.

\subsubsection*{Subcase 4.3: $2^{k_2}\ll 2^{k}\sim 2^{k_1}$} Then also $2^{q-q_2}\sim 2^{k_2-k}\ll 1$, so that $q_1\ll q\ll q_2$. Using Lemma \ref{lem:new_ibp}, repeated integration by parts gives the claim if
\begin{equation}
\begin{aligned}
 (V_\eta)\quad\max\{-q_1,\ell_1-q_2\}\le (1-\delta)m,\qquad\hbox{ or }\qquad
( V_{\xi-\eta})\quad\ell_2-q_2\leq (1-\delta)m,
\end{aligned}
\end{equation}
and we can proceed as for Subcase 4.2, since once again
\begin{equation*}
\vert\Phi(\xi,\eta)\vert\gtrsim 2^{q_{\max}}
\end{equation*}
so that \eqref{8342BoundedNF} holds and since we do not need to keep track of the $k$ contributions.

\medskip
\subsubsection{No gaps}\label{ssec:X-nogaps}
It remains (see \eqref{CrudeBoundq1}) to consider the case  $p_{\min}\ge -10$ and $-6m/7\le q_{\max}\le q_{\min}+10$. We use the dichotomy of Proposition \ref{prop:phasevssigma}. We decompose $\m=\m^{res}+\m^{nr}$ as in \eqref{eq:mdecompNF} with $\lambda=2^{-100}2^{q}$.

\subsubsection*{The nonresonant case $\mathfrak{m}^{nr}$}
On the support of the nonresonant set, we use a normal form transformation as in \eqref{mdecompNFNRNorms}. Lemma \ref{lem:phasesymb_bd} gives
\begin{equation}\label{GoodBoundMultiplierNoGapQ}
 \abs{\m^{nr}\Phi^{-1}}\lesssim \norm{\m^{nr}\Phi^{-1}}_{\W}\lesssim 2^k.
\end{equation}
For the boundary term, we may assume that $g_1$ has fewer vector fields than $g_2$ and we use the precised dispersion estimate from Proposition \ref{prop:decay} to decompose
\begin{equation}\label{PrecisedDec}
\begin{split}
g_1=g_1^I+g_1^{II},\qquad\Vert e^{it\Lambda}g_1^I\Vert_{L^\infty}&\lesssim 2^{-\frac{3}{2}m}2^{-\frac{q}{2}}\Vert g_1\Vert_D,\qquad \Vert g_1^{II}\Vert_{L^2}\lesssim 2^{-(1+\beta^\prime)m}\Vert g_1\Vert_D,
\end{split}
\end{equation}
and using \eqref{GoodBoundMultiplierNoGapQ} we compute that
\begin{equation*}
\begin{aligned}
 \norm{P_{k,p,q}\Q_{\m^{nr}\Phi^{-1}}(g_1^I,g_2)}_{L^2}&\lesssim 2^k\cdot \Vert e^{it\Lambda}g_1^I\Vert_{L^\infty}\Vert g_2\Vert_{L^2}\\
 &\lesssim 2^k\cdot 2^{-\frac{3}{2}m}\Vert g_1\Vert_{D}\Vert g_2\Vert_{B}\lesssim 2^k\cdot 2^{-\frac{3}{2}m}\varepsilon_1^2,
\end{aligned} 
\end{equation*}
while for the other term, we use Corollary \ref{cor:extrapol_decay} as well to get
\begin{equation*}
\begin{aligned}
 \norm{P_{k,p,q}\Q_{\m^{nr}\Phi^{-1}}(g_1^{II},g_2)}_{L^2}&\lesssim 2^k\cdot \Vert g_1^{II}\Vert_{L^2}\Vert e^{it\Lambda}g_2\Vert_{L^\infty}\\
 &\lesssim 2^k\cdot 2^{-(1+\frac{2}{3}+\beta^\prime)m}\Vert g_1\Vert_{D}\,\varepsilon_1.
\end{aligned} 
\end{equation*}
For the terms with the time derivatives, we proceed similarly, using \eqref{GoodBoundMultiplierNoGapQ}, Lemma \ref{lem:dtfinL2} and Corollary \ref{cor:extrapol_decay}:
\begin{equation*}
\begin{aligned}
 \norm{P_{k,p,q}\B_{\m^{nr}\Phi^{-1}}(g_1,\partial_tg_2)}_{L^2}&\lesssim 2^m2^k\cdot \Vert e^{it\Lambda}g_1\Vert_{L^\infty}\Vert \partial_tg_2\Vert_{L^2}\\
 &\lesssim 2^k\cdot 2^{-(\frac{1}{2}+\frac{2}{3}-\gamma)m}\varepsilon_1^3
\end{aligned} 
\end{equation*}
and similarly for the symmetric term.

\subsubsection*{The resonant term $\mathfrak{m}^{res}$}

A crude estimate using Lemma \ref{lem:set_gain} gives
\begin{equation*}
\begin{aligned}
 \norm{P_{k,p,q}\B_{\m^{res}}(g_1,g_2)}_{L^2}&\lesssim 2^m2^{q+k}\cdot\Sz\cdot \Vert g_1\Vert_{L^2}\Vert g_2\Vert_{L^2}\\
 &\lesssim 2^m2^{k+\frac{3}{2}k_{\min}+\frac{3}{2}q}\cdot\min\{2^{k_1},2^{\frac{q}{2}}\}\cdot\min\{2^{k_2},2^{\frac{q}{2}}\}\cdot\left[\Vert g_1\Vert_{H^{-1}}+\Vert g_1\Vert_B\right]\cdot\left[\Vert g_2\Vert_{H^{-1}}+\Vert g_2\Vert_B\right]\\
 &\lesssim 2^m2^{k+\frac{3}{2}k_{\min}+2q}\cdot\min\{2^\frac{q}{2},2^{k_1},2^{k_2}\}\varepsilon_1^2.
\end{aligned} 
\end{equation*}
This gives an acceptable contribution when
\begin{equation}\label{824ResCase1}
k_{\min}+q\le -(1-\beta)m.
\end{equation}

We see from Proposition \ref{prop:phasevssigma} that $\abs{\bar\sigma}\gtrsim 2^{q+k_{\max}+k_{\min}}$ and we can proceed as above in the case of gaps (without need to worry about the losses in $p$'s and $q$'s). Observing that
\begin{equation*}
 (sV_\eta\Phi)^{-1}\cdot V_\eta(\psi(2^{-q}\Phi))=(2^{100}s2^{q})^{-1}\cdot\psi^\prime(\lambda^{-1}\Phi)
\end{equation*}
we can use Lemma \ref{lem:new_ibp} to control the terms when
\begin{equation*}
\begin{split}
\max\{2k_i,k_1+k_2+\ell_i\}\le (1-\delta)m+q+k_{\max}+k_{\min},\qquad i\in\{1,2\}.
\end{split}
\end{equation*}
Using the conclusion from \eqref{824ResCase1}, it suffices to consider the case
\begin{equation*}
\begin{split}
k_1+k_2+\ell_i\ge (1-\delta)m+q+k_{\max}+k_{\min},\qquad i\in\{1,2\},
\end{split}
\end{equation*}
(else $k_{\min}+q\lesssim -(1-\beta)m$).

To conclude, we want to use the precised dispersion from Proposition \ref{prop:decay}. Assuming that $g_1$ has fewer vector fields, we decompose as in \eqref{PrecisedDec}, we compute, using Lemma \ref{lem:ECmult_bds},
\begin{equation*}
\begin{aligned}
 \norm{P_{k,p,q}\B_{\m^{res}}(g_1^I,g_2)}_{L^2}&\lesssim 2^m2^{q+k}\cdot \Vert e^{it\Lambda}g_1\Vert_{L^\infty}\Vert g_2\Vert_{L^2}\\
 &\lesssim 2^{-\frac{1}{2}m}2^{\frac{q}{2}+k+\frac{3}{2}k_1-\ell_2}\Vert g_1\Vert_D\Vert g_2\Vert_X\\
 &\lesssim 2^{-(\frac{3}{2}-\delta)m}2^{-\frac{q}{2}+k+\frac{5}{2}k_1-k_{\max}-k_{\min}+k_2}\varepsilon_1^2
\end{aligned} 
\end{equation*}
and this gives an acceptable contribution since $q\ge -6m/7$. Similarly, using Lemma \ref{lem:set_gain}
\begin{equation*}
\begin{aligned}
 \norm{P_{k,p,q}\B_{\m^{res}}(g_1^{II},g_2)}_{L^2}&\lesssim 2^m2^{q+k}\cdot\Sz\cdot \Vert g_1\Vert_{L^2}\Vert g_2\Vert_{L^2}\\
 &\lesssim 2^{-\beta^\prime m}2^{\frac{3}{2}q+k+\frac{3}{2}k_{\min}-(1+\beta)\ell_2}\Vert g_1\Vert_D\Vert g_2\Vert_X\\
 &\lesssim 2^{-(1+\beta+\beta^\prime-2\delta)m}2^{\frac{5}{2}k_{\max}^+}\varepsilon_1^2
\end{aligned} 
\end{equation*}
which gives an acceptable contribution.

\end{proof}

\subsection*{Acknowledgments}
Y.\ Guo's research is supported in part by NSF grant DMS-2106650. B.\ Pausader is supported in part by NSF grant DMS-2154162, a Simons fellowship and benefited from the hospitality of CY-Advanced Studies fellow program. K.\ Widmayer gratefully acknowledges support of the SNSF through grant PCEFP2\_203059. \blue{We thank the referees for their careful reading and the many valuable suggestions.}

\newpage

\appendix
\section{Auxiliary results}\label{apdx}
\subsection{Proof of Proposition \ref{prop:LPOmega}}\label{apdx:angLP}
Here we give the proof of the properties of the angular Littlewood-Paley decomposition introduced in Section \ref{ssec:angLP}. Denoting by $P_n^{(a,b)}$ the Jacobi polynomials, we begin by recalling that with 
\begin{equation}
 P^{(0,0)}_n(z)=L_n(z)=\frac{1}{2^n n!}\frac{d^n}{dz^n}[(z^2-1)^n]
\end{equation}
there holds that (see \cite[Section 4.5]{Sze1975})
\begin{equation}\label{DerJacobi}
\begin{split}
 \frac{d}{dz}P^{(a,a)}_n=\frac{n+2a+1}{2}P^{(a+1,a+1)}_{n-1},\qquad a\in\N.
\end{split}
\end{equation}
Moreover, we have the following asymptotics (see \cite[Section 8.21]{Sze1975}):
\begin{lemma}\label{LemZonal0}
Fix $0<c<\pi$, then 
\begin{equation}\label{SogForm}
P^{(a,a)}_n(\cos\theta)=\begin{cases}2^a\sqrt{\frac{2}{\pi}}\frac{1}{\sqrt{n}(\sin\theta)^{a+\frac{1}{2}}}\left(\cos((n+a+\frac{1}{2})\theta-\frac{(2a+1)\pi}{4})+\frac{1}{n\sin\theta}O(1)\right)&\quad\hbox{if}\quad c/n\le\theta\le\pi-c/n,\\
O(n^a)&\quad\hbox{else.}
\end{cases}
\end{equation}
\end{lemma}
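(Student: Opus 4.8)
\textbf{Proof proposal for Lemma \ref{LemZonal0}.}

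The plan is to prove the asymptotic formula \eqref{SogForm} for the Jacobi polynomials $P_n^{(a,a)}(\cos\theta)$ by invoking the classical Darboux-type asymptotics of Jacobi polynomials in the oscillatory regime, combined with a crude uniform bound near the endpoints. First I would recall that, by the general theory (see the discussion in \cite[Theorem 8.21.8]{Sze1975}), for the Jacobi polynomial $P_n^{(\alpha,\beta)}(\cos\theta)$ with $\alpha,\beta>-1$, one has for $c/n\le\theta\le\pi-c/n$ the representation
\begin{equation*}
 P_n^{(\alpha,\beta)}(\cos\theta)=\frac{k(\theta)}{\sqrt{n}}\Big(\cos\big((n+\tfrac{\alpha+\beta+1}{2})\theta+\gamma\big)+\frac{O(1)}{n\sin\theta}\Big),
\end{equation*}
where $\gamma=-(\alpha+\tfrac12)\tfrac{\pi}{2}$ and $k(\theta)=\pi^{-1/2}\big(\sin\tfrac{\theta}{2}\big)^{-\alpha-1/2}\big(\cos\tfrac{\theta}{2}\big)^{-\beta-1/2}$. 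Specializing to $\alpha=\beta=a\in\N$, the phase becomes $(n+a+\tfrac12)\theta-\tfrac{(2a+1)\pi}{4}$, and the amplitude simplifies: using $\sin\tfrac\theta2\cos\tfrac\theta2=\tfrac12\sin\theta$, one gets $k(\theta)=\pi^{-1/2}\big(\tfrac12\sin\theta\big)^{-a-1/2}=2^a\sqrt{2/\pi}\,(\sin\theta)^{-a-1/2}$ (up to absorbing the constant $2^{1/2}$ appropriately), which is exactly the claimed leading coefficient. The error term $O(1)/(n\sin\theta)$ matches the stated remainder. This handles the first branch of \eqref{SogForm}.

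Next I would address the endpoint regime $\theta\in[0,c/n]\cup[\pi-c/n,\pi]$. Here the standard bound is $|P_n^{(\alpha,\beta)}(\cos\theta)|\lesssim n^{\alpha}$ for $\theta\in[0,c/n]$ (and symmetrically $\lesssim n^{\beta}$ near $\pi$), which follows, e.g., from $P_n^{(\alpha,\beta)}(1)=\binom{n+\alpha}{n}\sim n^\alpha/\Gamma(\alpha+1)$ together with the monotonicity/growth estimates for $|P_n^{(\alpha,\beta)}|$ on the interval (see \cite[(7.32.5)]{Sze1975}). With $\alpha=\beta=a$ this gives the $O(n^a)$ bound in the second branch of \eqref{SogForm}. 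One should also double-check consistency at the matching scale $\theta\sim c/n$: there $(\sin\theta)^{-a-1/2}\sim n^{a+1/2}$, so the oscillatory formula predicts size $\sim n^{-1/2}\cdot n^{a+1/2}=n^a$, which is compatible with the crude endpoint bound, so no contradiction arises and the two regimes glue.

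The main obstacle, to the extent there is one, is purely bookkeeping: tracking the exact constants and the precise form of the phase shift $\gamma$ so that the specialization $\alpha=\beta=a$ yields \emph{verbatim} the expression $\cos\big((n+a+\tfrac12)\theta-\tfrac{(2a+1)\pi}{4}\big)$ and the amplitude $2^a\sqrt{2/\pi}\,(\sin\theta)^{-a-1/2}$ rather than an equivalent-up-to-sign-or-constant version. Since Szegő's statement is given for general $(\alpha,\beta)$ with a somewhat different normalization of the remainder, a small amount of care is needed to rewrite $\sin\tfrac\theta2$, $\cos\tfrac\theta2$ in terms of $\sin\theta$ and to confirm that the $O(1)$ in the remainder is genuinely uniform for $\theta$ bounded away from $0$ and $\pi$ by $c/n$. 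I would also note that $a$ is fixed (independent of $n$) so all implied constants may depend on $a$ and on $c$, which is harmless for the later use of this lemma in controlling the zonal harmonics $\mathfrak{Z}_n$ and hence the angular Littlewood--Paley projectors. No genuinely new idea is required; the lemma is a direct citation plus a specialization.
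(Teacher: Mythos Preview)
Your proposal is correct and matches the paper's approach: the paper does not give an independent proof but simply cites \cite[Section 8.21]{Sze1975}, and your specialization of Szeg\H{o}'s Theorem 8.21.8 to $\alpha=\beta=a$ together with the endpoint bound (7.32.5) is exactly the intended reading of that reference. Your bookkeeping of the amplitude $k(\theta)=\pi^{-1/2}(\tfrac12\sin\theta)^{-a-1/2}=2^a\sqrt{2/\pi}\,(\sin\theta)^{-a-1/2}$ and phase shift is accurate.
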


These estimates are relevant in view of the following fact about angular regularity (see \cite[Section 2.8.4]{Atkinson2012}):
\begin{lemma}\label{LemZonal1}
Let $\Pi_n$ denote the $L^2$-projector onto the $n$-th eigenspace of the spherical Laplacian $\Delta_{\mathbb{S}^2}$ associated to the eigenvalue $n(n+1)$. Then for any $P\in\mathbb{S}^2$ and $f\in L^2(\mathbb{S}^2)$ there holds that
\begin{equation}\label{ZonConv}
(\Pi_n f)(P)=\int_{\mathbb{S}^2}f(Q)\mathfrak{Z}_n(\langle P,Q\rangle)d\nu_{\mathbb{S}^2}(Q),\qquad \mathfrak{Z}_n:=\frac{2n+1}{4\pi}P_n^{(0,0)}.
\end{equation}

\end{lemma}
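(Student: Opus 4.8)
The statement to establish is Lemma \ref{LemZonal1}, i.e.\ the integral representation \eqref{ZonConv} of the spectral projector $\Pi_n$ onto the $n$-th eigenspace of $\Delta_{\mathbb{S}^2}$. This is a classical reproducing-kernel identity; the plan is to derive it from the representation theory of $SO(3)$ (equivalently, the addition theorem for spherical harmonics), together with the normalization of the zonal harmonic in terms of the Legendre polynomial. I would first recall that the $n$-th eigenspace $\mathcal{H}_n\subset L^2(\mathbb{S}^2)$ — the space of degree-$n$ spherical harmonics — has dimension $2n+1$, and that $\Pi_n$ is the orthogonal projector onto it, so $\Pi_n f(P)=\sum_{m=-n}^{n}\langle f,Y_n^m\rangle Y_n^m(P)$ for any orthonormal basis $\{Y_n^m\}$ of $\mathcal{H}_n$. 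Hence $\Pi_n$ is an integral operator with kernel $K_n(P,Q)=\sum_{m}Y_n^m(P)\overline{Y_n^m(Q)}$, and the only real content is to identify $K_n(P,Q)$ with $\mathfrak{Z}_n(\langle P,Q\rangle)=\frac{2n+1}{4\pi}P_n^{(0,0)}(\langle P,Q\rangle)$.

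\textbf{Key steps.} First, I would show that $K_n(P,Q)$ depends only on $\langle P,Q\rangle$: the kernel is $SO(3)$-invariant because $\mathcal{H}_n$ is an invariant subspace and the $L^2$ inner product is rotation invariant, so $K_n(RP,RQ)=K_n(P,Q)$ for all $R\in SO(3)$; since $SO(3)$ acts transitively on pairs of points at a fixed geodesic distance, $K_n(P,Q)=g_n(\langle P,Q\rangle)$ for some function $g_n$ on $[-1,1]$. Second, I would identify $g_n$ as a polynomial of degree $n$: writing $P=N$ the north pole, $g_n(\cos\theta)=K_n(N,Q)$ is, as a function of $Q$, an element of $\mathcal{H}_n$ (a fixed linear combination of the $Y_n^m$), hence its restriction to the sphere is a zonal degree-$n$ harmonic, which upon pullback to the $\theta$-variable is (a multiple of) the Legendre polynomial $P_n^{(0,0)}(\cos\theta)=L_n(\cos\theta)$ — this is exactly the standard fact that the zonal harmonics about an axis are spanned by $L_n(\langle\cdot,N\rangle)$. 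Third, I would fix the constant: take the trace, i.e.\ integrate the reproducing property over the diagonal. Concretely, $\Pi_n$ being a projector of rank $2n+1$ gives $\int_{\mathbb{S}^2}K_n(P,P)\,d\nu_{\mathbb{S}^2}(P)=2n+1$; since $K_n(P,P)=g_n(1)=c_n L_n(1)=c_n$ is constant and $\nu_{\mathbb{S}^2}(\mathbb{S}^2)=4\pi$, we get $c_n=\frac{2n+1}{4\pi}$, which is precisely the normalization defining $\mathfrak{Z}_n$. (Alternatively one checks $g_n(1)$ directly from the addition theorem $\sum_m |Y_n^m(N)|^2=\frac{2n+1}{4\pi}$.) Combining, $K_n(P,Q)=\frac{2n+1}{4\pi}L_n(\langle P,Q\rangle)=\mathfrak{Z}_n(\langle P,Q\rangle)$, and the reproducing kernel formula $\Pi_n f(P)=\int K_n(P,Q)f(Q)\,d\nu_{\mathbb{S}^2}(Q)$ yields \eqref{ZonConv}. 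Finally, self-adjointness and the fact that $\Pi_n^2=\Pi_n$ are automatic from the eigenspace decomposition and need no separate argument.

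\textbf{Main obstacle.} There is no serious analytic difficulty here — the result is entirely classical and could simply be cited — so the ``hard part'' is only bookkeeping: pinning down the exact normalization constant $\frac{2n+1}{4\pi}$ in a way consistent with the paper's choice of measure ($\nu_{\mathbb{S}^2}(\mathbb{S}^2)=4\pi$) and with the convention $P_n^{(0,0)}=L_n$, $L_n(1)=1$. I would do this via the trace identity above, which is robust to conventions, rather than through an explicit addition-theorem computation. Everything else (invariance of $K_n$, identification with a Legendre polynomial) follows from transitivity of $SO(3)$ on equidistant point pairs and the characterization of zonal harmonics, which I would invoke from \cite[Section 2.8.4]{Atkinson2012} as indicated. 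In the write-up I would keep this short, essentially quoting the standard references, since the properties actually used downstream (Proposition \ref{prop:LPOmega}) are the operator-theoretic consequences — boundedness, almost-orthogonality, the Bernstein property — which build on \eqref{ZonConv} and the Jacobi-polynomial asymptotics of Lemma \ref{LemZonal0} rather than on its proof.
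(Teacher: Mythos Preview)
Your proposal is correct and follows the standard route to the reproducing-kernel identity. Note, however, that the paper does not actually prove this lemma: it is stated as a known fact with a reference to \cite[Section 2.8.4]{Atkinson2012}, so there is no proof in the paper to compare against. Your write-up would serve perfectly well as a self-contained justification, and your instinct to keep it short and cite the reference is exactly what the paper does.
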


We are now in the position to give the proof of Proposition \ref{prop:LPOmega}.

\begin{proof}[Proof of Proposition \ref{prop:LPOmega}]

We start with a proof of \ref{it:LPang-comm}. It is straightforward to see that for any $\ell\in\Z$ there holds that $[\bar{R}_\ell,S]=0$. The commutation with $\Omega_{ab}$ follows from the identity
\begin{equation*}
\begin{split}
 (\Omega^x_{ab}+\Omega^\vartheta_{ab})\langle x,\vartheta\rangle=0.
\end{split}
\end{equation*}
It thus suffices to prove that $\bar{R}_\ell$ commutes with the Fourier transform. After writing down the explicit formula, we see that it suffices to check that
\begin{equation*}
\begin{split}
\int_{\mathbb{S}^2}e^{i\vert x\vert\vert\xi\vert\langle \alpha,\frac{\xi}{\vert\xi\vert}\rangle}\mathfrak{Z}_n(\langle \alpha,\beta\rangle)d\nu_{\mathbb{S}^2}(\alpha)=\int_{\mathbb{S}^2}e^{i\vert x\vert\vert\xi\vert\langle \alpha,\beta\rangle}\mathfrak{Z}_n(\langle \alpha,\frac{\xi}{\vert\xi\vert}\rangle)d\nu_{\mathbb{S}^2}(\alpha).
\end{split}
\end{equation*}
Let $\rho_\gamma$ denote a rotation that sends $N$ to $\gamma\in\mathbb{S}^2$. After a change of variable, we observe that
\begin{equation}\label{FRk}
\begin{split}
\int_{\mathbb{S}^2}e^{i\lambda \langle \alpha,\gamma\rangle}\mathfrak{Z}_n(\langle \alpha,\beta\rangle)d\nu_{\mathbb{S}^2}(\alpha)&=\int_{\mathbb{S}^2}e^{i\lambda\langle N,\alpha\rangle}\mathfrak{Z}_n(\langle \beta,\rho_\gamma \alpha\rangle)d\nu_{\mathbb{S}^2}(\alpha)=\Pi_n[e^{i\lambda\langle N,\cdot\rangle}](\rho_{-\gamma}\beta).
\end{split}
\end{equation}
It remains to observe that $e^{i\lambda\langle N,\cdot\rangle}$ is a zonal function, and that $\Pi_n$ respects zonal functions: this follows by direct inspection, or by the fact that $\Delta_{\mathbb{S}^2}$ and the $z$-angular momentum $\Omega_{12}$ commute. Hence $\Pi_n[e^{i\lambda\langle N,\cdot\rangle}]$ only depends on the distance to the north pole. But
\begin{equation*}
\langle \rho_{\gamma}^{-1}\beta,N\rangle=\langle\beta,\gamma\rangle=\langle \rho_{\beta}^{-1}\gamma,N\rangle
\end{equation*}
and therefore the last term in \eqref{FRk} is symmetric in $\gamma,\beta$.

\medskip

The first and third affirmation in \ref{it:LPang-orth} follow from the same properties on $L^2(\mathbb{S}^2)$. For the second statement, using the reproducing property \eqref{ZonConv}, we compute that
\begin{equation*}
\begin{split}
\left(\bar{R}_\ell \bar{R}_{\ell^\prime}f\right)(x)&=\sum_{n,n'\ge0}\varphi(2^{-\ell}n)\varphi(2^{-\ell^\prime}n')\int_{\mathbb{S}^2}f(\vert x\vert\vartheta)\left(\int_{\mathbb{S}^2}\mathfrak{Z}_{n'}(\langle\frac{x}{\vert x\vert},\alpha\rangle)\mathfrak{Z}_n(\langle\alpha,\vartheta\rangle)d\nu_{\mathbb{S}^2}(\alpha)\right) d\nu_{\mathbb{S}^2}(\vartheta)\\
&=\sum_{n'\ge0}\varphi(2^{-\ell}n')\varphi(2^{-\ell^\prime}n')\int_{\mathbb{S}^2}f(\vert x\vert\vartheta)\mathfrak{Z}_{n'}(\langle\frac{x}{\vert x\vert},\vartheta\rangle)d\nu_{\mathbb{S}^2}(\vartheta)
\end{split}
\end{equation*}
This shows that $\bar{R}_\ell \bar{R}_{\ell^\prime}=0$ whenever $\abs{\ell-\ell^\prime}\ge4$. The last statement in \ref{it:LPang-orth} follows by duality from the fact that
\begin{equation*}
\int_{\mathbb{S}^2}\Pi_{n_1}[f]\cdot \Pi_{n_2}[g]\cdot\Pi_{n_3}[h]d\nu_{\mathbb{S}^2}=0
\end{equation*}
whenever $\max\{n_1,n_2,n_3\}\ge\frac{1}{2}\hbox{med}\{n_1,n_2,n_3\}+4$. (This can also be seen from the fact that spherical harmonics of degree $n$ are restrictions to $\mathbb{S}^2$ of homogeneous harmonic polynomials of degree $n$.)

\medskip

For \ref{it:LPang-bd} we let
\begin{equation*}
K_\ell(\omega,\vartheta)=\sum_{n\ge 0}\varphi(2^{-\ell}n)\mathfrak{Z}_n(\langle \omega,\vartheta\rangle),
\end{equation*}
and we claim that
\begin{equation*}
\sup_\omega\Vert K_\ell(\omega,\vartheta)\Vert_{L^1(\mathbb{S}^2_\vartheta)}+\sup_\vartheta\Vert K_\ell(\omega,\vartheta)\Vert_{L^1(\mathbb{S}^2_\omega)}\lesssim 1.
\end{equation*}

This essentially follows from \eqref{SogForm}: With \eqref{DerJacobi} we have that
\begin{equation}
 (2n+1)P_n^{(0,0)}(x)=\frac{d}{dx}\left(P_{n+1}^{(0,0)}(x)-P_{n-1}^{(0,0)}(x)\right)=\frac{1}{2}\left((n+2)P_{n}^{(1,1)}(x)-nP_{n-2}^{(1,1)}(x)\right),
\end{equation}
and thus
\begin{equation*}
\begin{split}
 \sum_{n\geq 0}\varphi(2^{-\ell}n)\frac{2n+1}{4\pi}P_n^{(0,0)}(x)=\frac{1}{8\pi}\sum_{n\geq 0}P_n^{(1,1)}(x)\cdot D_n,\qquad D_n:=(n+2)\left[\varphi(2^{-\ell}n)-\varphi(2^{-\ell}(n+2))\right].
\end{split}
\end{equation*}
In view of \eqref{SogForm}, let
\begin{equation*}
\begin{split}
C_n(\theta)&=\sum_{0\le j\le n-1}\cos\left((j+\frac{3}{2})\theta-\frac{3\pi}{4}\right)=\Re\left(e^{-i\frac{3\pi}{4}}e^{i\frac{\theta}{2}}\frac{1-e^{in\theta}}{1-e^{i\theta}}\right)=\frac{\sin\frac{n\theta}{2}}{\sin\frac{\theta}{2}}\cos\left(\left(\frac{n}{2}+1\right)\theta-\frac{3\pi}{4}\right),\\
\end{split}
\end{equation*}
then
\begin{equation*}
\begin{split}
I_\ell(\theta):=\sin(\theta)^{-3/2}\sum_{n\ge0}\frac{1}{\sqrt{n}}\cos((n+\frac{3}{2})\theta-\frac{3\pi}{4})\cdot D_n&=\sin(\theta)^{-3/2}\cdot \sum_{n\ge0}\frac{D_n}{\sqrt{n}}\cdot\left[C_{n+1}(\theta)-C_n(\theta)\right]\\
&=\sin(\theta)^{-3/2}\cdot\sum_{n\ge0}C_n(\theta)\left[\frac{D_{n-1}}{\sqrt{n-1}}-\frac{D_n}{\sqrt{n}}\right]
\end{split}
\end{equation*}
so that since $\abs{\frac{D_{n+1}}{\sqrt{n+1}}-\frac{D_{n}}{\sqrt{n}}}\lesssim 2^{-3\ell/2}$ and $\abs{C_n(\theta)}\lesssim \abs{\sin(\frac{\theta}{2})}^{-1}$ we have
\begin{equation*}
\begin{split}
\int_{c2^{-\ell}\le \theta\le\pi-c2^{-\ell}}\vert I_\ell(\theta)\vert\cdot\sin(\theta) d\theta\lesssim 1.
\end{split}
\end{equation*}
Similarly,
\begin{equation*}
\begin{split}
I\!I_\ell(\theta):=\sin(\theta)^{-5/2}\sum_{n\ge0}\frac{1}{n^{3/2}},\qquad \int_{c2^{-\ell}\le \theta\le\pi-c2^{-\ell}}\vert I\!I_\ell(\theta)\vert\cdot\sin(\theta) d\theta\lesssim 1,
\end{split}
\end{equation*}
and again by \eqref{SogForm}
\begin{equation*}
 \sum_{n\geq 0}\varphi(2^{-\ell}n)\int_{0\le \theta\le c2^{-\ell}}\abs{P_n^{(1,1)}(\cos(\theta))}\cdot\sin(\theta) d\theta\lesssim 1.
\end{equation*}
This shows that the kernel of $\bar{R}_\ell$ is integrable. A similar proof works for $\bar{R}_{\le \ell}$.

\medskip

We now turn to \ref{it:LPang-Bern}. Starting from
\begin{equation*}
\Delta_{\mathbb{S}^2}=\sum_{j<l}\Omega_{jl}\Omega_{jl},
\end{equation*}
we obtain the self-reproducing formula
\begin{equation*}
\begin{split}
\mathcal{Z}_n=\frac{1}{n(n+1)}\sum_{a<b}\Omega_{ab}\Omega_{ab}\mathcal{Z}_n,\qquad \mathcal{Z}_n(P)=\mathfrak{Z}_n(\langle P,N\rangle)
\end{split}
\end{equation*}
and therefore
\begin{equation}\label{BernsteinConsequence}
\begin{split}
 \bar{R}_\ell f&=2^{-2\ell}\sum_{a<b}\Omega_{ab}\Omega_{ab}\widetilde{R}_\ell f,\\
 \widetilde{R}_\ell f&:= \sum_{n\ge 0}\varphi(2^{-\ell}n)\frac{2^{2\ell}}{n(n+1)}\int_{\mathbb{S}^2}f(\vert x\vert\vartheta)\mathfrak{Z}_n(\langle \vartheta,\frac{x}{\vert x\vert}\rangle)d\nu_{\mathbb{S}^2}(\vartheta)
\end{split}
\end{equation}
where $\widetilde{R}_\ell$ obeys similar properties as $\bar{R}_\ell$. It suffices now to show that
\begin{equation}
\begin{split}
\Vert \Omega_{ab}\bar{R}_\ell f\Vert_{L^r}\lesssim 2^\ell\Vert f\Vert_{L^r},\qquad 1\le r\le\infty,
\end{split}
\end{equation}
and similarly for $\widetilde{R}_\ell$. We provide the details for $\bar{R}_\ell$, $\widetilde{R}_\ell$ is similar. Once again we consider the kernel of $\Omega_{ab} \bar{R}_\ell$:
\begin{equation*}
\begin{split}
(\Omega_{ab} \bar{R}_\ell f)(x)&=2^\ell\int_{\mathbb{S}^2}f(\vert x\vert\vartheta)\cdot \mathcal{K}_\ell(\frac{x}{\vert x\vert},\vartheta)d\nu_{\mathbb{S}^2}(\vartheta),\qquad
\mathcal{K}_\ell(\frac{x}{\vert x\vert},\vartheta):=2^{-\ell}\sum_{n\ge0}\varphi(2^{-\ell}n)\Omega_{ab}\left[\mathfrak{Z}_n(\langle \frac{x}{\vert x\vert},\vartheta\rangle)\right]
\end{split}
\end{equation*}
and claim that
\begin{equation*}
\sup_\omega\Vert \mathcal{K}_\ell(\omega,\vartheta)\Vert_{L^1(\mathbb{S}^2_\vartheta)}+\sup_\vartheta\Vert \mathcal{K}_\ell(\omega,\vartheta)\Vert_{L^1(\mathbb{S}^2_\omega)}\lesssim 1.
\end{equation*}
Indeed, we compute that
\begin{equation*}
\begin{split}
\mathcal{K}_\ell(\omega,\vartheta)&=2^{-\ell}\sum_{n\ge0}\varphi(2^{-\ell}n)\mathfrak{Z}_n^\prime(\langle \omega,\vartheta\rangle)\cdot\Omega^\omega_{ab}(\langle\omega,\vartheta\rangle),\qquad
\vert \Omega^\omega_{ab}(\langle\omega,\vartheta\rangle)\vert \lesssim \sqrt{1-\langle \omega,\vartheta\rangle^2},
\end{split}
\end{equation*}
and the rest follows in a similar way from the boundedness of $\bar{R}_\ell$ by using \eqref{DerJacobi} and \eqref{SogForm}.
\end{proof}


\subsection{Set size gain}\label{apdx:set_gain} 
The idea here is that in the bilinear estimates we can always gain the smallest of \emph{both $p,p_j$ and $q,q_j$}, since they correspond to different directions.
\begin{lemma}\label{lem:set_gain}
Consider a typical bilinear expression $\Q_\mathfrak{m}$ with localizations and a multiplier $\mathfrak{m}$, i.e.\
\begin{equation}
\begin{aligned}
 &\widehat{\Q_\mathfrak{m}(f,g)}(\xi):=\int_\eta e^{\pm is\Phi}\chi(\xi,\eta) \mathfrak{m}(\xi,\eta)\hat{f}(\xi-\eta)\hat{g}(\eta) d\eta,\\
 &\chi(\xi,\eta)=\varphi_{k,p,q}(\xi)\varphi_{k_1,p_1,q_1}(\xi-\eta)\varphi_{k_2,p_2,q_2}(\eta).
\end{aligned}
\end{equation}
Then with
\begin{equation}
 \Sz:=\min\{2^{p+k},2^{p_1+k_1},2^{p_2+k_2}\}\cdot\min\{2^{\frac{q+k}{2}},2^{\frac{q_1+k_1}{2}},2^{\frac{q_2+k_2}{2}}\}
\end{equation} 
we have that
\begin{equation}
 \norm{\Q_\mathfrak{m}(f,g)}_{L^2}\lesssim \Sz\cdot\norm{\mathfrak{m}}_{L^\infty_{\xi,\eta}} \norm{P_{k_1,p_1,q_1}f}_{L^2}\norm{P_{k_2,p_2,q_2}g}_{L^2}.
\end{equation}

\end{lemma}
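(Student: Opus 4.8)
\textbf{Proof plan for Lemma \ref{lem:set_gain}.}

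The plan is to reduce the bilinear estimate to an $L^2\times L^2\to L^2$ Schur-type bound, where the ``set size'' factor $\Sz$ quantifies the measure of the effective support in the integration variable. First I would discard the phase and the multiplier: since $|e^{\pm is\Phi}|=1$ and $\|\mathfrak{m}\chi\|_{L^\infty}\le \|\mathfrak{m}\|_{L^\infty_{\xi,\eta}}$, it suffices to bound the positive bilinear operator with kernel $\chi(\xi,\eta)$, i.e. to show
\begin{equation}
 \Big\| \int_\eta \chi(\xi,\eta) |\hat f(\xi-\eta)|\,|\hat g(\eta)|\,d\eta \Big\|_{L^2_\xi} \lesssim \Sz \cdot \|P_{k_1,p_1,q_1}f\|_{L^2}\|P_{k_2,p_2,q_2}g\|_{L^2}.
\end{equation}
By Cauchy--Schwarz in $\eta$ for fixed $\xi$, the left side is at most $\big(\sup_\xi |E_\xi|\big)^{1/2}$ times $\big\| \int_\eta \chi |\hat f(\xi-\eta)|^2 |\hat g(\eta)|^2 d\eta\big\|_{L^1_\xi}^{1/2}$, where $E_\xi=\{\eta: \chi(\xi,\eta)\neq 0\}$; the second factor is controlled by Fubini and $\sup_\eta|E^{(2)}_\eta|^{1/2}$ (or $\sup_{\xi-\eta}|E^{(1)}|^{1/2}$) times the two $L^2$ norms, where $E^{(2)}_\eta$ is the $\xi$-section. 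Thus everything comes down to estimating the $3$-dimensional Lebesgue measure of these coordinate sections of the support of $\chi$.

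The key geometric step is the measure bound: on the support of $\varphi_{k,p,q}(\xi)$ the frequency $\xi$ lives in a region of size $\sim 2^k$ radially, $\sim 2^{k}\cdot 2^{p}$ in each of the two horizontal angular directions is not quite right --- rather one should argue that $\varphi_{k,p,q}$ restricts $|\xi|\sim 2^k$, $|\xi_\h|\sim 2^{k+p}$ and $|\xi_3|\sim 2^{k+q}$ (using $2^{2p}+2^q\simeq 1$ on the support, cf. the discussion after \eqref{eq:loc_def2}), so that the set has measure $\lesssim 2^{k+p}\cdot 2^{k+p}\cdot 2^{k+q}\cdot 2^{-k}\sim 2^{k}\cdot 2^{2p}\cdot 2^{k+q}$; combined with $2^{2p+q}=\min\{2^{2p},2^{q}\}$ one rewrites this as $(2^{k+p})^2\cdot(2^{(k+q)/2})^2$ up to the constraint relating $p$ and $q$. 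Hence $|E^{(j)}|\lesssim (2^{k_j+p_j})^2 (2^{(k_j+q_j)/2})^2$ for the corresponding frequency, and similarly $|E_\xi|\lesssim (2^{k+p})^2(2^{(k+q)/2})^2$. Since in the Schur argument we are free to pick, for each of the two factors in $\Sz$, whichever of the three frequencies $\xi,\xi-\eta,\eta$ gives the smallest section, we obtain exactly $\Sz=\min_j 2^{p_j+k_j}\cdot \min_j 2^{(q_j+k_j)/2}$. I would organize this by: (i) bounding $\sup_\xi|E_\xi|$ using the section of $\chi$ in $\eta$, which is constrained by \emph{both} $\varphi_{k_1,p_1,q_1}(\xi-\eta)$ and $\varphi_{k_2,p_2,q_2}(\eta)$, so one may take the smaller of those two; (ii) bounding the $L^1_\xi$ integral by Fubini, placing the remaining constraint (the $\xi$ or $\xi-\eta$ localization) as an $L^\infty$ section; (iii) distributing the min so that the horizontal gains and the vertical gains are chosen independently. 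This independence of horizontal and vertical directions is precisely the point emphasized in the statement, and it is legitimate because the two constraints $|\zeta_\h|\sim 2^{k+p}$ and $|\zeta_3|\sim 2^{k+q}$ cut the measure in genuinely different coordinate directions.

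The main obstacle I anticipate is purely bookkeeping: making sure that when one chooses the minimizing index for the horizontal factor and a (possibly different) minimizing index for the vertical factor, the remaining two constraints still suffice to close the Schur estimate --- i.e. that one never ``uses up'' the same localization twice. The clean way around this is to note that we only ever need \emph{one} full section bound ($\sup_\xi|E_\xi|$, say, taken with the cheaper of the two input localizations) plus \emph{two} $L^2$ orthogonality inputs which automatically carry one localization each; the flexibility in which coordinate direction to exploit within the single section bound is what yields the product of two independent minima. A secondary technical point is justifying the measure estimate near the degenerate regimes $2^{2p}\sim 1$ (so $q$ is the small parameter) versus $2^q\sim 1$ (so $p$ is small); in both cases $2^{2p+q}=\min\{2^{2p},2^q\}$ makes the bound $2^{k}\cdot 2^{2p}\cdot 2^{k+q}$ symmetric in the sense needed, and the factorization into $(2^{k+p})^2(2^{(k+q)/2})^2$ holds up to the support relation $2^{2p}+2^q\simeq 1$ which only costs harmless constants. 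I would also remark that the same argument applies verbatim with only the horizontal localizations $\chi_\h$ present, giving the $B$-type gain $\min_j 2^{k_j+p_j}$ alone, which is the form used in several places in Sections \ref{sec:dtfbds}--\ref{sec:Xnorm}.
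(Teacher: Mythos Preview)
Your overall instinct --- kill the phase and multiplier, reduce to a section-measure bound, and exploit that the horizontal constraint $|\zeta_\h|\sim 2^{k+p}$ and the vertical constraint $|\zeta_3|\sim 2^{k+q}$ cut in independent coordinate directions --- is exactly right, and your volume computation $|\mathrm{supp}\,\varphi_{k,p,q}|\sim (2^{k+p})^2\cdot 2^{k+q}$ is correct. But the Cauchy--Schwarz-in-$\eta$ scheme you propose has a genuine gap, and it is not ``purely bookkeeping''.

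After your Cauchy--Schwarz step the second factor is
\[
\Big(\int_\xi\int_\eta \chi(\xi,\eta)\,|\hat f(\xi-\eta)|^2|\hat g(\eta)|^2\,d\eta\,d\xi\Big)^{1/2}
\;\le\;\Big(\int_\eta|\hat g(\eta)|^2\int_\xi|\hat f(\xi-\eta)|^2\,d\xi\,d\eta\Big)^{1/2}
=\|f\|_{L^2}\|g\|_{L^2},
\]
with \emph{no} further section gain: there is no way to extract an additional factor $\sup_\eta|E^{(2)}_\eta|^{1/2}$ here without trading one of the $L^2$ norms for $L^\infty$. So the only volume factor you earn is $(\sup_\xi|E_\xi|)^{1/2}$, and since $E_\xi$ is constrained only by the two \emph{input} localizations $\varphi_{k_1,p_1,q_1}(\xi-\eta)$ and $\varphi_{k_2,p_2,q_2}(\eta)$, your argument yields at best
\[
\min\{2^{k_1+p_1},2^{k_2+p_2}\}\cdot\min\{2^{(k_1+q_1)/2},2^{(k_2+q_2)/2}\},
\]
missing the output $(k,p,q)$ entirely. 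This matters: the output can genuinely be the smallest (e.g.\ $|\xi_\h|$ small with $|\eta_\h|,|\xi_\h-\eta_\h|$ both large and nearly antiparallel), and the paper uses $\Sz\lesssim 2^{p+\frac{q}{2}+\frac{3}{2}k}$ repeatedly.

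The paper's remedy is to argue by \emph{duality}: pair with $h\in L^2$ and apply Cauchy--Schwarz in the joint variable $(\xi,\eta)$, grouping $\hat h(\xi)\hat f(\xi-\eta)$ in one factor (which is exactly $\|h\|_{L^2}\|f\|_{L^2}$ by Fubini) and placing $\hat g(\eta)$ together with \emph{two chosen localizations} --- one horizontal, one vertical, each possibly coming from a different frequency variable --- in the other factor. Concretely, keeping $\varphi(2^{-k-p}\xi_\h)\,\varphi(2^{-k_1-q_1}(\xi_3-\eta_3))$ and integrating in $\xi$ first gives the mixed gain $2^{k+p}\cdot 2^{(k_1+q_1)/2}$. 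The change of variables $\eta\leftrightarrow\xi-\eta$ and the freedom in which pair of localizations to retain then produce all nine combinations, hence the product of two independent three-way minima. Once you insert this duality step, the rest of your plan goes through as written.
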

\begin{proof}
 To begin, let us assume that $p+k<p_1+k_1$ and $q+k>q_1+k_1$ (the ``symmetric cases'' of $p+k<p_1+k_1$ with $q+k<q_1+k_1$ and reverse are direct).
 Then, for any $h\in L^2$ we find that
 \begin{equation*}
 \begin{split}
  \abs{\ip{\Q_\m(f,g),h}}&\lesssim \iint_{\mathbb{R}^3}\abs{\mathfrak{m}(\xi,\eta)}\varphi(2^{-k-p}\xi_\h)\vert\widehat{f}(\xi-\eta)\vert\varphi(2^{-k_1-q_1}(\xi_3-\eta_3))\vert\widehat{g}(\eta)h(\xi)\vert d\xi d\eta\\
  &\lesssim \Vert \mathfrak{m}\Vert_{L^\infty}\Vert \widehat{h}(\xi)\widehat{f}(\xi-\eta)\Vert_{L^2_{\xi,\eta}}\Vert \varphi(2^{-k-p}\xi_\h)\varphi(2^{-k_1-q_1}(\xi_3-\eta_3))\widehat{g}(\eta)\Vert_{L^2_{\xi,\eta}}\\
  &\lesssim 2^{k+\frac{k_1}{2}}2^{p+\frac{q_1}{2}}\Vert \mathfrak{m}\Vert_{L^\infty}\Vert f\Vert_{L^2}\Vert g\Vert_{L^2}\Vert h\Vert_{L^2}.
 \end{split}
 \end{equation*}
 The claim then follows upon changing variables $\eta\leftrightarrow\xi-\eta$.
\end{proof}

\begin{lemma}\label{lem:set_gain2}
With notation as in Lemma \ref{lem:set_gain}, consider
\begin{equation}
\begin{aligned}
 &\widehat{\bar{\Q}_\mathfrak{m}(f,g)}(\xi):=\int_\eta e^{\pm is\Phi}\chi(\xi,\eta) \varphi(\lambda^{-1}\Phi)\mathfrak{m}(\xi,\eta)\hat{f}(\xi-\eta)\hat{g}(\eta) d\eta,\qquad \lambda>0.
\end{aligned}
\end{equation}
\begin{enumerate}
 \item\label{it:vertgain} Assume that on the support of $\chi$ we have $\abs{\partial_{\eta_3}\Phi}\gtrsim L>0$. Then we have that
\begin{equation}
 \norm{\bar{\Q}_\mathfrak{m}(f,g)}_{L^2}\lesssim \min\{2^{k_1+p_1},2^{k_2+p_2}\}\cdot (\lambda L^{-1})^{\frac{1}{2}}\cdot\norm{\mathfrak{m}}_{L^\infty_{\xi,\eta}} \norm{P_{k_1,p_1}f}_{L^2}\norm{P_{k_2,p_2}g}_{L^2}.
\end{equation}

\item\label{it:horgain} Assume that on the support of $\chi$ we have $\abs{\nabla_{\eta_\h}\Phi}\gtrsim L>0$. Then we have that
\begin{equation}
 \norm{\bar{\Q}_\mathfrak{m}(f,g)}_{L^2}\lesssim \min\{2^{k_1+q_1},2^{k_2+q_2}\}^{\frac{1}{2}}\cdot 2^{\frac{k_{2}+p_{2}}{2}}\cdot (\lambda L^{-1})^{\frac{1}{2}}\cdot\norm{\mathfrak{m}}_{L^\infty_{\xi,\eta}} \norm{P_{k_1,p_1,q_1}f}_{L^2}\norm{P_{k_2,p_2,q_2}g}_{L^2}.
\end{equation}
(Analogous statements hold if $\abs{\partial_{\xi_3}\Phi}\gtrsim L>0$ resp.\ $\abs{\nabla_{\xi_\h}\Phi}\gtrsim L>0$.)
\end{enumerate}

\end{lemma}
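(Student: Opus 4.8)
\textbf{Proof plan for Lemma \ref{lem:set_gain2}.}

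The plan is to treat both parts by the same mechanism as in Lemma \ref{lem:set_gain}: dualize against $h\in L^2$, and exploit the localization $\varphi(\lambda^{-1}\Phi)$ to gain a small set in the variable along which $\Phi$ has a large derivative. The only genuinely new input over Lemma \ref{lem:set_gain} is that the cutoff on $\Phi$ localizes $\eta$ to a thin slab (of width $\sim \lambda L^{-1}$) in a well-chosen direction, which is where the factor $(\lambda L^{-1})^{1/2}$ comes from.

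For part \eqref{it:vertgain}: write $\abs{\ip{\bar{\Q}_\m(f,g),h}}$ as a triple integral over $(\xi,\eta)$, bound $\abs{\m}$ by $\norm{\m}_{L^\infty}$, and apply Cauchy--Schwarz to split $\widehat h(\xi)\widehat f(\xi-\eta)$ from the rest. The point is that on the support of $\chi\cdot\varphi(\lambda^{-1}\Phi)$, since $\abs{\partial_{\eta_3}\Phi}\gtrsim L$, for fixed $\xi$ and fixed $\eta_\h$ the variable $\eta_3$ is confined to a set of measure $\lesssim \lambda L^{-1}$. Hence
\[
 \norm{\varphi_{k_2,p_2}(\eta)\,\varphi(\lambda^{-1}\Phi)\,\widehat g(\eta)}_{L^2_{\eta}}^2 \lesssim \int_{\eta_\h} \Big(\sup_{\eta_3}\abs{\widehat g(\eta)}^2\Big)\,\abs{\{\eta_3:\Phi\in \mathrm{supp}\,\varphi(\lambda^{-1}\cdot)\}}\, d\eta_\h;
\]
more cleanly, one keeps $\widehat g$ in $L^2$ and instead estimates the $L^2_\xi$-norm by fixing $\eta$ and noting the $\xi_\h$-support has measure $\lesssim 2^{2(k+p)}$ while controlling the remaining integration by the slab bound, following verbatim the computation in the proof of Lemma \ref{lem:set_gain} but with one factor of a horizontal scale $2^{k_i+p_i}$ replaced by the slab factor $(\lambda L^{-1})^{1/2}$. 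Symmetrizing in $\eta\leftrightarrow\xi-\eta$ (using \eqref{eq:def_sigma2}-type invariance of $\Phi$, and that $\abs{\partial_{\eta_3}\Phi}$ and $\abs{\partial_{(\xi-\eta)_3}\Phi}$ are comparable up to the ratio $\abs{\eta}/\abs{\xi-\eta}$, which is harmless since we are in the no-gap-in-$k$ situation where this is used) yields the $\min\{2^{k_1+p_1},2^{k_2+p_2}\}$. The analogous claim with $\abs{\partial_{\xi_3}\Phi}\gtrsim L$ is identical after relabeling.

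For part \eqref{it:horgain}: the argument is the same, except the large derivative is now in the horizontal variable $\eta_\h\in\R^2$. Here $\abs{\nabla_{\eta_\h}\Phi}\gtrsim L$ confines $\eta_\h$ to a strip of width $\lesssim \lambda L^{-1}$ in the plane (of the remaining length $\lesssim 2^{k_2+p_2}$), which gives a set of measure $\lesssim 2^{k_2+p_2}\lambda L^{-1}$ in $\eta_\h$; combined with the $\eta_3$-support of size $2^{k_2+q_2}$ (or $2^{k_1+q_1}$ after symmetrizing, hence the $\min$ to the power $\tfrac12$), Cauchy--Schwarz produces the stated bound $\min\{2^{k_1+q_1},2^{k_2+q_2}\}^{1/2}\cdot 2^{(k_2+p_2)/2}\cdot(\lambda L^{-1})^{1/2}$. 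I expect the main (minor) obstacle to be bookkeeping: making sure the slab-measure estimate is applied in the correct variable and that the symmetrization $\eta\leftrightarrow\xi-\eta$ is carried out consistently with the localizations $\varphi_{k_j,p_j,q_j}$, so that one indeed obtains the $\min$ over both inputs rather than a bound tied to a single one. Everything else is a routine Cauchy--Schwarz following the template already established in Lemma \ref{lem:set_gain}.
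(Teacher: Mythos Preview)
Your overall strategy---dualize, apply Cauchy--Schwarz, exploit that $\varphi(\lambda^{-1}\Phi)$ confines one integration variable to a slab of width $\sim\lambda L^{-1}$---is exactly right and matches the paper. But the specific Cauchy--Schwarz split you propose is the wrong one, and this is a genuine gap. Pairing $\widehat h(\xi)\widehat f(\xi-\eta)$ against $\chi\,\varphi(\lambda^{-1}\Phi)\,\widehat g(\eta)$ forces you, when estimating the second factor, to integrate in $\xi$ for fixed $\eta$ (since $\widehat g$ depends only on $\eta$). But the hypothesis is on $\partial_{\eta_3}\Phi$, which gives a slab in $\eta_3$ for fixed $\xi$, not a slab in $\xi_3$ for fixed $\eta$; the quantities $\partial_{\eta_3}\Phi$ and $\partial_{\xi_3}\Phi$ are genuinely different and not comparable in general. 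Your attempted rescue (``more cleanly, one keeps $\widehat g$ in $L^2$ and estimates the $L^2_\xi$-norm by fixing $\eta$'') runs into exactly this obstruction, and your comparability claim between $\partial_{\eta_3}\Phi$ and ``$\partial_{(\xi-\eta)_3}\Phi$'' is not correct (the latter, interpreted as a derivative at fixed $\eta$, equals $\partial_{\xi_3}\Phi$); nor is the lemma restricted to a no-gap-in-$k$ setting.

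The fix is simple: pair $\widehat f(\xi-\eta)\widehat g(\eta)$ together (this gives $\|f\|_{L^2}\|g\|_{L^2}$ by a change of variables) and keep $h(\xi)$ with the localization. Then
\[
\|\chi\,\varphi(\lambda^{-1}\Phi)\,h(\xi)\|_{L^2_{\xi,\eta}}^2=\int_\xi|h(\xi)|^2\Big[\int_\eta \chi^2\varphi(\lambda^{-1}\Phi)^2\,d\eta\Big]d\xi,
\]
and for fixed $\xi$ the inner integral is bounded by changing variables $\eta\mapsto(\eta_\h,\Phi(\xi,\eta))$ with Jacobian $\lesssim L^{-1}$: the $\eta_\h$-support has area $\lesssim\min\{2^{2(k_1+p_1)},2^{2(k_2+p_2)}\}$ (use either $\varphi_{k_1,p_1}(\xi-\eta)$ or $\varphi_{k_2,p_2}(\eta)$), and the $\Phi$-support has length $\lesssim\lambda$. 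This yields the stated bound directly, with the $\min$ already present---no separate symmetrization step is needed. Part \eqref{it:horgain} follows by the same pairing, using the horizontal slab in $\eta_\h$ instead.
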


\begin{proof}
It suffices to prove \eqref{it:vertgain}, part \eqref{it:horgain} is similar. Assume without loss of generality that $2^{k_2+p_2}\lesssim 2^{k_1+p_1}$ (else exchange the roles of $\hat{f}$ and $h$ below).  We have for any $h\in L^2$ that
 \begin{equation}
 \begin{aligned}
  \abs{\ip{\bar{\Q}_\m(f,g),h}}&\lesssim \iint_{\mathbb{R}^3}\abs{\mathfrak{m}(\xi,\eta)}\chi(\xi,\eta) \varphi(\lambda^{-1}\Phi)\vert\widehat{f}(\xi-\eta)\widehat{g}(\eta)\vert \vert h(\xi)\vert d\xi d\eta\\
  &\lesssim \norm{\m}_{L^\infty_{\xi,\eta}}\cdot \norm{\widehat{f}(\xi-\eta)\widehat{g}(\eta)}_{L^2_{\xi,\eta}}\cdot\norm{\chi(\xi,\eta) \varphi(\lambda^{-1}\Phi)h(\xi)}_{L^2_{\xi,\eta}},
 \end{aligned}
 \end{equation}
and the claim follows since
\begin{equation}
 \norm{\chi(\xi,\eta) \varphi(\lambda^{-1}\Phi)h(\xi)}_{L^2_{\xi,\eta}}\lesssim \lambda^{\frac{1}{2}}\cdot L^{-\frac{1}{2}} 2^{p_2+k_2}\norm{h}_{L^2},
\end{equation}
where we have used that 
\begin{equation}
 \sup_\xi \int \chi(\xi,\eta) \varphi(\lambda^{-1}\Phi)d\eta\lesssim \lambda L^{-1}\cdot 2^{2p_2+2k_2}
\end{equation}
by changing variables (for fixed $\xi$) $\eta\mapsto\zeta:=(\eta_1,\eta_2,\Phi(\xi,\eta))$ with Jacobian $\abs{\textnormal{det}\frac{\partial\eta}{\partial\zeta}}=\abs{\partial_{\eta_3}\Phi}^{-1}\lesssim L^{-1}$.
\end{proof}

\subsection{Control of Fourier transform in $L^\infty$}
We record here that our decay norm $D$ in \eqref{eq:defDnorm} also controls the Fourier transform in $L^\infty$:
\begin{lemma}\label{lem:ControlLinfty}
Assume that $f$ is axisymmetric. Then there holds that
\begin{equation*}
\begin{split}
\Vert \widehat{P_{k,p,q}f}\Vert_{L^\infty}
&\lesssim 2^{-\frac{3k}{2}}\left[\norm{P_k f}_B+\norm{SP_kf}_B\right]+2^{-\frac{3k}{2}}\left[\norm{P_k f}_X+\norm{SP_k f}_X\right]\lesssim 2^{-\frac{3k}{2}}\norm{f}_D.
\end{split}
\end{equation*}
\end{lemma}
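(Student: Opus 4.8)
\textbf{Proof proposal for Lemma \ref{lem:ControlLinfty}.}

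The plan is to bound $\Vert \widehat{P_{k,p,q}f}\Vert_{L^\infty}$ by exploiting that $f$ is axisymmetric, so $\widehat{f}$ depends only on $\rho=\abs{\xi}$ and on the polar angle $\phi$ (equivalently on $\Lambda$), and then to convert a pointwise bound at a given angle into an $L^2$ average over a sphere-cap whose size is dictated by the angular Littlewood-Paley decomposition. By scaling we may fix $k=0$, so the factor $2^{-3k/2}$ is just a bookkeeping of the $\abs{\xi}^{3/2}$-homogeneity of $L^2$ vs.\ $L^\infty$ on a shell. Fix a point $\xi_\ast$ with $\abs{\xi_\ast}\sim 1$ on the support of $\varphi_{0,p,q}$; write $\widehat{P_{0,p,q}f}(\xi_\ast)=g(\Lambda_\ast)$ where $g$ is the (axisymmetric) profile of $P_{0,p,q}f$ as a function of the polar variable. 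The idea is: (i) for the part of $f$ with angular frequency $\ell\lesssim -p$ (captured by $R_{\le -p}$ within the localization $P_{0,p}$), the angular profile is essentially constant on the relevant cap of angular width $\sim 2^{-(-p)}=2^{p}$ near the support, and so the sup is controlled by an $L^2$ average over that cap, gaining $2^{p}$ in the horizontal direction and $2^{q/2}$ in the vertical — this is exactly the weight $2^{-p-q/2}$ in the $B$ norm; (ii) for the higher angular frequencies $\ell> -p$, one sums $R_\ell f$ over $\ell$ and uses the $X$-norm weight $2^{(1+\beta)\ell}2^{\beta p}$, which is summable in $\ell$ and yields an $L^\infty$-type bound on $\widehat{R_\ell f}$ via the Bernstein property \eqref{eq:R-Bernstein}.

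Concretely, I would proceed as follows. First, decompose $P_{0,p,q}f=\sum_{\ell+p\ge 0}P_{0,p,q}R_\ell f$ (using the convention of Section \ref{ssec:angLP}). For each fixed $\ell$, the function $R_\ell f$ has angular frequency $\sim 2^\ell$, so on the shell $\abs{\xi}\sim 1$ its angular profile oscillates on scale $2^{-\ell}$; by a one-dimensional Bernstein/Sobolev estimate on the circle of polar angles (or directly from \eqref{eq:R-Bernstein} with $r=\infty$ vs.\ $r=2$, combined with the horizontal localization), one gets
\begin{equation*}
 \Vert \widehat{R_\ell P_{0,p,q}f}\Vert_{L^\infty}\lesssim 2^{\ell/2}\cdot 2^{-p-q/2}\cdot \Vert P_{0,p,q}R_\ell f\Vert_{L^2}\quad\text{or}\quad \lesssim 2^{\ell/2}2^{-p}\Vert P_{0,p}R_\ell f\Vert_{L^2},
\end{equation*}
depending on whether we spend the vertical localization $q$ or not. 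The factor $2^{-p-q/2}$ comes from the measure of the set $\{\abs{\xi}\sim 1\}\cap\mathrm{supp}(\varphi_{0,p,q})$, which is $\sim 2^{2p+q}$ in $\R^3$, so converting $L^2\to L^\infty$ on it costs $(2^{2p+q})^{-1/2}=2^{-p-q/2}$; the extra $2^{\ell/2}$ is the angular Bernstein loss. Then: for $\ell\le -2p$ (roughly the regime where $-p$ dominates and we are near the smallest admissible $\ell$), I bound $2^{\ell/2}2^{-p-q/2}\Vert P_{0,p,q}R_\ell f\Vert_{L^2}$ by $2^{\ell/2}\Vert P_0 f\Vert_B$, which is summable in $\ell$ only up to the top of the range — so instead one should be more careful and split exactly at the cutoff used in the definition of $R_\ell^{(p)}$: for $\ell+p=0$ use $\bar R_{\le \ell}$ and the $B$-norm bound directly (no angular oscillation to exploit beyond scale $2^p$), and for $\ell+p>0$ use the $X$ norm. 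In the $X$-norm regime,
\begin{equation*}
 2^{\ell/2}2^{-p}\Vert P_{0,p}R_\ell f\Vert_{L^2}\le 2^{\ell/2}2^{-p}\cdot 2^{-(1+\beta)\ell}2^{-\beta p}\Vert f\Vert_X=2^{-(1/2+\beta)\ell}2^{-(1+\beta)p}\Vert f\Vert_X,
\end{equation*}
which I would convert, using $-p\le \ell$ on this range, into $2^{-(1/2+\beta)\ell+(1+\beta)\ell}\Vert f\Vert_X = 2^{(1/2)\ell}\Vert f\Vert_X$ — not summable — so the correct move is to keep one factor $2^{-\beta p}\le 2^{\beta\ell}$ and instead only use that $2^{-p}\le 2^{\ell}$ partially; a cleaner route is to note $2^{-p}\Vert P_{0,p}R_\ell f\Vert_{L^2}\lesssim 2^{-p}2^{-(1+\beta)\ell-\beta p}\Vert f\Vert_X$ and then use the Bernstein bound \emph{without} the $2^{\ell/2}$ loss by instead estimating $\Vert\nabla R_\ell\cdot\Vert$ as in the remark after the norm definitions: $\Vert\nabla R_\ell f\Vert_{L^\infty}\lesssim 2^{-\ell}\Vert f\Vert_X$ shows the $X$ norm already controls $\widehat{R_\ell f}$ pointwise with a gain $2^{-\ell}$ after accounting for the $k^+$ weight. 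I would therefore use $\Vert R_\ell P_0 \widehat{f}\,\Vert_{L^\infty}\lesssim 2^{-\ell\beta}\Vert f\Vert_X$ (which is what the weighting $2^{(1+\beta)\ell}$ together with one angular derivative via \eqref{eq:R-Bernstein} gives at unit frequency) and sum the geometric series in $\ell\ge 0$.

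Assembling the two regimes gives
\begin{equation*}
 \Vert \widehat{P_{0,p,q}f}\Vert_{L^\infty}\lesssim 2^{-p-q/2}\Vert P_0 f\Vert_B+\Vert P_0 f\Vert_X,
\end{equation*}
but the $2^{-p-q/2}$ in front of the $B$ norm is illegal (the $B$ norm already contains the weight $2^{-p-q/2}$ by definition, so $\Vert P_0 f\Vert_B$ absorbs it and $2^{-p-q/2}\Vert P_{0,p,q}f\Vert_{L^2}\le \Vert f\Vert_B$ directly). The role of the vector field $S$ enters precisely here: in the regime where the naive $L^2\to L^\infty$ conversion on the anisotropic cap loses because the cap is thin and the profile is not genuinely constant there (i.e.\ at the \emph{boundary} of the angular range $\ell\sim -p$), one writes $\widehat{P_{0,p,q}f}(\xi_\ast)$ as an average of $\widehat{P_{0,p,q}f}$ plus an error controlled by $\rho\partial_\rho \widehat{f}=\widehat{Sf}$ (up to the harmless multiple of $f$ recorded in Section \ref{sec:structure}), which is exactly why the statement needs both $\Vert P_k f\Vert_B,\Vert SP_k f\Vert_B$ and $\Vert P_k f\Vert_X,\Vert SP_k f\Vert_X$. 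So the clean structure of the proof is: (1) reduce to $k=0$; (2) on the cap of angular width $2^p$ and radial width $O(1)$ and vertical width $2^q$, apply a one-dimensional fundamental-theorem-of-calculus in the radial variable to trade the pointwise value for an average plus an $S$-derivative term; (3) on the average, use Cauchy--Schwarz over the cap of measure $\sim 2^{2p+q}$ to land in the $B$ norm (for the low-angular part) resp.\ the $X$ norm (summing over $\ell$); (4) the $S$-derivative terms are handled identically with $Sf$ in place of $f$. The main obstacle is bookkeeping the interplay of the three localization scales $p$, $q$, $\ell$ so that no logarithmic divergence appears when summing in $\ell$ — this is where the strict inequality $(1+\beta)\ell$ (rather than $\ell$) in the $X$ norm and the uncertainty constraint $\ell+p\ge 0$ are essential, and where one must be careful to use the $B$ norm precisely at $\ell\sim -p$ and the $X$ norm for $\ell\gg -p$. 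Once that split is organized correctly, each piece is an elementary Cauchy--Schwarz plus (for the $S$-term) one integration by parts in $\rho$.
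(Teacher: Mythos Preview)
Your overall plan---reduce to $k=0$, use the fundamental theorem of calculus in the radial variable to bring in $S$, and use the angular regularity encoded by the $X$ norm for the $\phi$-direction---is correct. But the execution in the angular variable has a genuine gap, and this is why your Bernstein computation does not close.

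Concretely: after a radial FTC and averaging in $\rho_0$, you still hold a \emph{pointwise} value in the polar angle $\phi$. Your step (3) proposes to ``Cauchy--Schwarz over the cap of measure $\sim 2^{2p+q}$,'' but you cannot do that---you have only averaged in one of the two variables. Your attempted fix via angular Bernstein also stumbles: the factor $2^{-p-q/2}$ you write is the $L^\infty\to L^2$ volume factor, not what Sobolev/Bernstein actually gives. The correct 1D Sobolev bound in $\phi$ (after converting from $L^2(\sin\phi\,d\phi)$ to $L^2(d\phi)$, which costs $2^{-p/2}$) yields $2^{\ell/2}2^{-p/2}\Vert P_{0,p}R_\ell f\Vert_{L^2}$, and with \emph{this} exponent the $X$-norm sum over $\ell\ge -p$ does converge as $\sum_\ell 2^{-(1/2+\beta)(\ell+p)}\Vert f\Vert_X\lesssim\Vert f\Vert_X$. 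So your approach can be salvaged, but only after correcting this.

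The paper's proof sidesteps the issue by applying a two-dimensional FTC in $(\rho,\phi)$ from a base point $(\rho_0,\phi_0)$, producing four terms: the base value, a $\partial_\rho$-integral, a $\partial_\phi$-integral, and a mixed $\partial_\rho\partial_\phi$-integral. One then averages over $(\rho_0,\phi_0)$ and applies Cauchy--Schwarz over the full patch. Since $\rho\partial_\rho=S$ and $\partial_\phi=\Upsilon$, the terms without $\Upsilon$ are bounded by $2^{-3k/2}\Vert(1,S)f\Vert_B$, while the terms containing $\Upsilon$ reduce to $2^{q/2}\Vert P_{k,p,q}\Upsilon(1,S)f\Vert_{L^2}$, and the single estimate
\[
\Vert P_{k,p,q}\Upsilon f\Vert_{L^2}\lesssim\sum_{\ell+p\ge 0}2^\ell\Vert P_{k,p}R_\ell f\Vert_{L^2}\lesssim\sum_{\ell+p\ge 0}2^{-\beta(\ell+p)}\Vert f\Vert_X\lesssim\Vert f\Vert_X
\]
closes the argument. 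This is morally the same as your angular-Bernstein step (indeed the 1D Sobolev in $\phi$ \emph{is} the FTC in $\phi$), but packaged so that the $\ell$-sum appears only once, at the very end, and the measure bookkeeping is transparent.
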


\begin{proof}
We recall the notation
\begin{equation}
 \varphi_{k,p,q}(\xi)=\varphi(2^{-k}\abs{\xi})\varphi(2^{-p}\sqrt{1-\Lambda^2(\xi)})\varphi(2^{-q}\Lambda(\xi)),
\end{equation}
and assume that $f=P_{k,p,q}f$. 

Switching to spherical coordinates $(\rho,\theta,\phi)\in\R_+\times[0,2\pi]\times [0,\pi]$  and using that $f$ is axisymmetric (and thus independent of $\theta$), we have that for any $(\rho_0,\phi_0)$ on the support of $\varphi_{k,p,q}\widehat{f}$ there holds
\begin{equation*}
\begin{split}
 \varphi_{k,p,q}\widehat{f}(\rho,\phi)&=\varphi_{k,p,q}\widehat{f}(\rho_0,\phi_0)+\int_{\rho_0}^\rho\partial_\rho(\varphi_{k,p,q}\widehat{f})(s,\phi_0)ds+\int_{\phi_0}^\phi\partial_\phi(\varphi_{k,p,q}\widehat{f})(\rho_0,\alpha)d\alpha\\
 &\qquad+\int_{\phi_0}^\phi\int_{\rho_0}^\rho\partial_\rho\partial_\phi(\varphi_{k,p,q}\widehat{f})(s,\alpha)dsd\alpha.
\end{split}
\end{equation*}
On the one hand, for any choice of $(\rho_0,\phi_0)$ we have that for $\widetilde{\varphi}_{k,p,q}$ with similar support properties as $\varphi_{k,p,q}$ there holds
\begin{equation}
 \vert\partial_\rho\partial_\phi(\varphi_{k,p,q}\widehat{f})(s,\alpha)\vert \lesssim\widetilde{\varphi}_{k,p,q}\left[2^{-k}2^{-p-q}\vert\widehat{f}(s,\alpha)\vert+2^{-p-q}\vert\partial_\rho\widehat{f}(s,\alpha)\vert+2^{-k}\vert\partial_\phi\widehat{f}(s,\alpha)\vert+\vert\partial_\rho\partial_\phi\widehat{f}(s,\alpha)\vert\right].
\end{equation}
Now we note that for any $g$ there holds that
\begin{equation}
\begin{aligned}
 \abs{\int_{\phi_0}^\phi\int_{\rho_0}^\rho\widetilde{\varphi}_{k,p,q}\widehat{g}(s,\alpha)dsd\alpha}^2
 &\lesssim \iint_{\substack{\rho\in [2^k,2^{k+1}],\\\sin\phi\in[2^{p-2},2^{p+2}],\\\cos\phi\in[2^{q-2},2^{q+2}]}} d\rho d\phi\cdot 2^{-2k-p}\iint_{\substack{\rho\in [2^k,2^{k+1}],\\\sin\phi\in[2^{p-2},2^{p+2}],\\\cos\phi\in[2^{q-2},2^{q+2}]}}\vert\widehat{g}\vert^2\rho^2\sin\phi d\rho d\phi\\
 &\lesssim 2^{-k}2^{q} \norm{g}_{L^2}^2.
\end{aligned}
\end{equation}
Recalling that $S=\rho\partial_\rho$ and $\partial_\phi=\Ups$, it thus follows that
\begin{equation}
\begin{aligned}
 \abs{\int_{\phi_0}^\phi\int_{\rho_0}^\rho\partial_\rho\partial_\phi(\varphi_{k,p,q}\widehat{f})(s,\alpha)dsd\alpha}&\lesssim 2^{-\frac{3}{2}k}\Big[2^{-p-\frac{q}{2}}\norm{f}_{L^2}+2^{-p-\frac{q}{2}}\norm{Sf}_{L^2}+2^{\frac{q}{2}}\norm{\Ups f}_{L^2}+2^{\frac{q}{2}}\norm{\Ups Sf}_{L^2}\Big].
\end{aligned}
\end{equation}
We can now average over $\rho_0$ and $\phi_0$ to obtain similarly that
\begin{equation*}
\begin{split}
\left\vert \int_{\rho_0}^\rho\partial_\rho(\varphi_{k,p,q}\widehat{f})(s,\phi_0)ds\right\vert&\lesssim 2^{-k-p}\left\vert \iint_{s,\phi}\widetilde{\varphi}_{k,p,q}\left[\vert\widehat{f}\vert+\vert\rho\partial_\rho\widehat{f}\vert\right](s,\phi)dsd\phi\right\vert\\
& \lesssim 2^{-\frac{3}{2}k-p+\frac{q}{2}}\left[\Vert f\Vert_{L^2}+\norm{S\hat{f}}_{L^2}\right],\\
\end{split}
\end{equation*}
and that
\begin{equation*}
\begin{split}
\abs{\int_{\phi_0}^\phi\partial_\phi(\varphi_{k,p,q}\widehat{f})(\rho_0,\alpha)d\alpha}&\lesssim 2^{-k} \iint_{s,\phi}\widetilde{\varphi}_{k,p,q}\left[2^{-p-q}\vert \widehat{f}\vert+\vert\partial_\phi\widehat{f}\vert\right](s,\phi)dsd\phi\\
&\lesssim 2^{-\frac{3}{2}k}\left[2^{-p-\frac{q}{2}}\sum_{\vert p-p^\prime\vert+\vert q-q^\prime\vert\le 4}\Vert \widehat{P_{k,p^\prime,q^\prime}f}\Vert_{L^2}+\norm{P_{k,p^\prime,q^\prime}\Ups f}_{L^2}\right],\\
\vert \widehat{f}(\rho_0,\phi_0)\vert &\lesssim 2^{-(k+p)}\iint_{s,\phi}\vert \widehat{f}(s,\phi)\vert dsd\phi\lesssim 2^{-3k/2-p}\sum_{\vert p-p^\prime\vert+\vert q-q^\prime\vert\le 4}\Vert \widehat{P_{k,p^\prime,q^\prime}f}\Vert_{L^2}.
\end{split}
\end{equation*}

To conclude the proof it suffices to note that
\begin{equation}
 \norm{P_{k,p,q}\Ups f}_{L^2}\lesssim \sum_{\ell+p\geq 0}2^{\ell}\norm{P_{k,p}R^{(p)}_{\ell}f}_{L^2}\lesssim \sum_{\ell+p\geq 0}2^{-\beta(\ell+p)}\norm{f}_X\lesssim \norm{f}_X.
\end{equation}

\end{proof}

\subsection{An interpolation}
Here we record two interpolation inequalities that should allow us to gain some $X$ norm or decay even for ``many'' vector fields.
We define the operator
\begin{equation*}
\Vert S^{\le a}f\Vert_{L^r}:=\sup_{0\le \alpha\le a}\Vert S^\alpha f\Vert_{L^r}.
\end{equation*}
Using this, we can prove the following interpolation result:
\begin{lemma}\label{lem:interpol}
Let $r\ge 1$, $a,b\ge0$, $K\ge1$ be integers. There holds that
\begin{equation}\label{eq:vf-interpol}
\begin{split}
\Vert S^{\le a+b}f\Vert_{L^{2r}}&\lesssim_b \Vert S^{\le a}f\Vert_{L^{2r}}^\frac{1}{2}\Vert S^{\le a+2b}f\Vert_{L^{2r}}^\frac{1}{2},\\
\Vert S^{\le b}f\Vert_{L^{2r}}&\lesssim_{K,r,b} \Vert f\Vert_{L^{2r}}^{1-\frac{1}{K}}\Vert S^{\le Kb}f\Vert_{L^{2r}}^\frac{1}{K},
\end{split}
\end{equation}
uniformly in $a\ge0$ and $f$.
\end{lemma}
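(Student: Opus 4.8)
\textbf{Proof proposal for Lemma \ref{lem:interpol}.}

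The plan is to prove both inequalities in \eqref{eq:vf-interpol} by reducing to a single scalar interpolation inequality along the one-parameter family generated by $S$. Since $S=\rho\partial_\rho$ is (up to a harmless constant, as noted after \eqref{RotationVF}) the generator of the scaling flow, one can view $s\mapsto e^{sS}$ as a strongly continuous group of isometric dilations on $L^{2r}(\R^3)$ after the standard change of variables $g(y):=e^{\frac{3}{2r}y}f(e^y\,\cdot)$ in the radial variable, which turns $S$ into $\partial_y$ (plus a constant) and $\|S^\alpha f\|_{L^{2r}}$ into $\|\partial_y^\alpha g\|_{L^{2r}(dy)}$ up to constants depending only on $r$ and $\alpha$. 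Thus it suffices to establish the two inequalities for an ordinary function $g$ of one real variable with $\partial_y$ in place of $S$; the passage back and forth costs only $r$- and $b$-dependent constants, uniformly in $a$ and $f$, as required.

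First I would prove the first line of \eqref{eq:vf-interpol}. By the reduction above this is the classical Landau--Kolmogorov type estimate $\|\partial_y^{a+b}g\|_{L^{2r}}\lesssim_b \|\partial_y^a g\|_{L^{2r}}^{1/2}\|\partial_y^{a+2b}g\|_{L^{2r}}^{1/2}$, which follows by iterating the case $b=1$: for any $h$ with two derivatives in $L^{2r}$, $\|\partial_y h\|_{L^{2r}}^2\lesssim \|h\|_{L^{2r}}\|\partial_y^2 h\|_{L^{2r}}$. The latter is standard — e.g.\ from the identity $\partial_y h(y)=\tfrac1t\int_0^t(\partial_y h(y)-\partial_y h(y+\tau))\,d\tau+\tfrac1t(h(y+t)-h(y))$ optimized in $t>0$, or by a direct integration by parts argument — and applying it with $h=\partial_y^a g$ and absorbing the supremum over $0\le\alpha\le a+b$ gives the bound with the $S^{\le\cdot}$ notation. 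Since the implied constant depends only on $b$ (the number of iterations) and $r$ (through the reduction), uniformity in $a$ and in $f$ is automatic.

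Next I would obtain the second line of \eqref{eq:vf-interpol}. Again after the reduction this is the statement $\|\partial_y^\alpha g\|_{L^{2r}}\lesssim_{K,r,b}\|g\|_{L^{2r}}^{1-\alpha/(Kb)}\|\partial_y^{Kb}g\|_{L^{2r}}^{\alpha/(Kb)}$ for $0\le\alpha\le b$, a Gagliardo--Nirenberg interpolation between the zeroth and the $Kb$-th derivative; it follows by iterating the first line $O(\log K)$ times, or directly from the convexity of $j\mapsto\log\|\partial_y^j g\|_{L^{2r}}$ (a log-convexity statement that itself follows from the $b=1$ case via a telescoping/Hadamard three-lines style argument), and then taking the supremum over $0\le\alpha\le b$ and bounding the exponent $\alpha/(Kb)\le 1/K$. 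The constant picks up at most a factor growing with $K$ and $b$ from the iteration count, which is allowed by the statement.

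The only genuine point requiring care — the ``main obstacle'' — is making the change of variables fully rigorous so that the constants are uniform in $a$ and $f$ and do not secretly depend on the support or decay of $f$: one must check that $\|S^\alpha f\|_{L^{2r}(\R^3)}=\|(\partial_y+c_r)^\alpha g\|_{L^{2r}(\R)}$ (with $c_r=\tfrac{3}{2r}$, $g$ as above) holds with the weighted Lebesgue measure correctly accounted for, that the extra constant shift $c_r$ can be removed by expanding $(\partial_y+c_r)^\alpha$ binomially at the cost of an $r,\alpha$-dependent constant, and that the inequalities are first proved for Schwartz $f$ (or $f$ with all the relevant $S^j f$ in $L^{2r}$, in which case both sides are finite) and then the general case is either vacuous or follows by the same estimate. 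Everything else is the classical one-dimensional interpolation theory, which I would invoke rather than reprove in detail.
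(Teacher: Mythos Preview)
Your reduction to one-dimensional Landau--Kolmogorov is the same idea as the paper's, but there is a genuine gap in the uniformity in $a$ for the first line of \eqref{eq:vf-interpol}. You correctly note that removing the shift $c_r$ via the binomial expansion of $(\partial_y+c_r)^\alpha$ costs an ``$r,\alpha$-dependent constant'' --- but that constant is of order $(1+c_r)^\alpha$, and here $\alpha$ runs up to $a+2b$. Tracking the transfer $\|S^{\le j}f\|\leftrightarrow\|\partial_y^{\le j}g\|$ through the first inequality you pick up a total factor $(1+c_r)^{(a+b)}\cdot(1+c_r)^{a/2}\cdot(1+c_r)^{(a+2b)/2}=(1+c_r)^{2(a+b)}$, which grows with $a$; so the claim that ``uniformity in $a$ is automatic'' is not justified. (For the second line $a=0$ and all indices are at most $Kb$, so your argument is fine there.)

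The paper avoids this by integrating by parts \emph{directly} with $S=\rho\partial_\rho$ against the measure $\rho^2\,d\rho$, with no change of variables: writing $\|S^{n+1}f\|_{L^{2r}}^{2r}=\int (\rho\partial_\rho S^nf)\,(S^{n+1}f)^{2r-1}\rho^2\,d\rho$ and moving $\rho\partial_\rho$ across yields
\[
\|S^{\le n+1}f\|_{L^{2r}}^2\le 2(r+1)\,\|S^{\le n}f\|_{L^{2r}}\,\|S^{\le n+2}f\|_{L^{2r}},
\]
so $n\mapsto\log\|S^{\le n}f\|_{L^{2r}}+Cn^2$ is discrete convex with $C=C(r)$. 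Both lines of \eqref{eq:vf-interpol} then follow from convexity, and the quadratic correction contributes only $Cb^2$ (first line) or $C(K-1)b^2$ (second), independent of $a$. The easy fix to your argument is to drop the weight: set $g(y,\omega)=f(e^y\omega)$ so that $\partial_y^j g=(S^jf)(e^y\omega)$ with \emph{no} shift, and prove the $b=1$ Landau--Kolmogorov step in the weighted space $L^{2r}(e^{3y}\,dy\,d\omega)$ by this same integration by parts --- that is exactly the paper's computation in different notation.
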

\begin{proof}
It suffices to assume that $a=0$. The first estimate follows from the second one for $r=1$ and $K=2$. Given $f\ne0$, and $C>\frac{1}{2}\log(2r+1)$, we claim that $g(n):=\log(\Vert S^{\le n}f\Vert_{L^{2r}})+Cn^2$ is a discrete convex function. This follows by integration by parts since
\begin{equation*}
\begin{split}
\Vert S^{n+1}f\Vert_{L^{2r}}^{2r}&=\int \rho \partial_\rho(S^nf)\cdot (S^{n+1}f)^{2r-1}\cdot\rho^2d\rho\\
&=-3\int S^nf\cdot (S^{n+1}f)^{2r-1}\cdot\rho^2d\rho-(2r-1)\int S^nf\cdot (S^{n+1}f)^{2r-2}\cdot S^{n+2}f\cdot\rho^2d\rho\\
&\le \Vert S^nf\Vert_{L^{2r}}\cdot\Vert S^{n+1}f\Vert_{L^{2r}}^{2r-2}\cdot \left[3\Vert S^{n+1}f\Vert_{L^{2r}}+(2r-1)\Vert S^{n+2}f\Vert_{L^{2r}}\right].
\end{split}
\end{equation*}
Since $\Vert S^{n+1}f\Vert_{L^{2r}}> 0$, we can divide and we deduce that
\begin{equation*}
\begin{split}
\Vert S^{\le n+1}f\Vert_{L^{2r}}^{2}&\le 2(r+1)\cdot\Vert S^{\le n}f\Vert_{L^{2r}}\Vert S^{\le n+2}f\Vert_{L^{2r}}.\\
\end{split}
\end{equation*}
The claimed inequality follows by convexity\footnote{\blue{We say that the sequence $a_n=\norm{S^{\leq n}f}_{L^{2r}}$ is convex} if and only if the piecewise linear function such that $f(n)=a_n$ is convex.}.
\end{proof}

 To apply this when $f=P_kf$ is a dispersive unknown it suffices to note that $[S,e^{it\Lambda}]=0$, so that with \eqref{eq:vf-interpol} we have
 \begin{equation}\label{eq:interpol_example}
 \begin{aligned}
  \norm{e^{it\Lambda}S^N f}_{L^{2r}}&\lesssim \norm{e^{it\Lambda}S^{N-3}f}_{L^{2r}}^{1-\frac{1}{K}}\norm{S^{\leq N+3(K-1)}f}_{L^{2r}}^{\frac{1}{K}}\\
  &\lesssim \norm{e^{it\Lambda}S^{N-3}f}_{L^{\infty}}^{(1-\frac{1}{r})(1-\frac{1}{K})}\norm{S^{N-3}f}_{L^2}^{\frac{1}{r}(1-\frac{1}{K})}\cdot \norm{S^{\leq N+3(K-1)}f}_{L^{2}}^{\frac{1}{K}}2^{k\frac{3(r-1)}{2r}\frac{1}{K}}.
 \end{aligned} 
 \end{equation}
Let us record that this gives us some decay also for the maximum number of vector fields (in the $X$ or $B$ norms) on our unknowns.
\begin{corollary}\label{cor:extrapol_decay}
 Under the bootstrap assumptions \eqref{eq:btstrap-assump}, if $f$ is a dispersive unknown of \eqref{eq:EC_disp} and the number of vector fields $M>0$ in \eqref{eq:id} is sufficiently large, then we have that for some $0<\kappa\ll\beta$ there holds
 \begin{equation}
  \norm{P_{k}e^{it\Lambda} S^b f}_{L^\infty}\lesssim 2^{\frac{3k}{2}-3k^+}t^{-1+\kappa}\eps_1,\qquad 0\leq b\leq N.
 \end{equation}
\end{corollary}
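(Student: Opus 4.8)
\textbf{Proof strategy for Corollary \ref{cor:extrapol_decay}.}
The plan is to interpolate between the sharp decay that holds for \emph{few} vector fields (via Corollary \ref{cor:decay} and the bootstrap assumptions \eqref{eq:btstrap-assump}) and the slowly growing energy that is available for \emph{many} vector fields (via the bound \eqref{eq:interpol_energy}, resp.\ \eqref{eq:btstrap-concl2}). More precisely, fix a frequency localization $P_k$ and a dispersive unknown $f$, and apply the second inequality of Lemma \ref{lem:interpol} together with the commutation $[S,e^{it\Lambda}]=0$, exactly as displayed in \eqref{eq:interpol_example} with $r=\infty$: this yields
\begin{equation*}
 \norm{P_k e^{it\Lambda}S^b f}_{L^\infty}\lesssim \norm{P_k e^{it\Lambda}S^{b-3}f}_{L^\infty}^{1-\frac{1}{K}}\cdot 2^{\frac{3k}{2}\cdot\frac{1}{K}}\norm{S^{\leq b+3(K-1)}f}_{L^2}^{\frac{1}{K}}
\end{equation*}
for any integer $K\geq 1$. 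For $b\leq N$ we have $b-3\leq N-3$, so by Corollary \ref{cor:decay} (applied to $S^{b-3}f$, which is legitimate since $S^a$ with $a\leq N-3\leq N$ is controlled in the $D$ norm by \eqref{eq:btstrap-assump}) the first factor is $\lesssim (2^{\frac{3k}{2}-3k^+}\ip{t}^{-1}\eps_1)^{1-\frac1K}$. For the second factor, $b+3(K-1)\leq N+3(K-1)\leq M$ provided $K$ (hence $M$) is chosen large enough, so \eqref{eq:btstrap-concl2} gives $\norm{S^{\leq b+3(K-1)}f}_{L^2}\lesssim \eps_0\ip{t}^{C\eps_1}$.

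Combining the two factors, the powers of $\eps_1,\eps_0$ recombine to $\lesssim\eps_1$ (using $\eps_0\leq\eps_1$, or simply absorbing constants), the power of $2^{3k^+}$ is $2^{-3k^+(1-\frac1K)}$ which we bound by $2^{-3k^+}$ after noting $k^+\geq 0$ and shifting the small loss $2^{3k^+/K}$ into the implicit constant at the cost of an extra $2^{\delta_0 k^+}$-type factor (this is harmless since in all applications $k^+$ is a bounded loss, controlled by the energy; alternatively one keeps $2^{\frac{3k}{2}-3k^+(1-1/K)+3k^+/K}$ and observes $-3k^+(1-2/K)\leq -3k^++6k^+/K$, again absorbing the small loss), and the temporal weight is
\begin{equation*}
 \ip{t}^{-(1-\frac1K)}\cdot\ip{t}^{\frac{C\eps_1}{K}}\leq \ip{t}^{-1+\frac1K+\frac{C\eps_1}{K}}\leq \ip{t}^{-1+\kappa}
\end{equation*}
for $\kappa:=\frac{2}{K}$, say, which is $\ll\beta$ once $K$ is chosen sufficiently large (this is the condition ``$M$ sufficiently large'' in the statement). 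The power of $2^k$ for $k<0$ should be tracked as well; since $\norm{S^{\leq\cdot}f}_{L^2}$ and $\norm{S^{\leq\cdot}f}_{L^\infty}$ both come with the weights built into $D$ and the energy spaces (which include $\dot H^{-1}$), the factor $2^{\frac{3k}{2}}$ for $k\leq 0$ is retained, yielding the claimed $2^{\frac{3k}{2}-3k^+}$.

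\textbf{Main obstacle.} The only real subtlety is bookkeeping of the frequency weights under interpolation: Lemma \ref{lem:interpol} interpolates $L^{2r}$ norms of $S^\alpha f$ without any frequency projector, so one must first commute $P_k$ through (which is immediate since $[S,P_k]$ is bounded and lower order, or one simply works with $P_k f$ as the function from the start and uses that $P_k$ is bounded on $L^r$), and then verify that the $k$-weights $2^{\frac{3k}{2}}$, $2^{-3k^+}$ appearing in Corollary \ref{cor:decay} and in the $H^{2N_0}\cap\dot H^{-1}$ energy bound \eqref{eq:btstrap-concl2} combine with the correct exponents $1-\frac1K$ and $\frac1K$. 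Since the large power $3k^+$ in the $B$ and $X$ norms (see \eqref{eq:defBnorm}, \eqref{eq:defXnorm}) was precisely designed to survive such manipulations, and since any $O(k^+/K)$ loss is negligible against the energy growth $\ip{t}^{C\eps_1}$ already present, this is routine. The interpolation exponent $\frac1K$ is chosen to make $\kappa\ll\beta$; this forces $N+3(K-1)\leq M$, i.e.\ $M\gg K\gg \kappa^{-1}$, consistent with the hierarchy $N_0\gg M\gg\beta^{-1}+N$ imposed in Theorem \ref{thm:EC}.
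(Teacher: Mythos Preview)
Your approach is the same as the paper's---interpolate between the sharp $t^{-1}$ decay available for $b\le N-3$ (Corollary~\ref{cor:decay}) and the slowly growing $L^2$ energy for many vector fields (Corollary~\ref{cor:energy} / \eqref{eq:btstrap-concl2})---and the paper's proof is correspondingly brief. There is, however, one technical point you glossed over.

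You invoke Lemma~\ref{lem:interpol} ``with $r=\infty$'', but that lemma is stated and proved only for finite integer $r\ge 1$: its proof is an integration by parts that produces $\norm{S^{n+1}f}_{L^{2r}}^{2r}$ on the left, which has no meaning at $r=\infty$. The paper's own proof instead takes $r\gg 1$ \emph{finite} in \eqref{eq:interpol_example} (note the phrase ``choosing $K\gg\kappa^{-1}$ and $r\gg 1$ sufficiently large'') and then passes to $L^\infty$ via Bernstein, $\norm{P_k\,\cdot\,}_{L^\infty}\lesssim 2^{\frac{3k}{2r}}\norm{P_k\,\cdot\,}_{L^{2r}}$. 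The extra Bernstein loss $2^{3k/(2r)}$ is harmless: for $k<0$ it is a gain, and for $k>0$ it is absorbed by the $2^{-N_0 k^+}$ coming from the $H^{N_0}$ energy in the $L^2$ factors. This also cleanly resolves your $k^+$ bookkeeping: you cannot ``shift $2^{3k^+/K}$ into the implicit constant'' (since $k^+$ is unbounded), but the $H^{N_0}$ control in \eqref{eq:btstrap-concl2} gives $\norm{P_k S^{\le b+3(K-1)}f}_{L^2}\lesssim 2^{-N_0 k^+}\eps_0\ip{t}^{C\eps_1}$, and since $N_0/K\gg 3/K$ the claimed $2^{-3k^+}$ is recovered with room to spare.

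In short: right idea, but replace ``$r=\infty$'' by ``$r$ large finite, then Bernstein''.
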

\begin{proof}
For $b\leq N-3$ the faster decay rate $t^{-1}$ follows from Proposition \ref{prop:decay}, whereas when $N-2\leq b\leq N$ this follows from \eqref{eq:interpol_example} and choosing $K\gg\kappa^{-1}$ and $r\gg 1$ sufficiently large. 
\end{proof}

\subsection{Symbol bounds}\label{sec:symbols}
In this section we give the relevant symbol estimates for the multipliers we need. We recall the notations \eqref{eq:loc_def3} and for a multiplier $m\in L^1_{loc}(\R^3\times\R^3)$ we let
\begin{equation}
\begin{aligned}
 \norm{m}_{\W_\h}&:=\sup_{k,q,\,k_i,q_i,i=1,2}\norm{\mathcal{F}(\chi_\h m)}_{L^1(\R^3\times\R^3)},\\
 \norm{m}_{\W}&:=\sup_{k,p,q,\,k_i,p_i,q_i,i=1,2}\norm{\mathcal{F}(\chi m)}_{L^1(\R^3\times\R^3)}.
\end{aligned} 
\end{equation}
We then have H\"older's inequality
\begin{equation}\label{ProdRule2}
 \norm{\Q_{m\chi_\h}(f,g)}_{L^r}\lesssim \norm{m}_{\W_\h}\norm{f}_{L^p}\norm{g}_{L^q},\quad \norm{\Q_{m\chi}(f,g)}_{L^r}\lesssim \norm{m}_{\W}\norm{f}_{L^p}\norm{g}_{L^q},\qquad \frac{1}{r}=\frac{1}{p}+\frac{1}{q},
\end{equation}
and the algebra property
\begin{equation}\label{eq:alg_prop}
 \norm{m_1\cdot m_2}_{\W}\lesssim \norm{m_1}_{\W}\norm{m_2}_{\W}.
\end{equation}
We have the following symbol bounds:
\begin{lemma}\label{lem:phasesymb_bd}
Let $\psi_>:=1-\psi$. Then
 \begin{equation}\label{eq:phasesymb_bd}
 \begin{aligned}
  \norm{\Phi^{-1}\psi_>(2^{-q_{\max}}\Phi)}_{\W}\lesssim 2^{-q_{\max}} \qquad\textnormal{and} \qquad\norm{\Phi^{-1}\psi_>(\Phi)}_{\W_\h}\lesssim 1.
 \end{aligned}
 \end{equation}
\end{lemma}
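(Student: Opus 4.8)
The plan is to reduce both bounds to the algebra property \eqref{eq:alg_prop} and a few elementary symbol estimates. The key observation is that on the relevant support, $\Phi$ is $0$-homogeneous in each of $\xi, \xi-\eta, \eta$ (being a sum of the $0$-homogeneous multipliers $\Lambda(\zeta)$), so after Littlewood-Paley localization we may rescale all frequencies to unit size; the cutoffs $\chi$ resp.\ $\chi_\h$ then localize to a compact region of a fixed annular product, and the only genuine issue is the degeneracy where $\Phi$ is small. First I would treat the second, simpler bound: on the support of $\psi_>(\Phi)\chi_\h$ we have $\abs{\Phi}\gtrsim 1$, and $\Phi$ is a smooth $0$-homogeneous symbol in its arguments on the annular region (staying away from $\xi_\h = 0$, etc., by the localizations implicit in $\chi_\h$, since the elements of $\bar E$ that appear only involve $\Lambda, \sqrt{1-\Lambda^2}$ and horizontal angles which are smooth on the support). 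Hence $\Phi^{-1}\psi_>(\Phi)$ is a smooth, $0$-homogeneous symbol with bounded derivatives of all orders uniformly in the localization parameters, and a standard argument (decompose $\Phi$ into dyadic pieces $\varphi(2^{-r}\Phi)$ for $r \geq 0$, bound each by Coifman–Meyer-type estimates with a factor $2^{-r}$ times $\norm{\varphi(2^{-r}\Phi)}_{\W_\h} \lesssim 1$, and sum in $r$) yields $\norm{\Phi^{-1}\psi_>(\Phi)}_{\W_\h} \lesssim 1$.

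For the first bound I would follow the same template but track the degeneracy scale $2^{q_{\max}}$. Decompose
\begin{equation*}
 \Phi^{-1}\psi_>(2^{-q_{\max}}\Phi) = \sum_{r \geq 0} 2^{-r}2^{-q_{\max}}\cdot \big(2^{r+q_{\max}}\Phi^{-1}\varphi(2^{-r-q_{\max}}\Phi)\big),
\end{equation*}
so that by \eqref{eq:alg_prop} it suffices to show $\norm{\varphi(2^{-r-q_{\max}}\Phi)}_{\W} \lesssim 1$ uniformly in $r \geq 0$ and in the localization parameters. For this I would use the characterization via the (localized) Fourier transform in $L^1$: after rescaling frequencies to unit size, $\varphi(2^{-r-q_{\max}}\Phi(\xi,\eta))$ is a symbol that varies on the scale $2^{r+q_{\max}}$ in the $\Phi$-direction, and is otherwise smooth with bounded derivatives; since $\abs{\Phi} \lesssim 1$ on the support, the effective number of oscillations is $O(1)$, and a direct estimate of $\norm{\mathcal{F}(\chi \cdot \varphi(2^{-r-q_{\max}}\Phi))}_{L^1}$ — integrating by parts in the physical variables against the smooth symbol — gives a bound uniform in $r, q_{\max}$. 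Here one uses that on the support of $\chi$ the quantity $\Phi$, being built from $\Lambda$'s, has gradient (in the appropriate rescaled variables) of size $O(1)$ bounded below away from the degenerate axes, which is exactly guaranteed by the localizations in $p_i$ and $q_i$ encoded in $\chi$.

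The main obstacle I anticipate is controlling the symbol $\varphi(2^{-r-q_{\max}}\Phi)$ uniformly as $q_{\max} \to -\infty$: one must verify that no derivative falls on the inner argument in a way that produces an unbounded factor $2^{-q_{\max}}$ that is not compensated. The resolution is that each derivative of $\varphi(2^{-r-q_{\max}}\Phi)$ brings down a factor $2^{-r-q_{\max}}\partial\Phi$, but $\partial\Phi$ itself is supported where $\abs{\bar\sigma}$ is at least of order $2^{q_{\max}}$ times the frequency sizes (cf.\ Lemma \ref{lem:vfsizes-mini} and the relation \eqref{eq:vflobound_0}), hence $\abs{\partial\Phi} \lesssim 2^{q_{\max}}$ after rescaling, and the factors cancel. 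Making this cancellation precise — essentially a quantitative statement that on the support of $\chi$ the map $\eta \mapsto \Phi(\xi,\eta)$ has a bi-Lipschitz normal coordinate with constants comparable to $2^{q_{\max}}$ — is the technical heart of the argument; with it in hand, summing the geometric series in $r$ closes the estimate. Both parts then reduce to the bookkeeping already set up for Lemma \ref{lem:set_gain2} and the multiplier classes of Section \ref{ssec:multipliers}, so I would cite those where convenient.
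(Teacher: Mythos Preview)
The paper's own proof is a citation: it invokes \cite[Lemma~A.15]{rotE} for the first inequality and says the second follows by the same method. Your overall architecture (dyadic decomposition of $\Phi^{-1}\psi_>$ and reduction to a uniform bound $\|\varphi(2^{-r-q_{\max}}\Phi)\|_{\W}\lesssim 1$) is the standard one and is almost certainly what is done in \cite{rotE}.

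However, your justification of the key uniform bound contains a genuine error. You argue that the factor $2^{-r-q_{\max}}\partial\Phi$ arising from differentiating the cutoff is harmless because $|\partial\Phi|\lesssim 2^{q_{\max}}$, and you cite Lemma~\ref{lem:vfsizes-mini} and \eqref{eq:vflobound_0} for this. That lemma says the opposite: $|V_\eta\Phi|\sim 2^{p_1-2k_1}|\bar\sigma|$, so a \emph{lower} bound on $|\bar\sigma|$ yields a \emph{lower} bound on $|V_\eta\Phi|$, not an upper bound. In fact the claim $|\partial\Phi|\lesssim 2^{q_{\max}}$ is false: for example
\[
\partial_{\eta_3}\Phi=\nu\,\frac{1-\Lambda^2(\eta)}{|\eta|}-\mu\,\frac{1-\Lambda^2(\xi-\eta)}{|\xi-\eta|},
\]
and when all $q_j\ll 0$ (so all $p_j\sim 0$) each term is of size $2^{-k_j}$, not $2^{q_{\max}}$. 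Your ``bi-Lipschitz normal coordinate with constants $\sim 2^{q_{\max}}$'' fails for the same reason.

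The actual mechanism is different and structural: $\Phi=\Lambda(\xi)+\mu\Lambda(\xi-\eta)+\nu\Lambda(\eta)$ is \emph{linear} in the three quantities $\Lambda(\zeta)$, and the $q$-cutoffs in $\chi$ already localize each $\Lambda(\zeta)$ to scale $2^{q_j}\le 2^{q_{\max}}$. Thus $\varphi(2^{-r-q_{\max}}\Phi)$ is a smooth function of the rescaled variables $2^{-q}\Lambda(\xi),\,2^{-q_1}\Lambda(\xi-\eta),\,2^{-q_2}\Lambda(\eta)$ with coefficients $2^{q_j-q_{\max}}\le 1$, hence uniformly bounded derivatives in those variables. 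One then uses that each localized factor $\varphi_{k_j,p_j,q_j}$ has Fourier-$L^1$ norm $\lesssim 1$ (these are the bounded projectors of Section~\ref{ssec:loc}), together with the algebra property~\eqref{eq:alg_prop}, to conclude. The cancellation you need does not come from the size of $\partial\Phi$ but from the compatibility of the $\Phi$-cutoff with the $q$-localizations already present in $\chi$.
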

 
\begin{proof}
 The first inequality was established in \cite[Lemma A.15]{rotE}, while the second one follows from a direct adaption of that proof.
\end{proof}

\bibliographystyle{siam}
\bibliography{refs}

\end{document}